\g@addto@macro{\UrlBreaks}{\UrlOrds}
\newtheorem{theorem}{Theorem}[section]
\newtheorem{prop}[theorem]{Proposition}
\newtheorem{lemma}[theorem]{Lemma}
\newtheorem{cor}[theorem]{Corollary}
\newtheorem{conj}[theorem]{Conjecture}
\newtheorem{ex}[theorem]{Example}
\newtheorem{dfn}[theorem]{Definition}
\newtheorem{remark}[theorem]{Remark}
\def\k{{\bf k}}
\def\B{\mathbb{B}}
\def\R{\mathbb{R}}
\def\Z{\mathbb{Z}}
\def\N{\mathbb{N}}
\def\H{\mathbb{H}}
\def\C{\mathbb{C}}
\def\h{\mathfrak h}
\def\f{\mathfrak f}
\def\g{\mathfrak g}
\def\mod{\rm{mod}}
\def\SH{{\rm SH}}
\def\HF{{\rm HF}}
\def\Ham{{\rm Ham}}
\def\Diff{{\rm Diff}}
\newcommand{\ind}{\operatorname{ind}}
\newcommand{\Vol}{\operatorname{Vol}}
\newcommand{\im}{\operatorname{im}}
\begin{document}

\title{Persistence modules, symplectic Banach-Mazur distance and Riemannian metrics}

\author[Vuka\v sin Stojisavljevi\'c ]{Vuka\v sin Stojisavljevi\'c}
\address{School of Mathematical Sciences, Tel Aviv University, Ramat Aviv, Tel Aviv 69978
Israel}
\email{vukasin@mail.tau.ac.il}

\author[Jun Zhang]{Jun Zhang}
\address{Department of Mathematics and Statistics, University of Montreal, C.P. 6128 Succ. Centre-Ville Montreal, QC H3C 3J7, Canada}
\email{jun.zhang.3@umontreal.ca}
\address{School of Mathematical Sciences, Tel Aviv University, Ramat Aviv, Tel Aviv 69978
Israel}
\email{junzhang@mail.tau.ac.il}

\date{\today} 

\maketitle

\begin{abstract} We use persistence modules and their corresponding barcodes to quantitatively distinguish between different fiberwise star-shaped domains in the cotangent bundle of a fixed manifold. The distance between two fiberwise star-shaped domains is measured by a non-linear version of the classical Banach-Mazur distance, called symplectic Banach-Mazur distance and denoted by $d_{SBM}.$ The relevant persistence modules come from filtered symplectic ho\-mo\-lo\-gy and are stable with respect to $d_{SBM}.$ Our main focus is on the space of unit codisc bundles of orientable surfaces of positive genus, equipped with Riemannian metrics. We consider some questions about large-scale geometry of this space and in particular we give a construction of a quasi-isometric embedding of $(\R^n,|\cdot |_\infty)$ into this space for all $n\in \N.$ On the other hand, in the case of domains in $T^*S^2$, we can show that the corresponding metric space has infinite diameter. Finally, we discuss the existence of closed geodesics whose energies can be controlled. \end{abstract}
\tableofcontents

\section{Introduction}

Recently, the technique of persistence modules and barcodes has been successfully used in symplectic and contact topology. For instance, \cite{PS16}, \cite{UZ16}, \cite{Zha16}, \cite{PSS17}, \cite{Ste18}, \cite{KS18} and \cite{She19b} used persistence modules constructed from Floer homology to study questions in Hamiltonian dynamics, while \cite{BHS18}, \cite{LSV18}, \cite{She18}, \cite{Kaw19} and \cite{She19a} applied persistence techniques in the framework of $C^0$-symplectic topology. On the other hand, in \cite{Fraser}, persistence modules defined using generating function homology were considered, while \cite{RS18} used barcodes to deduce displacement energy bounds for Legendrian submanifolds. Persistence modules coming from filtered symplectic homology of star-shaped domains in $\C^{n}$ were considered in \cite{Ush18}. We refer the reader to \cite{PRSZ17} for various applications of persistence modules and barcodes in geometry and analysis. In this paper, we will consider persistence modules coming from filtered symplectic homology in order to study fiberwise star-shaped domains in the cotangent bundle of a fixed manifold in a quantitative fashion.

\subsection{The metrical set-up}

The quantitative perspective which we wish to adopt has its roots in the concept of Banach-Mazur distance, initially appearing in functional analysis with the aim of comparing convex bodies. Let $M$ be a closed, orientable manifold of dimension $n$. Its cotangent bundle $T^*M$ is equipped with a canonical symplectic form $\omega_{can} = d \lambda_{can},$ where $\lambda_{can}$ is the Liouville form, and a canonical vector field $X$ given by $i_X (\omega_{can}) = \lambda_{can}$ called Liouville vector field. We call a domain $U\subset T^*M$ \textit{admissible} if it is a compact, fiberwise star-shaped domain, centered at the zero section $0_M\subset U \subset T^*M,$ whose boundary $\partial U$ is smooth and such that $ X \pitchfork \partial U.$ Restriction of the Liouville form to the boundary of an admissible domain is a contact form, i.e. $(\partial U, \lambda_{can}|_{\partial U})$ is a contact manifold. Denote
\[ \mathcal C_M = \{ \mbox{admissible domains $U$ in $T^*M$} \}.\]

For two admissible domains $U, V \in \mathcal C_M $, an embedding $\phi: U\to V$ satisfying $\phi^*\lambda_{can}-\lambda_{can}=df$ for some smooth function $f:U\rightarrow \R$ is called a {\it Liouville embedding.}  Denote the set of homotopy classes of free loops in $M$ by $\tilde{\pi}_1(M)$. Notice that any $U \in \mathcal C_M$ deformation retracts to the zero section $0_M$ of $T^*M,$ and the projection $\pi:T^*M\rightarrow M$ restricted to $U$ induces a homotopy equivalence $\pi|_U:U\rightarrow M.$ Thus, any Liouville embedding $\phi: U \to V$ between two admissible domains in $T^*M$ induces a map $\phi_*$ on $\tilde{\pi}_1(M)$. Majority of maps which we will consider in this paper will be a special type of Liouville embeddings which are defined as follows. 

\begin{dfn} \label{dfn-pi1-le}
Given two admissible domains $U, V \in \mathcal C_M$, a Liouville embedding $\phi: U \to V$ is {\it $\tilde{\pi}_1$-trivial} if $\phi_* \alpha = \alpha$ for all $\alpha \in {\tilde \pi}_1(M)$. We adopt the notation $U \xhookrightarrow{\phi} V$ for a ${\tilde \pi}_1$-trivial Liouville embedding $\phi: U \to V$.
\end{dfn}
 One readily checks that the composition of two ${\tilde \pi}_1$-trivial Liouville embeddings is again a ${\tilde \pi}_1$-trivial Liouville embedding. The following definition modifies a key definition from \cite{GU17}.

\begin{dfn}  Let $U \subset V$ be two admissible domains in $T^*M$ and $\phi: U \to V$ a Liouville embedding. We call $\phi$ {\it strongly unknotted} if there exists an isotopy $\{\phi_t\}_{t \in [0,1]}$ such that each $\phi_t: U \to V$ is a Liouville embedding and $\phi_0 = i_U$, $\phi_1 = \phi$, $i_U$ being the inclusion $i_U:U\rightarrow V.$ \end{dfn}

Let us illustrate these concepts on an example coming from Riemannian geometry. This example is also going to be the main example considered in this paper.

\begin{ex} \label{Main_Example}
Let $(M,g)$ be a closed, orientable Riemannian manifold with induced norm $\| \cdot \|_g: TM \to \R$ and denote the unit ball at a point $q$ by $B_1(g)_q = \{x \in T_q M \,| \, \|x\|_g \leq 1\}$. The dual norm $\| \cdot \|_{g^*}$ on $T^*M$ is given by $\| \xi_q \|_{g^*} =\max \{\xi_q(x) \, | \, x \in B_1(g)_q\}$ and the unit coball $B^*_1(g^*)_q= \{p \in T^*_q M \,| \, \|p\|_{g^*} \leq 1\}$ defines a convex set in $T^*_qM.$ Denoting the unit codisc bundle (union of unit coballs over all points of the manifold) by $U^*_g M,$ we have that $U^*_g M$ is an admissible domain in $T^*M.$ The boundary $\partial U^*_g M$ is the unit cosphere bundle and the Reeb flow on $(\partial U^*_g M, \lambda_{can}|_{\partial U^*_g M})$ is the cogeodesic flow of $g.$ Now, if for every $q \in M$ and every $x \in T_qM$, it holds $||x||_{g_1} \leq ||x||_{g_2}$, we have that $U^*_{g_1} M\subset U^*_{g_2} M$ and inclusion $i: U^*_{g_1} M \rightarrow  U^*_{g_2} M$ is a ${\tilde \pi}_1$-trivial Liouville embedding. Obviously, this embedding is also strongly unknotted.
\end{ex}

We will now define the distance which we wish to consider.

\begin{dfn} (Ostrover, Polterovich, Gutt, Usher \cite{P15, P17, OstPol, PRSZ17, GU17, Ush18})  \label{dfn-SBM} For $U, V \in \mathcal C_M$, we define {\it symplectic Banach-Mazur distance} $d_{SBM}(U,V)$ by
\[ d_{SBM}(U,V) = \inf\left\{ \ln C \,\bigg| \, \begin{array}{cc} \mbox{$\exists$\,\, $\frac{1}{C}U \xhookrightarrow{\phi} V \xhookrightarrow{\psi} CU$} \,\,(\mbox{and hence} \,\mbox{$\frac{1}{C}V \xhookrightarrow{\psi(C^{-1})} U \xhookrightarrow{\phi(C)} CV$}) \\\mbox{s.t. $\psi \circ \phi$ and $\phi(C) \circ \psi(C^{-1})$ are strongly unknotted} \end{array} \right\}. \]
Here multiplication $C U$ applies on the covector component, i.e. for any $(q, p) \in U$, $C (q,p) = (q, C p)$. Moreover, $\phi(C)$ is defined as $\phi(C)(q,p) = C\phi(q, p/C),$ for $(q,p) \in U$, where again multiplication acts on the covector component and $\psi(C^{-1})$ is defined similarly.
\end{dfn}

In order to study unit codisc bundles of different Riemannian metrics with respect to $d_{SBM}$, we will need an auxiliary distance defined on the space of Riemannian metrics on $M.$ Denote by
\[ \mathcal G_M = \{\mbox{Riemannian metrics $g$ on $M$}\} .\]
Similarly to $d_{SBM}$ on $\mathcal C_M$, we have

\begin{dfn} \label{RBM} For $g_1, g_2 \in \mathcal G_M$, we define {\it Riemannian Banach-Mazur distance} denoted by $d_{RBM}(g_1,g_2)$ as follows,
\[ d_{RBM}(g_1, g_2) = \inf\left\{ \ln C \in [0, \infty) \, \bigg| \, \exists \phi \in {\Diff}_0\,(M)\ \,\,\mbox{s.t.} \,\, \frac{1}{C} g_1 \preceq \phi^*g_2 \preceq C g_1\right\},\]
where $g_1 \preceq g_2$ means that for any $q \in M$ and any $x \in T_qM$, $||x||_{g_1} \leq ||x||_{g_2}$. ${\Diff}_0(M)$ stands for the identity component of ${\Diff}(M).$ \end{dfn}

As we saw in Example~\ref{Main_Example}, every Riemannian metric $g$ defines a domain $ U^*_g M \in \mathcal C_M$ and thus $\mathcal G_M$ can be naturally identified with a subset of $\mathcal C_M.$ With this in mind, $d_{RBM}$ and $d_{SBM}$ are two pseudo-metrics on $\mathcal G_M$ which turn out to be comparable. More precisely, the following inequality is proven in Proposition~\ref{SvsR}
\begin{equation}\label{SRB}
2 \cdot d_{SBM}(U^*_{g_1}M, U^*_{g_2}M) \leq d_{RBM}(g_1, g_2).
\end{equation}

Recall that given a contact manifold $(Y,\mu)$ with Reeb flow $\varphi^{\mu}_t,$ a periodic Reeb orbit $\varphi^{\mu}_t(x),$ $\varphi^{\mu}_T(x)=x$ of period $T$ is called {\it non-degenerate} if $\det (d\varphi^{\mu}_T|_{\ker \mu(x) } - {\mathds 1}_{\ker \mu(x) })\neq 0.$ If all periodic Reeb orbits are non-degenerate, contact manifold $(Y,\mu)$ is called {\it non-degenerate}. In the light of this definition an admissible domain $U$ is called {\it non-degenerate} if $(\partial U,\lambda_{can}|_{\partial U})$ is a non-degenerate contact manifold. A classical tool used to study admissible domains is symplectic homology, denoted by $\SH_{*}(U; \alpha)$, for $U\in \mathcal C_M$ and $\alpha$ a homotopy class of free loops in $M.$ Assuming $U$ is non-degenerate, a filtered version of symplectic homology $\SH^a_{*}(U; \alpha),a>0$ can be viewed as a persistence module which we denote by $\mathbb{SH}_{*,\alpha}(U)$, see Subsection \ref{symplectic-persitence} as well as Subsection \ref{app-pm} for general background on persistence modules. Multiplying the domain by $C>0$ results in the proportional scaling of the filtration, that is  $\SH^{Ca}_{*}(CU; \alpha)=\SH^{a}_{*}(U; \alpha)$ for all $a>0.$ In accordance with Definition \ref{dfn-SBM} we introduce the logarithmic version of $\SH_{*}(U; \alpha),$
$$S^t_{*}(U; \alpha)=\SH^{e^t}_{*}(U; \alpha),~~t\in \R,$$
which satisfies $S^{t+\ln C}_{*}(CU; \alpha)=S^{t}_{*}(U; \alpha).$ The resulting persistence module is denoted by $\mathbb S_{*, \alpha}(U).$ We are able to estimate $d_{RBM}$ and $d_{SBM}$ from below using the associated barcodes, namely the following {\it stability property} holds.

\begin{theorem} \label{TST} For $U,V \in \mathcal C_M$ non-degenerate, denote the barcodes of $\mathbb{S}_{*,\alpha}(U)$ and $\mathbb{S}_{*,\alpha}(V)$ by $\mathbb B_{*,\alpha}(U)$ and $\mathbb B_{*,\alpha}(V)$ respectively. Then
\[ d_{bottle}(\mathbb B_{*,\alpha}(U), \mathbb B_{*,\alpha}(V)) \leq d_{SBM}(U,V). \]
In particular, when $U = U^*_{g_1} M$ and $V = U^*_{g_2}M$, it follows from (\ref{SRB}) that
\[2 \cdot d_{bottle}(\mathbb B_{*,\alpha}(U), \mathbb B_{*,\alpha}(V)) \leq  d_{RBM}(g_1,g_2) .\]
\end{theorem}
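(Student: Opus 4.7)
The plan is to exhibit an $\varepsilon$-interleaving between the persistence modules $\mathbb{S}_{*,\alpha}(U)$ and $\mathbb{S}_{*,\alpha}(V)$ for every $\varepsilon>d_{SBM}(U,V)$, and then conclude by the isometry theorem between interleaving distance and bottleneck distance. The second assertion of the theorem then follows immediately from inequality (\ref{SRB}).

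Fix $\varepsilon > d_{SBM}(U,V)$ and set $C = e^{\varepsilon}$. By Definition \ref{dfn-SBM} there exist $\tilde{\pi}_1$-trivial Liouville embeddings $\phi\co \tfrac{1}{C}U \hookrightarrow V$ and $\psi\co V \hookrightarrow CU$, together with the rescaled pair $\phi(C), \psi(C^{-1})$, such that $\psi\circ \phi$ and $\phi(C)\circ \psi(C^{-1})$ are strongly unknotted. The first key ingredient is Viterbo functoriality for filtered symplectic homology: a $\tilde{\pi}_1$-trivial Liouville embedding $\iota\co W_1 \hookrightarrow W_2$ induces transfer maps $\iota^{!}\co \SH^a_*(W_2;\alpha)\to \SH^a_*(W_1;\alpha)$ that commute with the persistence structure maps. ($\tilde{\pi}_1$-triviality is exactly what is needed so the decomposition by free homotopy class is preserved.) Applying this to $\phi$ and $\psi$, and passing to the logarithmic scale using $S^{t+\ln C}_*(CU;\alpha) = S^t_*(U;\alpha)$, one obtains two morphisms of persistence modules
\[
f\co \mathbb{S}_{*,\alpha}(U)\longrightarrow \mathbb{S}_{*,\alpha}(V)[\varepsilon], \qquad g\co \mathbb{S}_{*,\alpha}(V)\longrightarrow \mathbb{S}_{*,\alpha}(U)[\varepsilon],
\]
where $f$ comes from $\psi^{!}$ (with the shift absorbed via $S^t_*(CU;\alpha)=S^{t-\varepsilon}_*(U;\alpha)$) and $g$ comes from $\phi^{!}$ (absorbing $S^t_*(\tfrac{1}{C}U;\alpha)=S^{t+\varepsilon}_*(U;\alpha)$).

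The second key ingredient is checking the interleaving identities. The composition $g[\varepsilon]\circ f\co \mathbb{S}_{*,\alpha}(U)\to \mathbb{S}_{*,\alpha}(U)[2\varepsilon]$ is, by functoriality, induced by $\psi\circ \phi\co \tfrac{1}{C}U\to CU$. Since $\psi\circ \phi$ is strongly unknotted, it is isotopic through $\tilde{\pi}_1$-trivial Liouville embeddings to the scaling inclusion $i\co \tfrac{1}{C}U\hookrightarrow CU$, and isotopy invariance of the filtered Viterbo transfer gives $(\psi\circ\phi)^{!}=i^{!}$. Under the scaling identification, $i^{!}$ coincides with the persistence structure map $\mathbb{S}_{*,\alpha}(U)\to \mathbb{S}_{*,\alpha}(U)[2\varepsilon]$. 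The same argument applied to $\phi(C)\circ \psi(C^{-1})$ yields the corresponding identity for $f[\varepsilon]\circ g$. Thus $(f,g)$ is an $\varepsilon$-interleaving, hence
\[
d_{bottle}(\mathbb{B}_{*,\alpha}(U),\mathbb{B}_{*,\alpha}(V))\le \varepsilon
\]
by the isometry theorem for persistence modules, and taking $\varepsilon \searrow d_{SBM}(U,V)$ completes the proof.

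The principal technical obstacle is the isotopy invariance of the filtered Viterbo transfer map for $\tilde{\pi}_1$-trivial Liouville embeddings: one needs to know that two such embeddings connected by a Liouville isotopy induce the same map on filtered symplectic homology, respecting the free-homotopy-class decomposition. This is the reason both the $\tilde{\pi}_1$-triviality of the embeddings and the strongly unknotted condition on the compositions are built into Definition \ref{dfn-SBM}. Everything else (scaling behavior, compatibility of transfer with persistence structure maps, and the isometry theorem) is standard once these functorial properties are established.
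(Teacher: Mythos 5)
Your proposal is correct and follows essentially the same route as the paper: both arguments extract a $(\ln C)$-interleaving from the pair of Liouville embeddings via the contravariant transfer maps, the rescaling identification $S^{t+\ln C}_*(CU;\alpha)=S^t_*(U;\alpha)$, and isotopy invariance applied to the strongly unknotted compositions, then invoke the isometry theorem. The functorial inputs you list are exactly the content of Proposition~\ref{fp-sh} in the paper, and you correctly identify isotopy invariance of the filtered transfer as the point where $\tilde\pi_1$-triviality and strong unknottedness are used.
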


Precise definitions and a proof of this theorem are given in Section 3. Different versions of Theorem \ref{TST} for star-shaped domains in $\R^{2n}$ can be found in \cite{PRSZ17} and \cite{Ush18}.

\medskip

We want to emphasize that in \cite{PS16} an analogous result in Hamiltonian Floer theory was proven (see also \cite{UZ16}). In the case of a symplectically aspherical manifold $(M,\omega),$ \cite{PS16} shows that 
$$d_{bottle}(\mathbb{HF}_{*,\alpha}(\phi), \mathbb{HF}_{*,\alpha}(\psi)) \leq d_{\rm Hofer}(\phi,\psi),$$ for $\phi, \psi \in \Ham(M, \omega).$ Here $\mathbb{HF}_{*,\alpha}$ denotes a persistence module coming from filtered Floer homology and $d_{\rm Hofer}$ stands for Hofer's metric. Nowadays it is well-known that this inequality can be used to prove continuity of some famous symplectic invariants, such as spectral invariants or boundary depth, with respect to Hofer's metric.

\begin{remark} ({\it Alternative definition}) One may give a definition of symplectic Banach-Mazur distance different from Definition \ref{dfn-SBM} as follows. 
\begin{dfn} \label{dfn-SBM-2} Let $U, V \in \mathcal C_M$ and
\[ \rho(U,V) = \inf\left\{ \ln C \in [0, \infty) \,\bigg| \, \begin{array}{cc} \mbox{$\exists$\,\, $\frac{1}{C}U \xhookrightarrow{\phi} V \xhookrightarrow{\psi} CU$}  \\\mbox{s.t. $\psi \circ \phi$ is strongly unknotted} \end{array} \right\}. \]
We define $d_{SBM}'(U,V) = \max\{\rho(U,V), \rho(V,U)\}$.\end{dfn}
One may prove that $d'_{SBM}$ defines a pseudo-metric on $\mathcal C_M$ in a similar way to the proof of Proposition \ref{dSBM}. However, in order to prove the stability of $d_{bottle}$ with respect to $d'_{SBM}$, i.e. an analogue of Theorem \ref{TST} , one needs a stronger version of the classical isometry theorem for barcodes which was communicated to us by M. Usher, \cite{Usher}. Quantities $\rho$ and $d'_{SBM}$ can be considered analogous to $\delta_f$ and $d_f$ defined in \cite{Ush18}, as explained in Subsection 1.2 of \cite{Ush18}.
\end{remark}

\begin{remark}
Throughout the paper, we assume that the base manifold $M$ is orientable. This is done in order to simplify considerations regarding the grading in symplectic homology, see Subsection \ref{symplectic-persitence}. It seems likely that, using the results from \cite{Web02}, one may apply similar arguments and obtain analogous results in the non-orientable case. 
\end{remark}

\subsection{Large-scale geometry of the space of Riemannian metrics}

Recall that a map $\Phi: (X_1,d_1) \rightarrow (X_2,d_2)$ between two (pseudo-)metric spaces is called \textit{quasi-isometric embedding} if there exist constants $A\geq 1,B \geq 0$ s.t.
\[\frac{1}{A}d_1(x,y)-B \leq d_2(\Phi(x),\Phi(y)) \leq A d_1(x,y)+B,\]
for all $x,y \in X_1.$

From a general perspective, given a (pseudo-)metric space $(X, d)$, we wish to ask the following questions with the flavor of large-scale geometry.
\begin{itemize}
\item[\textbf{(A)}] What is the diameter of $(X,d)$?
\item[\textbf{(B)}] If ${\rm diam}\,(X,d)=+\infty$, how many unbounded linearly independent directions are there in $X$? More precisely, for which $N$ does there exist a quasi-isometric embedding of $\R^N$ into $(X,d)?$
\end{itemize}

Our goal is to give partial answer to these questions for the space of admissible domains in $T^*M$, i.e. when $(X,d)=(\mathcal C_M,d_{SBM}) .$ In the case of Hofer's metric, i.e. $(X,d) = (\Ham(M, \omega), d_{\rm Hofer})$, these questions have been studied and partially answered using advanced tools from symplectic topology (see, for instance, \cite{Pol98} and \cite{Ush13}). 

Before we state the main results we wish to point out that without imposing additional assumptions on spaces $(\mathcal C_M,d_{SBM})$ and $(\mathcal G_M,d_{SBM})$ it is easy to see that both of their diameters are infinite. This follows from the fact that $d_{SBM}$ satisfies $d_{SBM}(U, CU) = \ln C$ for any $U \in \mathcal C_M$ and $C \geq 1$. Indeed, for any $C \geq 1$ it readily follows that $d_{SBM}(U, CU) \leq \ln C$ simply by taking $\phi$ and $\psi$ in the definition of $d_{SBM}$ to be inclusions. On the other hand, if there would exist some $C'<C$ such that $U/C' \hookrightarrow CU \hookrightarrow C'U$, the second embedding would contradict preservation of volume and hence $d_{SBM}(U,CU) = \ln C$. Thus, in order to make question $\textbf{(A)}$ meaningful we must introduce certain normalizations. To this end we define
\[ \bar{\mathcal C}_M = \left\{ \mbox{admissible domains $U$ in $T^*M$ s.t. $\Vol(U) = {V_n}$}  \right\},\]
where $\Vol(U)=\int\limits_{U} \frac{(d\lambda_{can})^n}{n!}=\int\limits_{U} \frac{(\omega_{can})^n}{n!}$ and $V_n$ denotes the volume of the $n$-dimensional unit ball. Similarly, we define
\[ \bar{\mathcal G}_M = \{\mbox{Riemannian metrics $g$ on $M$ s.t. $\Vol_g(M)=1$ and ${\rm diam}(M,g)\leq 100$}\}. \]
Note that when $U=U^*_g M$, one has $\Vol(U)=V_n \cdot \Vol_g(M)$ and hence we may include $\bar{\mathcal G}_M$ in $\bar{\mathcal C}_M$ via the map $g\rightarrow U^*_g M.$ Slightly abusing the notation we write $\bar{\mathcal G}_M \subset \bar{\mathcal C}_M.$

\begin{remark} \label{res-diam} We also wish to explain the restriction that we put on the diameter of $(M, g).$ Assume that $g_2 \preceq C^2 g_1,$ for a constant $C\geq 1.$ Now, for every smooth curve $\gamma$ in $M$ it holds that $L_{g_2}(\gamma) \leq C \cdot L_{g_1}(\gamma),$ $L_{g_i}$ denoting the length with respect to $g_i,$ and thus ${\rm diam}(M,g_2) \leq C \cdot {\rm diam}(M,g_1).$ If ${\rm diam}(M,g_1)$ is fixed and ${\rm diam}(M,g_2) \rightarrow + \infty,$ we see that $C \rightarrow + \infty$ and since ${\rm diam}(M,g_2)={\rm diam}(M,\phi^* g_2)$ for all diffeomorphisms $\phi,$ we have that $d_{RBM}(g_1,g_2)\rightarrow + \infty.$  This means that if there was no restriction on the diameter of $(M,g)$, the space $(\mathcal G_{M}, d_{RBM})$ would trivially have infinite diameter even if we fix the volume of $M$. \end{remark}

The following theorem is the main result of the paper.

\begin{theorem} \label{Main_Thm}
If $M=S^2$, then there exists a quasi-isometric embedding
\[ \Phi: ([0,\infty),|\cdot|) \to (\bar{\mathcal G}_M, d_{SBM}). \]
If $M=\Sigma$ is a closed, orientable surface whose genus is at least 1, then for every $N\in \N$ there exists a quasi-isometric embedding
\[ \Phi: (\R^N, |\cdot|_{\infty}) \to (\bar{\mathcal G}_{M}, d_{SBM}).\]
Both statements remain true if we replace $d_{SBM}$ by $d_{RBM}.$
\end{theorem}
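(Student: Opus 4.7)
The overall plan is to construct the embeddings explicitly as families of Riemannian metrics and to deduce the lower bounds on $d_{SBM}$ from Theorem \ref{TST} applied to carefully chosen free homotopy classes. The matching upper bounds for both $d_{SBM}$ and $d_{RBM}$ will come from direct metric comparisons inside $\mathcal G_M$, combined with the inequality $2\, d_{SBM} \leq d_{RBM}$ from (\ref{SRB}).

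For a closed orientable surface $\Sigma$ of genus $\geq 1$ and a fixed $N\in\N$, I would first choose $N$ distinct primitive free homotopy classes $\alpha_1,\dots,\alpha_N \in \tilde\pi_1(\Sigma)$ and then construct a smooth $N$-parameter family $\{g_v\}_{v \in \R^N} \subset \bar{\mathcal G}_\Sigma$ with the following key properties: (a) each $g_v$ has volume $1$ and diameter $\leq 100$; (b) there is a closed geodesic $\gamma_i(v)$ representing $\alpha_i$ whose length satisfies $\ln L_{g_v}(\gamma_i(v)) = v_i + O(1)$ uniformly in $v$ and which is isolated in action from the rest of the length spectrum in class $\alpha_i$; (c) $d_{RBM}(g_v, g_w) \leq C |v-w|_\infty + D$ with $C, D$ independent of $v, w$. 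When the genus is at least $2$ such a family can be built from a pants decomposition by selecting pairwise disjoint simple closed curves $\gamma_i$ representing $\alpha_i$, rescaling the metric in disjoint collar neighborhoods around the $\gamma_i$ in Fenchel--Nielsen fashion, and finally applying a global conformal correction to pin the volume and cap the diameter. The torus case needs a different device since on $T^2$ three pairwise non-homotopic essential simple closed curves must mutually intersect; one option is to lift to a finite cover of $T^2$ of sufficiently high genus, carry out the above construction equivariantly, and push the metric back; another is to localize each modification inside a thin annular neighborhood of a small geodesic arc and argue directly.

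For the lower bound one applies Theorem \ref{TST} class by class: for each $i$,
\[ d_{SBM}(U^*_{g_v}\Sigma, U^*_{g_w}\Sigma) \geq d_{bottle}\bigl(\mathbb B_{*,\alpha_i}(U^*_{g_v}\Sigma), \mathbb B_{*,\alpha_i}(U^*_{g_w}\Sigma)\bigr). \]
Property (b) implies that $\mathbb S_{*,\alpha_i}(U^*_{g_v}\Sigma)$ carries a distinguished bar whose left endpoint lies within bounded distance of $v_i$ (for the hyperbolic model on higher genus surfaces this is because the loop space component $\Lambda_{\alpha_i}\Sigma \simeq S^1$ has non-trivial homology entirely contributed by the unique shortest geodesic in $\alpha_i$), so the isometry/stability theorem for barcodes yields $d_{bottle} \geq |v_i - w_i| - O(1)$. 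Taking the maximum over $i$ gives $d_{SBM}(U^*_{g_v}\Sigma, U^*_{g_w}\Sigma) \geq |v-w|_\infty - O(1)$, which together with (c) exhibits $\Phi(v) := g_v$ as a quasi-isometric embedding with respect to both $d_{SBM}$ and $d_{RBM}$.

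The sphere case follows the same scheme but exploits only the trivial class, since $\tilde\pi_1(S^2) = 0$, which accounts for the one-dimensional target: construct $\{g_t\}_{t \geq 0} \subset \bar{\mathcal G}_{S^2}$ (for instance, suitably renormalized surfaces of revolution with a thin waist circle, or pinched spheres in the spirit of Example \ref{Main_Example}) so that $g_t$ admits a contractible simple closed geodesic of length $\sim e^{-t}$ isolated from nearby action values while all other Floer-relevant actions stay bounded, and so that $d_{RBM}(g_t, g_s) \leq C|t-s| + D$; the barcode $\mathbb B_{*,0}(U^*_{g_t}S^2)$ then features a marked short bar near $-t$, and Theorem \ref{TST} delivers $d_{SBM}(U^*_{g_t}S^2, U^*_{g_s}S^2) \geq |t-s| - O(1)$. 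The main obstacle in all three statements is ensuring that the designated closed geodesic actually survives as a distinguishable, linearly-moving endpoint of some bar in $\mathbb B_{*,\alpha_i}$ (or $\mathbb B_{*,0}$); this requires either a Morse--Bott analysis of the relevant free-loop-space component or a direct count of generators and boundary operators in the filtered Floer complex that rules out cancellation. The secondary but persistent technical burden is the simultaneous normalization of volume and diameter while scaling one or more geodesic lengths by unbounded factors, which constrains the admissible local models and is responsible for the extra factor of $2$ lost in (\ref{SRB}).
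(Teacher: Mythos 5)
Your overall framework (construct explicit families of metrics, get lower bounds from Theorem \ref{TST} via distinguished bars in $\mathbb B_{*,\alpha}$ coming from short closed geodesics, get upper bounds from direct metric comparison plus (\ref{SRB}), then renormalize volume and diameter) matches the paper, and your $S^2$ construction is essentially the paper's bulked sphere. For positive genus, however, you take a genuinely different route: $N$ distinct primitive classes $\alpha_1,\dots,\alpha_N$ with disjoint collars, one distinguished bar per class, versus the paper's single class $\alpha$ containing a chain of $2N$ short neck geodesics on one multi-bulked surface, combined with a combinatorial matching lemma (Lemma \ref{opt-matching}) and a purely combinatorial quasi-isometric embedding $Q:\R^N\to\mathcal T(2N)$ (Lemma \ref{red-par}). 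This difference is where the gaps are.

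First, the torus case of your construction does not work as described. Any two essential simple closed curves on $\mathbb{T}^2$ in distinct free homotopy classes must intersect, so there are no disjoint collars to rescale independently; and the proposed rescue of passing to ``a finite cover of $\mathbb{T}^2$ of sufficiently high genus'' is impossible, since $\chi$ is multiplicative under finite covers and every finite orientable cover of $\mathbb{T}^2$ is again $\mathbb{T}^2$. The alternative of localizing near arcs is not an argument. The paper avoids this entirely: all $2N$ short geodesics lie in the \emph{same} nontrivial class $\alpha$ (the class of the neck circles), which exists on every surface of genus $\geq 1$. Second, demanding $\ln L_{g_v}(\gamma_i(v))=v_i+O(1)$ for all $v_i\in\R$ forces the designated geodesic to become arbitrarily \emph{long} as $v_i\to+\infty$, and then (i) it is no longer clear that $\gamma_i(v)$ remains the shortest geodesic in $\alpha_i$, so the left endpoint of the infinite bar need not track $v_i$, and (ii) even granting a distinguished bar, a bottleneck lower bound requires controlling the \emph{rest} of the barcode in class $\alpha_i$ — a nearby bar could absorb the matching, and a finite bar could be erased at cost half its length. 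The paper supplies exactly this control: Proposition \ref{mb-p} forces every other geodesic (and every connecting cylinder) in class $\alpha$ above the energy threshold $\delta_0^2/2$, so the $2N$ neck bars are the $2N$ smallest left endpoints with long right ends, and Lemma \ref{opt-matching} then yields $d_{bottle}\geq\frac12|\vec x-\vec y|_\infty$. Crucially the paper only ever \emph{shrinks} necks (lengths $\delta_0 e^{-x_i}$, $x_i\geq 0$, ordered) and recovers all of $\R^N$ through the combinatorial reduction $Q=A\circ\mathbf L$, never entering the long-geodesic regime. To repair your argument you would need either to restrict to a cone of parameters and add such a reduction yourself, or to carry out a genuinely harder spectral isolation argument for long geodesics under the volume and diameter normalization.
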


Since  $\bar{\mathcal G}_M \subset \bar{\mathcal C}_M$ we immediately obtain the following.

\begin{cor} \label{Main_Cor}
If $M=S^2$, then there exists a quasi-isometric embedding
\[ \Phi: ([0,\infty),|\cdot|) \to (\bar{\mathcal C}_M, d_{SBM}). \]
If $M=\Sigma$ is a closed, orientable surface whose genus is at least 1 then for every $N\in \N$ there exists a quasi-isometric embedding
\[ \Phi: (\R^N, |\cdot|_{\infty}) \to (\bar{\mathcal C}_{M}, d_{SBM}).\]
\end{cor}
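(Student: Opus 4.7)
The plan is to deduce the corollary directly from Theorem~\ref{Main_Thm} by post-composing with the natural inclusion $\iota \colon \bar{\mathcal G}_M \hookrightarrow \bar{\mathcal C}_M$ given by $\iota(g) = U^*_g M$. First I would verify that $\iota$ indeed lands in $\bar{\mathcal C}_M$: the excerpt records that $\Vol(U^*_g M) = V_n \cdot \Vol_g(M)$, so if $\Vol_g(M) = 1$ then $\Vol(U^*_g M) = V_n$, which is exactly the volume normalization defining $\bar{\mathcal C}_M$.

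Next I would observe that $\iota$ is isometric with respect to $d_{SBM}$. This is essentially a matter of unpacking definitions: the pseudo-metric $d_{SBM}$ on $\bar{\mathcal G}_M$ was defined by identifying $g$ with its unit codisc bundle $U^*_g M$, so the identity $d_{SBM}(g_1,g_2) = d_{SBM}(U^*_{g_1}M, U^*_{g_2}M) = d_{SBM}(\iota(g_1), \iota(g_2))$ holds tautologically. Hence $\iota$ is an isometric embedding of pseudo-metric spaces (with constants $A = 1$, $B = 0$).

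Finally, letting $\Phi_0$ denote the quasi-isometric embedding provided by Theorem~\ref{Main_Thm}, with constants $A \geq 1$, $B \geq 0$, the composition $\Phi \coloneqq \iota \circ \Phi_0$ inherits the same two-sided inequality and therefore constitutes a quasi-isometric embedding with the same constants, in both the $M = S^2$ case (source $[0,\infty)$) and the genus $\geq 1$ case (source $\R^N$). There is no real obstacle here: once Theorem~\ref{Main_Thm} is in hand, the corollary is a formal consequence, all the substantive work having been absorbed into proving the theorem itself.
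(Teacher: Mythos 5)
Your proposal is correct and matches the paper's argument exactly: the paper deduces the corollary from Theorem~\ref{Main_Thm} in one line via the inclusion $\bar{\mathcal G}_M \subset \bar{\mathcal C}_M$ given by $g \mapsto U^*_g M$, which preserves the volume normalization and is tautologically distance-preserving since $d_{SBM}$ on $\bar{\mathcal G}_M$ is defined through this identification. Your write-up simply makes these routine verifications explicit.
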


Corollary \ref{Main_Cor} readily implies that if $M$ is any closed, orientable surface it holds ${\rm diam}(\bar{\mathcal C}_{M}, d_{SBM})=+\infty,$ which answers question $\textbf{(A)}.$ However, regarding question $\textbf{(B)},$ we observe a sharp contrast between cases of a sphere and of higher genus surfaces. Indeed, when $M = \Sigma$ is a closed, orientable surface of positive genus, Corollary \ref{Main_Cor}  proves the existence of many unbounded directions inside $(\bar{\mathcal C}_{M}, d_{SBM})$, namely there exist $N$ unbounded directions inside $(\bar{\mathcal C}_{M}, d_{SBM})$ for any $N\in \N.$ On the other hand when $M=S^2$ it provides only one unbounded direction. This contrast ultimately comes from the fact that $\pi_1(S^2)=0$ while $\pi_1(\Sigma)\neq 0.$ Nevertheless, we pose the following conjecture.

\begin{conj} For every $N \in \N$, there exists a quasi-isometric embedding 
\[ \Phi: (\R^N, |\cdot|_{\infty}) \to (\bar{\mathcal C}_{S^2}, d_{SBM}).\]
\end{conj}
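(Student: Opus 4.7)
The strategy is to replace the role played by distinct free homotopy classes in the positive-genus case (where $\tilde\pi_1$ separates the contributions of different geodesics) with the $\Z$-grading on $\mathbb S_{*,0}(U)$. Since $\tilde\pi_1(S^2)=\{*\}$, only the trivial class contributes to symplectic homology, but the underlying persistence module records the topology of the free loop space $\Lambda S^2$, which is non-trivial in infinitely many degrees. The goal is to extract $N$ numerical invariants from these different degrees that vary independently over a suitable family of metrics.

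The first step would be to construct, for each $\vec t=(t_1,\dots,t_N)\in [-T,T]^N$, a metric $g_{\vec t}\in \bar{\mathcal G}_{S^2}$ by performing $N$ disjoint local modifications of a fixed reference metric. Each modification should produce a waist-like simple closed geodesic $\gamma_i(\vec t)$ of length $\ell_i(\vec t)$ depending linearly on $e^{t_i}$, and the modifications are arranged so that the Conley--Zehnder indices of $\gamma_i(\vec t)$ together with those of its iterates occupy pre-assigned disjoint subsets $D_i\subset \Z$. A natural candidate is a family of surfaces of revolution with $N$ controllable waist circles separated by thin connecting tubes; the remaining closed geodesics contributed by the tubes should be made much shorter or much longer than the $\ell_i(\vec t)$, so that they do not interfere with the invariants below.

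The second step is to extract numerical invariants from the graded persistence module. For each degree $k\in \Z$, define $b_k(U)$ to be the right endpoint of the longest bar of $\mathbb B_{k,0}(U)$, and note that Theorem~\ref{TST} makes $U\mapsto b_k(U)$ a $1$-Lipschitz map from $(\bar{\mathcal C}_{S^2},d_{SBM})$ into $\R$. By the construction in Step~1, choosing $k_i\in D_i$ for each $i$, the composition $\vec t\mapsto (b_{k_1},\dots,b_{k_N})(U^*_{g_{\vec t}}S^2)$ would recover $\vec t$ up to additive constants, yielding the lower bound $d_{SBM}(\Phi(\vec s),\Phi(\vec t))\geq \tfrac{1}{A}|\vec s-\vec t|_\infty - B$. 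The upper Lipschitz bound follows directly from the construction together with inequality (\ref{SRB}), since the local modifications are designed to keep $d_{RBM}(g_{\vec s}, g_{\vec t})$ comparable to $|\vec s-\vec t|_\infty$.

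The hardest part is expected to be the interplay between Steps 1 and 2: unlike in the positive-genus case, there is no topological mechanism on $S^2$ to prevent geodesics other than the prescribed $\gamma_i(\vec t)$ (and their iterates) from contaminating the bars one wishes to use, since all free loops are contractible. One must therefore combine an explicit index computation via the Bott iteration formula with a rigidity argument showing that, in each chosen degree $k_i$, the longest bar is controlled by $\ell_i(\vec t)$ and not by some spurious geodesic coming from, say, the reference metric or the connecting necks. A useful refinement might be to work with $S^1$-equivariant symplectic homology, whose richer grading structure could provide additional separation, or to use products/coproducts on $\mathbb S_{*,0}$ coming from pair-of-pants operations in order to pin down individual bars algebraically rather than by the crude functional $b_k$.
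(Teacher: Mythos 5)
The statement you are addressing is posed in the paper as a \emph{conjecture}, not a theorem: the authors prove only a quasi-isometric embedding of $([0,\infty),|\cdot|)$ into $(\bar{\mathcal C}_{S^2},d_{SBM})$ and explicitly leave the $\R^N$ case open, for exactly the reason you identify at the end of your proposal. So there is no proof in the paper to compare against, and your text should be judged as a research sketch. As such it contains genuine gaps that prevent it from being a proof.

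First, your numerical invariant $b_k(U)$, the right endpoint of the longest bar in degree $k$, is not $1$-Lipschitz with respect to $d_{bottle}$: a $\delta$-matching matches the longest bar of $\mathbb B_{k,0}(U)$ to \emph{some} bar of $\mathbb B_{k,0}(V)$, not necessarily its longest one, and an extra bar of length slightly more than $2\delta$ sitting far to the right changes $b_k$ by an arbitrarily large amount while changing $d_{bottle}$ by at most $\delta+\varepsilon$. The paper's lower bounds instead use the \emph{left endpoints of the $n$ smallest bars} in a fixed degree, under the hypothesis that all relevant bars are long (Lemma \ref{opt-matching}); any replacement functional must be checked for stability in the same way. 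Second, and more fundamentally, the index-separation scheme of Step 1 does not work for the waist geodesics the construction naturally produces: a waist circle on a surface of revolution with $r''>0$ at the waist is hyperbolic with index $0$, and by Lemma \ref{Lemma_Index_Iterations} all its iterates then also have index $0$, so every iterate of every waist contributes generators only in degrees $0$ and $1$. There is no way to assign disjoint index sets $D_i$ to the different waists in this setting; all the bars pile up in degree $1$, and since every loop in $S^2$ is contractible the iterates $\gamma_i^{\pm m}$ live in the same (trivial) class as the primitives $\gamma_j$, so the $N$ smallest left endpoints in degree $1$ are typically $\ell_1, 2\ell_1, 3\ell_1,\dots$ rather than $\ell_1,\dots,\ell_N$, and the matching argument collapses. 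Switching to elliptic waists to get growing iterate indices, or to $S^1$-equivariant symplectic homology or product structures, are reasonable directions, but as you acknowledge these are speculations rather than arguments; the contamination problem you name in your last paragraph is precisely the open difficulty, and it remains unresolved in your proposal.
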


We break down the proof of Theorem \ref{Main_Thm} into the following two propositions.

\begin{prop} \label{emb-thm}
For any $\varepsilon>0$, there exists a map $\Phi: [0, \infty) \to \bar{\mathcal G}_{S^2}$ such that for any $x, y \in [0, \infty)$,
\begin{equation} \label{emb}
|x-y| - \varepsilon \leq 2d_{SBM}(U^*_{\Phi(x)}M,U^*_{\Phi(y)}M) \leq d_{RBM}(\Phi(x), \Phi(y)) \leq 2|x-y|+\varepsilon.
\end{equation}
\end{prop}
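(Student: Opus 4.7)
The plan is to construct $\Phi: [0, \infty) \to \bar{\mathcal G}_{S^2}$ as an explicit family of rotationally symmetric ``dumbbell'' metrics whose narrow waist shrinks geometrically with $x$, and to read the lower bound on $d_{SBM}$ off of the Morse theory of the energy functional on the free loop space. Concretely, consider a warped-product metric $du^2 + f(u)^2\,d\theta^2$ on $S^2$ with two fixed round caps joined by a neck. For each $x \geq 0$, choose $f_x$ so that the minimal cross-section has circumference $\ell(x) = c\cdot e^{-x}$ for a constant $c > 0$ to be tuned later, while keeping the caps and the rest of the neck profile essentially $x$-independent. A small $x$-dependent conformal adjustment ensures $\Vol(\Phi(x)) = 1$ and $\mathrm{diam}(\Phi(x)) \leq 100$ throughout.

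The upper bound on $d_{RBM}$ is immediate: the only change between $\Phi(x)$ and $\Phi(y)$ is a rescaling of the $d\theta^2$ factor in the neck region by a factor at most $e^{2|x-y|}$, so pointwise $e^{-2|x-y|}\Phi(x) \preceq \Phi(y) \preceq e^{2|x-y|}\Phi(x)$ up to the small conformal adjustment. Taking $\phi = \mathrm{id}$ in Definition~\ref{RBM} gives $d_{RBM}(\Phi(x), \Phi(y)) \leq 2|x-y| + \varepsilon$; the middle inequality $2d_{SBM} \leq d_{RBM}$ is \eqref{SRB}.

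For the lower bound, note that $\pi_1(S^2) = 0$, so only the trivial free homotopy class $\alpha = 0$ contributes. By the Abbondandolo--Schwarz filtered Viterbo isomorphism, the persistence module $\mathbb S_{*, 0}(U^*_{\Phi(x)} S^2)$ coincides, after the log-reparametrization $t = \ln a$, with the energy-filtered persistence module of the contractible-loop component $\Lambda_0 S^2$. The waist circle $\gamma_x$ is the non-constant critical point of lowest positive critical value $E(\gamma_x) = \ell(x)^2/2 = c^2 e^{-2x}/2$, and the homology class it creates in sublevel-set homology must be killed at some higher critical level $c_0 > 0$ depending only on the fixed cap geometry (since $H_*(\Lambda_0 S^2)$ is generated at finite energies by cap-related cycles). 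This produces a finite bar of log-length $2x + \ln(2c_0/c^2)$; choosing $c$ so that $c^2 = 2c_0$ makes it precisely $2x$. The diameter constraint together with a direct inspection of the remaining critical points bounds every other finite bar in the barcode by $2x + \varepsilon/4$ in log-length. Hence the boundary depth $\beta_x$ of $\mathbb S_{*, 0}(U^*_{\Phi(x)} S^2)$ satisfies $|\beta_x - 2x| \leq \varepsilon/4$ uniformly in $x$.

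Since boundary depth is $2$-Lipschitz with respect to $d_{bottle}$, Theorem~\ref{TST} combined with the estimate above yields $2|x-y| - \varepsilon/2 \leq |\beta_x - \beta_y| \leq 2 d_{SBM}(U^*_{\Phi(x)}M, U^*_{\Phi(y)}M)$, which implies $|x-y| - \varepsilon \leq 2 d_{SBM}$. The main technical obstacle is the Morse-theoretic analysis in the third paragraph: one must precisely identify the critical level at which $\gamma_x$'s class dies and verify that no other finite bar in the barcode has log-length significantly exceeding $2x$. This hinges on a careful understanding of the energy-filtered topology of $\Lambda_0 S^2$ for the specific dumbbell metrics, in particular of how the waist-generated cycle interacts with the fixed cap geometry.
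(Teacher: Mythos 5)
Your construction (a rotationally symmetric dumbbell with waist circumference $\propto e^{-x}$ and fixed caps) and your upper bound are essentially identical to the paper's: the paper's ``bulked sphere'' metrics $g_x$ have the same structure, and the $d_{RBM}$ estimate via the identity diffeomorphism and a profile-function comparison is the same argument. The divergence, and the problem, is in the lower bound.

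You propose to read off $2|x-y|$ from the \emph{boundary depth} of the persistence module, which requires two things you do not justify: (i) that the class created by the waist circle $\gamma_x$ in degree one dies at a level $c_0$ depending only on the cap geometry, and (ii) that no other finite bar has log-length significantly exceeding $2x$. Both are genuine gaps. For (i), it is not even clear the bar is finite: $H_1(\mathcal L_{pt}(S^2);\Z_2)\neq 0$, and the waist geodesic traversed in its two orientations contributes \emph{two} degree-one cycles $p^1_{\pm 1}$ at the same energy, at least one of which can survive to an infinite bar; determining where the other dies requires tracking cascades into the high-energy region, which your setup does not control. For (ii), the iterates $\gamma_x^{\pm m}$ produce generators at energies $m\cdot E(\gamma_x)$ in degrees $0$ and $1$ (the index computation ${\rm Ind}(\gamma_x^{\pm m})=0$, which you implicitly need, is Lemma~\ref{lm-short-geo} and itself requires the hyperbolicity argument), and the degree-zero generators create finite bars whose right endpoints are governed by when these components of the sublevel sets merge --- quantities you have not bounded and which could dominate the boundary depth. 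The paper sidesteps all of this: it only establishes that $E_{g_x}(\gamma_0)$ is the \emph{smallest left endpoint} of a degree-one bar and that this bar's right endpoint exceeds the fixed threshold $\delta_0^2/2$ (using that $\partial p^1_{\pm1}=0$, which needs a short argument ruling out cascades to $p_0$ and between the two critical circles), and then applies the purely combinatorial Lemma~\ref{opt-matching}, which needs no information about right endpoints beyond the threshold and no control of the rest of the barcode. To repair your proof you would either have to carry out the full Morse--Bott computation of the barcode of the dumbbell metric at all energy levels, or abandon boundary depth in favor of an invariant, like the smallest left endpoint, that is insensitive to the uncontrolled part of the spectrum.
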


\begin{prop} \label{emb-thm2}
Let $\Sigma$ be a closed, orientable surface of genus at least 1. Then for any $N \in \N$ and any $\varepsilon >0$, there exists a map $\Phi: \R^N \to \bar{\mathcal G}_{\Sigma}$ such that for all $\vec{x}, \vec{y} \in \R^N$
\[ \frac{1}{4} \cdot |\vec{x}- \vec{y}|_{\infty} - \varepsilon \leq 2d_{SBM}(U^*_{\Phi(\vec{x})}M,U^*_{\Phi(\vec{y})}M) \leq d_{RBM}(\Phi(\vec{x}), \Phi(\vec{y})) \leq 4N \cdot |\vec{x} - \vec{y}|_{\infty} + \varepsilon .\]
\end{prop}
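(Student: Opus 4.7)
The plan is to build $\Phi$ by performing $N$ localised area-preserving deformations of a fixed reference metric---one per chosen free homotopy class---and to recover the lower bound via Theorem \ref{TST} applied separately to each class.

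Since $\Sigma$ has positive genus, $\tilde{\pi}_1(\Sigma)$ is infinite, so for any $N\in\N$ we may select $N$ distinct primitive free homotopy classes $\alpha_1,\ldots,\alpha_N\in\tilde{\pi}_1(\Sigma)$, represented by embedded closed curves $\gamma_i$. When $g(\Sigma)\geq 2$ and $N\leq 3g-3$ these may be taken pairwise disjoint inside a pants decomposition; for larger $N$, or for the torus (where any two disjoint essential simple closed curves are isotopic), more intricate configurations are needed, e.g.\ nested collars or classes obtained by iterating Dehn twists along a fixed curve. Fix a reference metric $g_0\in\bar{\mathcal G}_\Sigma$ with $\Vol(g_0)=1$ and $\mathrm{diam}(g_0)\leq 50$ for which each $\gamma_i$ is the unique shortest closed geodesic in its class and admits a flat cylindrical collar $A_i\subset\Sigma$. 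For $\vec x=(x_1,\ldots,x_N)\in\R^N$ set $\Phi(\vec x):=g_{\vec x}$, obtained by deforming $g_0$ on each $A_i$ via the area-preserving transformation that stretches the core circle by $e^{x_i/2}$ and contracts the transverse direction by $e^{-x_i/2}$, supplemented by a compactly supported correction outside $\bigcup_i A_i$ that enforces $\Vol(g_{\vec x})=1$ and $\mathrm{diam}(g_{\vec x})\leq 100$.

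For the upper estimate, compare $g_{\vec x}$ and $g_{\vec y}$ via the identity (adjusted by a small isotopy absorbing the correction term): on each $A_i$ the pointwise metric ratio is bounded by $e^{|x_i-y_i|/2}$, while on the complement the diameter-normalising correction contributes a factor bounded by $e^{O(N|\vec x-\vec y|_\infty)+\varepsilon/2}$. Combining these gives $d_{RBM}(\Phi(\vec x),\Phi(\vec y))\leq 4N|\vec x-\vec y|_\infty+\varepsilon$, and the bound on $2 d_{SBM}$ follows from \eqref{SRB}. For the lower estimate, apply Theorem \ref{TST} in each class $\alpha_i$:
\[ d_{bottle}(\mathbb B_{*,\alpha_i}(U^*_{g_{\vec x}}\Sigma),\mathbb B_{*,\alpha_i}(U^*_{g_{\vec y}}\Sigma))\leq d_{SBM}(U^*_{g_{\vec x}}\Sigma,U^*_{g_{\vec y}}\Sigma). \]
By construction $\gamma_i$ remains the unique minimum-length closed geodesic in class $\alpha_i$ with respect to $g_{\vec x}$, with length $e^{x_i/2}\ell_{g_0}(\gamma_i)$; in the logarithmic filtration of $\mathbb S_{*,\alpha_i}$ this produces an isolated bar endpoint whose location shifts linearly in $x_i$, and a standard spectral-invariant argument yields $d_{bottle}\geq\tfrac{1}{8}|x_i-y_i|-\varepsilon/2$. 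Taking the maximum over $i\in\{1,\ldots,N\}$ gives $2 d_{SBM}\geq\tfrac{1}{4}|\vec x-\vec y|_\infty-\varepsilon$.

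The main obstacle is securing \emph{genuine independence} of the $N$ coordinates: altering $x_i$ alone must shift only the barcode in class $\alpha_i$ and leave the shortest geodesics in the other $\alpha_j$ essentially unaffected, all while preserving the volume and diameter constraints. For surfaces of genus $\geq 2$ this is largely automatic via disjoint subsurface decompositions, but on the torus the constraint that any two disjoint essential simple closed curves are isotopic forces a substantially more delicate geometric construction, and is the principal source of the $N$-dependence appearing in the upper bound in the statement.
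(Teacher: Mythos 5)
Your strategy---$N$ distinct free homotopy classes $\alpha_1,\ldots,\alpha_N$, one localised deformation per class, and Theorem \ref{TST} applied class-by-class---is genuinely different from the paper's, and it has the attraction of avoiding the ordering problem entirely (barcodes are unordered multisets, so $N$ bars in a \emph{single} class only see $\vec{x}$ up to permutation; separating the classes sidesteps this). But the construction is not actually carried out in exactly the cases where it is hardest, and those cases are the whole content of the proposition. On $\mathbb{T}^2$ any two non-isotopic essential simple closed curves intersect, and on a genus-$g$ surface at most $3g-3$ pairwise disjoint, pairwise non-isotopic essential simple closed curves exist; so for $g=1$ with $N\geq 2$, or for $N>3g-3$ in general, your collars $A_i$ cannot be disjoint. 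Once the representatives intersect, a deformation supported near $\gamma_i$ changes the lengths of all curves in the classes $\alpha_j$ whose representatives cross $A_i$, and the claim that $\gamma_j$ remains the shortest geodesic in $\alpha_j$ with length depending only on $x_j$ fails; the ``genuine independence'' you correctly identify as the main obstacle is precisely what is not established. The paper avoids this entirely by putting all $2N$ short geodesics in a \emph{single} class $\alpha$ (the $2N$ narrow necks of a chain of spheres inserted into one cylindrical segment, which exists on any positive-genus surface), and then paying for the resulting unorderedness with the combinatorial Lemma \ref{opt-matching} and the quasi-isometric embedding $Q:\R^N\to\mathcal T(2N)$ of Lemma \ref{red-par}; that detour is exactly where the constants $1/4$ and $4N$ come from.

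A second, more local problem: you require each $\gamma_i$ to be the \emph{unique} shortest closed geodesic in its class while sitting inside a \emph{flat} cylindrical collar. These are incompatible---in a flat collar every parallel circle is a closed geodesic of the same length in the same class, so the critical set of $E_g$ in class $\alpha_i$ is a degenerate two-dimensional family, not an isolated nondegenerate $S^1$-orbit. This breaks the hypotheses needed to run the barcode machinery (Theorem \ref{TST} and Theorem \ref{sh-loop-thm} require nondegeneracy/bumpiness, and the ``isolated bar endpoint shifting linearly in $x_i$'' needs the minimum to be strict). The fix is to use a genuinely pinched neck with $r''>0$ at the core, as in Definition \ref{dfn-bsm}, where Lemmas \ref{lm-short-geo} and \ref{hyper} give hyperbolicity, nondegeneracy and index $0$; but then the ``area-preserving stretch'' bookkeeping and the pointwise metric-ratio estimate for the upper bound have to be redone along the lines of Lemma \ref{Lemma_matrix} and Proposition \ref{Prop-upper} rather than by the two-factor scaling you describe.
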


The lower bounds are the most significant parts of Propositions \ref{emb-thm} and \ref{emb-thm2}. Their proofs use the technique of barcodes and occupy the entire Section \ref{Proofs}. The upper bounds in both theorems are proven simultaneously in Subsection \ref{sec-upper}.

In order to construct quasi-isometric embeddings $\Phi$ as above, we consider geometric models which we call {\it bulked spheres} and {\it multi-bulked surfaces}. A bulked sphere is a surfaces of revolution. Roughly speaking, it is obtained as a connected sum of two spheres through a very narrow ``neck'' as shown in Figure \ref{ex-bsm}. We analyze closed geodesics on a bulked sphere in Section \ref{sec-bulk}. More precisely, in Subsection \ref{Geo_index} we analyze the shortest non-constant closed geodesic, coming from the connecting neck, and its iterates, while in Subsection \ref{Geo_length} we analyze the rest of the closed geodesics. By shrinking the neck we produce the desired direction going to infinity in terms of $d_{SBM}$. On the other hand, a {\it multi-bulked surface} is a closed, orientable surface of genus at least one which has a part that looks like a connected sum of $N+1$ spheres through $N$ ``narrow necks'', see Figure \ref{ex-mb}. By shrinking different necks, we obtain different unbounded directions. The behaviour of closed geodesics in a multi-bulked surface is also discussed in Subsections \ref{Geo_index} and \ref{Geo_length}. Finally, in order to exclude multiple covers of the same loop from our considerations, we work with symplectic homology in the non-trivial class of loops $\alpha.$ This explains the significance of the condition on the genus of $\Sigma,$ since every loop in $S^2$ is contractible.

\begin{remark} A theorem similar to Theorem \ref{Main_Thm} was proven by M. Usher in \cite{Ush18} in the context of star-shaped domains in $\C^n.$ Even though the general set-up is similar, the constructions of the quasi-isometric embeddings as well as the arguments used in the proofs in these two papers are fundamentally different. \end{remark}

\subsection{Applications to the study of closed geodesics}

As another application of our methods, we study existence and robustness (with respect to perturbations of metrics) of closed geodesics on a closed, orientable, Riemannian manifold $(M,g).$ Let $\alpha$ be a homotopy class of free loops in $M$ and denote by $\mathcal{L}_\alpha (M)$ the space of smooth loops $\gamma:S^1=\R / \Z \rightarrow M$ in class $\alpha.$ Define {\it the energy of a loop $\gamma$} by $E_g(\gamma)= \int_0^1 \frac{\| \dot \gamma(t) \|^2_g}{2} dt$. Critical points of $E_g$ are closed geodesics of constant speed and if all of them are non-degenerate, metric $g$ is called {\it bumpy.} The condition of $g$ being bumpy is equivalent to $U^*_g M$ being a non-degenerate admissible domain. Introduce the filtration on $\mathcal{L}_\alpha (M)$ by
$$\mathcal{L}^\lambda_\alpha (M)=\{ \gamma \in \mathcal L_{\alpha}(M)\, | ~  E_g(\gamma) \leq \lambda \}.$$
If $g$ is bumpy, homologies $H_*(\mathcal{L}^\lambda_\alpha (M); \Z_2)$ form a persistence module with parameter $\lambda\in \R,$ which we denote by $\H _{*, \alpha}(M,g)$ (here the homology of the empty set is taken to be zero). Comparison maps
$$\iota_{\lambda , \eta}:H_*(\mathcal{L}^\lambda_\alpha (M); \Z_2) \rightarrow H_*(\mathcal{L}^\eta_\alpha (M); \Z_2) $$
for $\lambda \leq \eta$ are given by the inclusions $\{ \gamma\, |\, E_g(\gamma) \leq \lambda \} \subset \{ \gamma \,|\, E_g(\gamma) \leq \eta \}.$ Now, Morse-Bott theory of $E_g$ implies that the endpoints of bars in the barcode $\mathbb{B}(\mathbb H_{*, \alpha}(M,g))$ are equal to energies of certain closed geodesics. This allows us to use persistence modules to study closed geodesics, namely, in Subsection \ref{Proof_non_symmetric}, we prove the following result.

\begin{theorem}\label{exist-geodesic}
Let $g_1,g_2$ be two bumpy metrics on a closed, orientable manifold $M$ such that $\frac{1}{C_1} g_1 \preceq g_2 \preceq C_2 g_1.$ If there exists a bar $[x,y) \in \mathbb{B}(\mathbb H_{*, \alpha}(M,g_1))$ such that $\frac{y}{x} > C_1 C_2$ then there exist closed geodesics $\gamma_1$ and $\gamma_2$ of $(M,g_2)$ in homotopy class $\alpha$, whose energies satisfy
$$\frac{1}{C_1}x \leq E_{g_2}(\gamma_1) \leq C_2 x, \,\,\,\, \frac{1}{C_1}y \leq E_{g_2}(\gamma_2) \leq C_2 y,$$
and furthermore $[E_{g_2}(\gamma_1), E_{g_2}(\gamma_2)) \in \mathbb{B}(\mathbb H_{*, \alpha}(M,g_2))$.

In the case of an infinite bar $[x,+\infty)\in \mathbb{B}(\mathbb H_{*, \alpha}(M,g_1))$, there exists a closed geodesic $\gamma_1$ of $g_2$ such that 
$$\frac{1}{C_1}x \leq E_{g_2}(\gamma_1)  \leq C_2 x,$$
and we have that $[E_{g_2}(\gamma_1), +\infty) \in \mathbb{B}(\mathbb H_{*, \alpha}(M,g_2)).$
\end{theorem}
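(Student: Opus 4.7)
The plan is to translate the pointwise comparison $\frac{1}{C_1}g_1 \preceq g_2 \preceq C_2 g_1$ into a multiplicative interleaving between $\mathbb{H}_{*,\alpha}(M, g_1)$ and $\mathbb{H}_{*,\alpha}(M, g_2)$, and then invoke (a logarithmic form of) the classical isometry theorem for persistence modules to match the bars.

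The norm comparison yields the pointwise energy estimate $\frac{1}{C_1} E_{g_1}(\gamma) \leq E_{g_2}(\gamma) \leq C_2 E_{g_1}(\gamma)$ for every smooth loop $\gamma$, and hence the sublevel set inclusions
\[ \mathcal{L}^{\lambda}_\alpha(M, g_1) \subset \mathcal{L}^{C_2 \lambda}_\alpha(M, g_2), \qquad \mathcal{L}^{\lambda}_\alpha(M, g_2) \subset \mathcal{L}^{C_1 \lambda}_\alpha(M, g_1), \]
whose compositions are the canonical inclusions at scale $C_1 C_2$. Passing to $\Z_2$-homology gives persistence morphisms that compose to the internal structure maps at multiplicative shift $C_1 C_2$. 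Reparametrizing logarithmically (as with $\mathbb{S}_{*,\alpha}$ earlier in the paper) and then translating one of the two modules by $\frac{1}{2}(\ln C_2 - \ln C_1)$ in the filtration parameter converts this asymmetric interleaving into a symmetric $\delta$-interleaving with $\delta = \ln\sqrt{C_1 C_2}$, to which the classical isometry theorem applies.

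The isometry theorem then matches the barcodes with bottleneck error at most $\delta$, and any bar whose length strictly exceeds $2\delta = \ln(C_1 C_2)$ in the logarithmic scale must be matched to an actual bar (rather than killed) whose endpoints differ by at most $\delta$. Translating back to the multiplicative scale, the hypothesis $y/x > C_1 C_2$ guarantees that $[x, y) \in \mathbb{B}(\mathbb{H}_{*,\alpha}(M, g_1))$ is matched to some $[x', y') \in \mathbb{B}(\mathbb{H}_{*,\alpha}(M, g_2))$ with $x' \in [x/C_1, C_2 x]$ and $y' \in [y/C_1, C_2 y]$. Since $g_2$ is bumpy, Morse--Bott theory of $E_{g_2}$ on $\mathcal{L}_\alpha(M)$ identifies bar endpoints with critical values of $E_{g_2}$, i.e., with energies of closed geodesics of $g_2$ in class $\alpha$; this produces the desired $\gamma_1, \gamma_2$. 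The case of an infinite bar $[x, +\infty)$ is handled identically, using that the number of infinite bars in each degree is an invariant preserved under interleaving, so the matched bar in $\mathbb{H}_{*,\alpha}(M, g_2)$ is also infinite and starts at $E_{g_2}(\gamma_1) \in [x/C_1, C_2 x]$. The main technical point is setting up the isometry theorem in this asymmetric multiplicative framework, which the logarithmic reparametrization and symmetrizing shift described above are precisely designed to handle.
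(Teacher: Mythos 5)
Your argument is correct, and it takes a genuinely different route from the paper. The paper never forms an interleaving of the loop-space persistence modules: it passes to symplectic homology via the inclusions $\frac{1}{\sqrt{C_1}}U^*_{g_1}M \subset U^*_{g_2}M \subset \sqrt{C_2}U^*_{g_1}M$, uses the contravariant functoriality and rescaling isomorphisms of Proposition \ref{fp-sh} to produce a single commutative triangle (a ``half-interleaving''), applies the bespoke one-sided matching result Lemma \ref{bar} (itself proved from the structure theorem plus Lemma \ref{intervals}), and only at the end translates back to geodesics through Theorem \ref{sh-loop-thm}. You instead stay entirely on the loop-space side, where the pointwise energy comparison gives sublevel-set inclusions in \emph{both} directions inside the fixed space $\mathcal L_\alpha(M)$; since all maps are honest inclusions, both triangle identities of Definition \ref{int-bottle} hold on the nose, and after the logarithmic reparametrization and the symmetrizing shift by $\frac{1}{2}(\ln C_2-\ln C_1)$ you get a genuine $\delta$-interleaving with $\delta=\frac{1}{2}\ln(C_1C_2)$, to which the classical Isometry Theorem \ref{iso-thm} applies; undoing the shift turns the symmetric $\delta$-bounds into exactly the asymmetric windows $[x/C_1,\,C_2x]$ and $[y/C_1,\,C_2y]$. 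Your approach is more elementary (no symplectic homology, no Lemma \ref{bar}); the paper's approach is built to handle one-sided data, which is why it develops Lemma \ref{bar} (compare the remark about $d'_{SBM}$ requiring a stronger isometry theorem). Two small points you should make explicit: (i) $d_{bottle}\le\delta$ only guarantees $(\delta+\varepsilon)$-matchings for all $\varepsilon>0$, so to land a single bar of $\mathbb B(\mathbb H_{*,\alpha}(M,g_2))$ with endpoints within the exact windows you need the limiting argument via pointwise finite-dimensionality, exactly as in the paper's proof of Corollary \ref{exist}; (ii) the identification of bar endpoints with energies of closed geodesics of the bumpy metric $g_2$ rests on the filtered Morse--Bott isomorphism (\ref{Filtered_Morse-Bott}), which you should cite rather than treat as automatic.
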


In \cite{Ulj16}, for a fixed Finsler metric $F,$ the quantity $l(F)$ was introduced as the length of the shortest non-constant and ``homologically visible'' closed geodesic $\gamma_0$. Assume that $F$ comes from a Riemannian bumpy metric $g$ and denote by $L_g$ the length with respect to $g.$ Since $\gamma_0$ has constant speed we have $E_g(\gamma_0)=\frac{L_g(\gamma_0)^2}{2}=\frac{(l(F))^2}{2}.$ In the language of barcodes $\frac{(l(F))^2}{2}$ is equal to the smallest non-zero endpoint of an infinite bar in $\mathbb{B}(\mathbb H_{*, \alpha}(M,g)),$ where smallest means smallest among all such endpoints for all $\alpha.$ Now Theorem \ref{exist-geodesic} implies the following.

\begin{cor}[Theorem 1.10 in \cite{Ulj16} - Bumpy metric case]\label{existence-visible}
Let $g_1,g_2$ be two bumpy metrics on a closed, orientable manifold $M$ such that $g_2 \preceq g_1.$ Then $l(g_2)\leq l(g_1)$ and in particular there exists a non-constant ``homologically visible'' closed geodesic $\gamma$ of $g_2$ such that $L_{g_2}(\gamma) \leq l(g_1) .$
\end{cor}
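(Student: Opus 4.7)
The plan is to reduce Corollary~\ref{existence-visible} to a direct application of the infinite-bar case of Theorem~\ref{exist-geodesic}. First I would observe that the hypothesis $g_2 \preceq g_1$ allows us to take $C_2 = 1$ in the theorem's inequality $\tfrac{1}{C_1} g_1 \preceq g_2 \preceq C_2 g_1$, while a finite $C_1 \geq 1$ with $\tfrac{1}{C_1} g_1 \preceq g_2$ is guaranteed by compactness of $M$ together with positive-definiteness of both metrics. The specific value of $C_1$ will play no role in the final estimate, only in showing that the geodesic produced is non-constant.

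Next I would translate the definition of $l(g_1)$ into the language of barcodes, as recalled in the paragraph preceding the corollary: there exist a free homotopy class $\alpha$ and an infinite bar $[x,+\infty) \in \mathbb B(\mathbb H_{*,\alpha}(M,g_1))$ whose left endpoint is exactly $x = \tfrac{(l(g_1))^2}{2}$. Feeding this bar into the infinite-bar conclusion of Theorem~\ref{exist-geodesic} will produce a closed geodesic $\gamma_1$ of $(M,g_2)$ with
\[ \tfrac{x}{C_1} \;\leq\; E_{g_2}(\gamma_1) \;\leq\; C_2\, x \;=\; x, \]
and with the additional property that $[E_{g_2}(\gamma_1),+\infty)$ is itself an infinite bar in $\mathbb B(\mathbb H_{*,\alpha}(M,g_2))$. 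Note that the membership of this bar in the $\alpha$-indexed persistence module forces $\gamma_1$ to represent the class $\alpha$.

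To finish I would convert the energy bound into a length bound using that critical points of the energy functional have constant speed, so $E_{g_2}(\gamma_1) = \tfrac{1}{2} L_{g_2}(\gamma_1)^2$; combined with $E_{g_2}(\gamma_1) \leq \tfrac{1}{2}(l(g_1))^2$ this yields $L_{g_2}(\gamma_1) \leq l(g_1)$. The strictly positive lower bound $E_{g_2}(\gamma_1) \geq x/C_1 > 0$ ensures $\gamma_1$ is non-constant, while the infinite bar in the $g_2$-barcode is precisely the statement that $\gamma_1$ is ``homologically visible''. Since $l(g_2)$ is by definition the infimum of lengths over non-constant homologically visible closed geodesics, we conclude $l(g_2) \leq L_{g_2}(\gamma_1) \leq l(g_1)$, which is exactly the claim.

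The only non-trivial input is Theorem~\ref{exist-geodesic} itself; once that is in hand, this corollary is essentially bookkeeping and I do not expect a serious obstacle. The one subtlety worth flagging is the identification of the ``homologically visible'' notion from \cite{Ulj16} with the barcode characterisation used here, but this equivalence is built into the setup of the paragraph introducing $l(F)$, so nothing extra is needed.
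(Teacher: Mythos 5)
Your proposal is correct and follows essentially the same route as the paper: compactness supplies $C_1$ with $\tfrac{1}{C_1}g_1 \preceq g_2 \preceq g_1$, the infinite-bar case of Theorem~\ref{exist-geodesic} is applied to the bar $\left[\tfrac{l(g_1)^2}{2}, +\infty\right)$, and the constant-speed identity $E = L^2/2$ converts the energy bound into the length bound. The paper's proof is just a more compressed version of the same bookkeeping.
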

\begin{proof}
Since $M$ is compact, there exists $C_1$ such that $\frac{1}{C_1} g_1 \preceq g_2 \preceq g_1.$ For some $\alpha$ we have that $$\left[ \frac{l(g_1)^2}{2},+\infty \right) \in \mathbb{B}(\mathbb H_{*, \alpha}(M,g_1)),$$
and thus Theorem \ref{exist-geodesic} implies that there exists a ``homologically visible'' closed geodesic $\gamma$ of $g_2$ such that
$$\frac{1}{\sqrt{C_1}} l(g_1) \leq L_{g_2}(\gamma) \leq l(g_1),$$
which finishes the proof.
\end{proof}

\begin{remark}
In view of Corollary \ref{existence-visible}, Theorem \ref{exist-geodesic} may be considered a quantitative version of Theorem 1.10 in \cite{Ulj16} (in the case of bumpy metrics on an orientable manifold). Indeed, it provides estimates for the energies (or equivalently lengths) of the closed geodesics in terms of constants $C_1$ and $C_2$ which are used to measure the discrepancy between $g_1$ and $g_2.$ Another benefit of our method is that it allows us to study ``homologically invisible'' closed geodesics, i.e. finite bars in $\mathbb{B}(\mathbb H_{*, \alpha}(M,g)).$   
\end{remark}

Let us illustrate the appearance of finite bars in $\mathbb{B}(\mathbb H_{*, \alpha}(M,g))$ on a concrete example of metrics of revolution on $\mathbb{T}^2.$

\begin{ex} \label{Torus_of_revolution}

Let $A>0, f:[-A,A]\rightarrow (0,+\infty)$ a smooth, even function, strictly increasing on $[-A,0]$ and hence strictly decreasing on $[0,A]$ with unique maximum at 0 and two minima at $\pm A.$ Moreover, assume that $f$ extends $2A$-periodically to a smooth function on $\R$ and let $g$ be a Riemannian metric on $\mathbb{T}^2= \R/ 2A \Z \times \R/ 2\pi\Z$ induced by an embedding into $\R^3$ given by
$$(x,\theta)\rightarrow (x,f(x)\cos \theta, f(x) \sin \theta).$$
Define a change of variable $X(x)=\int_{0}^{x}\sqrt{1+(f'(t))^2} dt, x\in [-A,A]$ and denote by $T=\int_{0}^{A}\sqrt{1+(f'(t))^2}$, so that $X\in[-T,T].$ In $X$ variable we denote $F(X)=f(x(X)).$

In Subsection \ref{geodesic-flow} we give a detailed analysis of the geodesic flow of $g.$ In particular, this analysis shows that parallel circles $X=\pm T$ and $X=0$ (i.e. $x=\pm A$ and $x=0$) are closed geodesics which we denote by $\check{\gamma}$ and $\hat{\gamma}$ respectively, see Figure \ref{torus_rev} below.

\begin{figure}[ht]
\includegraphics[scale=0.5]{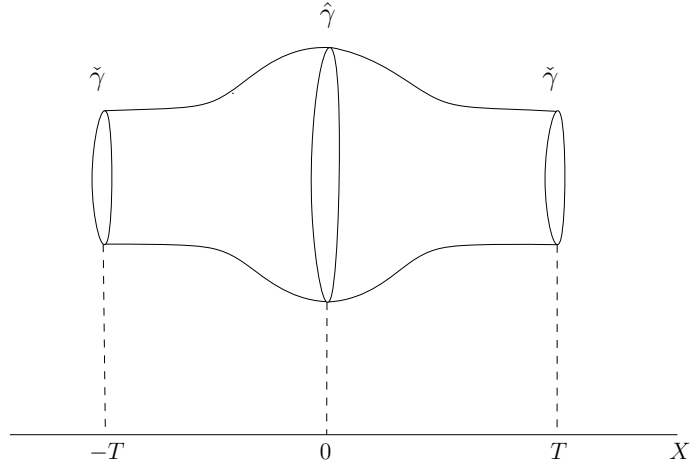}
\caption{Metric of revolution on $\mathbb{T}^2$}
\label{torus_rev}
\end{figure}

Let $\alpha$ be the homotopy class of loops $\check{\gamma}$ and $\hat{\gamma}$ oriented in the direction in which $\theta$-coordinate increases. The function $F$ which we wish to consider is defined\footnote{In principal, one should define $f$ in order to define $g,$ however, in this case one may show that $f$ is implicitly defined by $F$ as explained in Subsection \ref{geodesic-flow}.} on $[-1,1]$ as a $C^0$-small smoothing of $\frac{1}{\sqrt{kX^2+m}}$ for $k,m>0,~\sqrt{k}<m.$ Namely in Subsection \ref{geodesic-flow} we prove the following:
\begin{lemma} \label{No-geodesics}
Let $k,m>0,~\sqrt{k}<m.$ For small enough $\varepsilon>0$ there exists $F_\varepsilon:[-1,1]\rightarrow (0,+\infty)$ as above such that $F_\varepsilon(X)=\frac{1}{\sqrt{kX^2+m}}$ for $X\in [-1+\varepsilon, 1-\varepsilon]$ and there are no closed geodesics of metric $g$ induced by $F_\epsilon$ in class $\alpha$ other than $\check{\gamma}$ and $\hat{\gamma}.$ Moreover, $\check{\gamma}$ and $\hat{\gamma}$ are non-degenerate and $\ind \check{\gamma}=0,\ind \hat{\gamma}=1.$ Furthermore, $F_\varepsilon \xrightarrow{C^0} \frac{1}{\sqrt{kX^2+m}}$ as $\varepsilon \rightarrow 0.$
\end{lemma}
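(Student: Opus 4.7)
The plan is to use Clairaut's relation to classify closed geodesics of the surface-of-revolution metric on $\mathbb{T}^2$ induced by $F_\varepsilon$, and to verify the Morse indices via the periodic Jacobi operator. First, I would construct $F_\varepsilon$ explicitly: set $F_\varepsilon(X) = 1/\sqrt{kX^2+m}$ on $[-1+\varepsilon, 1-\varepsilon]$ and extend smoothly and evenly to $[-1, 1]$ so that $F_\varepsilon$ is strictly decreasing on $[0,1]$ with $F_\varepsilon'(\pm 1) = 0$ (and all higher derivatives at $\pm 1$ vanishing so the $2$-periodic extension is $C^\infty$). A bump-function interpolation provides such an extension and clearly $F_\varepsilon \xrightarrow{C^0} 1/\sqrt{kX^2+m}$ as $\varepsilon \to 0$. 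A parallel $X = X_0$ is a closed geodesic iff $F_\varepsilon'(X_0) = 0$, so the horizontal closed geodesics are exactly $\hat\gamma$ (at $X = 0$) and $\check\gamma$ (at $X = \pm 1$).

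To compute Morse indices, recall that the Hessian of the energy functional at a parallel closed geodesic is the periodic Jacobi operator $-\partial_s^2 - K$ on the circle of length $L = 2\pi F(X_0)$, with Gauss curvature $K = -F''(X_0)/F(X_0)$; its eigenvalues are $\lambda_n = (2\pi n/L)^2 - K$ (multiplicity $2$ for $n\ge 1$, $1$ for $n=0$). At $X_0 = 0$: $F(0) = 1/\sqrt m$, $F''(0) = -k/m^{3/2}$, so $K = k/m$, $L = 2\pi/\sqrt m$, and $\lambda_n = mn^2 - k/m$. The hypothesis $\sqrt k < m$ is precisely what makes $\lambda_0 < 0 < \lambda_1$, so $\hat\gamma$ is non-degenerate with $\ind \hat\gamma = 1$. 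At $X_0 = \pm 1$: $F''(\pm 1) > 0$ gives $K < 0$, so all $\lambda_n > 0$ and $\check\gamma$ is non-degenerate with $\ind \check\gamma = 0$.

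To exclude other closed geodesics in class $\alpha = (0,1) \in \pi_1(\mathbb{T}^2)$, invoke Clairaut's integral $F_\varepsilon^2 \dot\theta = c$. Non-parallel geodesics are of two types: \emph{rotating} (monotone in $X$, hence with nonzero $X$-winding, so not in $\alpha$) and \emph{oscillating} between symmetric turning points $\pm X_*$ (by evenness of $F_\varepsilon$) with $c = F_\varepsilon(X_*)$. An oscillating geodesic closes in $\alpha$ iff $n\, \Delta\theta(X_*) = 2\pi$ for some integer $n \ge 1$, where
\[
\Delta\theta(X_*) \;=\; 2 \int_{-X_*}^{X_*} \frac{c \, dX}{F_\varepsilon(X)\sqrt{F_\varepsilon(X)^2 - c^2}}.
\]
For $X_* \in (0, 1-\varepsilon]$, substituting $F_\varepsilon = 1/\sqrt{kX^2+m}$ reduces the $X$-motion to the harmonic oscillator $\ddot X = -c^2 k X$, and integrating $\dot\theta = c(kX^2 + m)$ over one $X$-period yields the closed form $\Delta\theta(X_*) = \pi(kX_*^2 + 2m)/\sqrt k > 2\pi m/\sqrt k > 2\pi$, where the last inequality is the hypothesis $\sqrt k < m$. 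Since $2\pi/n \le 2\pi$ for all $n\ge 1$, this case is ruled out.

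The main obstacle is the thin boundary region $X_* \in (1-\varepsilon, 1)$, where the modified profile of $F_\varepsilon$ contributes to the integral. At the left endpoint the interior formula gives $\Delta\theta > 2\pi$, and as $X_* \to 1^-$, the hyperbolicity of $\check\gamma$ (from $F''(\pm 1) > 0$) forces $\Delta\theta(X_*) \to +\infty$ via a logarithmic divergence at the turning point. The task is to choose the boundary-layer profile of $F_\varepsilon$ so that $\Delta\theta$ stays strictly above $2\pi$ throughout $(1-\varepsilon, 1)$ — for instance by making the boundary extension sufficiently ``slow'' that $\Delta\theta(X_*)$ is monotonically increasing there, so that it is bounded below by its interior-endpoint value; if necessary, a small $C^\infty$-generic perturbation of $F_\varepsilon$ in the boundary layer (which preserves $C^0$-closeness to $1/\sqrt{kX^2+m}$ and all properties from the previous steps) can be used to avoid the finitely many potential resonances $\Delta\theta(X_*) = 2\pi/n$. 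This completes the exclusion and the proof.
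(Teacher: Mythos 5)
Your overall strategy coincides with the paper's: classify closed geodesics via Clairaut's relation, reduce the $X$-motion for the unperturbed profile $F_0=1/\sqrt{kX^2+m}$ to a harmonic oscillator, and show the rotation angle $\Delta\theta(X_*)=\pi(kX_*^2+2m)/\sqrt{k}$ always exceeds $2\pi$; your index computation via the periodic Jacobi operator is a legitimate alternative to the paper's Poincar\'e-map argument and gives the same answer. However, the step you yourself identify as ``the main obstacle'' --- the turning points $X_*\in(1-\varepsilon,1)$ lying in the smoothing region --- is exactly where your proposal has a genuine gap, and neither of your two fallbacks closes it. The ``generic perturbation to avoid finitely many resonances'' cannot work: $\Delta\theta$ is a continuous function of $X_*$ on $(1-\varepsilon,1)$ which exceeds $2\pi$ at both ends of the layer, so if it ever dipped below $2\pi$ it would have to \emph{equal} $2\pi$ (the $n=1$ resonance) by the intermediate value theorem, producing a closed geodesic in class $\alpha$; there is no transversality to exploit, and the set of potential resonances is not a priori finite. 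The monotonicity of $\Delta\theta(X_*)$ in the boundary layer is likewise asserted, not proved, and is not an automatic consequence of any of the properties you impose on the extension. The paper avoids all of this with a simple comparison: choosing the smoothing so that $F_\varepsilon\geq F_0$ and restricting the rotation-angle integral to the subinterval $[-\lambda^0_C+\varepsilon,\lambda^0_C-\varepsilon]$, where $F_\varepsilon=F_0$, one gets the explicit lower bound $\tfrac{2m}{\sqrt{k}}\bigl(\arccos(-1+\varepsilon/\lambda^0_C)-\arccos(1-\varepsilon/\lambda^0_C)\bigr)$, which converges to $2\pi m/\sqrt{k}>2\pi$ uniformly in $C$ as $\varepsilon\to 0$. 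Some such quantitative estimate is indispensable; your proof as written does not supply one.

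A secondary slip: you require ``all higher derivatives of $F_\varepsilon$ at $\pm 1$ vanishing'' for smoothness of the $2$-periodic extension, but this would force $F_\varepsilon''(\pm1)=0$, hence Gauss curvature $K=0$ along $\check{\gamma}$ and a periodic constant normal Jacobi field, making $\check{\gamma}$ degenerate and contradicting your own index computation. Evenness and periodicity only require the \emph{odd} derivatives to vanish at $\pm1$; you need $F_\varepsilon''(\pm1)>0$, as the paper arranges. Finally, note that the existence of a genuine profile $f$ inducing $F_\varepsilon$ requires $|F_\varepsilon'|<1$, a point the paper checks and your construction should too.
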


Using Lemma \ref{No-geodesics} we may compute the whole barcode $\mathbb{B}(\mathbb H_{*, \alpha}(\mathbb{T}^2,g)).$ To this end, first note that
\begin{equation}\label{homology_torus}
H_*(\mathcal{L}_\alpha (\mathbb{T}^2); \Z_2)=\left\{ \begin{array}{l}
{\Z_2,~ *=0,2} \\
\\
{\Z_2\oplus \Z_2,~ *=1 }\\
\\
{0,~ {\rm otherwise}}\\
\end{array} \right.
\end{equation}
Indeed, using the group action of $\mathbb{T}^2$ on $\mathcal{L}(\mathbb{T}^2)$ one proves that $\mathcal{L}(\mathbb{T}^2)\cong \mathbb{T}^2 \times \Omega \mathbb{T}^2$, $\Omega \mathbb{T}^2$ being the based loop space of $\mathbb{T}^2,$ and hence $\mathcal{L}_\alpha (\mathbb{T}^2) \simeq \mathbb{T}^2.$ For another computation of $H_*(\mathcal{L}_\alpha (\mathbb{T}^2); \Z_2)$, using symplectic homology, see Section 5 in \cite{BPS03} .

In order to compute $\mathbb{B}(\mathbb H_{*, \alpha}(\mathbb{T}^2,g))$ we use filtered (by $\lambda\in \R$) Morse-Bott chain complex\footnote{Here we omit the auxiliary height function $h$ on $S^1$ from the notation.} $CMB^\lambda_{*,\alpha}(E_g),$ described in detail in Subsection \ref{ss-Mor-B}. Since $\check{\gamma}$ is non-degenerate, it contributes two generators to $CMB^\lambda_{*,\alpha}(E_g)$, one in degree $\ind \check{\gamma}=0,$ the other in degree $\ind \check{\gamma}+1=1,$ both on filtration level $E_{min}=E_g(\check{\gamma})=\frac{2 \pi^2}{k+m}+o(\varepsilon).$ Similarly, $\hat{\gamma}$ contributes two generators to $CMB^\lambda_{*,\alpha}(E_g)$, one in degree 1, the other in degree 2, both on filtration level $E_{max}=E_g(\hat{\gamma})=\frac{2 \pi^2}{m}.$ The general rules for computing the barcode of $HMB^\lambda_{*,\alpha}(E_g)=H_*(\mathcal{L}^\lambda_\alpha (M); \Z_2)$ are the following (see \cite{UZ16} for a much more general version of this procedure):

Each generator corresponds to an endpoint of a bar equal to it's filtration level. Moreover, the generator in degree $d$ corresponds either to a left endpoint of a bar in  $\mathbb{B}(\mathbb H_{d, \alpha}(M,g))$ or to a right endpoint of a bar in $\mathbb{B}(\mathbb H_{d-1, \alpha}(M,g)).$ All infinite bars are of the form $[\cdot,+\infty)$ and they correspond to linearly independent homology classes in $H_*(\mathcal{L}_\alpha (M); \Z_2).$ Hence, the number of infinite bars is equal to the total dimension of $H_*(\mathcal{L}_\alpha (M); \Z_2).$

In our case, in particular, there are 4 infinite bars in $\mathbb{B}(\mathbb H_{*, \alpha}(\mathbb{T}^2,g))$ by (\ref{homology_torus}). Since there are only 4 generators of $CMB^\lambda_{*,\alpha}(E_g)$ each of them contributes an infinite bar or in other words we have
\begin{equation}
\mathbb{B}(\mathbb H_{*, \alpha}(\mathbb{T}^2,g))=\left\{ \begin{array}{l}
{\{[E_{min},+\infty) \},~ *=0} \\
\\
{\{[E_{min},+\infty),~ [E_{max},+\infty) \},~ *=1 }\\
\\
{ \{[E_{max},+\infty) \},~ *=2}\\
\end{array} \right.
\end{equation}

Let us now define for each $n\in \N$ a metric of revolution $g_n$, on $\mathbb{T}^2=\R / 2n \Z \times \R /2\pi \Z$ by stacking $n$ copies of a profile function $F$ next to each other, see Figure \ref{torus_rev_2} below.

\begin{figure}[ht]
\includegraphics[scale=0.5]{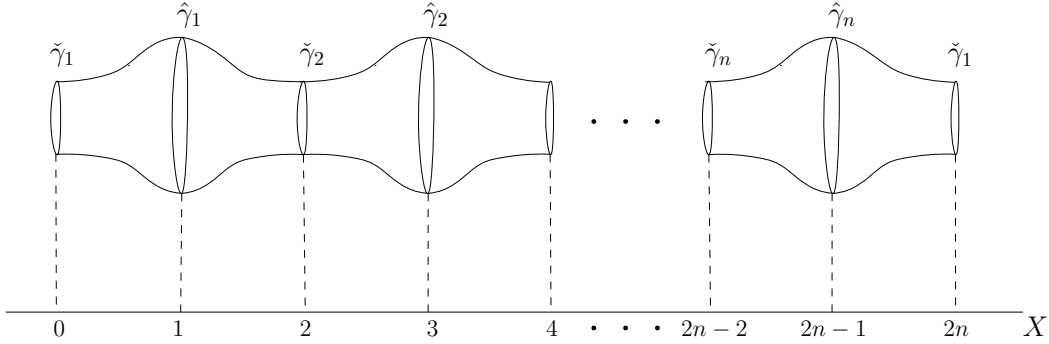}
\caption{Metric of revolution $g_n$ on $\mathbb{T}^2$}
\label{torus_rev_2}
\end{figure}

Same analysis as in the proof of Lemma \ref{No-geodesics} shows that $\check{\gamma}_i, \hat{\gamma}_i, i=1,\ldots n$ are the only closed geodesics of $g_n$ in class $\alpha$ as well as that they are non-degenerate and $\ind \check{\gamma}_i=0, \ind \hat{\gamma}_i=1$ for all $i.$ Similarly to the previous situation, we conclude that in filtered Morse-Bott chain complex $CMB^\lambda_{*,\alpha}(E_{g_n})$ on filtration level $E_{min}$ there are $n$ generators in degree 0 and $n$ generators in degree 1 coming from $\check{\gamma}_i.$ On the other hand, on filtration level $E_{max}$ there are $n$ generators in degree 1 and $n$ generators in degree 2 coming from $\hat{\gamma}_i.$ From (\ref{homology_torus}) we know that one of degree-0 generators contributes a bar $[E_{min},+\infty),$ one of degree-2 generators contributes a bar $[E_{max},+\infty),$ and there are two more infinite bars coming from degree-1 generators. Hence, the remaining $4n-4$ generators correspond to endpoints of finite bars, i.e. there are $2n-2$ finite bars in $\mathbb{B}(\mathbb H_{*, \alpha}(\mathbb{T}^2,g_n))$ , $n-1$ of them in $\mathbb{B}(\mathbb H_{0, \alpha}(\mathbb{T}^2,g_n))$ and $n-1$ of them in $\mathbb{B}(\mathbb H_{1, \alpha}(\mathbb{T}^2,g_n)).$ Since differential $\partial_{CMB}$ strictly lowers filtration, there are no degenerate bars, meaning bars of length zero, in $\mathbb{B}(\mathbb H_{*, \alpha}(\mathbb{T}^2,g_n)).$ Thus all finite bars are equal to $[E_{min},E_{max})$ and it readily follows that

\begin{equation}
\mathbb{B}(\mathbb H_{*, \alpha}(\mathbb{T}^2,g_n))=\left\{ \begin{array}{l}
{\{[E_{min},+\infty),~ [E_{min},E_{max})\times (n-1)  \},~ *=0} \\
\\
{\{[E_{min},+\infty),~ [E_{max},+\infty),~ [E_{min},E_{max})\times (n-1) \},~ *=1 }\\
\\
{ \{[E_{max},+\infty) \},~ *=2}\\
\end{array} \right.
\end{equation}

\end{ex}

\begin{remark}
One may define a ``non-symmetric'' version of $d_{RBM}$ in the following way. We say that $g_1,g_2$ are {\it $(C_1,C_2)$-equivalent} if there exists a diffeomorphism $\phi \in {\Diff}_0(M)$ such that $C_1 g_1 \preceq \phi^* g_2 \preceq C_2 g_1.$ Define {\it equivalence-distance} $d_{EQ}$ on $\mathcal{G}_M$ by
\[ d_{EQ} (g_1, g_2) = \inf \left\{ \ln  \frac{C_2}{C_1}  \,\bigg| \, \begin{array}{cc} \mbox{there exist $C_2 \geq C_1 >0$ such that} \\ \mbox{$(g_1, g_2)$ are $(C_1, C_2)$-equivalent} \end{array} \right\}. \]
One readily checks the following relations hold: $d_{EQ}(g_1,g_2)\geq 0,$ $d_{EQ}(g_1,g_2)=d_{EQ}(g_2,g_1)$ and $d_{EQ}(g_1,g_2)+d_{EQ}(g_2,g_3)\geq d_{EQ}(g_1,g_3).$ On the other hand, if for some $\phi \in {\Diff}_0(M)$ it holds $\phi^*g_2= C g_1$ then $d_{EQ}(g_1,g_2)=0.$ It follows from the definitions that
$$d_{EQ}(g_1,g_2) \leq 2d_{RBM}(g_1,g_2).$$
\end{remark}

Finally, we wish to mention that by taking $C_1=C_2$ in Theorem \ref{exist-geodesic}, we obtain Corollary \ref{exist} given below. This corollary also has a direct proof using stability of barcodes, i.e. Theorem \ref{TST}, which we present in Section \ref{quan-geo}.

\begin{cor} \label{exist}
Let $M$ be a closed, orientable manifold, $\alpha$ a homotopy class of free loops in $M$, $g_1$ a bumpy metric on $M$ and suppose that $[a^2/2, b^2/2) \in \mathbb B(\mathbb H_{*, \alpha} (M, g_1))$ for some $0<a<b$. For any bumpy metric $g_2$ on $M$, such that $0 \leq d_{RBM}(g_1, g_2) < \ln (b/a)$, there exist non-constant closed geodesics $\gamma_1, \gamma_2$ of $ g_2$ in homotopy class $\alpha$ such that
\[ \max\left\{ \left| \ln \left(\frac{E_{g_2}(\gamma_1)}{a^2/2}\right) \right|, \left| \ln \left(\frac{E_{g_2}(\gamma_2)}{b^2/2}\right) \right| \right\} \leq d_{RBM}(g_1,g_2). \]
In the case of an infinite bar $[a^2/2, +\infty) \in \mathbb B(\mathbb H_{*,\alpha}(M, g_1))$, there exists a ``homologically visible'' closed geodesic $\gamma_1$ of $g_2$ which satisfies $$\left| \ln \left(\frac{E_{g_2}(\gamma_1)}{a^2/2}\right) \right| \leq d_{RBM}(g_1,g_2).$$
\end{cor}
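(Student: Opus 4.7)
The plan is to import the given bar from $\mathbb H_{*,\alpha}(M,g_1)$ into the symplectic persistence module $\mathbb S_{*,\alpha}(U^*_{g_1}M)$, apply the stability inequality of Theorem \ref{TST}, match the resulting bar to one in $\mathbb S_{*,\alpha}(U^*_{g_2}M)$, and read off a closed geodesic of $g_2$ from each endpoint via Morse--Bott theory of the energy functional.

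First I would invoke the Viterbo-type identification of persistence modules $\mathbb H_{*,\alpha}(M,g) \cong \mathbb S_{*,\alpha}(U^*_g M)$: since the Reeb flow on $\partial U^*_g M$ is the unit-speed cogeodesic flow, a closed Reeb orbit of period $T$ corresponds to a closed geodesic of length $T$, i.e.\ of energy $T^2/2$. The filtration parameter $t$ of $\mathbb S_{*,\alpha}(U^*_g M)$, being the logarithm of the action, thus matches $\frac{1}{2}\log(2E)$ on the energy side, and the given bar $[a^2/2,b^2/2) \in \mathbb B(\mathbb H_{*,\alpha}(M,g_1))$ becomes a bar $[\log a,\log b) \in \mathbb B_{*,\alpha}(U^*_{g_1}M)$ of length $\log(b/a)$.

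Combining Theorem \ref{TST} with the inequality (\ref{SRB}) gives
\[ d_{bottle}\bigl(\mathbb B_{*,\alpha}(U^*_{g_1}M),\,\mathbb B_{*,\alpha}(U^*_{g_2}M)\bigr) \leq \tfrac{1}{2} d_{RBM}(g_1, g_2) =: \epsilon, \]
and the hypothesis $d_{RBM}(g_1,g_2) < \log(b/a)$ forces $2\epsilon < \log(b/a)$. Since the target bar is strictly longer than $2\epsilon$, it cannot be left unmatched in any $(\epsilon+\delta)$-matching; bumpiness of $g_2$ keeps the relevant barcode locally finite, so pigeonhole on a sequence $\delta_n \to 0$ extracts a fixed bar $[\log a',\log b') \in \mathbb B_{*,\alpha}(U^*_{g_2}M)$ with $|\log a'-\log a|\leq \epsilon$ and $|\log b'-\log b|\leq \epsilon$. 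Reversing the dictionary yields a bar $[(a')^2/2,(b')^2/2) \in \mathbb B(\mathbb H_{*,\alpha}(M,g_2))$, whose endpoints are critical values of $E_{g_2}$ on $\mathcal L_\alpha(M)$ by Morse--Bott theory, realized as energies $E_{g_2}(\gamma_1)=(a')^2/2$ and $E_{g_2}(\gamma_2)=(b')^2/2$ of closed geodesics of $g_2$ in class $\alpha$ (non-constant since $a,a'>0$). The claim then follows from $|\ln(E_{g_2}(\gamma_1)/(a^2/2))| = 2|\log a'-\log a| \leq 2\epsilon = d_{RBM}(g_1,g_2)$, and analogously for $\gamma_2$.

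The infinite-bar case is handled by observing that an infinite bar cannot be matched to a finite one, so it pairs with an infinite bar $[\log a',+\infty)$, producing the required ``homologically visible'' geodesic $\gamma_1$. The main technical obstacle I anticipate is making the limit argument rigorous, i.e.\ extracting a matched bar valid at the critical value $\epsilon=\frac{1}{2}d_{RBM}(g_1,g_2)$ rather than only at $\epsilon>\frac{1}{2}d_{RBM}(g_1,g_2)$; this is precisely where one leans on local finiteness of barcodes of bumpy metrics, together with the fact that energies of closed geodesics below a given level form a discrete set.
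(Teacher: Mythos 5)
Your proposal is correct and follows essentially the same route as the paper's proof: translate the bar via Theorem \ref{sh-loop-thm} to $[\ln a,\ln b)\in\mathbb B_{*,\alpha}(U^*_{g_1}M)$, apply Theorem \ref{TST} with (\ref{SRB}), argue the bar is too long to be erased in a near-optimal matching, pass to the limit using pointwise finite dimensionality of the symplectic persistence module, and read off geodesics from the endpoints via Remark \ref{endpoints}. The limit-extraction step you flag as the main technical obstacle is handled in the paper exactly as you suggest, by noting that otherwise infinitely many bars would contain the midpoint of $[\ln a,\ln b)$, contradicting finite dimensionality.
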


\subsection*{Acknowledgments} We are very grateful to Leonid Polterovich for his guidance and for helping us keep this project on the right track. We thank Michael Usher for sharing his preprint \cite{Ush18} with us as well as for helpful discussions. Parts of the project were motivated by a conversation with Kai Cieliebak at the conference -  Quantitative Symplectic Geometry, held in Simons Center for Geometry and Physics in Stony Brook University in May 2017. We are thankful to him for encouraging us to study this interesting topic. Finally, the project greatly benefited from conversations with  Daniel Rosen, Igor Uljarevic and Qingsan Zhu. Both authors are partially supported by the European Research Council Advanced grant 338809. The first author is also partially supported by the ISF grant No. 667/18.

\section{Basic properties of $d_{SBM}$ and $d_{RBM}$}

\subsection{Symplectic Banach-Mazur distance}
Symplectic Banach-Mazur distance $d_{SBM}$ is defined in \cite{PRSZ17} in the setting of general Liouville manifolds and it is proven to be a pseudo-metric. Our case of admissible domains in the cotangent bundle is a special case of this situation. For completeness, we include the proof of the following proposition

\begin{prop} \label{dSBM} $d_{SBM}$ defines a pseudo-metric on $\mathcal C_M$. \end{prop}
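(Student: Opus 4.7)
The plan is to verify the four pseudo-metric axioms; non-negativity, reflexivity, and symmetry are immediate, while the triangle inequality requires substantive work. Any chain $\frac{1}{C}U \xhookrightarrow{\phi} V \xhookrightarrow{\psi} CU$ witnessing $d_{SBM}(U,V) \leq \ln C$ produces a $\tilde{\pi}_1$-trivial Liouville embedding $\psi \circ \phi: \frac{1}{C}U \to CU$; since Liouville embeddings preserve $\omega_{can}^n$ they preserve volume, forcing $C \geq 1$ and hence $\ln C \geq 0$. Taking $C = 1$ and $\phi = \psi = \mathrm{id}$ gives $d_{SBM}(U,U) = 0$. Symmetry is built into Definition~\ref{dfn-SBM} via the parenthetical ``and hence'' clause, since the chain and the two required strong-unknottedness conditions are symmetric in the roles of $U$ and $V$.

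For the triangle inequality, suppose $d_{SBM}(U,V) < \ln C_1$ and $d_{SBM}(V,W) < \ln C_2$, witnessed by embeddings $\phi_1, \psi_1$ and $\phi_2, \psi_2$. I construct
\[
\Phi := \phi_2 \circ \phi_1(1/C_2) \colon \tfrac{1}{C_1 C_2} U \to W, \qquad \Psi := \psi_1(C_2) \circ \psi_2 \colon W \to C_1 C_2 U,
\]
both of which are $\tilde{\pi}_1$-trivial Liouville embeddings: the fiberwise rescaling $\mu_\lambda(q,p) := (q, \lambda p)$ satisfies $\mu_\lambda^{*} \lambda_{can} = \lambda \lambda_{can}$, so conjugation preserves the Liouville property, and $\mu_\lambda$ is isotopic to the identity, preserving $\tilde{\pi}_1$-triviality. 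It remains to show that $\Psi \circ \Phi \colon \frac{1}{C_1 C_2} U \to C_1 C_2 U$ and the dual composition $\Phi(C_1 C_2) \circ \Psi((C_1 C_2)^{-1})$ are strongly unknotted.

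My approach for $\Psi \circ \Phi$ is to chain three isotopies of $\tilde{\pi}_1$-trivial Liouville embeddings connecting $\Psi \circ \Phi$ to the inclusion $\frac{1}{C_1 C_2} U \hookrightarrow C_1 C_2 U$. Stage one uses the strong unknottedness of $\psi_2 \circ \phi_2$ to deform the middle segment of $\psi_1(C_2) \circ \psi_2 \circ \phi_2 \circ \phi_1(1/C_2)$ to the inclusion $\frac{1}{C_2}V \hookrightarrow C_2 V$, reducing the problem to $\psi_1(C_2) \circ \phi_1(1/C_2)$. Stage two is a rescaling isotopy: the family $\{\psi_1(r) \circ \phi_1(1/C_2)\}_{r \in [1/C_2,\, C_2]}$ is well-defined because $\tfrac{1}{C_2}V \subseteq rV$ and $C_1 r U \subseteq C_1 C_2 U$ for $r$ in that range, and it deforms $\psi_1(C_2) \circ \phi_1(1/C_2)$ to $(\psi_1 \circ \phi_1)(1/C_2)$ via the identity $(\psi_1 \circ \phi_1)(\lambda) = \psi_1(\lambda) \circ \phi_1(\lambda)$. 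Stage three applies the isotopy unknotting $\psi_1 \circ \phi_1$, conjugated by $\mu_{1/C_2}$ and then included into $C_1 C_2 U$, to deform $(\psi_1 \circ \phi_1)(1/C_2)$ all the way to the inclusion. The dual statement for $\Phi(C_1 C_2) \circ \Psi((C_1 C_2)^{-1}) \colon \tfrac{1}{C_1 C_2} W \to C_1 C_2 W$ is proved by the same three-step argument, now exploiting the strong unknottedness of $\phi_i(C_i) \circ \psi_i(C_i^{-1})$.

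The main obstacle is the bookkeeping in stage two: confirming that all intermediate rescaled domains and codomains fit together consistently as $r$ varies, and that every intermediate map is a genuine $\tilde{\pi}_1$-trivial Liouville embedding. This ultimately rests on the bounds $C_1, C_2 \geq 1$ (already established in the non-negativity step) together with the fiberwise star-shapedness of admissible domains centered at the zero section, which ensure the nested inclusions $\tfrac{1}{C_2}V \subseteq rV \subseteq C_2 V$ and $\tfrac{C_1}{C_2}U \subseteq C_1 r U \subseteq C_1 C_2 U$ hold throughout.
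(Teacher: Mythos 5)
Your proof is correct and follows essentially the same route as the paper: the composed embeddings $\Phi=\phi_2\circ\phi_1(1/C_2)$ and $\Psi=\psi_1(C_2)\circ\psi_2$ are exactly those used in the paper's triangle-inequality argument, and your three-stage isotopy is the paper's concatenation of the isotopy unknotting the middle composition with the rescaling argument of its Lemma~\ref{Lemma_rescale}. The only (harmless) variation is in stage two/three, where you connect the inclusion to $(\psi_1\circ\phi_1)(1/C_2)$ via the rescaled unknotting isotopy and the family $\psi_1(r)\circ\phi_1(1/C_2)$, whereas the paper connects it to the restriction $\psi_1\circ\phi_1|_{\frac{1}{C_1C_2}U}$ and uses the family $\psi_1(s)\circ\phi_1(s^{-1})$; both choices work for the same reasons.
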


We start the proof with a lemma.

\begin{lemma} \label{Lemma_rescale}
Let $\frac{1}{C} U \xhookrightarrow{\phi}  V \xhookrightarrow{\psi} CU$, $U,V \in \mathcal{C}_M$, $C>1$ be ${\tilde \pi}_1$-trivial Liouville embeddings such that $\psi \circ \phi$ is strongly unknotted. Then for any $D>1$ and embeddings
$$\frac{1}{CD} U \xhookrightarrow{\phi(D^{-1})}  V \xhookrightarrow{\psi(D)} CD U,$$
$\psi(D) \circ \phi(D^{-1})$ is strongly unknotted
\end{lemma}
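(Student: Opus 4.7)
The plan is to construct a Liouville isotopy from the inclusion $\frac{1}{CD}U \hookrightarrow CDU$ to $\psi(D) \circ \phi(D^{-1})$ by concatenating two pieces: a ``domain-restriction'' of the given strongly unknotted isotopy for $\psi \circ \phi$, followed by a ``scaling'' family that interpolates $\psi \circ \phi$ to $\psi(D) \circ \phi(D^{-1})$.

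For the first piece I would take the given Liouville isotopy $\Phi_t \colon \frac{1}{C}U \to CU$ with $\Phi_0 = i$, $\Phi_1 = \psi \circ \phi$, and restrict each $\Phi_t$ to the subset $\frac{1}{CD}U \subset \frac{1}{C}U$ (using $D > 1$ and star-shapedness of $U$), composing with the inclusion $CU \hookrightarrow CDU$. This yields an isotopy of Liouville embeddings $\frac{1}{CD}U \to CDU$ running from the inclusion $i$ at $t=0$ to $(\psi \circ \phi)|_{\frac{1}{CD}U}$ at $t = 1$.

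For the second piece I would use the observation that, writing $M_r(q,p) = (q, rp)$ for fiberwise multiplication, the maps appearing in the statement factor as $\phi(D^{-1}) = M_{D^{-1}} \circ \phi \circ M_D$ and $\psi(D) = M_D \circ \psi \circ M_{D^{-1}}$. This motivates the family
\[ \phi_s := M_{D^{-s}} \circ \phi \circ M_{D^s}, \qquad \psi_s := M_{D^s} \circ \psi \circ M_{D^{-s}}, \qquad s \in [0,1], \]
for which $(\phi_0, \psi_0) = (\phi, \psi)$ and $(\phi_1, \psi_1) = (\phi(D^{-1}), \psi(D))$. Because $M_r^* \lambda_{can} = r\lambda_{can}$, a direct pullback computation gives
\[ \phi_s^* \lambda_{can} - \lambda_{can} = d\bigl(D^{-s} f \circ M_{D^s}\bigr), \]
where $\phi^* \lambda_{can} - \lambda_{can} = df$, so each $\phi_s$ is Liouville; an identical argument works for $\psi_s$. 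The natural domains are $\phi_s \colon \frac{1}{CD^s}U \to \frac{1}{D^s}V$ and $\psi_s \colon D^s V \to CD^s U$, and since $s \in [0, 1]$ we have $\frac{1}{CD}U \subset \frac{1}{CD^s}U$ and $CD^s U \subset CDU$ by star-shapedness and $D > 1$. Thus $s \mapsto (\psi_s \circ \phi_s)|_{\frac{1}{CD}U}$, viewed as a map into $CDU$, is a Liouville isotopy from $(\psi \circ \phi)|_{\frac{1}{CD}U}$ to $\psi(D) \circ \phi(D^{-1})$. Concatenating this with the first piece produces the required strongly unknotted isotopy.

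The only genuine bookkeeping is verifying that each $\phi_s, \psi_s$ really is Liouville (the key identity being $M_r^* \lambda_{can} = r\lambda_{can}$, which makes the $D^{\pm s}$ factors cancel in the pullback), and tracking the chain of domain/codomain inclusions at every $s$; both rely only on $D > 1$ together with star-shapedness of $U$ and $V$, and I do not expect any technical obstacle beyond these routine checks.
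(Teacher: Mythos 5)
Your proposal is correct and follows essentially the same route as the paper: the paper also concatenates the restriction of the given unknotting isotopy for $\psi\circ\phi$ (pushed into $CDU$ via the inclusion $CU\hookrightarrow CDU$) with a scaling family $\psi(E_t)\circ\phi(E_t^{-1})$ interpolating the scale factor from $1$ to $D$, the only cosmetic difference being the parametrization ($D^s$ in your version versus $1+(D-1)t$ in the paper's). Your explicit verification via $M_r^*\lambda_{can}=r\lambda_{can}$ that the rescaled maps remain Liouville is a detail the paper leaves implicit, and it checks out.
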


\begin{proof}
For $t\in [0,1]$ consider the following maps

\begin{equation}\label{composition}
 \frac{1}{C (1+(D-1)t)} U \xhookrightarrow{\phi((1+(D-1)t)^{-1})} V \xhookrightarrow{\psi(1+(D-1)t)} C(1+(D-1)t) U.
\end{equation}
Since $D-1>0$ we have that $C(1+(D-1)t) \geq C$ and hence
$$ \frac{1}{CD} U \subset \frac{1}{C(1+(D-1)t)} U,~~ C(1+(D-1)t) U \subset CD U.$$
Composing (\ref{composition}) with these inclusions, we get
$$\frac{1}{CD} U \xhookrightarrow{\psi(1+(D-1)t) \circ \phi((1+(D-1)t)^{-1})} CD U, $$
Denoting 
$$\beta_t= \psi(1+(D-1)t) \circ \phi((1+(D-1)t)^{-1}), ~ \beta_t : \frac{1}{CD} U \hookrightarrow CD U,$$
we get $\beta_0=\psi \circ \phi|_{\frac{1}{CD} U},$ $\beta_1=\psi (D) \circ \phi (D^{-1}).$ Since $\psi \circ \phi$ is strongly unknotted, there exists $\alpha_t:\frac{1}{C} U \hookrightarrow C U$ such that $\alpha_0=i_{\frac{1}{C} U},$ $\alpha_1= \psi \circ \phi.$ Restricting $\alpha_t$ to $\frac{1}{CD} U$ and composing with the inclusion $C U\xhookrightarrow{i} CD U,$ we get that $\alpha'_t=i\circ (\alpha_t |_{\frac{1}{CD}U})$ satisfies $\alpha'_0=i_{\frac{1}{CD}U},$ $\alpha'_1=\beta_0.$ Concatenation of $\alpha'$ and $\beta$ gives the desired isotopy.
\end{proof}

\begin{proof} (Proof of Proposition \ref{dSBM}) It readily follows that $d_{SBM}(U,U) = 0$ by taking $C=1$, $\phi = \psi = {\mathds1}_U$ in the definition of $d_{SBM}.$

To prove symmetry and triangle inequality we need the following two properties which can be proven by direct calculations. If $U \xhookrightarrow{\phi}  V \xhookrightarrow{\psi} W$ and $C,D>0$ then
\begin{itemize}
    \item[$(\ast)$] $(\phi \circ \psi)(C)= \phi (C) \circ \psi (C),$
    \item[$(\ast \ast)$] $(\phi(C))(D)=\phi (CD).$
\end{itemize}
Now, if $U/C \xhookrightarrow{\phi} V \xhookrightarrow{\psi} CU,$ $V/C \xhookrightarrow{\psi(C^{-1})} U \xhookrightarrow{\phi(C)} CV,$ are such that $\psi \circ \phi$ and $\phi(C) \circ \psi(C^{-1})$ are strongly unknotted, $(\ast)$ implies that so are 
$$
\frac{1}{C}V \xhookrightarrow{\psi(C^{-1})} U \xhookrightarrow{\phi(C)} CV,~\frac{1}{C}U \xhookrightarrow{\phi(C)(C^{-1})} V \xhookrightarrow{\psi(C^{-1})(C)} CU.
$$
This proves that $d_{SBM}(U,V)=d_{SBM}(V,U).$

Thus, we are left to prove the triangle inequality. Given $U/C \xhookrightarrow{\phi} V \xhookrightarrow{\psi} CU$ and $V/D \xhookrightarrow{\theta} W\xhookrightarrow{\xi} DV$ with strongly unknotted compositions, we claim that the composition of the following maps
$$ \frac{1}{CD} U \xhookrightarrow{\phi(D^{-1})} \frac{1}{D} V \xhookrightarrow{\theta} W\xhookrightarrow{\xi} DV \xhookrightarrow{\psi(D)} CD U $$
is also strongly unknotted. Indeed, denote by $\alpha_t:\frac{1}{D} V \hookrightarrow DV$ the isotopy such that $\alpha_0=i_{\frac{1}{D}V},$ $\alpha_1= \xi \circ \theta$, given by the unknottedness of $\xi \circ \theta$ and by $\beta_t: \frac{1}{CD} U \hookrightarrow CD U$ the isotopy such that $\beta_0=i_{\frac{1}{CD}U},$ $\beta_1= \psi(D) \circ \phi(D^{-1})$, given by the unknottedness of $\psi \circ \phi$ and Lemma \ref{Lemma_rescale}. Now, the isotopy $\gamma_t:\frac{1}{CD}U \hookrightarrow CD U$ given by

\[ \gamma_t = \left\{ \begin{array}{cc} \beta_{2t} \,\,\,&\mbox{for} \,\, 0 \leq t \leq 1/2 \\ \psi(D) \circ \alpha_{2t-1}  \circ \mathfrak \phi(D^{-1})  \,\,\, \,\,\,&\mbox{for}\,\, 1/2 \leq t \leq 1 \end{array} \right. \]
satisfies $\gamma_0=i_{\frac{1}{CD}U}$, $\gamma_1= \psi (D) \circ \xi \circ \theta \circ \psi (D^{-1})$ which proves the claim.

This way, we proved that the composition of maps
$$\frac{1}{CD} U \xhookrightarrow{\theta \circ \phi(D^{-1})} W \xhookrightarrow{\psi(D) \circ \xi} CD U $$
is strongly unknotted. What is left is to prove that the composition of maps
\begin{equation} \label{reversed}
\frac{1}{CD} W \xhookrightarrow{(\psi(D) \circ \xi)((CD)^{-1})} U \xhookrightarrow{(\theta \circ \phi(D^{-1}))(CD)} CD W
\end{equation}
is strongly unknotted. Using $(\ast)$ and $(\ast \ast)$, we reformulate (\ref{reversed}) as
$$ \frac{1}{CD} W \xhookrightarrow{(\xi(D^{-1}))(C^{-1})} \frac{1}{C} V \xhookrightarrow{\psi(C^{-1})} U\xhookrightarrow{\phi(C)} CV \xhookrightarrow{(\theta(D))(C)} CD W .$$
Now the same construction of the isotopy as the one we used for $\gamma$ applies, only starting from $ W/D \xhookrightarrow{\xi(D^{-1})} V \xhookrightarrow{\theta(D)} D W$ and $W/C \xhookrightarrow{\psi(C^{-1})} V \xhookrightarrow{\phi(C)} C W.$
\end{proof}

\begin{remark} \label{Pseudo-Metric-1}
Note that $d_{SBM}$ is not a genuine metric. Indeed if $U,V \in \mathcal C_M$ are exactly symplectomorphic via a $\tilde{\pi}_1$-trivial map, i.e. there exists a diffeomorphism $\phi: U \to V$ such that 1-form $\phi^* \lambda_{can} - \lambda_{can}$ is exact and $\phi_*= \mathds{1}_{\tilde{\pi}_1(M)}$, we have
\[ U \xhookrightarrow{\phi} V \xhookrightarrow{\phi^{-1}} U \,\,\,\,\mbox{and} \,\,\,\, V \xhookrightarrow{\phi^{-1}} U \xhookrightarrow{\phi} V \]
and thus $d_{SBM}(U, V) = 0$. This is, for example, the case when $V=\phi(U)$ and $\phi\in \Ham_c(T^*M).$ 
\end{remark}

\begin{remark} \label{Scaling-SBM}
Using $(\ast)$ and $(\ast \ast)$ one easily checks that for all $C>0$ and all $U,V\in \mathcal C_M$ it holds that $d_{SBM}(CU,CV)=d_{SBM}(U,V)$ as well as $d_{SBM}(U,CU)=\ln C.$
\end{remark}

\subsection{Riemannian Banach-Mazur distance}
We begin with the following statement

\begin{prop} \label{dRBM} $d_{RBM}$ defines a pseudo-metric on $\mathcal G_M$. \end{prop}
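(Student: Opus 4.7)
The plan is to verify the three pseudo-metric axioms --- that $d_{RBM}(g,g)=0$, that $d_{RBM}$ is symmetric, and that it satisfies the triangle inequality --- by direct manipulation of the defining two-sided inequality $\tfrac{1}{C}g_1 \preceq \phi^* g_2 \preceq C g_1$. The two structural facts I will use throughout are that pullback of metrics by any diffeomorphism preserves both the order $\preceq$ and scaling by positive constants, and that $\mathrm{Diff}_0(M)$ is a group under composition.

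For vanishing on the diagonal I simply take $\phi = \mathrm{id}_M$ and $C = 1$; the chain $g \preceq g \preceq g$ is trivial and yields $d_{RBM}(g,g) \leq 0$, which combined with manifest non-negativity gives zero. For symmetry, suppose $\phi \in \mathrm{Diff}_0(M)$ witnesses $\tfrac{1}{C}g_1 \preceq \phi^* g_2 \preceq C g_1$. Applying $(\phi^{-1})^*$ termwise and rearranging produces $\tfrac{1}{C}g_2 \preceq (\phi^{-1})^* g_1 \preceq C g_2$; since $\phi^{-1} \in \mathrm{Diff}_0(M)$, any upper bound for $d_{RBM}(g_1,g_2)$ is also an upper bound for $d_{RBM}(g_2,g_1)$. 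Swapping the roles of $g_1, g_2$ and taking infima yields equality.

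The main (but still routine) step is the triangle inequality. Given witnesses $\phi_1, C_1$ for a bound on $d_{RBM}(g_1,g_2)$ and $\phi_2, C_2$ for $d_{RBM}(g_2,g_3)$, my plan is to use the composition $\psi := \phi_2 \circ \phi_1 \in \mathrm{Diff}_0(M)$. Pulling back the second chain $\tfrac{1}{C_2}g_2 \preceq \phi_2^* g_3 \preceq C_2 g_2$ by $\phi_1$ and then inserting the first chain $\tfrac{1}{C_1}g_1 \preceq \phi_1^* g_2 \preceq C_1 g_1$ yields $\tfrac{1}{C_1 C_2} g_1 \preceq \psi^* g_3 \preceq C_1 C_2 g_1$, hence $d_{RBM}(g_1,g_3) \leq \ln C_1 + \ln C_2$; taking infima independently over the two sides completes the argument.

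I do not anticipate any real obstacle: the statement is a routine verification once one commits to a scaling convention for $Cg$. The same calculations also show $d_{RBM}(g,\phi^* g)=0$ for every $\phi \in \mathrm{Diff}_0(M)$, which is exactly why $d_{RBM}$ is only a pseudo-metric rather than a genuine metric, consistent with the phrasing of the proposition.
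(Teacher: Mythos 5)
Your proposal is correct and follows essentially the same route as the paper's proof: identity and $C=1$ for the diagonal, pulling back by $\phi^{-1}$ for symmetry, and composing the diffeomorphisms ($\phi_2\circ\phi_1$, whose pullback is $\phi_1^*\circ\phi_2^*$) together with the chain $\tfrac{1}{C_1C_2}g_1\preceq \tfrac{1}{C_2}\phi_1^*g_2\preceq \phi_1^*(\phi_2^*g_3)\preceq C_2\,\phi_1^*g_2\preceq C_1C_2\,g_1$ for the triangle inequality. No gaps.
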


\begin{proof}
By taking $C=1$ and $\phi= {\mathds 1}_M$ one readily concludes that $d_{RBM}(g,g)=0.$ On the other hand, for $\phi \in {\Diff}_0\,(M)$ and $C \geq 1$, we have that $(1/C) g_1 \preceq\phi^* g_2 \preceq C g_1$  if and only if
\[ \frac{1}{C} g_2 \preceq(\phi^{-1})^* g_1 \preceq C g_2.\]
This implies $d_{RBM}(g_1, g_2) = d_{RBM}(g_2, g_1)$.

Finally, for $\phi_1, \phi_2 \in {\Diff}_0\,(M)$ and $C, D \geq 1$, the relations $(1/C) g_1 \preceq \phi^*_1 g_2 \preceq C g_1$ and $(1/D) g_2 \preceq \phi^*_2 g_3 \preceq D g_2$ imply
\[ \frac{1}{CD} g_1 \preceq \frac{1}{D} (\phi_1^* g_2) \preceq \phi_1^* (\phi_2^* g_3) \preceq D (\phi_1^* g_2) \preceq C D g_1. \]
Setting $\phi=\phi_2 \circ \phi_1$ gives $d_{RBM}(g_1, g_3)\leq \ln C + \ln D $ and thus taking infimum over $C$ and $D$ gives $d_{RBM}(g_1, g_3) \leq d_{RBM}(g_1, g_2) + d_{RBM}(g_2, g_3)$.
\end{proof}

\begin{remark} \label{Pseudo-Metric_2}
Similarly to $d_{SBM}$, $d_{RBM}$ is also not a genuine metric. Indeed, if there exists some $\phi \in {\Diff}_0\,(M)$ such that $g_1 = \phi^*g_2$, taking $C=1$ we have
\[  g_1 \preceq \phi^* g_2 = g_1 \preceq g_1. \]
This implies $d_{RBM}(g_1, g_2) =  0$. 
\end{remark}

\begin{remark} \label{Scaling-RBM}
One readily checks that it holds $d_{RBM}(Cg_1,Cg_2)=d_{RBM}(g_1,g_2)$ as well as $d_{RBM}(g_1,Cg_1)=\ln C$ for all $C>0$ and all $g_1,g_2\in \mathcal G_M$.
\end{remark}

As we saw in Example~\ref{Main_Example}, $\mathcal G_{M}$ can be identified with a subset of $\mathcal C_M$ via inclusion $g (\in \mathcal G_M) \mapsto U^*_g M ( \in \mathcal C_M)$. Therefore, it makes sense to compare $d_{SBM}$ with $d_{RBM}$ on $\mathcal G_M$ and we have

\begin{prop} \label{SvsR} Let $M$ be a closed, orientable manifold and $g_1, g_2 \in \mathcal G_M$. Then
\[ 2 \cdot d_{SBM}(U^*_{g_1}M, U^*_{g_2}M) \leq d_{RBM}(g_1, g_2).\]
 \end{prop}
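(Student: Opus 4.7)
The plan is to upgrade a near-optimal diffeomorphism $\phi \in \Diff_0(M)$ almost realizing $d_{RBM}(g_1,g_2)$ to its cotangent lift $\tilde{\phi}: T^*M \to T^*M$, which is a global symplectomorphism strictly preserving $\lambda_{can}$. The norm bounds defining $d_{RBM}$ will then dualize to inclusions of rescaled unit codisc bundles, furnishing the Liouville embeddings required in Definition \ref{dfn-SBM}. The factor $2$ enters as the square root relating scalar multiples of a metric tensor to the corresponding scalar multiples of its associated norm.

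In detail, fix $\varepsilon > 0$ and choose $\phi \in \Diff_0(M)$ together with $C \geq 1$ so that $\frac{1}{C} g_1 \preceq \phi^* g_2 \preceq C g_1$ and $\ln C \leq d_{RBM}(g_1,g_2) + \varepsilon$. Rewriting at the level of norms gives $\frac{1}{\sqrt{C}}\|v\|_{g_1} \leq \|d\phi_q\, v\|_{g_2} \leq \sqrt{C}\,\|v\|_{g_1}$ for every $v \in T_qM$. A short duality argument (the dual unit ball of $\|\cdot\|_{g_2}$ at $\phi(q)$ is sandwiched between $\frac{1}{\sqrt{C}}$ and $\sqrt{C}$ times the pushforward under $(d\phi_q^{-1})^*$ of the dual unit ball of $\|\cdot\|_{g_1}$ at $q$) translates this to $\frac{1}{\sqrt{C}}\|p\|_{g_1^*} \leq \|(d\phi_q^{-1})^* p\|_{g_2^*} \leq \sqrt{C}\,\|p\|_{g_1^*}$ for every $p \in T^*_qM$. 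Setting $D = \sqrt{C}$ and letting $\tilde{\phi}(q,p) = (\phi(q), (d\phi_q^{-1})^* p)$ be the cotangent lift (so that $\tilde{\phi}^*\lambda_{can} = \lambda_{can}$ identically), the upper dual bound yields $\tilde{\phi}\bigl(\tfrac{1}{D} U^*_{g_1}M\bigr) \subset U^*_{g_2}M$, while the lower bound, applied to $\tilde{\phi}^{-1}$, yields $\tilde{\phi}^{-1}(U^*_{g_2}M) \subset D \cdot U^*_{g_1}M$. We thereby obtain Liouville embeddings $\tfrac{1}{D} U^*_{g_1} M \xhookrightarrow{\tilde{\phi}} U^*_{g_2}M \xhookrightarrow{\tilde{\phi}^{-1}} D \cdot U^*_{g_1}M$.

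It remains to verify the technical conditions of Definition \ref{dfn-SBM}. Since $\phi \in \Diff_0(M)$, any isotopy from $\mathds{1}_M$ to $\phi$ lifts (again via the cotangent lift construction) to an isotopy of Liouville diffeomorphisms of $T^*M$ from $\mathds{1}_{T^*M}$ to $\tilde{\phi}$; in particular $\tilde{\phi}$ and $\tilde{\phi}^{-1}$ act as the identity on $\tilde{\pi}_1(M)$, so both embeddings are $\tilde{\pi}_1$-trivial. The composition $\tilde{\phi}^{-1} \circ \tilde{\phi} : \tfrac{1}{D} U^*_{g_1}M \to D \cdot U^*_{g_1} M$ is literally the inclusion, which is strongly unknotted via the constant isotopy. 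Unwinding the definitions of $\tilde{\phi}(D)$ and $\tilde{\phi}^{-1}(D^{-1})$, and using that $\tilde{\phi}$ is homogeneous of degree $1$ in the covector variable (so that $\tilde{\phi}(D) = \tilde{\phi}$ and $\tilde{\phi}^{-1}(D^{-1}) = \tilde{\phi}^{-1}$ as maps of pairs, up to the appropriate change of domain), the ``swapped'' composition $\tilde{\phi}(D) \circ \tilde{\phi}^{-1}(D^{-1}) : \tfrac{1}{D}U^*_{g_2}M \to D \cdot U^*_{g_2} M$ is again the inclusion, hence strongly unknotted. Therefore $d_{SBM}(U^*_{g_1}M, U^*_{g_2}M) \leq \ln D = \tfrac{1}{2}\ln C \leq \tfrac{1}{2}\bigl(d_{RBM}(g_1,g_2) + \varepsilon\bigr)$, and letting $\varepsilon \to 0$ gives the desired inequality.

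The only real obstacle is bookkeeping: tracking the metric-tensor-versus-norm convention (which is exactly what produces the factor $\sqrt{C}$, and hence the $2$ in the statement) and the rescaling recipe $\phi \mapsto \phi(C)$ from Definition \ref{dfn-SBM}. No symplectic input beyond the standard fact that cotangent lifts preserve the canonical Liouville form is needed.
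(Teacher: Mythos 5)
Your proposal is correct and follows essentially the same route as the paper: both arguments take a near-optimal $\phi \in \Diff_0(M)$, pass to its cotangent lift (which preserves $\lambda_{can}$ exactly and is $\tilde{\pi}_1$-trivial because $\phi$ is isotopic to the identity), dualize the metric comparison to inclusions of rescaled codisc bundles, and observe that both compositions reduce to inclusions so the strong unknottedness is automatic, with the factor $2$ coming from the square root relating $C g$ to $\sqrt{C}\, U^*_g M$. The only cosmetic difference is that the paper writes the interleaving constant as $C^2$ from the outset (so the codisc bundles rescale by $C$) and phrases the duality via the identity $U^*_{C^2 g}M = C\, U^*_{g}M$, whereas you keep $C$ and carry out the dual-norm estimate explicitly.
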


\begin{proof} For any $\varepsilon>0$, there exists some $C^2>1$ and $\phi \in {\Diff}_0\,(M)$ such that $\ln (C^2) \leq d_{RBM}(g_1,g_2) + \varepsilon$ and $(1/C^2) g_1 \preceq \phi^* g_2 \preceq C^2 g_1$. Since $B_1(g_2) \subset B_1(g_1)$ if $g_1 \preceq g_2$ we have that
\begin{equation} \label{u-interleaving}
U^*_{\frac{1}{C^2} g_1} M \subset U^*_{\phi^* g_2} M \subset U^*_{C^2 g_1} M.
\end{equation}
One also readily checks that $U^*_{\frac{1}{C^2} g_1} M = \frac{1}{C} U^*_{g_1} M$ and $U^*_{C^2 g_1} M = C U_{g_1}^* M$.

On the other hand, $\phi \in {\Diff}_0\,(M)$ lifts to a symplectomorphism $\phi^{\#}$ of $T^*M$, given by $\phi^{\#}(p, \xi) = (\phi(p), (\phi_{\phi(p)}^{-1})^* \xi).$ Since $\phi$ is isotopic to $\mathds{1}_M$, the lift $\phi^{\#}$ is isotopic to $\mathds{1}_{T^*M}$ and in particular, $\phi^{\#}$ acts trivially on ${\tilde \pi}_1(M)$. Moreover, one may check that $\phi^{\#}$ is exact as well as that $\phi^{\#}(U^*_{\phi^* g}M)=U^*_gM.$ Therefore (\ref{u-interleaving}) can be rewritten as
\[ \frac{1}{C} U^*_{g_1} M \subset (\phi^{-1})^{\#}(U_{g_2}^* M) \subset C U_{g_1}^*M \]
which implies
\[ \frac{1}{C} U^*_{g_1}M \xhookrightarrow{(\phi^{\#})|_{\frac{1}{C} U^*_{g_1} M}} U_{g_2}^* M \xhookrightarrow{(\phi^{-1})^{\#}|_{U_{g_2}^* M}} C U_{g_1}^* M .\]
The above maps are ${\tilde \pi}_1$-trivial Liouville embeddings and their composition is the inclusion, thus strongly unknotted. We are left to check that the composition
\[ \frac{1}{C} U^*_{g_2}M \xhookrightarrow{(\phi^{-1})^{\#}|_{U_{g_2}^* M} (C^{-1})}  U_{g_1}^* M \xhookrightarrow{(\phi^{\#})|_{\frac{1}{C} U^*_{g_1} M} (C)}  C U_{g_2}^* M .\]
is strongly unknotted. This follows from the fact that $\phi^{\#}(C)=\phi^{\#}$ for all $\phi \in \Diff_0 (M)$ and all $C \geq 1.$

Therefore, by the definition of $d_{SBM}$, we obtain
\[ d_{SBM}(U_{g_1}^*M, U_{g_2}^*M) \leq \ln C \leq \frac{1}{2} d_{RBM}(g_1, g_2) + \frac{\varepsilon}{2}. \]
Since the inequality holds for every $\varepsilon > 0$, the conclusion follows.
\end{proof}

\section{Symplectic homology as a persistence module}\label{section-SympHom}

\subsection{Background on symplectic homology}\label{symplectic-persitence}
Symplectic homology has been developed in the 90's by the work of many people, see \cite{FH94, FHW94, CFH95, CFHW96, Vit99}. There exist different versions of the theory, depending on the class of manifolds and admissible Hamiltonians which are considered. We will use the version developed in \cite{BPS03} and \cite{Web06} (with different signs from \cite{BPS03}).

\medskip

Throughout the paper, all Floer homologies as well as symplectic homology are taken with $\Z_2$-coefficients. As a result all persistence modules will also be persistence modules over $\Z_2.$  
\medskip

We start by briefly recalling the setup of \cite{BPS03} and \cite{Web06}. For a fixed homotopy class $\alpha$ of free loops in $M$, consider the following space
\[ \mathcal L_{\alpha}(T^*M) = \left\{z:S^1\rightarrow T^*M \, | \,\, z= (x,y), \,\, x: S^1 \to M \,\,\mbox{s.t.} \,\,[x] = \alpha \,\,\mbox{and} \,\,y(t) \in T_{x(t)}^*M\right\}.\]
Recall that $(T^*M, \omega_{can}=d\lambda_{can})$ is a symplectic manifold and given a Hamiltonian function $H:\R/\Z \times T^*M \to \R$ we may define its Hamiltonian vector field $X_H$ by $i_{X_H}\omega_{can}=-dH.$ The collection of all Hamiltonian 1-periodic orbits of $H$ in $\mathcal L_{\alpha}(T^*M)$ is denoted by $\mathcal P(H; \alpha)$. Recall also that the symplectic action functional $\mathcal A_H$ is given by
$$\mathcal A_H(z)=\int_0^1 H_t(z(t)) dt - \int_{S^1} z^*\lambda_{can},$$
for any loop $z:S^1=\R/\Z \rightarrow T^*M.$ The action spectrum of $H$ in class $\alpha$ is 
\[ {\rm Spec}(H; \alpha) = \{\mathcal A_H(z) \, | \, z \in \mathcal P(H; \alpha)\}. \]

Since $T^*M$ is not compact, in order to define Floer homology, we need to impose certain restrictions on the Hamiltonian $H.$ The standard assumption in this situation is that $H$ is linear\footnote{Linearity of $H$ in this context can be understood as $H_t(x,y)=\beta \| y \|_{g^*}+\beta'$ for some $\beta,\beta'\in \R$ and a fixed Riemannian metric $g$ on $M.$ More generally, if $(U,\lambda)$ is a Liouville domain, linearity is understood as linearity with respect to the radial coordinate in the completion of $(U,\lambda)$ and the previously described linearity corresponds to the case $(U,\lambda)=(U^*_gM,\lambda_{can}).$} outside a compact subset of $T^*M.$ For simplicity, in this section we only consider compactly supported Hamiltonians, i.e. linear outside of a compact set with slope equal to zero, as this class of Hamiltonians suffices to define symplectic homology. However, in the proof of Theorem \ref{sh-loop-thm} we will need to work with non-zero slopes and we will review the relevant setup in Subsection \ref{isomorphism}.

Now, to a given compactly supported Hamiltonian $H:\R/\Z \times T^*M \to \R$, a class $\alpha$ of free loops in $M$ and real numbers $a<b$ not belonging to ${\rm Spec}(H; \alpha)$ (and also $0\notin [a,b]$ if $\alpha=[pt]$) we associate Hamiltonian Floer homology in action window $[a,b)$ denoted by $\HF^{[a,b)}_*(H, \alpha).$ This is homology of a Floer chain complex generated by elements of $\mathcal P(H; \alpha)$ with action in $[a,b)$ (see \cite{BPS03,Web06} for details). 

Grading on Floer chain complex is defined using the Lagrangian distribution of vertical subspaces $T^vT^*M\subset TT^*M$ given by $T_x^vT^*M=\ker d\pi(x)$ where $\pi:T^*M\rightarrow M$ is projection. Namely, let $z\in \mathcal P(H; \alpha)$ and let $\Phi:S^1\times \R^{2n}\rightarrow z^*(TT^*M)$ be a symplectic trivialization such that $\Phi_t(0\times \R^n)=T_{z(t)}^vT^*M$ for all $t\in S^1.$ The existence of such a trivialization follows from orientability of $z^*(T^vT^*M)$ (which follows from orientability of $M$), see, for example, Lemma 1.2 in \cite{AS06}. Now, if $\phi^H_t$ is the Hamiltonian flow of $H$, $\phi^H_t(z(0))=z(t),$ we have a path of symplectic matrices
$$P(t)=\Phi^{-1}_t \circ d\phi^H_t(z(0)) \circ \Phi_0,~t\in[0,1],$$
and we define $\ind_{\HF}(z)=\ind_{\rm CZ}(P),$ where $\ind_{\rm CZ}$ stands for the Conley-Zehnder index. It is easy to check that $\ind_{\HF}(z)$ does not depend on the choice of $\Phi$ as above, see Lemma 1.3 in \cite{AS06}. Moreover, our conventions for the Conley-Zehnder index are chosen in such a way that isomorphism in Theorem \ref{sh-loop-thm} preserves grading, see \cite{Web02, AS06} and references therein.

\begin{remark}
The definition of Floer homology also includes an auxiliary choice of an almost complex structure. Since $\HF^{[a,b)}_*(H, \alpha)$ does not depend on this choice, we omit it from the notation. We should also mention that one first defines $\HF^{[a,b)}_*(H, \alpha)$ for Hamiltonians $H$ whose periodic orbits are non-degenerate. Floer homology of a general Hamiltonian $H$ as above is then defined, roughly speaking, as $\HF^{[a,b)}_*(H', \alpha)$, where $H'$ is a $C^\infty$-small perturbation of $H$ whose periodic orbits are non-degenerate.
\end{remark}

We now focus on defining symplectic homology of an admissible domain $U \in \mathcal C_M.$ Denote by $\mathcal H_{U}$ the set of all functions on $S^1 \times T^*M$ compactly supported in $S^1 \times {\rm Int}(U)$. Given $a<b$ (with $a,b$ possibly being $\pm \infty$), let 
\[ \mathcal H_{U, a,b} = \{H \in \mathcal H_{U} \,| \, a, b \notin {\rm Spec}(H; \alpha) ~{\rm and} ~ 0 \notin [a,b] ~ {\rm if}~ \alpha=[pt] \} \]
If $b = +\infty$, we denote $\mathcal H_{U, a, +\infty}$ by $\mathcal H_{U,a}$.

Define a partial order on $\mathcal H_{U, a, b}$ as follow. For $H_1, H_2 \in \mathcal H_{U, a,b}$, $H_1 \preceq H_2$  if and only if $H_1(t,z) \geq H_2(t, z)$  for all  $(t,z) \in S^1 \times U.$ If $H_1 \preceq H_2$ then there exists a smooth homotopy $\tau \to H_{\tau}$ from $H_1$ to $H_2$ such that $\partial_\tau H_\tau \leq 0$. We call such a homotopy {\it monotone}. Every monotone homotopy induces a $\Z_2$-linear continuation map 
$$\sigma_{12}: \HF_*^{[a,b)}(H_1, \alpha) \to \HF_*^{[a,b)}(H_2, \alpha).$$
Moreover, the map $\sigma_{12}$ does not depend on the choice of the monotone homotopy. In general, $\sigma_{12}$ may not be an isomorphism. However if there exists a monotone homotopy $\tau \to H_{\tau}$ such that $H_{\tau}\in \mathcal H_{U, a,b} $ for every $\tau$, then $\sigma_{12}$ is an isomorphism, see Proposition 4.5.1 in \cite{BPS03}. Such a monotone homotopy is called {\it action-regular}. For a detailed treatment of maps induced by monotone homotopies, see \cite{BPS03,Web06} and references therein. We will need the following statement (see, for example, Lemma 2.7 in \cite{Web06}).

\begin{lemma} \label{4-lemma-1}
Let $U \in \mathcal C_M$. For any three functions $H_1, H_2$ and $H_3$ in $\mathcal H_{U, a, b}$ with $H_1 \preceq H_2 \preceq H_3$, the induced maps on Hamiltonian Floer homologies in action window $[a,b)$ satisfy $\sigma_{13} = \sigma_{23} \circ \sigma_{12}$. \end{lemma}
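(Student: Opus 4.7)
The plan is to prove the equality $\sigma_{13} = \sigma_{23} \circ \sigma_{12}$ by exhibiting both sides as continuation maps induced by two different monotone homotopies from $H_1$ to $H_3$, and then invoking the independence of the continuation map on the choice of monotone homotopy (which was recalled just before the statement).

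First, I would fix monotone homotopies $\{K^{12}_\tau\}_{\tau \in \R}$ from $H_1$ to $H_2$ and $\{K^{23}_\tau\}_{\tau \in \R}$ from $H_2$ to $H_3$, each stationary outside a compact $\tau$-interval. By definition, the induced continuation maps at the chain level count solutions of the non-autonomous Floer equation associated to these $\tau$-dependent Hamiltonians, and on homology they realize $\sigma_{12}$ and $\sigma_{23}$ respectively. For a large parameter $R > 0$, I would then build a new homotopy $K^R_\tau$ from $H_1$ to $H_3$ by concatenating shifted copies: equal to $K^{12}_{\tau+R}$ for $\tau \leq 0$ and to $K^{23}_{\tau-R}$ for $\tau \geq 0$, interpolated smoothly in a bounded region near $\tau=0$ where the Hamiltonian is arbitrarily $C^\infty$-close to $H_2$. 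Because both pieces are monotone and $H_1 \preceq H_2 \preceq H_3$, the concatenation $K^R_\tau$ is itself a monotone homotopy from $H_1$ to $H_3$, so it induces a continuation map $\sigma^R\co \HF_*^{[a,b)}(H_1,\alpha) \to \HF_*^{[a,b)}(H_3,\alpha)$.

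The key computational step is the standard Floer-theoretic gluing/stretching argument: for $R$ sufficiently large, the moduli space of Floer trajectories for $K^R_\tau$ between a generator of $H_1$ and a generator of $H_3$ is in oriented bijective correspondence with pairs of trajectories — one for $K^{12}$ from the $H_1$-generator to some $H_2$-generator, glued at an $H_2$-generator to a trajectory for $K^{23}$ ending at the $H_3$-generator. Chain-level counting of these broken configurations realizes the composition of the chain-level continuation maps, and hence on homology $\sigma^R = \sigma_{23} \circ \sigma_{12}$. Since $K^R_\tau$ is a monotone homotopy from $H_1$ to $H_3$ and the induced map on filtered Floer homology is independent of the particular choice of monotone homotopy (as recalled in the excerpt, citing \cite{BPS03,Web06}), we also have $\sigma^R = \sigma_{13}$, yielding the desired equality.

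The main obstacle is handling the filtration, since the $\tau$-dependent Hamiltonians along the concatenation are only guaranteed to lie in $\mathcal H_{U,a,b}$ at the two endpoints and not in between; however, the monotonicity $\partial_\tau K^R_\tau \leq 0$ is precisely what forces the standard energy estimate $\mathcal A_{H_3}(z^-) - \mathcal A_{H_1}(z^+) \geq 0$ along Floer trajectories, so the continuation map respects the action window $[a,b)$ without needing action-regularity. With that estimate in hand, the gluing argument proceeds as in the classical references, and the chain-level identification above gives the statement. No separate verification of orientations is needed since we work over $\Z_2$.
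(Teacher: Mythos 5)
Your proposal is correct and is the standard concatenation-plus-gluing argument: the paper does not actually prove this lemma itself but defers to Lemma 2.7 of \cite{Web06}, whose proof is exactly the route you describe (glue the two monotone homotopies, stretch the neck to identify the induced map with the composition, and invoke independence of the continuation map on the choice of monotone homotopy). Your remark that monotonicity alone — not action-regularity — is what makes the chain-level continuation maps respect the action filtration, so that everything descends to the window $[a,b)$, is the right point to flag; just be careful that the precise direction of the action inequality matches the paper's sign conventions for $\mathcal A_H$ and $\partial_\tau H_\tau \leq 0$.
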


Note that Lemma \ref{4-lemma-1} together with the fact that continuation map is independent of the choice of the monotone homotopy implies that if $H_1 \preceq H_2 \preceq H_3$ and $H_1 \preceq H_4 \preceq H_3,$ $H_i\in \mathcal H_{U,a, b}$, the following diagram commutes 
\begin{equation} \label{com-1}
\xymatrix{
\HF_*^{[a,b)}(H_2, \alpha) \ar[r]^-{\sigma_{23}} &\HF_*^{[a,b)}(H_3, \alpha)\\
\HF_*^{[a,b)}(H_1, \alpha) \ar[r]_-{\sigma_{14}} \ar[u]^-{\sigma_{12}} &\HF_*^{[a,b)}(H_4, \alpha). \ar[u]_-{\sigma_{43}}}
\end{equation} 
We will use this in the proof of Theorem \ref{sh-loop-thm}.

Now notice that $(\mathcal H_{U, a,b}, \preceq)$ is a {\it downward directed partially ordered system}, i.e. for any $H_2, H_3 \in \mathcal H_{U, a,b}$, there exists some $H_1 \in \mathcal H_{U, a,b}$ such that $H_1 \preceq H_2$ and $H_1 \preceq H_3$. Lemma \ref{4-lemma-1} implies that Floer homologies $\HF_*^{[a,b)}(H,\alpha),$ $H \in \mathcal H_{U, a,b}$ together with continuation maps $\sigma_{12}:\HF_*^{[a,b)}(H_1,\alpha) \rightarrow \HF_*^{[a,b)}(H_2,\alpha)$ for $H_1 \preceq H_2,$  define an inverse system of $\Z_2$-vector spaces over $(\mathcal H_{U, a, b}, \preceq).$ Thus we can take the inverse limit of such an inverse system, which leads to the definition of symplectic homology. For a general background on inverse system and inverse limit, see subsection 4.6 in \cite{BPS03}. 

As we mentioned in the introduction, if $U\in \mathcal{C}_M$ then $(\partial U,\lambda_{can}|_{\partial U})$ is a contact manifold. We denote by $ {\rm Spec}(\partial U)$ the set of periods of all periodic Reeb orbits of $(\partial U,\lambda_{can}|_{\partial U}).$ Recall that $U$ is called non-degenerate if all the periodic Reeb orbits of $(\partial U,\lambda_{can}|_{\partial U})$ are non-degenerate. In this case for every $T>0$ there are finitely many Reeb orbits of period less than $T$ and in particular $ {\rm Spec}(\partial U)$ is discrete.

\begin{dfn}\label{dfn-symp-pm} For a homotopy class $\alpha$ of free loops in $M$, $U \in \mathcal C_M$ non-degenerate and $a>0, a\notin {\rm Spec}(\partial U)$ define the {\it filtered symplectic homology of $U$} by 
\[  \SH^a_*(U; \alpha)  =  \varprojlim_{H \in \mathcal H_{U, a}} \HF^{[a, \infty)}_*(H, \alpha).\] 
{\it The symplectic persistence module of $U$ in class $\alpha$} is given by the collection of data 
\[ \mathbb{SH}_{*, \alpha}(U)  = \left\{\{\SH^a_*(U; \alpha)\}_{a \in \R_{>0}}; \, \{\iota_{a,b}: \SH^a_*(U; \alpha) \to \SH^b_*(U; \alpha) \}_{a \leq b}\} \right\} \]
with $\iota_{a,b}$ being induced from corresponding filtered Hamiltonian Floer homologies.
\end{dfn}

There are two points which we wish to clarify regarding Definition \ref{dfn-symp-pm}. Firstly, if $0<a\leq b$ and $H\in \mathcal H_{U, a} \cap \mathcal H_{U, b}$ the map $\iota^H_{a,b}:\HF^{[a,+\infty)}_*(H, \alpha)\rightarrow \HF^{[b,+\infty)}_*(H, \alpha)$ is induced by the inclusion of Floer chain complexes\footnote{Here we think of a Floer chain complex ${\rm CF}^{[a,+\infty)}_*(H, \alpha)$ as a quotient ${\rm CF}^{[a,+\infty)}_*(H, \alpha)={\rm CF}^{(-\infty,+\infty)}_*(H, \alpha)/ {\rm CF}^{(-\infty,a)}_*(H, \alpha).$}. This map commutes with continuation maps and hence induces a map $\iota_{a,b}: \SH^a_*(U; \alpha) \to \SH^b_*(U; \alpha)$ for $a,b\notin {\rm Spec}(\partial U).$ 

Secondly, we only defined $\SH^a_*(U; \alpha)$ for $a\notin {\rm Spec}(\partial U).$ However, due to the non-degeneracy of $U$ we may extend the definition of $\SH^a_*(U; \alpha)$ to all $a>0$ by asking for all the bars in the barcode of $\mathbb{SH}_{*, \alpha}(U)$ to have left endpoints closed and right endpoints open. Indeed, one may show that for each $b>0, b\notin {\rm Spec}(\partial U),$ $\SH^b_*(U; \alpha)$ is a finite dimensional $\Z_2$-vector space as well as that if $[b,c]\cap {\rm Spec}(\partial U)=\emptyset$ then $\iota_{b,c}$ is an isomorphism. Since ${\rm Spec}(\partial U)$ is discrete, for every $a\in {\rm Spec}(\partial U)$ there exists $\varepsilon >0$ such that $[a-\varepsilon,a+\varepsilon]$ contains no other points from ${\rm Spec}(\partial U).$ Now, we define $\SH^a_*(U; \alpha)$ by asking for $\iota_{a,a+\varepsilon}$ to be an isomorphism, or, more formally, by setting $\SH^a_*(U; \alpha)=\varprojlim\limits_{t\in (a,a+\varepsilon]} (\SH^t_*(U; \alpha),\iota).$ An interested reader may check that all the bars in the barcode of a symplectic persistence module defined this way have endpoints in ${\rm Spec}(\partial U)$, each point in ${\rm Spec}(\partial U)$ is an endpoint of finitely many bars and all bars have left endpoints closed and right endpoints open.

We are mainly interested in the functorial properties of filtered symplectic homology, which are expressed by the following proposition.

\begin{prop} \label{fp-sh} Let $U, V, W \in \mathcal C_M$ be non-degenerate.
\begin{itemize}
\item[(1)] If $U \xhookrightarrow{\phi} V$ (recall that this means there exists a ${\tilde \pi}_1$-trivial Liouville embedding from $U$ to $V$), then there exists a persistence morphism $\h_{\phi}: \mathbb{SH}_{*, \alpha}(V) \to \mathbb{SH}_{*, \alpha}(U)$. Moreover, if $U \xhookrightarrow{\phi} V \xhookrightarrow{\psi} W$, the following diagram commutes
\[ \xymatrix{
\mathbb{SH}_{*, \alpha}(W) \ar[rr]^{\mathfrak{h}_{\psi}} \ar@/_1.8pc/[rrrr]^{\h_{\psi \circ \phi}} && \mathbb{SH}_{*, \alpha}(V) \ar[rr]^{\mathfrak{h}_{\phi}} && \mathbb{SH}_{*,\alpha}(U)}. \]
\item[(2)] For $C,a>0$, there exists a canonical persistence isomorphism $r_C: \SH_{*}^a(U; \alpha) \xrightarrow{\simeq} \SH_{*}^{Ca}(CU; \alpha).$ These ismorphims satisfy $(r_C)^{-1}=r_{\frac{1}{C}}$ for all $C>0.$ Moreover for $C \geq 1$, we have a commutative diagram
\begin{equation*}
\xymatrix{
\SH_{*}^{Ca}(C U; \alpha)\ar[rd]_{\mathfrak{h}^{Ca}_{i}} && \SH_{*}^{a}(U; \alpha) \ar[ld]^{\iota^{{\mathbb {SH}_{*, \alpha}(U)}}_{a, Ca}}  \ar[ll]^{\simeq}_{r_C}  \\
& \SH_{*}^{Ca}(U; \alpha)}
\end{equation*}
where $\mathfrak h^{Ca}_{i}$ is the persistence morphism induced by inclusion $U \xhookrightarrow{i} CU$ and $\iota^{{\mathbb {SH}_{*, \alpha}(U)}}_{a, Ca}$ is the comparison map of the persistence module $\mathbb {SH}_{*, \alpha}(U)$.

Similarly, for $C\leq 1$, we have the commutative diagram 
\begin{equation*}
\xymatrix{
\SH_{*}^{Ca}(C U; \alpha) && \SH_{*}^{a}(U; \alpha) \ar[ll]^{\simeq}_{r_C}  \\
& \SH_{*}^{Ca}(U; \alpha) \ar[lu]^{\mathfrak{h}^{Ca}_{i}} \ar[ru]_{\iota^{{\mathbb {SH}_{*, \alpha}(U)}}_{Ca, a}}}
\end{equation*}
where $\mathfrak h^{Ca}_{i}$ is the persistence morphism induced by inclusion $CU \xhookrightarrow{i} U$ and $\iota^{{\mathbb {SH}_{*, \alpha}(U)}}_{Ca, a}$ is the comparison map of the persistence module $\mathbb {SH}_{*, \alpha}(U).$
\item[(3)] If $\phi: U \rightarrow V$ is a ${\tilde \pi}_1$-trivial Liouville embedding, then for any positive $C$ and $a$, it holds $\mathfrak h_{\phi(C)}^{Ca} \circ r_C = r_C \circ \mathfrak h_{\phi}^a.$ In other words, the following diagram commutes:
\[ \xymatrix{ 
\SH^a_*(V; \alpha) \ar[rr]^-{\mathfrak h_{\phi}^a} \ar[d]_-{r_C}^-{\simeq} && \SH^a_*(U; \alpha) \ar[d]^-{r_C}_-{\simeq} \\
\SH^{Ca}_*(CV; \alpha) \ar[rr]_-{\mathfrak h_{\phi(C)}^{Ca}} && \SH^{Ca}_*(CU; \alpha)}\]
\item[(4)] Let $U \subset V$ and suppose a Liouville embedding $\phi: U \to V$ is isotopic to inclusion $i_U$ through Liouville embeddings, i.e. strongly unknotted. Then $\h_{\phi} = \h_{i_U}$.
\end{itemize}
\end{prop}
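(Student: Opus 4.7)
The plan is to verify each item by tracing through the Floer-theoretic definitions of $\SH$ and exploiting cofinal subfamilies of $\mathcal H_{U,a}$ and $\mathcal H_{V,a}$ matched to each construction, then passing to the inverse limit via the universal property.

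For Part (1), I would send $H\in\mathcal H_{U,a}$ to its push-forward $\phi_*H\in\mathcal H_{V,a}$ defined as $H\circ\phi^{-1}$ on $\phi(U)$ and zero elsewhere. The Liouville condition $\phi^*\lambda_{can}-\lambda_{can}=df$ implies $\mathcal A_H(z)=\mathcal A_{\phi_*H}(\phi\circ z)$ (since $df$ integrates to zero over closed loops), so $\phi$ sets up an action-preserving bijection between 1-periodic orbits of $H$ and those of $\phi_*H$ supported in $\phi(U)$; the $\tilde\pi_1$-trivial condition preserves the class $\alpha$, and constant orbits of $\phi_*H$ outside $\phi(U)$ only carry action $0<a$ and so are excluded by the window $[a,\infty)$. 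This yields canonical chain-level isomorphisms $\HF_*^{[a,\infty)}(H,\alpha)\cong\HF_*^{[a,\infty)}(\phi_*H,\alpha)$ intertwining continuation maps along $\preceq$. Composing with the structural projections $\SH^a_*(V;\alpha)\to\HF_*^{[a,\infty)}(\phi_*H,\alpha)$ and invoking the universal property of the inverse limit over $H\in\mathcal H_{U,a}$ produces $\mathfrak h_\phi$; functoriality follows from $(\psi\circ\phi)_*=\psi_*\circ\phi_*$, and commutation with the persistence maps $\iota_{a,b}$ holds at the chain level.

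For Parts (2) and (3), the organizing tool is the conformal symplectomorphism $\rho_C\colon T^*M\to T^*M$, $(q,p)\mapsto(q,Cp)$, which maps $U$ onto $CU$ and satisfies $\rho_C^*\lambda_{can}=C\lambda_{can}$. The rule $\tilde H(t,q,p)=C\cdot H(t,q,p/C)$ bijects $\mathcal H_{U,a}$ with $\mathcal H_{CU,Ca}$, conjugating 1-periodic orbits by $\rho_C$ and multiplying the action spectrum by $C$; this produces chain-level isomorphisms $\HF_*^{[a,\infty)}(H,\alpha)\cong\HF_*^{[Ca,\infty)}(\tilde H,\alpha)$ whose inverse limit is $r_C$, and $r_C^{-1}=r_{1/C}$ is immediate. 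For the commuting triangles, I would pick two cofinal subfamilies of $\mathcal H_{CU,Ca}$---those of the form $\tilde H$, and the push-forwards under $i$ of elements of $\mathcal H_{U,Ca}$---and use a common monotone refinement between them to identify $\mathfrak h_i^{Ca}\circ r_C$ with the internal persistence comparison $\iota_{a,Ca}$ of $\mathbb{SH}_{*,\alpha}(U)$ (and similarly in the $C\leq 1$ case). Part (3) reduces to the identity $\phi(C)=\rho_C\circ\phi\circ\rho_C^{-1}$, which yields $(\phi(C))_*\tilde H=\widetilde{\phi_*H}$, so push-forwards and rescalings commute on each Floer complex.

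Part (4) will be the main obstacle. Given a Liouville isotopy $\{\phi_t\}$ from $i_U$ to $\phi$, the time-dependent vector field $X_t=\dot\phi_t\circ\phi_t^{-1}$ on $\phi_t(U)$ is Hamiltonian: differentiating $\phi_t^*\lambda_{can}-\lambda_{can}=df_t$ in $t$ shows $\phi_t^*(\mathcal L_{X_t}\lambda_{can})$ is exact, and Cartan's formula then forces $i_{X_t}\omega_{can}$ to be exact on each $\phi_t(U)$. Extending the generating function by a cut-off to a compactly supported Hamiltonian on a neighborhood of $\bigcup_t\phi_t(U)$ inside $V$ produces a compactly supported Hamiltonian isotopy $\Psi_t$ of $V$ with $\Psi_t|_U=\phi_t$, and the identity $(\phi_t)_*H=H\circ\Psi_t^{-1}$ shows that the family $\{(\phi_t)_*H\}_t$ lies on a single Hamiltonian-conjugacy orbit. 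Standard naturality of filtered Floer homology under compactly supported Hamiltonian isotopies then gives that the identifications $\HF_*^{[a,\infty)}(H,\alpha)\cong\HF_*^{[a,\infty)}((\phi_t)_*H,\alpha)$ agree for all $t$; taking the inverse limit yields $\mathfrak h_{\phi_0}=\mathfrak h_{\phi_1}$. The delicate technical point is arranging the cut-off so that it does not introduce new 1-periodic orbits within the action window $[a,\infty)$; this is handled by restricting to a cofinal family of $H$ whose push-forwards are well separated from the cut-off region and by placing the cut-off in a thin shell where only orbits of negligible or negative action can appear.
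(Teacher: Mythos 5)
Your overall architecture is the same one the paper has in mind: the paper does not write out a proof of this proposition at all, but defers to the star-shaped case in \cite{PRSZ17}, and your construction of $\h_\phi$ via the push-forward $\phi_*H=H\circ\phi^{-1}$ extended by zero, of $r_C$ via the fiberwise dilation $(q,p)\mapsto(q,Cp)$ and the rescaled Hamiltonians $C\cdot H(t,q,p/C)$, and of isotopy invariance in (4) via a compactly supported Hamiltonian extension of the Liouville isotopy is exactly that standard transfer-map argument. The action bookkeeping (exactness of $\phi^*\lambda_{can}-\lambda_{can}$ killing the boundary term, the window $[a,\infty)$ with $a>0$ discarding the constant orbits outside $\phi(U)$, multiplicativity of the action under dilation) is correct, and the identity $\phi(C)=\rho_C\circ\phi\circ\rho_C^{-1}$ is indeed the right reduction for (3).

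The genuine gap is the grading. You assert a ``canonical chain-level isomorphism'' $\HF_*^{[a,\infty)}(H,\alpha)\cong\HF_*^{[a,\infty)}(\phi_*H,\alpha)$, but the subscript $*$ is precisely the point the paper singles out as the non-routine part of this proposition. In this paper the Conley--Zehnder grading is defined using trivializations adapted to the vertical Lagrangian distribution $T^vT^*M$, and a Liouville embedding $\phi$ need not preserve that distribution, so it is not automatic that $\ind_{\HF}(z)=\ind_{\HF}(\phi\circ z)$. The discrepancy is $2\mu(\theta)$ for a loop of symplectic matrices built from the two trivializations, and the paper shows this Maslov index is controlled by the Maslov class of the pair $(T^vT^*M,(d\phi)^{-1}(T^vT^*M))$, which it identifies with the Maslov class of the exact Lagrangian embedding $\phi|_{0_M}:M\to T^*M$; its vanishing is then imported from Kragh's theorem (\cite[Appendix E]{Kra13}). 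Without this input, none of the maps in (1)--(4) are known to be degree-preserving, so the persistence modules $\mathbb{SH}_{*,\alpha}$ are not actually being compared degree by degree. (A secondary, more routine point you elide: the push-forward only identifies the generators; to get a chain map one must also confine the Floer trajectories of $\phi_*H$ to $\phi(U)$, which is where the positivity of the action window and the standard confinement lemmas of \cite{BPS03} enter.)
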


The proof of Proposition \ref{fp-sh} can be derived from Definition \ref{dfn-symp-pm} and is left to an interested reader. The proof is analogous to the proof in the case of star-shaped domains which is treated in \cite{PRSZ17}. The main difference between the two cases is in the way the grading is defined. Indeed, in the case of star-shaped domains this is done using a symplectic trivialization of the tangent bundle over a disc capping the orbit, while, as explained above, we use the vertical Lagrangian distribution. Hence, we should prove that in our case $\h_f$ preserves grading. We do this in the lemma that follows.

\begin{lemma}
Let $M$ be a closed, orientable manifold, $U,V \in \mathcal{C}_M$ non-degenerate domains and let $f:U\rightarrow V$ be a ${\tilde \pi}_1$-trivial Liouville embedding. Then $\h_f$ preserves grading.
\end{lemma}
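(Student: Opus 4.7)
The plan is to realize $\mathfrak h_f$ at the chain level as the orbit-identification $z \mapsto f \circ z$ and then to compare the Conley--Zehnder indices of corresponding orbits. First, I would construct $\mathfrak h_f$ via a cofinal system of Hamiltonians $H \in \mathcal H_{V,a}$ supported in a small neighborhood of $f(U)\subset V$, using the standard transfer construction as in \cite{Web06}. For such an $H$, the pull-back $H \circ f^{-1}$ (extended by zero) lies in $\mathcal H_{U,a}$; since $f$ is a symplectomorphism onto its image, $f_* X_{H \circ f^{-1}} = X_H$, which sets up a bijection $z \mapsto f\circ z$ between $1$-periodic orbits of $H \circ f^{-1}$ in $U$ and of $H$ in $f(U)$, preserving the class $\alpha$ by the $\tilde\pi_1$-triviality of $f$. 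On the corresponding Floer generators, $\mathfrak h_f$ is induced by this identification, so it is enough to prove the equality
\[ \ind_{HF}(z) = \ind_{HF}(f \circ z), \]
where each side is computed via the vertical Lagrangian distribution along the respective orbit.

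Next, I would transport the trivialization used for $z$ through the symplectic bundle isomorphism $df\co z^*(TT^*M) \to (f\circ z)^*(TT^*M)$. Fix a symplectic trivialization $\Phi\co S^1 \times \R^{2n} \to z^*(TT^*M)$ with $\Phi_t(\{0\}\times \R^n) = T^v_{z(t)}T^*M$, and set $\Phi' := df \circ \Phi$. The intertwining $df \circ d\phi^{H\circ f^{-1}}_t = d\phi^H_t \circ df$ shows that the symplectic path used to compute $\ind_{HF}(z)$ via $\Phi$ coincides with the path obtained for $f\circ z$ via $\Phi'$. However, $\Phi'$ sends $\{0\} \times \R^n$ to $df(T^v_{z(t)}T^*M)$ rather than to the ``true'' vertical $T^v_{f(z(t))}T^*M$ required by admissibility of a trivialization used for $\ind_{HF}(f\circ z)$. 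The two Conley--Zehnder indices therefore differ by a Maslov-type invariant of the loop of pairs of Lagrangian subspaces
\[ t \mapsto \bigl( df(T^v_{z(t)}T^*M),\, T^v_{f(z(t))}T^*M \bigr) \]
in $(f \circ z)^*(TT^*M)$, and it remains to show this invariant vanishes.

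The main obstacle is precisely this Maslov vanishing, and it is here that the $\tilde\pi_1$-triviality of $f$ is essential. The subbundle $T^v T^*M \subset TT^*M$ is globally defined on $T^*M$, and after fixing a compatible almost complex structure $J$ on $T^*M$ (for instance, one induced by an auxiliary Riemannian metric on $M$), the Maslov class of $T^vT^*M$ along a free loop $\gamma$ in $T^*M$, computed with respect to a trivialization of $\gamma^*(TT^*M)$ obtained from a free homotopy of $\gamma$ into the zero section $0_M \simeq M$, is an integer depending only on $[\pi\circ\gamma]\in\tilde\pi_1(M)$; orientability of $M$ ensures these trivializations are well-defined up to homotopy. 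Since $f_*\alpha = \alpha$, the loops $z$ and $f\circ z$ represent the same class in $\tilde\pi_1(M)$, so choosing free homotopies to a common reference loop in $0_M$ produces coherent trivializations of $z^*(TT^*M)$ and $(f\circ z)^*(TT^*M)$ in which $T^v|_z$ and $T^v|_{f\circ z}$ yield equal Maslov numbers. A final check shows that the identification induced by $df$ differs from this coherent identification by a loop of symplectic automorphisms whose induced action on Lagrangian subspaces has Maslov index zero, forcing the relative Maslov invariant above to vanish. This yields $\ind_{HF}(z) = \ind_{HF}(f\circ z)$ and shows that $\mathfrak h_f$ preserves grading.
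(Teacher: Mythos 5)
Your setup and the first two steps coincide with the paper's argument: realizing $\mathfrak h_f$ on generators as $z\mapsto f\circ z$, transporting the vertical-aligned trivialization through $df$, and identifying the index discrepancy with the Maslov index of the loop of Lagrangian pairs $t\mapsto\bigl(df(T^v_{z(t)}T^*M),\,T^v_{f(z(t))}T^*M\bigr)$. (As a minor point, your directions are slightly scrambled: for $H\in\mathcal H_U$ one pushes forward to $H\circ f^{-1}$ on $V$, not the other way around, but this does not affect the substance.)

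The genuine gap is in your final paragraph. The statement that ``the identification induced by $df$ differs from the coherent identification by a loop of symplectic automorphisms whose induced action on Lagrangian subspaces has Maslov index zero'' is precisely the entire content of the lemma, and it does not follow from $f_*\alpha=\alpha$ together with homotopy-invariance of the Maslov number of $T^vT^*M$ along free loops. Your observation about $T^v$ depending only on $[\pi\circ\gamma]$ compares the vertical distribution with itself along homotopic loops; the quantity you actually need to kill compares $df(T^v)$ with $T^v$. After homotoping $z$ into the zero section (which is where $\tilde\pi_1$-triviality and the fiberwise star-shaped hypothesis are really used), this relative Maslov index becomes the Maslov class of the pair of Lagrangian subbundles $\bigl((df)^{-1}(T^vT^*M),\,T^vT^*M\bigr)$ over $M$ evaluated on $[\bar z]$, and, using transversality of $TM$ and $T^vT^*M$, it equals the Maslov class $\mu_f$ of the exact Lagrangian embedding $f|_{0_M}:M\to T^*M$ evaluated on $[\bar z]$. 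There is no elementary reason for $\mu_f$ to vanish: the paper invokes Kragh's theorem \cite[Appendix E]{Kra13} that exact Lagrangian embeddings in cotangent bundles have vanishing Maslov class, a deep input from the nearby Lagrangian circle of ideas. Without supplying this (or an equivalent) ingredient, your ``final check'' is an unproved assertion and the argument does not close.
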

\begin{proof}
For $H\in \mathcal H_{U}$, denote by $f_* H\in \mathcal H_{V}$ the extension by zero of $H\circ f^{-1}.$ Recall from \cite{PRSZ17} that $\h_f$ is induced by the map $f_*$ between Floer chain complexes of $H$ and $f_*H.$ This map sends a periodic orbit $z\in \mathcal P(H; \alpha)$ to a periodic orbit $f\circ z \in \mathcal P(f_*H; \alpha).$ Hence, we need to prove that $\ind_{\HF}(z)=\ind_{\HF}(f\circ z)$ where both indices are calculated using the vertical Lagrangian distribution. To this end, let $\Phi:S^1\times \R^{2n}\rightarrow z^*(TT^*M), \Psi: S^1\times \R^{2n}\rightarrow (f\circ z)^*(TT^*M)$ be symplectic trivializations such that for all $t\in S^1$ it holds
$$\Phi_t(0\times \R^n)=T_{z(t)}^vT^*M,~~\Psi_t(0\times \R^n)=T_{f(z(t))}^vT^*M.$$
By definition 
$$\ind_{\HF}(z)=\ind_{\rm CZ} (\Phi^{-1}_t \circ d\phi^H_t(z(0)) \circ \Phi_0),$$
as well as 
$$\ind_{\HF}(f\circ z)=\ind_{\rm CZ} (\Psi^{-1}_t \circ d\phi^{f_*H}_t(f(z(0))) \circ \Psi_0).$$
Since $f$ is a symplectic embedding, we have that $ d\phi^H_t = (df)^{-1} \circ d\phi^{f_*H}_t \circ df$ and thus
\begin{align*}
\ind_{\HF}(z) & = \ind_{\rm CZ}(\Phi^{-1}_t \circ (df)^{-1} \circ d\phi^{f_*H}_t \circ df \circ \Phi_0) \\
& = \ind_{\rm CZ}(\Phi^{-1}_t \circ (df)^{-1} \circ \Psi_t \circ \Psi_t^{-1} \circ d\phi^{f_*H}_t \circ \Psi_0 \circ \Psi_0^{-1} \circ df \circ \Phi_0)\\
& = \ind_{\rm CZ}(\theta(t)\circ \Phi^{-1}_0 \circ (df)^{-1} \circ \Psi_0 \circ \Psi_t^{-1} \circ d\phi^{f_*H}_t \circ \Psi_0 \circ \Psi_0^{-1} \circ df \circ \Phi_0),
\end{align*}
where $\theta(t)=\Phi_t^{-1}\circ (df)^{-1} \circ \Psi_t \circ \Psi_0^{-1} \circ df \circ \Phi_0$ is a loop of symplectic matrices. One readily checks that $\theta(0)=\theta(1)=\mathbb{1}$ and hence, using loop and naturality properties of the Conley-Zehnder index, we have that 
\begin{align*}
\ind_{\HF}(z) & = \ind_{\rm CZ}(\Phi^{-1}_0 \circ (df)^{-1} \circ \Psi_0 \circ \Psi_t^{-1} \circ d\phi^{f_*H}_t \circ \Psi_0 \circ \Psi_0^{-1} \circ df \circ \Phi_0) +2\mu(\theta) \\
& = \ind_{\rm CZ} (\Psi^{-1}_t \circ d\phi^{f_*H}_t \circ \Psi_0) +2\mu(\theta) \\
& = \ind_{\HF}(f\circ z) + 2\mu(\theta),
\end{align*}
where $\mu$ denotes the Maslov index. Thus, our goal is to show that $\mu(\theta)=0.$

Fix a Lagrangian subspace $V_0\subset \R^{2n}$ given by $V_0=\Phi_0^{-1}((df)^{-1}(T^v_{f(z(0))}T^*M)).$ Loop $\theta$ induces a loop of Lagrangian subspaces $\Lambda(t)=\theta(t)V_0$ and from properties of $\Phi$ and $\Psi$ it follows that $\Lambda(t)=\Phi^{-1}_t((df)^{-1}(T^v_{f(z(t))}T^*M)).$ Let $G:[0,1]\times S^1\rightarrow U$ be a homotopy such that $G_0(t)=z(t),~G_1(t)=\bar{z}(t)$ for $\bar{z}:S^1\rightarrow M \subset U$ (such $G$ exists because $U$ is fiberwise star-shaped). Let $\widetilde{\Phi}:[0,1]\times S^1 \times \R^{2n} \rightarrow G^*(TT^*M)$ be a symplectic trivialization such that for all $t\in S^1,~ \widetilde{\Phi}_{0,t}=\Phi_t$ and for all $s\in [0,1], t\in S^1$ it holds\footnote{The existence of such a trivialization $\widetilde{\Phi}$ follows from an argument similar to the proof of Lemma 1.7 in \cite{AS06}. In \cite{AS06}, $\widetilde{\Phi}_{1,t}$ is predetermined and hence $\widetilde{\Phi}_{s,t}(0\times \R^n)=T^v_{G(s,t)}T^*M$ only holds for $s=0,1.$ One may notice that this weaker condition would also be sufficient for our purposes.} $\widetilde{\Phi}_{s,t}(0\times \R^n)=T^v_{G(s,t)}T^*M.$ Denote by $\bar{\Phi}_t=\widetilde{\Phi}_{1,t}:S^1\times \R^{2n}\rightarrow \bar{z}^*(TT^*M).$ Now, $\widetilde{\Phi}_{s,t}^{-1}((df)^{-1}(T^v_{f(G(s,t))}T^*M))$ provides a homotopy between loops $\Lambda(t)$ and $\bar{\Lambda}(t):=\bar{\Phi}_t^{-1}((df)^{-1}(T^v_{f(\bar{z}(t))}T^*M))$ of Lagrangian subsapaces of $\R^{2n}.$ Thus,
\begin{align*}
\mu(\theta)=\mu(\Lambda)=\mu(\bar{\Lambda})=\mu(\bar{\Phi}_t^{-1}((df)^{-1}(T^v_{f(\bar{z}(t))}T^*M)), 0\times \R^n) \\ =\mu(\bar{\Phi}_t^{-1}((df)^{-1}(T^v_{f(\bar{z}(t))}T^*M)), \bar{\Phi}_t^{-1}(T^v_{\bar{z}(t)}T^*M)),  
\end{align*}
where $\mu(\cdot,\cdot)$ denotes the relative Maslov index for the pair of Lagrangian paths, see \cite{RS93}.

Now, notice that $(df)^{-1}(T^vT^*M)$ and $T^vT^*M$ are Lagrangian subbundles of the symplectic vector bundle $TT^*M$ over $M$, and define their Maslov class, $\mu_{T^vT^*M,(df)^{-1}(T^vT^*M)}\in H^1(M; \Z)$, see \cite{AudHolo} for the definition and properties of the Maslov class of a pair of Lagrangian subbundles. Moreover, it holds
\begin{equation}
\mu(\bar{\Phi}_t^{-1}((df)^{-1}(T^v_{f(\bar{z}(t))}T^*M)), \bar{\Phi}_t^{-1}(T^v_{\bar{z}(t)}T^*M))=\mu_{T^vT^*M,(df)^{-1}(T^vT^*M)}([\bar{z}]).
\end{equation}
Since the bundles $T^vT^*M$ and $TM$ are fiberwise transversal Lagrangian subbundels of $TT^*M$, we have that $\mu(TM,T^vT^*M)=0$ and hence
\begin{equation}
\mu_{T^vT^*M,(df)^{-1}(T^vT^*M)}=\mu_{TM,T^vT^*M}+\mu_{T^vT^*M,(df)^{-1}(T^vT^*M)}=\mu_{TM,(df)^{-1}(T^vT^*M)}.    
\end{equation}
Usind $df$ to identify $TT^*M$ and $f^*(TT^*M)$ as symplectic vector bundles over $M,$ we have that
\begin{equation}
\mu_{TM,(df)^{-1}(T^vT^*M)}=\mu_{TM,f^*(T^vT^*M)}=\mu_f,
\end{equation}
where $\mu_f$ denotes the Maslov classs of a Lagrangian immersion $f:M\rightarrow T^*M.$ Since $f$ is actually an exact Lagrangian embedding, it follows from \cite[Appendix E]{Kra13} that $\mu_f=0$ and we have $\mu(\theta)=\mu_f([\bar{z}])=0,$ which proves that $\ind_{\HF}(z)=\inf_{\HF}(f\circ z).$
\end{proof}

\medskip

As explained in the introduction, in order to use standard (additive) parametrization of persistence modules, we also consider a logarithmic version of $\mathbb{SH}_{*, \alpha}(U)$. 

\begin{dfn} \label{dfn-log-sm} For $t\in \R,$ let $S^t_{*}(U; \alpha) = \SH^{e^t}_{*}(U; \alpha)$. Define a logarithmic version of the symplectic persistence module associated to $U \in \mathcal C_M$ as
\[ \mathbb S_{*, \alpha}(U) = \left\{\{S^t_{*, \alpha}(U)\}_{t \in \R}; \{\iota_{s,t}=\iota^{\SH}_{e^s,e^t}: S^s_{*}(U; \alpha) \to S^t_{*}(U; \alpha)\}_{s\leq t} \right\}.\]
\end{dfn}

\subsection{Proof of Theorem \ref{TST}}
\begin{proof} The second inequality directly follows from the first one and Proposition \ref{SvsR}. Thus we will prove the first inequality. By Definition \ref{dfn-SBM}, for any $\varepsilon>0$, there exists $C \geq 1$ with $\ln C \leq d_{SBM}(U,V) + \varepsilon$ such that 
\begin{itemize}
\item[$(\ast)$] $ U/C \xhookrightarrow{\phi} V \xhookrightarrow{\psi} CU$ and $\psi \circ \phi$ is strongly unknotted;
\item[$(\ast\ast)$] $ V/C \xhookrightarrow{\psi(C^{-1})} U\xhookrightarrow{\phi(C)} CV$ and $\phi(C) \circ \psi(C^{-1})$ is strongly unknotted.
\end{itemize}

Then, (1) and (4) in Proposition \ref{fp-sh} together with $(\ast)$ imply for any positive $a$, $\mathfrak h^a_{\phi} \circ \mathfrak h^a_{\psi} = \mathfrak h^a_{\psi \circ \phi} = \mathfrak h^a_{i}$ where $i$ is the inclusion $i: U/C \to CU.$ Moreover, (2) in Proposition \ref{fp-sh} implies that the following diagram commutes
\begin{equation} \label{sh-com}
\xymatrixcolsep{4pc} \xymatrix{
\SH^a_{*}(CU; \alpha) \ar[r]^{\mathfrak h^a_{\psi}} \ar[rdd]_{\mathfrak h^a_{i'}} \ar@/_2pc/[rr]_-{\mathfrak h_{i}^a} & \SH^a_*(V; \alpha)\ar[r]^-{\mathfrak h^a_{\phi}}&  \SH_{*}^a(U/C; \alpha)  \ar[dd]_{\simeq}^{r_C} \\
& & & \\
\SH_{*}^{\frac{a}{C}}(U; \alpha) \ar[r]_{\iota^{\mathbb{SH}_{*,\alpha}(U)}_{a/C, a}} \ar[uu]_-{\simeq}^-{r_C} & \SH_{*}^a(U; \alpha) \ar[r]_{\iota^{\mathbb{SH}_{*,\alpha}(U)}_{a, Ca}} \ar[ruu]_{\mathfrak h^a_{i''}} & \SH_{*}^{Ca}(U; \alpha)}
\end{equation}
where $\mathfrak h_{i'}$ is induced by the inclusion $i': U \to CU$ and $\mathfrak h_{i''}$ is induced by the inclusion $i'': U/C \to U$. Now set 
\begin{itemize}
    \item{} $\Psi : = \mathfrak h_{\psi} \circ r_C$ where $\Psi^{a/C} = \mathfrak h_{\psi}^a \circ r_C: \SH_*^{\frac{a}{C}}(U; \alpha) \to \SH_*^a(V; \alpha)$;
    \item{} $\Phi: = r_C \circ \mathfrak h_{\phi}$ where $\Phi^a = r_C \circ \mathfrak h^a_{\phi}: \SH_*^a(V; \alpha) \to \SH^{Ca}(U; \alpha)$.
\end{itemize} 
For any positive $a$, (\ref{sh-com}) implies 
\begin{align*}
    \Phi^a \circ \Psi^{a/C} & = (r_C \circ \mathfrak h_{\phi}^a) \circ (h_{\psi}^{a} \circ r_C) \\
    & = r_C \circ \mathfrak h_{i''}^a \circ \mathfrak h_{i'}^a \circ r_C \\
    & = \iota^{\mathbb{SH}_{*,\alpha}(U)}_{a, Ca} \circ \iota^{\mathbb{SH}_{*,\alpha}(U)}_{a/C, a} = \iota^{\mathbb{SH}_{*,\alpha}(U)}_{a/C, Ca}.
\end{align*}

Similarly to (\ref{sh-com}) , (1), (2) and (4) in Proposition \ref{fp-sh} together with ($\ast\ast$) give a commutative diagram 
\begin{equation} \label{sh-com-2}
\xymatrixcolsep{4pc} \xymatrix{
\SH^a_{*}(CV; \alpha) \ar[r]^{\mathfrak h^a_{\phi(C)}} \ar[rdd]_{\mathfrak h^a_{j'}} \ar@/_2pc/[rr]_-{\mathfrak h_{j}^a} & \SH^a_*(U; \alpha)\ar[r]^-{\mathfrak h^a_{\psi(C^{-1})}}&  \SH_{*}^a(V/C; \alpha)  \ar[dd]_{\simeq}^{r_C} \\
& & & \\
\SH_{*}^{\frac{a}{C}}(V; \alpha) \ar[r]_{\iota^{\mathbb{SH}_{*,\alpha}(V)}_{a/C, a}} \ar[uu]_-{\simeq}^-{r_C} & \SH_{*}^a(V; \alpha) \ar[r]_{\iota^{\mathbb{SH}_{*,\alpha}(V)}_{a, Ca}} \ar[ruu]_{\mathfrak h^a_{j''}} & \SH_{*}^{Ca}(V; \alpha)}
\end{equation}
where $\mathfrak h_{j}$, $\mathfrak h_{j'}$ and $\mathfrak h_{j''}$ are induced by the inclusions $j: V/C \to CV$, $j': V \to CV$ and $j'': V/C \to V$ respectively. Moreover, applied to $\Psi$ and $\Phi$ which we defined above, (3) in Proposition \ref{fp-sh} gives 
\[ \Phi^{a/C} =r_C \circ \mathfrak h_{\phi}^{a/C} =  \mathfrak h_{\phi(C)}^a \circ r_C  \,\,\,\,\mbox{and}\,\,\,\, \Psi^a = h_{\psi}^{Ca} \circ r_C = r_C \circ h_{\psi(C^{-1})}^{a}.\]
Then commutative diagram (\ref{sh-com-2}) implies 
\begin{align*}
\Psi^a \circ \Phi^{a/C} & = (h_{\psi}^{Ca} \circ r_C) \circ (r_C \circ \mathfrak h_{\phi}^{a/C}) \\
& = r_C \circ h_{\psi(C^{-1})}^{a} \circ \mathfrak h_{\phi(C)}^a \circ r_C\\
& = r_C \circ \mathfrak h^a_{j''} \circ \mathfrak h^a_{j'} \circ r_C \\
& = \iota^{\mathbb{SH}_{*,\alpha}(V)}_{a, Ca} \circ \iota^{\mathbb{SH}_{*,\alpha}(V)}_{a/C, a} = \iota^{\mathbb{SH}_{*,\alpha}(V)}_{a/C, Ca}. 
\end{align*}
Therefore, passing to the logarithmic version of symplectic persistence modules defined in Definition \ref{dfn-log-sm}, the existence of the pair $(\Phi, \Psi)$ implies that $\mathbb S_{*, \alpha}(U)$ and $\mathbb S_{*, \alpha}(V)$ are $(\ln C)$-interleaved. Hence, by Theorem \ref{iso-thm}
\[ d_{bottle}(\mathbb B_{*,\alpha}(U), \mathbb B_{*,\alpha}(V)) = d_{inter}(\mathbb {S}_{*,\alpha}(U), \mathbb {S}_{*,\alpha}(V)) \leq \ln C \leq d_{SBM}(U,V) + \varepsilon.\]
We draw the conclusion by letting $\varepsilon \rightarrow 0.$ \end{proof}

\section{Filtered homology of the free loop space}

In this section we review basic notions about the homology of the free loop space filtered by energy and show how this filtered homology relates to symplectic homology of the unit codisc bundle.

\subsection{Morse-Bott perspective} \label{ss-Mor-B}

Let $(M,g)$ be a closed, orientable, Riemannian manifold, $\alpha$ a homotopy class of free loops in $M$ and $\mathcal L_\alpha(M)$ the space of smooth loops in $M$ in class $\alpha$. Recall that {\it the energy functional} $E_g: \mathcal L_{\alpha}(M) \to \R$ is defined as $E_g(\gamma) = \int_0^1 \frac{||\dot\gamma||^2_g}{2} dt$ for any $\gamma \in \mathcal L_{\alpha} (M)$. This functional is never Morse, but rather Morse-Bott in a generic situation. In this subsection we briefly review some basic notions of Morse-Bott homology in the context of $E_g$. Our exposition mostly follows Section 4 in \cite{AS09}, which is based on \cite{AM} and \cite{Fra}. For other treatments of this topic, see \cite{Kli78, Oan14}.

Let $f:W\rightarrow \R$ be a smooth function on a Hilbert manifold $W$ and assume that $ {\rm Crit} (f)$ consists of a disjoint union of closed submanifolds of $W.$ Hessian, ${\rm Hess}(f)_p$, at a point $p\in {\rm Crit} (f)$ is a bilinear form on $T_pW$, and we have that $T_p {\rm Crit}(f) \subset \ker ({\rm Hess}(f)_p).$ Let $N\subset {\rm Crit}(f)$ be a connected component and $p\in N$ a critical point. 

We define {\it nullity} of $p$ to be equal to $\dim (\ker ({\rm Hess}(f)_p)) - 1$ and {\it index} of $p$ to be the maximal dimension of a subspace of $T_pW$ on which  ${\rm Hess}(f)_p$ is negative definite. Both index and nullity are constant along $N$ and hence we may define index and nullity of $N$ as index and nullity of any point in $N.$ $N$ is said to be a {\it non-degenerate critical submanifold of} $f$ if $\ker ({\rm Hess}(f)_p) = T_p {\rm Crit}(f)$ for all $p\in N$ or equivalently if nullity of $N$ equals $\dim N -1.$

We consider $E_g$ as a functional on the space $W^{1,2}(S^1,M)\supset \mathcal L_{\alpha} (M)$ which is a Hilbert manifold. Critical points of $E_g$ are closed geodesics (this includes constant loops too). By a closed geodesic, we mean a closed curve $\gamma: S^1 \to M$ such that $\nabla_{\dot{\gamma}} \dot{\gamma} = 0$. In particular, $||\dot{\gamma}||_g^2 = \rm{constant}$. Constant geodesics form a critical submanifold diffeomorphic to $M.$ This critical submanifold is always non-degenerate and has index equal to 0 (see Proposition 2.4.6 in \cite{Kli78}).  On the other hand, any non-constant closed geodesic appears in an $S^1$-family corresponding to reparameterizations. More precisely, if $\gamma$ is a non-constant closed geodesic of constant speed, so is $s\cdot \gamma, s\in S^1$ given by $(s\cdot \gamma)(t)=\gamma(t+s).$ We say that a non-constant closed geodesic $\gamma$ is non-degenerate if $S^1\cdot \gamma$ is a non-degenerate critical submanifold, i.e. nullity of $S^1\cdot \gamma$ is zero.

\begin{dfn}[\cite{Abraham}]\label{bumpy}
A metric $g$ is called {\it bumpy} if all of its closed geodesics are non-degenerate. One may check that this definition is equivalent to $(U^*_gM,\lambda_{can})$ being a non-degenerate domain.
\end{dfn}

\begin{remark}
A generic Riemannian metric is bumpy, see \cite{Abraham,Anosov} for a precise statement.
\end{remark}

\begin{remark} In certain cases index and nullity of a closed geodesic $\gamma$ can be computed in a more direct way by analyzing Poincare return map and Jacobi vector fields along $\gamma.$ We will make this precise in Section \ref{sec-bulk} and use it to carry out calculations for the bulked spheres and multi-bulked surfaces. \end{remark}

 If ${\rm Crit}(E_g)$ consists only of non-degenerate critical submanifolds $E_g$ is called {\it Morse-Bott.} For a bumpy $g,$ $E_g$ is Morse-Bott and one may use it to define Morse-Bott homology. There are different approaches to constructing Morse-Bott homology (see \cite{Hurt} and references therein for finite dimensional cases) and we focus on the one described in \cite{Fra} which uses moduli spaces of flow lines with cascades. Let $g$ by a bumpy metric and pick an auxiliary Morse function $h$ on ${\rm Crit} (E_g)$, meaning Morse on each connected component of ${\rm Crit}(E_g).$ If $x\in {\rm Crit}(h),$ it follows that $x\in N,$ where $N\subset {\rm Crit}(E_g)$ is a connected critical submanifold of $E_g$ and we define {\it total index} of $x$ as
$$\ind_{E_g,h}(x)=\ind_{E_g}(N)+\ind_h(x),$$
where $\ind_{E_g}(N)$ denotes the index of $N$ as a critical submanifold and $\ind_h(x)$ denotes the standard Morse index. Slightly abusing the notation, throughout the paper we will write just $\ind$ when it is clear what is the index in question. Morse-Bott $k$-th chain group in homotopy class $\alpha$ is defined as
$$CMB_{k,\alpha}(E_g,h) = Span_{\Z_2} \big( \{ x\in {\rm Crit}(h) ~|~ [x]=\alpha, \ind_{E_g,h}(x)=k \} \big). $$
In order to define the differential, we introduce moduli spaces of flow lines with cascades. Fix two regular metrics\footnote{Regular means such that transversality is achieved in the definition of all the moduli spaces which appear. Such choice of metrics is generic.}, one on $W^{1,2}(S^1,M),$ the other one on ${\rm Crit} (E_g)$ and denote by $\nabla E_g$ and $\nabla h$ the gradient vector fields corresponding to these metrics. For $x,y \in {\rm Crit}(h)$ let
$$\mathcal{M}^{cas}_0(x,y)=\{ u: \R \rightarrow W^{1,2}(S^1,M) ~ | ~ \dot{u}= - \nabla h (u),~ u(-\infty)=x,~ u(+\infty)=y \}.$$
Note that $\mathcal{M}^{cas}_0(x,y)$ can only be non-empty if $x$ and $y$ belong to the same connected component of ${\rm Crit}(E_g).$ For $k\geq 1$ define $\mathcal{M}^{cas}_k(x,y)$ as the set of pairs $({\bf u}, {\bf t})$ where ${\bf u} = (u_1, \ldots , u_k)$ is a $k$-tuple of negative gradient flow lines $u_i:\R \rightarrow W^{1,2}(S^1,M),$ $$ \dot{u_i}=-\nabla E_g(u_i),$$
and ${\bf t}=(t_1, \ldots , t_{k-1})$  a $(k-1)$-tuple of non-negative numbers $t_i\geq 0$ such that
\begin{enumerate}
    \item $u_1(-\infty)\in W^u(x)$, $u_k(+\infty) \in W^s(y)$, where $W^u(x)$ and $W^s(y)$ denote respectively unstable and stable manifolds of $x$ and $y$ with respect to the flow of $-\nabla h.$
    \item For every $1\leq i \leq k-1$ there exists a negative gradient flow line $v_i:\R \rightarrow {\rm Crit}(E_g)$, $\dot{v_i}=-\nabla h(v_i)$ such that $$v_i(0)=u_i(+\infty),~ v_i(t_i)=u_{i+1}(-\infty).$$ 
\end{enumerate}

Now, $\R$ acts freely on each of $u_i$ by translations and thus $\R^k$ acts freely on $\mathcal{M}^{cas}_k(x,y)$ and we denote $$\bar{\mathcal{M}}^{cas}_k(x,y)= \mathcal{M}^{cas}_k(x,y) / \R^k \,\,\,\,\mbox{and}\,\,\,\, \bar{\mathcal{M}}^{cas}(x,y)= \bigcup_{k\geq 0} \bar{\mathcal{M}}^{cas}_k(x,y).$$
Regularity of the choice of metrics implies that $\bar{\mathcal{M}}^{cas}(x,y)$ is a smooth manifold of dimension $\ind_{E_g,h}(x)- \ind_{E_g,h}(y)-1 .$ When $\ind_{E_g,h}(x)= \ind_{E_g,h}(y)+1$ this manifold is zero-dimensional and compact, i.e. it is a finite set of points, and we denote by $n(x,y)$ the number of points in $\bar{\mathcal{M}}^{cas}(x,y)$ modulo 2. The Morse-Bott differential  $\partial: CMB_{k,\alpha}(E_g,h) \rightarrow CMB_{k-1,\alpha}(E_g,h),$ is given by
$$\partial x=\sum\limits_{y, \ind y=k-1} n(x,y)y.$$
It satisfies $\partial^2=0$ and the resulting Morse-Bott homology does not depend on the regular choices of two metrics, $h$ or $E_g.$ In fact, we have that
$$HMB_{k,\alpha}(E_g,h) \cong H_k( \mathcal  L_{\alpha}(M); \Z_2).$$
For our purposes it is essential to consider Morse-Bott chain complex together with the filtration by energy, i.e., we define
$$CMB^\lambda_{k,\alpha}(E_g,h) = Span_{\Z_2} \big( \{ x\in {\rm Crit}(h) ~|~ [x]=\alpha, \ind_{E_g,h}(x)=k, E_g(x) \leq \lambda \} \big). $$
Since $E_g$ decreases along the flow lines of $-\nabla E_g,$ $\partial$ restricts to $CMB^\lambda_{k,\alpha}(E_g,h)$ and we may define filtered Morse-Bott homology $HMB^\lambda_{k,\alpha}(E_g,h).$ In this case it holds
\begin{equation}\label{Filtered_Morse-Bott}
HMB^{\lambda}_{k,\alpha}(E_g,h) \cong H_k( \mathcal L^\lambda_{\alpha}(M) ; \Z_2),    
\end{equation}
and this isomorphism commutes with the maps induced from inclusions of sublevel sets $\{ E_g \leq \lambda \}.$

If $g$ is bumpy, then for all $\lambda\geq 0$ there are finitely many critical submanifolds in the sublevel $\{ E_g \leq \lambda \}$ and hence $CMB^\lambda_{k,\alpha}(E_g,h)$ is finitely generated (see Theorem 3.5 in \cite{Oan14} and references therein). This, together with (\ref{Filtered_Morse-Bott}) implies that the collection of data
\begin{equation}\label{homology}
\mathbb H_{*, \alpha}(M, g) = \left\{\begin{array}{l} \{H_*(\mathcal L_{\alpha}^{\lambda}(M,g); \Z_2)\}_{\lambda \in \R_{>0}};\\ \{\iota_{\lambda,\eta}: H_*(\mathcal L_{\alpha}^{\lambda}(M,g); \Z_2) \to H_*(\mathcal L_{\alpha}^{\eta}(M); \Z_2)\}_{\lambda \leq \eta}\end{array}\right\}
\end{equation}
forms a persistence module with $\Z_2$-coefficients, where $\iota_{\lambda, \eta}$ are induced by inclusion $\mathcal L_{\alpha}^{\lambda}(M,g) \hookrightarrow \mathcal L_{\alpha}^{\eta}(M,g)$ when $\lambda \leq \eta.$ Moreover, since the endpoints of bars in the barcode $\mathbb{B}(\mathbb H_{*, \alpha}(M, g))$ come from generators of $CMB^\lambda_{k,\alpha}(E_g,h),$ the barcode $\mathbb{B}(\mathbb H_{*, \alpha}(M, g))$ has only finitely many endpoints of bars below every fixed $\lambda\geq 0.$ These endpoints are equal to the energies of certain closed geodesics.

\begin{remark}\label{Below}
For isomorphism (\ref{Filtered_Morse-Bott}) to hold it is enough that all closed geodesics of energy not greater than $\lambda$ are non-degenerate. Indeed, one may apply the same considerations as above directly to $\mathcal L^\lambda_{\alpha}(M).$ 
\end{remark}

Let us sum up the important features of the above construction. Firstly, every non-constant, non-degenerate closed geodesic gives rise to a critical submanifold of $E_g$ diffeomorphic to $S^1.$ There exists a function on $S^1$ which has exactly 2 critical points of Morse index 0 and 1 (for example the standard height function). By picking the auxiliary function $h$ to be equal to such a function on each of the $S^1$-critical submanifolds, we obtain that to each non-constant, non-degenerate closed geodesic $\gamma$ correspond two critical points of $h$ whose total indices are equal to $\ind \gamma$ and $\ind \gamma+1.$ In other words, $\gamma$ produces two generators of the chain complex $CMB_{*,\alpha}(E_g,h),$ one in degree $\ind \gamma$ and the other one in degree $\ind \gamma + 1.$ On the other hand, critical submanifold of constant geodesics is diffeomorphic to $M$ and has index equal to 0. Hence it gives rise to critical points of $h$ whose total indices are equal to their Morse indices with respect to $h.$ In other words, if we view $h$ as a function on $M,$ each critical points of Morse index $k$ produces a generator of $CMB_{k,pt}(E_g,h).$ Finally the differential counts certain broken trajectories in $\mathcal L_\alpha M.$ Each broken trajectory can be viewed as a tuple of maps from a cylinder to $M$ connecting different closed geodesics. 

\subsection{The isomorphism with symplectic homology}\label{isomorphism}
In this subsection, we will elaborate a result which enables us to transfer computations from symplectic homology to the homology of the loop space. It states that, under certain parametrizations, filtered versions of these homologies are isomorphic as persistence modules, see Theorem \ref{sh-loop-thm}. We will use this result to describe the barcode of the symplectic persistence module associated to the unit cotangent bundle of metrics coming from our main geometric constructions, see Sections \ref{Proofs} and \ref{sec-bulk}.

The isomorphism between symplectic homology or Floer homology of the cotangent bundle and the homology of the loop space first appeared in \cite{Vit99}. Other versions of this isomorphism, constructed using different methods, have appeared in \cite{SW06,Web06,AS06,AS14, Abo15}. The version which fits our conventions is the one from \cite{Web06} and we give a short exposition of it below. Let us recall some notions first.

Given a closed, orientable, Riemannian manifold $(M, g)$ and a homotopy class $\alpha$ of free loops, denote the loop space of $M$ in class $\alpha$ by $\mathcal L_{\alpha} (M)$. We define the {\it length spectrum} of $g$ in class $\alpha,$ denoted by $\Lambda_\alpha,$ to be the set of lengths of all closed geodesics in class $\alpha.$ Recall that if $g$ is bumpy there are finitely many closed geodesics below any fixed energy level and hence $\Lambda_\alpha$ is discrete.

\begin{remark}\label{endpoints}
One may check that all the endpoints of all bars in $\mathbb{B}(\mathbb{SH}_*(U^*_g M; \alpha))$ belong to $\Lambda_\alpha.$ Indeed, it is enough to check that 
$$\iota_{a,b}:\SH_*^a(U^*_g M; \alpha) \rightarrow \SH_*^b(U^*_g M; \alpha)$$
is an isomorphism if $0<a\leq b$ are such that $[a,b]\cap \Lambda_\alpha=\emptyset.$ To prove this one considers a radially symmetric Hamiltonian $H(\xi)=h(\| \xi \|_{g^*})$ for $h:[0,+\infty) \rightarrow \R.$ If $h$ is a decreasing function such that $h|_{[0,1-\varepsilon]}=C,$ and $h|_{[1,+\infty)}=0,$ taking $C$ large enough (namely $C>b$) and $\varepsilon$ small enough one sees that there are no periodic orbits of $H$ in the action window $[a,b].$ This implies that 
$$\iota_{a,b}: \HF^{[a, \infty)}_*(H, \alpha) \rightarrow \HF^{[b, \infty)}_*(H, \alpha)$$
is an isomorphism and by taking $C\rightarrow +\infty,\varepsilon \rightarrow 0$ we get the desired conclusion. For more details see \cite{Web06}.
\end{remark}

Recall that
\[ \mathcal L_{\alpha}^{\lambda} (M,g) = \{ \gamma \in \mathcal L_{\alpha}(M) \,| \, E_g(\gamma) \leq \lambda\} .\]
We have expained in (\ref{homology}), that if $g$ is a bumpy metric then $\{H_*(\mathcal L_{\alpha}^{\lambda}(M,g); \Z_2)\}_{\lambda \in \R_{>0}}$ form a persistence module $\mathbb H_{*, \alpha}(M, g)$ such that $\mathbb{B}(\mathbb H_{*, \alpha}(M, g))$ has finitely many endpoints of bars below every fixed $\lambda.$ Moreover the endpoints of bars in $\mathbb{B}(\mathbb H_{*, \alpha}(M, g))$ are equal to energies of certain closed geodesics. We are now ready to state the result.

\begin{theorem} \label{sh-loop-thm} (\cite{Vit99,SW06,Web06,AS06,AS14,Abo15}) Let $(M, g)$ be a closed, orientable, Riemannian manifold with bumpy metric $g$ and $\alpha$ a homotopy class of free loops in $M$. There exists a family of isomorphisms
$$\Phi_a: \SH_*^a(U_g^*M ;\alpha) \to {H}_*(\mathcal L_{\alpha}^{a^2/2} (M); \Z_2),$$
for $a>0,$ which commute with comparison maps. In other words, under suitable parameterizations, $\Phi$ is a persistence isomorphism.
\end{theorem}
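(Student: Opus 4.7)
The statement is a persistence-module refinement of the classical isomorphism between symplectic homology of a unit codisc bundle and singular homology of the free loop space, established independently by Viterbo, Salamon-Weber, Weber, Abbondandolo-Schwarz, and Abouzaid. The plan is to adapt Weber's version \cite{Web06}, whose sign conventions agree with those of the paper, and to track the action and energy filtrations throughout so as to upgrade the identification to a persistence isomorphism with the parameter match $a \leftrightarrow a^2/2$.

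The heart of the argument is the action-energy correspondence for a cofinal family of Hamiltonians. I would work with radial functions $H^N(q,p) = h_N(\|p\|_{g^*}) \in \mathcal H_{U_g^*M, a}$ compactly supported in $\mathrm{Int}(U_g^*M)$, chosen as a cofinal sequence in $(\mathcal H_{U_g^*M, a}, \preceq)$ whose derivative profile sweeps through all periods in the Reeb spectrum of $(\partial U_g^*M, \lambda_{can})$. For such $H^N$ the non-constant 1-periodic orbits in class $\alpha$ are reparameterizations of closed cogeodesics, and a direct computation using the sign convention $i_{X_H}\omega_{can} = -dH$ expresses the symplectic action of an orbit at radius $r$ wrapping a geodesic of length $L = |h_N'(r)|$ as an explicit function of $h_N(r)$, $r$, and $L$. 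In the cofinal limit the action stabilizes to a value matching $L$, which under the parametrization $a \leftrightarrow a^2/2$ corresponds to the energy $E_g(\gamma) = L^2/2$ of the underlying geodesic. Indices are then matched via the vertical Lagrangian trivialization used in Subsection \ref{symplectic-persitence}.

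I would then invoke Weber's chain-level construction \cite{Web06} (in the spirit of Salamon-Weber and Abbondandolo-Schwarz) to obtain, for each $H^N$, a filtered quasi-isomorphism between the Floer complex and the Morse-Bott chain complex of $E_g$ on $\mathcal L_\alpha(M)$, sending the orbit wrapping $\gamma$ to the Morse-Bott generator at energy level $E_g(\gamma)$. Combining this with the identification $H_*(\mathcal L_\alpha^\lambda(M); \Z_2) \cong HMB^\lambda_{*,\alpha}(E_g, h)$ from Subsection \ref{ss-Mor-B} and passing to the inverse limit over the cofinal family gives the isomorphisms $\Phi_a$. The main obstacle is to verify that $\Phi_a$ commutes with the comparison maps on both sides---quotient-induced on the symplectic side and inclusion-induced on the loop-space side---and this compatibility is the one genuinely new input required to upgrade the classical statement to a persistence isomorphism. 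It is secured by choosing monotone homotopies within the cofinal family to be action-regular so that the continuation maps are isomorphisms, by arranging the Floer-to-Morse comparison to be strictly filtration-preserving under the reparametrization $a \leftrightarrow a^2/2$, and by a diagram chase showing that both types of comparison maps become the same induced map under the identification.
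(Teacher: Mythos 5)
Your overall strategy---take Weber's isomorphism as the starting point and upgrade it to a persistence isomorphism by verifying compatibility with comparison maps---is exactly the route the paper takes, and you correctly isolate that verification as the only genuinely new content. The difficulty is that your proposal leaves that step as an assertion, and it is precisely there that a concrete external input is needed which you do not name. The paper's diagram chase closes because Theorem 2.9 of \cite{Web06} provides, besides the isomorphism $\psi^a_{H_a}: \HF_*^{(-\infty, C(H_a,a))}(H_a,\alpha) \to H_*(\mathcal L_{\alpha}^{a^2/2}(M);\Z_2)$ itself, two naturality clauses: compatibility of $\psi$ with the continuation maps of monotone homotopies, and compatibility with the maps induced by enlarging the action window (equivalently, raising the slope from $a$ to $b$). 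Your phrase ``arranging the Floer-to-Morse comparison to be strictly filtration-preserving'' concerns a single map at a single filtration level; it does not by itself yield commutativity of the square relating $\Phi_{H,a}$, $\Phi_{H,b}$, $\iota^{\HF}_{a,b}$ and $\iota^{\mathbb{H}}_{a^2/2,\, b^2/2}$. Without citing (or reproving) those naturality statements, the ``diagram chase'' you invoke does not close; action-regularity of the homotopies only guarantees that certain arrows are isomorphisms, not that the relevant squares commute.

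Two further remarks. First, the paper does not redo the action--energy correspondence at chain level over a cofinal family: it works at the level of homology with a single radially symmetric $H$ with a large plateau and two auxiliary Hamiltonians $\tilde H_a$ and $H_a$ obtained by linearizing the tail with slope $-a$ from the two radii $r_2$ and $r_1$ where $h'=-a$; the action-regular monotone homotopies between these give the isomorphisms $c_a$ and $s_a$, and the rest is the commutativity of continuation maps (\ref{com-1}) together with Weber's naturality. Your plan to build a filtered chain-level quasi-isomorphism for each $H^N$ and pass to the cofinal limit would in effect re-derive a substantial part of \cite{SW06}, and the claim that ``the action stabilizes to a value matching $L$'' needs care, since the actions of the non-constant orbits of $H^N$ depend on $h_N(r)$ and only converge to the desired values in the limit. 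Second, the theorem is asserted for all $a>0$, including $a \in \Lambda_{\alpha}$; the paper handles spectral values by the convention that all bars are left-closed and right-open and extends $\Phi_a$ by an inverse-limit continuity argument, a point your proposal omits entirely.
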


For every $a \in \R_{>0} \backslash \Lambda_{\alpha}$, the isomorphism $\Phi_a$ as in Theorem \ref{sh-loop-thm} has been constructed in Theorem 3.1 in \cite{Web06}. Our goal is to show that this $\Phi$ commutes with comparison maps and then to extend it to a persistence isomorphism for all $a\in \R_{>0}.$ For reader's convenience, let us review the construction of $\Phi_a$ from \cite{Web06} first.

According to Definition \ref{dfn-symp-pm}, in order to study symplectic homology, we need to understand the associated Hamiltonian Floer homologies. In Subsection \ref{symplectic-persitence} we explained how to associate Floer homology $\HF_*^{[a,b)}(H, \alpha)$ to $H\in \mathcal H_{U, a,b}.$ In an analogous fashion, instead of considering compactly supported Hamiltonians, one may look at the sets 
\[ \mathcal K_{U^*_g M} = \{H:S^1\times T^*M\rightarrow \R \,| \, \exists \beta>0,\beta'\in \R ~ {\rm s.t.}~ H_t(\xi)=-\beta\| \xi \|_{g^*}+\beta' ~ {\rm for} ~ \| \xi \|_{g^*}\geq 1 \} ,\]
and 
\[ \mathcal K_{U^*_g M,a,b} = \{H\in K_{U^*_g M} \,| \, a, b \notin {\rm Spec}(H; \alpha) ~ {\rm and}~ {\rm either} ~ \beta \notin \Lambda_\alpha ~ {\rm or} ~ \beta' \notin [a,b] \} .\]
Hamiltonians compactly supported inside $U^*_g M$ constitute the case $\beta=\beta'=0$ and thus $ \mathcal H_{U^*_g M, a,b} \subset \mathcal K_{U^*_g M, a,b}.$ As in the compactly supported case if $H_1 \preceq H_2$ one may define a continuation map associated to a monotone homotopy (manifestly if $H_1 \preceq H_2$ slopes satisfy $-\beta_{H_1}\geq -\beta_{H_2}$). Analogously, a monotone homotopy $H_{\tau}$ such that $H_{\tau}\in \mathcal K_{U^*_g M, a,b} $ for every $\tau$ is called {\it action-regular}. Continuation maps will have the same properties as before, in particular the existence of an action-regular monotone homotopy will imply that the corresponding continuation map is an isomorphism, see \cite{Web06} for more details.

We are now ready to define $\Phi_a.$ Fix $a \in \R_{>0} \backslash \Lambda_{\alpha}$ and take a radially symmetric Hamiltonian $H \in \mathcal H_{U^*_gM}$, $H(\xi)=h(\| \xi \|_{g^*})$ with $h:[0,+\infty)\rightarrow \R$ such that $h|_{[0,1-\varepsilon)}=C>a,$ $h$ is decreasing and $h|_{[1,+\infty)}=0.$ Moreover consider a monotone homotopy, shown in Figure \ref{htp1}, from $H$ to a new Hamiltonian $\tilde H_a$  which we obtain by making the ``tail'' of $H$ linear with slope $-a$.

\begin{figure}[ht]
\includegraphics[scale=0.5]{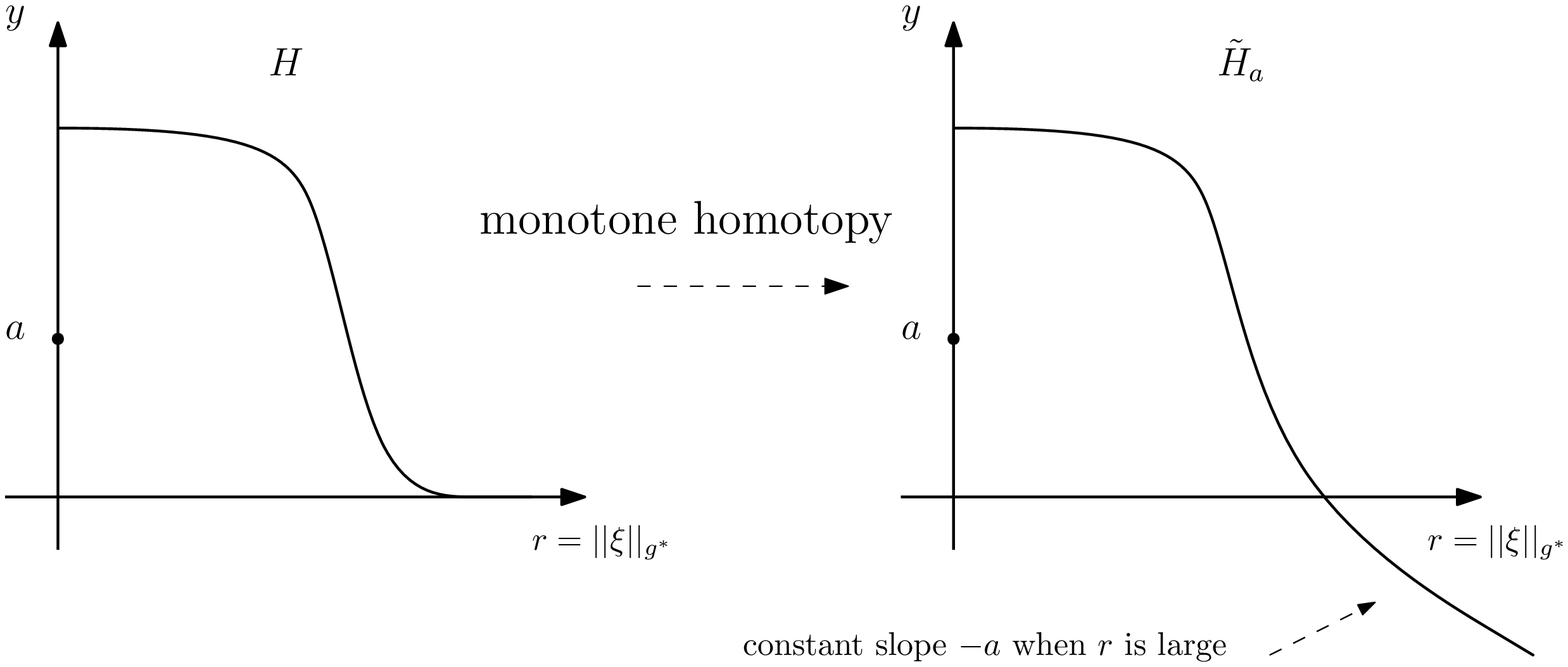}
\caption{Monotone homotopy 1}
\label{htp1}
\end{figure}

More precisely, one sees that there are exactly two points $r_1, r_2 \in [0,1]$ for which $h'(r_i)=-a$ for $i =1, 2$. If we label them by $r_1\leq r_2$ then $h(r_1)\approx C$ and $h(r_2) \approx 0.$ Up to a small smoothing at $\|\xi \|_{g^*}= r_2,$ Hamiltonian $\tilde H_a$ is equal to $H$ for $\|\xi \|_{g^*}\in [0,r_2]$ and is linear with slope $-a$ for $\|\xi \|_{g^*}\in [r_2,+\infty).$

The monotone homotopy from $H$ to $\tilde{H}_a$ gives the isomorphism 
\begin{equation} \label{iso-1}
c_a: \HF_*^{[a,\infty)}(H, \alpha)  \xrightarrow{\simeq}  \HF_*^{[a,\infty)}(\tilde H_a, \alpha). 
\end{equation}
because no Hamiltonian 1-periodic orbit with action in the action window $[a, \infty)$ appears during the homotopy, i.e. it is action-regular.

Moreover, we construct the third Hamiltonian, denoted by $H_a$, in the similar fashion to the construction of $\tilde H_a.$ Namely, up to a small smoothing at $\|\xi \|_{g^*}= r_1,$ $H_a$ coincides with $H$ on the set $\|\xi \|_{g^*}\in [0,r_1]$ and is linear with slope $-a$ for $\|\xi \|_{g^*}\in [r_1,+\infty).$

Now, there exists another action-regular monotone homotopy from $H_a$ to $\tilde H_a,$ see Figure \ref{htp2}, which provides another isomorphism 
\begin{equation} \label{iso-2}
s_a: \HF_*^{[a,\infty)}(H_a, \alpha) \xrightarrow{\simeq} \HF_*^{[a,\infty)}(\tilde{H}_a, \alpha)   
\end{equation}
 
\begin{figure}[ht]
\includegraphics[scale=0.5]{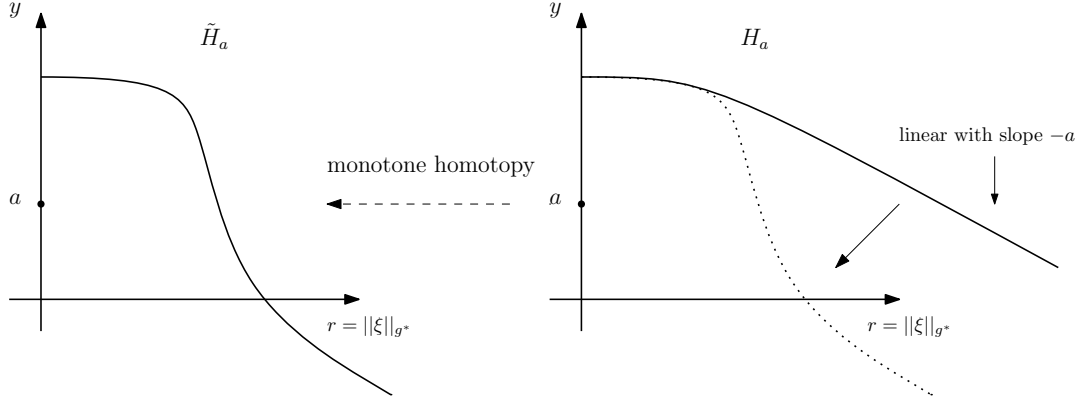}
\caption{Monotone homotopy 2}
\label{htp2}
\end{figure}

Following \cite{Web06}, for a radially symmetric Hamiltonian function $H = h(r),$ $r = ||\xi||_{g^*}$ with $h''(r)\leq 0$, define for any $\lambda \in \R_{\geq 0}$,
\begin{equation} \label{y-int} 
C(H, \lambda) = \lambda r_* + h(r_*) \,\,\,\,\mbox{where}\,\,\,\, h'(r_*) = -\lambda,
\end{equation} 
if such $r_*$ exists. Observe that $C(H, \lambda)$ is the $y$-intercept of the line passing through point $(r_*, h(r_*))$ with slope $-\lambda$. Since the Hamiltonian function $H_a$ in Figure \ref{htp2} is concave with respect to $r$, value $C(H_a, \lambda)$ is well-defined for all $\lambda \in [0,a]$. Now, the advantage of considering Hamiltonian $H_a$ is that it does not have any Hamiltonian 1-periodic orbit of action less than $a$. On the other hand, the maximal action of the Hamiltonian 1-periodic orbit of $H_a$ is less than $C(H_a, a)$. 
Therefore, one gets the following isomorphisms 
\begin{equation} \label{iso-3}
\HF_*^{(-\infty, C(H_a, a))}(H_a, \alpha) \xrightarrow[\simeq]{i_{H_a}^a} \HF_*^{(-\infty,\infty)}(H_a, \alpha)  \xrightarrow[\simeq]{\pi_{H_a}^a}\HF_*^{[a, \infty)}(H_a, \alpha)
\end{equation}
from the equality on the chain level. For more details regarding all the constructions see Section 3 in \cite{Web06}.

Finally, Theorem 2.9 in \cite{Web06} claims that there exists an isomorphism
\begin{equation} \label{iso-4}
\psi_{H_a}^a: \HF_*^{(-\infty, C(H_a, a))}(H_a, \alpha) \xrightarrow{\simeq} H_*(\mathcal L_{\alpha}^{a^2/2} (M); \Z_2).
\end{equation}
Map $\psi_{H_a}^a$ essentially comes from the main result in \cite{SW06} which compares the symplectic action functional with a certain energy functional on the loop space. Combining all the above defined isomorphisms together, one obtains the following isomorphism,  
\begin{equation} \label{web-iso}
\Phi_{H,a}: \HF_*^{[a,\infty)}(H, \alpha)  \to H_*(\mathcal L_{\alpha}^{a^2/2} (M); \Z_2)
\end{equation}
where $\Phi_{H,a} = \psi_{H_a}^a \circ (i_{H_a}^a)^{-1} \circ (\pi_{H_a}^a)^{-1}\circ s_a^{-1} \circ c_a$. The desired isomorphism $\Phi_a$ is then given by $\Phi_a = \varprojlim_{H \in \mathcal H_{U_g^*M}} \Phi_{H, a}$.

\begin{proof} (Proof of Theorem \ref{sh-loop-thm}) It follows from the definitions that both persistence modules $\mathbb{SH}_{*, \alpha}(U_g^*M)$ and $\mathbb H_{*, \alpha}(M, g)$ are such that all the bars in their barcodes have left endpoints closed and right endpoints open. Moreover, by Remark \ref{endpoints}, the endpoints of bars in the barcode of $\mathbb{SH}_{*, \alpha}(U_g^*M)$ belong to $\Lambda_\alpha$ and since $g$ is bumpy, for every fixed $\lambda >0,$ $\Lambda_\alpha\cap [0,\lambda]$ is finite. Thus, it is enough to prove Theorem \ref{sh-loop-thm} for $a \in \R_{>0} \backslash \Lambda_{\alpha}$ and afterwards extend $\Phi_a$ to $a\in \Lambda_\alpha$ by continuity.

From the definition of symplectic homology, it readily follows that for any $a<b$, there exists a single $H \in \mathcal H_{U^*_gM, \{a,b\}}$ such that $\iota_{a,b}^{\mathbb{SH}}: \SH_*^a(U_g^*M ;\alpha) \to \SH_*^b(U_g^*M ;\alpha)$ can be seen as 
\[ \iota_{a,b}^{\HF}: \HF_*^{[a, \infty)}(H, \alpha) \to \HF_*^{[b, \infty)}(H, \alpha). \]
The example of such $H$ which we consider is a radially symmetric Hamiltonian, shown in Figure \ref{H}, such that $\max{H} \geq b$ (thus also $\max{H} \geq a$), $H$ is equal to $\max{H}$ for $\|\xi\|_{g^*}=r \leq 1- \varepsilon$ with some small $\varepsilon>0$ and is decreasing in $r.$ 
\begin{figure}[ht]
\includegraphics[scale=0.5]{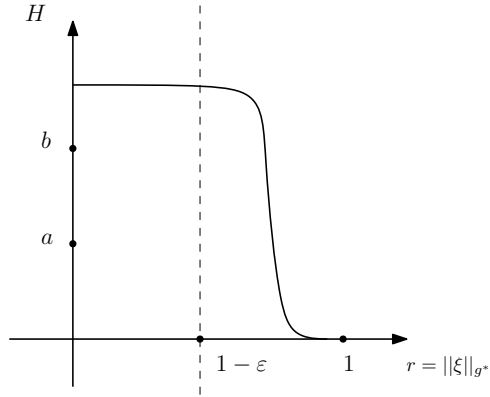}
\caption{A radially symmetric Hamiltonian which computes symplectic homology}
\label{H}
\end{figure}

Using this $H$, we can carry out monotone homotopies as described above and shown in Figure \ref{htp1} and Figure \ref{htp2} for both slopes $a$ and $b.$ This way, we obtain new Hamiltonian functions $\tilde H_a$, $\tilde H_b$ as well as $H_a, H_b$, see Figure \ref{HH}.
\begin{figure}[ht]
\includegraphics[scale=0.5]{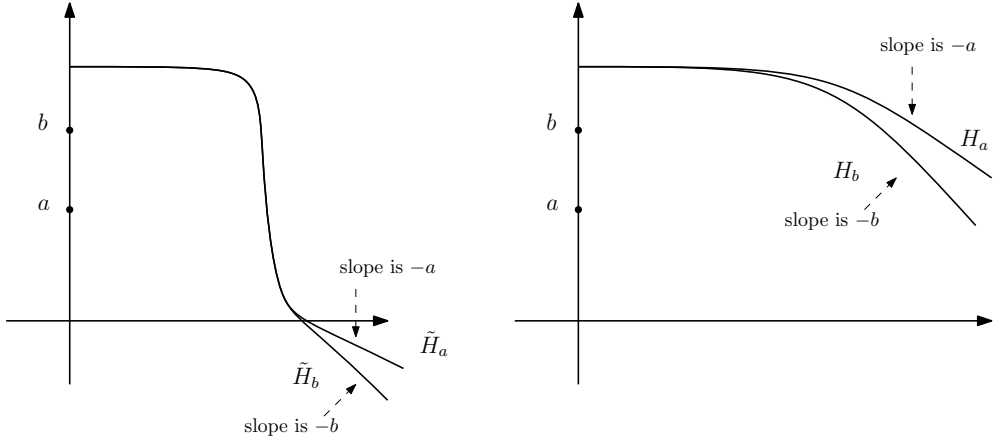}
\caption{New Hamiltonian functions coming from monotone homotopies}
\label{HH}
\end{figure}
We claim that the following diagram commutes. 
\[ \xymatrix{
& \HF_*^{[a,\infty)}(H, \alpha) \ar[ld]_-{c_a} \ar[d]^-{c'_b} \ar[rr]^-{\iota_{a,b}^{\HF}} & & \HF_*^{[b,\infty)}(H, \alpha) \ar[d]^-{c_b} \\
\HF_*^{[a,\infty)}(\tilde{H}_a, \alpha) \ar[r]^-{\tilde{c}} \ar@/^/[d]^-{s_a^{-1}} &  \HF_*^{[a,\infty)}(\tilde{H}_b, \alpha) \ar[rr]^-{inc_*} & & \HF_*^{[b,\infty)}(\tilde{H}_b, \alpha)\ar@/^/[d]^-{s_b^{-1}}\\
\HF_*^{[a,\infty)}(H_a, \alpha) \ar[u]^-{s_a} \ar[r]^-{c} &  \HF_*^{[a,\infty)}(H_b, \alpha) \ar[rr]^-{inc_*} \ar[u]^-{s_b'}& & \HF_*^{[b,\infty)}(H_b, \alpha) \ar[u]^-{s_b}.} \] 
Commutativity comes from the following arguments: 
\begin{itemize}
\item{} In the upper-left triangle, $c'_b$ is induced from a monotone homotopy from $H$ to $\tilde{H}_b$ and $\tilde{c}$ is induced from a monotone homotopy from $\tilde{H}_a$ to $\tilde{H}_b$. Because $H \preceq \tilde{H}_a \preceq \tilde{H}_b$, $c'_b = \tilde{c} \circ c_a$ comes from Lemma \ref{4-lemma-1}. 
\item{} In the lower-left rectangle, $s_b'$ is induced from a monotone homotopy from $H_b$ to $\tilde{H}_b$ and $c$ is induced from a monotone homotopy from $H_a$ to $H_b$. Because $H_a \preceq \tilde H_a \preceq \tilde H_b$ and $H_a \preceq H_b \preceq \tilde H_b$, from (\ref{com-1}) we get $\tilde{c} \circ s_a = s'_b \circ c$, which implies $\tilde{c} = s'_b \circ c \circ s_a^{-1}$ where $s_a^{-1}$ is the inverse of $s_a$ ($s_a$ is an isomorphism by (\ref{iso-2})).
\item{} The upper-right rectangle trivially commutes because we may take monotone homotopy inducing $c'_b$ to be the same as the monotone homotopy inducing $c_b$ and hence the maps count the same Floer trajectories. 
\item{} The lower-right rectangle trivially commutes by the same reason as above, which implies $inc_* = s^{-1}_b \circ inc_* \circ s'_b$. 
\end{itemize}
Finally, we also claim that the following diagram commutes. 
\[ \xymatrix{
\HF_*^{[a, \infty)}(H_a, \alpha) \ar[r]^c \ar@/^/[d]^-{(\pi_{H_a}^a)^{-1}} & \HF_*^{[a, \infty)}(H_b, \alpha) \ar[r]^{inc_*} & \HF_*^{[b, \infty)}(H_b, \alpha) \ar@/^/[d]^-{(\pi_{H_b}^b)^{-1}}\\
\HF_*^{(-\infty, \infty)}(H_a, \alpha) \ar[r]^{c} \ar[u]^-{\pi_{H_a}^a} \ar@/^/[d]^-{(i_{H_a}^a)^{-1}} & \HF_*^{(-\infty, \infty)}(H_b, \alpha) \ar[r]^-{\mathds{1}} \ar[u]^-{\pi_{H_b}^a} & \HF_*^{(-\infty, \infty)}(H_b, \alpha) \ar[u]^-{\pi_{H_b}^b} \ar@/^/[d]^-{(i_{H_b}^b)^{-1}}\\
\HF_*^{(-\infty, C(H_a, a))}(H_a, \alpha) \ar[r]^-{c} \ar[u]^-{i_{H_a}^a} \ar[rd]_-{\psi_{H_a}^a} & \HF_*^{(-\infty, C(H_b, a))}(H_b, \alpha) \ar[r]^-{inc_*} \ar[u]^-{i_{H_b}^a} \ar[d]^-{\psi_{H_b}^a} & \HF_*^{(-\infty, C(H_b, b))}(H_b, \alpha) \ar[u]^-{i_{H_b}^b} \ar[d]^-{\psi_{H_b}^b}\\
& H_*(\mathcal L_{\alpha}^{a^2/2}(M)) \ar[r]^-{\iota_{a^2/2, b^2/2}^{\mathbb H}} &  H_*(\mathcal L_{\alpha}^{b^2/2}(M)).} \]
The only non-trivial commutativity is of the lower-left triangle and the lower-right rectangle. The former comes from the second proposition of Theorem 2.9 in \cite{Web06} while the latter comes from the third proposition of Theorem 2.9 in \cite{Web06}. Notice that maps $\pi_{H_a}^a, i_{H_a}^a$ and $\pi_{H_b}^b, i_{H_b}^b$ are all isomorphisms by (\ref{iso-3}), but $i_{H_b}^a$ is not an isomorphism. Denote by $\iota_{a^2/2, b^2/2}^{\mathbb H}$ the persistence comparison map from filtration level $a^2/2$ to filtration level $b^2/2$ of persistence module $\mathbb H_{*, \alpha}(M,g)$. Using the definition of $\Phi_{H,a}$ given by (\ref{web-iso}) and the two commutative diagrams above we obtain $\iota_{a^2/2, b^2/2}^{\mathbb H} \circ \Phi_{H,a} = \Phi_{H,b} \circ \iota_{a,b}^{\HF}$, which finished the proof. \end{proof}

\section{Proofs of Proposition \ref{emb-thm} and Proposition \ref{emb-thm2} (lower bounds)}\label{Proofs}

In this section, we prove lower bounds in Propositions \ref{emb-thm} and \ref{emb-thm2}. To this end, we will describe two classes of Riemannian metrics which realize quasi-isometric embeddings in Propositions \ref{emb-thm} and \ref{emb-thm2}. The first class of metrics will be defined on $S^2$ and metrics in this class will be called {\it bulked sphere metrics} on $S^2$. The other class will be defined on a closed, orientable surface $\Sigma$ of genus at least 1, and metrics in this class will be called {\it multi-bulked metrics} on $\Sigma$. The way we construct these metrics enables us to precisely analyze closed geodesics and prove that they have various nice properties, see Propositions \ref{bsm-p} and \ref{mb-p}. Then, using Theorem \ref{sh-loop-thm}, we are able to describe parts of the barcodes of the corresponding symplectic persistence modules. Finally, the lower bounds in both Proposition \ref{emb-thm} and Proposition \ref{emb-thm2} comes from the stability property - Theorem \ref{TST} and a combinatorial result - Lemma \ref{opt-matching}, which we will now prove. 

\subsection{A combinatorial lemma}
The following combinatorial lemma says that a particular shape of barcodes can help us get a lower bound on the bottleneck distance. 

\begin{lemma} \label{opt-matching}
Let $\B_1$ and $\B_2$ be two barcodes. Let $a_1 \geq ... \geq a_n$ be the $n$ smallest left endpoints of bars in $\B_1$ and denote by $[a_1, C_{a_1}), ..., [a_n, C_{a_n}) \in \B_1$ the corresponding bars. Similarly let $b_1 \geq ... \geq b_n$ be the $n$ smallest left endpoints of bars in $\B_2$ with corresponding bars $[b_1, C_{b_1}), ..., [b_n, C_{b_n}) \in \B_2.$ Assume that 
$$ \min\{C_{a_1}, ..., C_{a_n}, C_{b_1}, ..., C_{b_n}\} > \max\{a_1, b_1\}.$$
Then it holds 
$$\frac{1}{2} |\vec{a} - \vec{b}|_{\infty} \leq d_{bottle}(\B_1, \B_2),$$
where $\vec{a} = (a_1, ..., a_n)$ and $\vec{b} = (b_1, ..., b_n).$ The statement remains true if some of the $C_{a_i}$ or $C_{b_j}$ are equal to $+\infty.$ 
\end{lemma}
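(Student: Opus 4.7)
My plan is to argue by contradiction via a counting argument applied to a near-optimal matching. Suppose $d_{bottle}(\B_1, \B_2) < \tfrac{1}{2}|\vec a - \vec b|_\infty$; then there is an index $k$ with $|a_k - b_k| > 2 d_{bottle}(\B_1, \B_2)$. After interchanging the roles of $\B_1$ and $\B_2$ if necessary, I may assume $a_k - b_k = 2c$ with $c > d_{bottle}(\B_1, \B_2)$. Pick $\delta' \in (d_{bottle}(\B_1, \B_2), c)$ and fix a $\delta'$-matching $\mu$ between $\B_1$ and $\B_2$.

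The first key step is to show that each of the $n - k + 1$ bars $[b_j, C_{b_j}) \in \B_2$ for $j = k, k+1, \ldots, n$ must be matched by $\mu$. Using $b_j \le b_k$ together with the hypothesis $C_{b_j} > \max\{a_1, b_1\} \ge a_k$, their lengths satisfy
\[ C_{b_j} - b_j > a_k - b_j \ge a_k - b_k = 2c > 2\delta' \]
(trivially also when $C_{b_j} = +\infty$), so they exceed the threshold under which a bar in a $\delta'$-matching may be left unmatched.

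The second step is to examine the partners in $\B_1$. If $\mu$ matches $[b_j, C_{b_j})$ to $[a', C') \in \B_1$, then $|a' - b_j| \le \delta'$, whence
\[ a' \le b_j + \delta' \le b_k + \delta' < b_k + c = a_k - c < a_k. \]
Thus all $n - k + 1$ partner bars have left endpoint strictly less than $a_k$. But since $a_1 \ge \cdots \ge a_n$ are the $n$ smallest left endpoints of $\B_1$, every bar of $\B_1$ not among these has left endpoint at least $a_1 \ge a_k$, while among $a_1, \ldots, a_n$ those strictly less than $a_k$ must have index $i \ge k+1$. Consequently at most $n - k$ bars of $\B_1$ satisfy the constraint, contradicting that $\mu$ (being injective) requires $n - k + 1$ distinct such partners. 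This yields $d_{bottle}(\B_1, \B_2) \ge c = \tfrac{1}{2}(a_k - b_k)$; the symmetric argument for $b_k > a_k$, together with maximizing over $k$, gives the lemma.

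The main subtlety I anticipate is the first step: the assumption $\min\{C_{a_i}, C_{b_j}\} > \max\{a_1, b_1\}$ is precisely what is needed to push the lengths of the relevant bars above $2\delta'$, and the choice of which family to consider (the $b_j$'s rather than the $a_i$'s) is dictated by the sign of $a_k - b_k$ — without this, an $a_i$ could have length as small as $C_{a_i} - a_1$, which is not controlled. The infinite-bar case requires only the standard convention $|{+\infty} - ({+\infty})| = 0$ in the matching and does not affect the left-endpoint analysis.
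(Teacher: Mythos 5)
Your proof is correct and takes essentially the same route as the paper's: after the same normalization, both arguments use the hypothesis on the right endpoints to rule out erasure of the $n-k+1$ bars with smallest left endpoints on the appropriate side, and then use the ordering of the left endpoints to force a displacement of at least $\tfrac12|a_k-b_k|$. The only difference is presentational: you package the paper's three-case analysis (erased / matched outside the target family / matched inside it) into a single injectivity-plus-counting contradiction.
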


\begin{proof} Let $k$ be such that $|\vec{a} - \vec{b}|_{\infty}=|a_k - b_k|$ and assume without loss of generality that $a_k\leq b_k.$ Further assume that there exists a $\delta$-matching $\sigma: \B_1 \rightarrow \B_2.$ It is enough to prove that $2 \delta \geq b_k - a_k=|\vec{a} - \vec{b}|_{\infty}.$ We split the proof in three cases.

\medskip

\noindent {$\bullet$ Case 1 - One of the bars $[a_k,C_{a_k}), \dots , [a_n,C_{a_n}) $ is erased.}

\medskip

Denote by $l$ the index of the erased bar. Since $a_l\leq a_k$ and $C_{a_l} > b_1$ we have 
$$2 \delta \geq C_{a_l} - a_l \geq C_{a_l} - a_k > b_1 - a_k \geq b_k - a_k.$$
{$\bullet$ Case 2 - None of the bars $[a_k,C_{a_k}), \dots , [a_n,C_{a_n}) $ are erased, but at least one of them is matched with a bar different from $[b_k,C_{b_k}), \dots , [b_n,C_{b_n}).$}

\medskip 

Let $l$, where $k\leq l \leq n$, be such that $[a_l,C_{a_l})$ is not matched with any of the bars $[b_k,C_{b_k}), \dots , [b_n,C_{b_n})$ and let $\sigma ( [a_l,C_{a_l}) ) = [x,y).$ By the assumption of the theorem, we have that $x\geq b_k$ and hence
$$\delta\geq x - a_l \geq b_k - a_l \geq b_k - a_k.$$
{$\bullet$ Case 3 - Bars $[a_k,C_{a_k}), \dots , [a_n,C_{a_n})$ are all matched with bars $[b_k,C_{b_k}), \dots , [b_n,C_{b_n}).$}

\medskip

Let $l, ~ k\leq l \leq n$ be such that $\sigma ([a_l,C_{a_l}))=[b_k,C_{b_k}).$ We have
$$\delta \geq b_k - a_l \geq b_k - a_k,$$
and the proof is finished. 
\end{proof}

\subsection{Proof of Proposition \ref{emb-thm}} \label{ss-p-12}

We start with the definition of a bulked sphere.

\begin{dfn} \label{dfn-bsm} A {\it bulked sphere $S\subset \R^3$} is a surface of revolution obtained by rotating a {\it profile function} $r: [-L, L] \to [0, \infty)$ around axis $l$ as shown in Figure \ref{pf1}.

\begin{figure}[ht]
\includegraphics[scale=0.55]{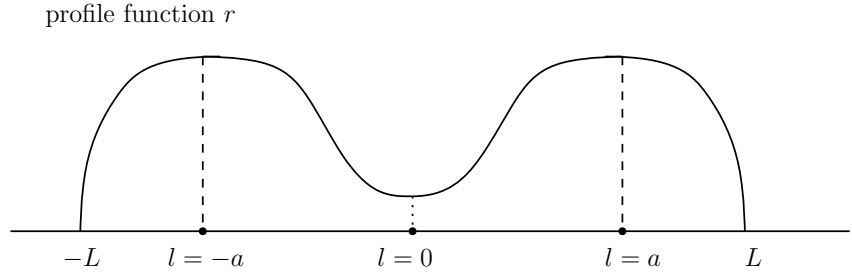}
\caption{Profile function of a bulked sphere $S$}
\label{pf1}
\end{figure}

\noindent We ask for $r$ to satisfy the following properties.  
\begin{itemize}
\item{} $r(l)$ is a smooth even function on $(-L, L)$ and $r(l) = 0$ exactly at $l = L \,\mbox{and}\, -L$.
\item{} $r(l)$ has only three critical points at $l = -a, 0, a$ and $r$ attains global maximum at $l=a, -a$ and local minimum at $l =0$.
\item{} $r''(0) > 0.$
\end{itemize}
Figure \ref{ex-bsm} shows a general picture of a bulked sphere. A {\it bulked sphere metric $g$} is a metric on $S^2$ induced from the standard metric on $\R^3.$ 
\begin{figure}[ht]
\includegraphics[scale=0.5]{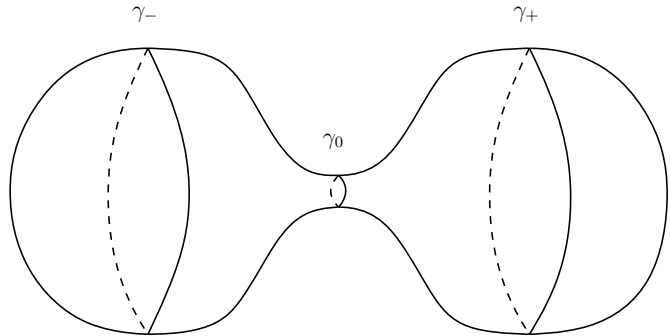}
\caption{A general picture of a bulked sphere}
\label{ex-bsm}
\end{figure}
\end{dfn}

A parallel circle is a geodesic if and only if it passes through a local extremum. In other words, we have three non-constant geodesic parallel circles of a bulked sphere metric, which we denote by $\gamma_-$, $\gamma_0$ and $\gamma_+$ as shown in Figure \ref{ex-bsm}.

\begin{lemma} \label{lm-short-geo}
For $m\in \N$, denote by $\gamma_0^m$ the $m$-times iteration of a closed geodesic $\gamma_0$ and by $\gamma_0^{-m}$ the $m$-times iteration of $\gamma_0$ in the opposite direction. For every $m\in \N$, $\gamma^{\pm m}_0$ are non-degenerate and ${\rm Ind}(\gamma^{\pm m}_0) = 0$.
\end{lemma}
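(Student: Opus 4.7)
The plan is to read off the Morse index and nullity of every iterate $\gamma_0^{\pm m}$ directly from the normal Jacobi equation along $\gamma_0$. First, I will put the bulked-sphere metric into the standard warped-product form near $\gamma_0$: introducing the meridian arclength coordinate $s = \int_0^l \sqrt{1+r'(t)^2}\, dt$ and setting $f(s) := r(l(s))$, the induced metric becomes $g = ds^2 + f(s)^2\, d\theta^2$, and $\gamma_0$ corresponds to the parallel circle $s=0$. Since $r'(0)=0$, the chain rule immediately yields $f'(0)=0$ (confirming that $\gamma_0$ is indeed a geodesic) together with $f''(0)=r''(0)>0$. The Gaussian curvature of such a warped product equals $K = -f''(s)/f(s)$, so along $\gamma_0$ it takes the \emph{constant} negative value
\[ K_0 = -\frac{r''(0)}{r(0)} < 0. \]

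Next I would compute the Hessian of the energy functional $E_g$ at $\gamma_0^{\pm m}$ directly. Parametrizing with constant speed on $\R/(mL_0)\Z$ where $L_0 := 2\pi r(0)$, decompose an arbitrary periodic variation field as $V = \phi\, T + \psi\, N$ into tangential and normal components. Using $\nabla_T T = 0$ and the surface identity $\langle R(N,T)T,N\rangle = K_0$, the second variation formula reduces to
\[ \mathrm{Hess}(E_g)_{\gamma_0^{\pm m}}(V,V) \;=\; \int_0^{mL_0} (\phi')^2\, ds \;+\; \int_0^{mL_0}\bigl[(\psi')^2 + |K_0|\,\psi^2\bigr]\, ds. \]
The tangential integral is positive semi-definite with one-dimensional kernel consisting of the constant $\phi$, which corresponds precisely to the infinitesimal $S^1$-reparametrization direction and hence to the tangent direction of the critical submanifold $S^1\cdot\gamma_0^{\pm m}$. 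The normal integral is strictly positive on every nonzero periodic $\psi$. It follows that the kernel of the Hessian has dimension exactly one, coming solely from reparametrization, and that there is no negative-definite subspace at all. Consequently $\ind(\gamma_0^{\pm m}) = 0$ and $\gamma_0^{\pm m}$ is non-degenerate in the sense of Definition \ref{bumpy}; the analysis for $\gamma_0^{-m}$ is identical to that for $\gamma_0^m$, as the Jacobi equation along a geodesic is insensitive to orientation.

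The only delicate point — and the main thing that has to be checked — is that passage to the $m$-fold cover does not introduce new zero modes of the normal Sturm--Liouville operator, which a priori it could if conjugate points accumulated. What rescues the argument is precisely the constancy of $K_0$ along $\gamma_0$, forced by rotational symmetry: the eigenvalues of $-\partial_s^2 + |K_0|$ on $\R/(mL_0)\Z$ are $(2\pi k/(mL_0))^2 + |K_0|$ for $k\in\Z$, uniformly bounded below by $|K_0|>0$ independently of $m$. Equivalently, as formalized in Subsection \ref{Geo_index} via the Poincar\'e return map, the normal return map of $\gamma_0$ is hyperbolic with eigenvalues $e^{\pm L_0\sqrt{|K_0|}}$, so every iterate remains hyperbolic with eigenvalues off the unit circle and non-degeneracy together with vanishing index persists simultaneously through all iterates.
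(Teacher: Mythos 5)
Your argument is correct, but it proceeds by a genuinely different route from the paper's. The paper first solves the normal Jacobi equation $\ddot J_1 + K J_1 = 0$ explicitly along $\gamma_0$ (using the same constant negative Gaussian curvature $-r''(0)/r(0)$ that you compute), concludes that $\gamma_0$ is hyperbolic because the linearized Poincar\'e map has eigenvalues $e^{\pm\sqrt{-K}}$ off the unit circle, deduces non-degeneracy of every iterate from the fact that no combination of $e^{\pm\sqrt{-K}t}$ is periodic, and then obtains ${\rm Ind}(\gamma_0^{\pm m})=0$ by invoking Klingenberg's index formula for hyperbolic closed geodesics (counting zeros of stable Jacobi fields) together with the iteration formula ${\rm Ind}(\gamma^m)=m\cdot{\rm Ind}(\gamma)$. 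You instead attack the second variation of $E_g$ directly: the index form splits into a tangential block $\int(\phi')^2$ and a normal block $\int\bigl[(\psi')^2+|K_0|\psi^2\bigr]$ (the cross terms vanish since $\nabla_T T=0$ and $\nabla_T N=0$ along a geodesic on a surface), and since the normal block is a sum of nonnegative terms with $|K_0|>0$ it is positive definite on periodic fields for every $m$ at once, giving index $0$ and a one-dimensional kernel spanned by the reparametrization field in one stroke. This is more elementary and self-contained — it needs neither the hyperbolic index theorem nor the iteration formula — and it handles all iterates uniformly; the price is that the positivity argument leans on the curvature being constant (or at least everywhere negative) along the circle, whereas the paper's hyperbolicity route is the one that generalizes when that fails, and hyperbolicity itself is isolated as a reusable statement. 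Your closing worry about zero modes appearing in the $m$-fold cover is in fact moot once the normal form is written as a sum of nonnegative terms, but the spectral computation you supply is correct, and your return-map eigenvalues $e^{\pm L_0\sqrt{|K_0|}}$ agree with the paper's $e^{\pm\sqrt{-K}}$ after accounting for the parametrization.
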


The proof of Lemma \ref{lm-short-geo} comes from a direct computation which we carry out in Subsection \ref{Geo_index}. The following proposition is crucial for our proof of Proposition \ref{emb-thm} (lower bound). 

\begin{prop} \label{bsm-p}
Given any $0<\varepsilon_0<1$, there exists a positive $\delta_0<<1$ such that for every $x \in [0, \infty)$, there exists a bulked sphere metric $g_x \in {\mathcal G}_{S^2}$ satisfying the following properties.
\begin{itemize}
\item[(1)] Closed geodesic $\gamma_0$ has energy $E_{g_x}(\gamma_0) = \frac{\delta_0^2}{2} e^{-2x}$.
\item[(2)] Any closed geodesic $\gamma$ of $(S^2, g_x)$ different from $\gamma_0^{\pm m}$, $m\in \N$ has energy $E_{g_x}(\gamma) > \frac{\delta_0^2}{2}$. 
\item[(3)] There exists a constant $R_x \in \left[ \sqrt{\frac{1}{1+ \varepsilon_0}}, \sqrt{\frac{1}{1 - \varepsilon_0}} \right]$ such that $R_x \cdot g_x \in \bar{\mathcal G}_{S^2}$. 
\end{itemize}
\end{prop}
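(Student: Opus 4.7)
The plan is to realize each $g_x$ as a bulked sphere metric whose profile function $r_x\colon[-L,L]\to[0,\infty)$ agrees with a fixed base profile $\hat r$ outside a small neighborhood of the origin and takes the value $\tfrac{\delta_0}{2\pi}e^{-x}$ at $l=0$. The parameter $x$ therefore controls only the radius of the ``neck'', while the two bulked caps are independent of $x$. Condition~(1) is then immediate: the neck circle $\gamma_0$ has length $2\pi r_x(0)=\delta_0 e^{-x}$, so $E_{g_x}(\gamma_0)=\tfrac{\delta_0^2}{2}e^{-2x}$. Concretely, I would first fix a base bulked sphere profile $\hat r$ so that $(S^2,\hat g)$ has area close to~$1$ and diameter well under~$100$; then, for a small $\epsilon>0$ to be chosen later and each $x\geq 0$, let $r_x=\hat r$ on $\{|l|\geq\epsilon\}$ and interpolate smoothly and evenly on $[-\epsilon,\epsilon]$ so that $r_x(0)=\tfrac{\delta_0}{2\pi}e^{-x}$, the unique local minimum of $r_x$ is at $l=0$, and $r_x''(0)>0$.

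The heart of the proof is property~(2). On a surface of revolution every unit-speed geodesic carries a conserved Clairaut integral $c$ satisfying $|c|\leq r_x$ along the trajectory, with equality at parallels to which the geodesic is tangent. Non-constant closed geodesics then split into three families: (i) parallel geodesics at the critical points $l=0,\pm a$ of $r_x$; (ii) non-parallel closed geodesics with $|c|\geq\hat r(\epsilon)$, which are confined to $\{|l|\geq\epsilon\}$ where $r_x\equiv\hat r$; (iii) non-parallel closed geodesics with $|c|<\hat r(\epsilon)$. The parallels at $l=\pm a$ have length $2\pi\hat r(a)$ independent of $x$, and the parallels at $l=0$ are precisely the iterates $\gamma_0^{\pm m}$, whose non-degeneracy and index are given by Lemma~\ref{lm-short-geo} and Subsection~\ref{Geo_index}. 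Geodesics in~(ii) live in the fixed geometry of the bulked caps and hence have length bounded below by a positive constant depending only on $\hat r$. Family~(iii) is the delicate case: a closed geodesic with $|c|$ close to $r_x(0)$ could a priori spiral tightly around the thin neck and close up short. The detailed analysis of Subsection~\ref{Geo_length} shows that apart from $\gamma_0^{\pm m}$ any closed geodesic in this regime has macroscopic $l$-excursion and therefore length bounded below uniformly in $x$. Choosing $\delta_0$ smaller than all of these lower bounds then secures~(2).

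For~(3), the profile modification on $[-\epsilon,\epsilon]$ changes the surface area by $O(\epsilon)$ uniformly in $x$, since $r_x$ and $r_x'$ are bounded on that interval. Choosing $\epsilon$ small enough in terms of $\varepsilon_0$, we arrange $\Vol(S^2,g_x)$ to lie in the window required by the stated range of~$R_x$, and the rescaling $R_xg_x$ is of unit volume with diameter well below~$100$ inherited from $\hat g$. The main obstacle is the analysis of family~(iii) above: ruling out short closed geodesics whose Clairaut constant is close to the neck radius ultimately requires the Poincar\'e-return map and Jacobi-field analysis carried out in Subsections~\ref{Geo_index} and~\ref{Geo_length}, and this is where the bulk of the work lies.
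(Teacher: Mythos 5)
Your proposal is correct and follows essentially the same route as the paper: a profile function that agrees with a fixed base profile away from the neck and has neck radius $\tfrac{\delta_0}{2\pi}e^{-x}$ at $l=0$ (giving (1) immediately), Clairaut's relation to force every closed geodesic other than the iterates of $\gamma_0$ to make a macroscopic excursion out of the neck (giving (2)), and a uniform smallness estimate for the area and diameter change to obtain the normalization factor $R_x$ in (3). The one small misattribution is at the end: excluding short closed geodesics in your family (iii) rests purely on the Clairaut argument of Subsection~\ref{Geo_length} (Lemma~\ref{geodesics1}), while the Poincar\'e-return-map and Jacobi-field analysis of Subsection~\ref{Geo_index} is needed only for the non-degeneracy and index statement of Lemma~\ref{lm-short-geo}, not for the length lower bound.
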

Part (2) of Proposition \ref{bsm-p} is proven in Subsection \ref{Geo_length}. Roughly speaking, it comes from a fact that every closed geodesic $\gamma$ different from $\gamma_0^{\pm m}$ has to exit the ``narrow neck'' and enter the two ``spherical regions'', i.e. regions where $l \notin [-a,a].$ By making these regions sufficiently large we get that the length of $\gamma$ must be large compared to the length of $\gamma_0.$ Finally, in order to prove (1) and (3) in Proposition \ref{bsm-p}, we need an explicit parametrization of $S$, see Subsection \ref{precise-par} in the Appendix. 

\begin{remark}\label{rmk-bumpy} Metrics $g_x$ in Proposition \ref{bsm-p} are not bumpy\footnote{They may be thought of as "bumpy below energy level $\frac{\delta_0^2}{2}$".} due to the existence of a rotational symmetry. However, they can be perturbed, by a $C^\infty$-small perturbation, to a bumpy metric which still satisfies all the properties from Lemma \ref{lm-short-geo} and Proposition \ref{bsm-p} (up to a small difference in logarithms of energies), see \cite{Anosov}. Since $C^\infty$-small perturbations create small differences in $d_{SBM},$ we ignore this point in the proof that follows, for the sake of clarity.
\end{remark}

We are now ready to give a proof of the lower bound in Proposition \ref{emb-thm}.

\begin{proof} (proof of Proposition \ref{emb-thm} (lower bound))
Define $\tilde{\Phi}: [0, \infty) \to {\mathcal G}_{S^2}$ as $\tilde{\Phi}(x)=g_x$ where $g_x$ is the metric given by Propostion \ref{bsm-p}.

Recall that
\[ \mathcal L_{pt}^{\lambda} (S^2,g_x) = \{ \gamma \in \mathcal L_{pt}(S^2) \,| \, E_{g_x}(\gamma) \leq \lambda\}. \]
and also that $\mathbb H_{*, pt}(S^2,g_x)$ denotes the persistence module given by $\mathbb H^\lambda_{*, pt}(S^2,g_x)=H_*(\mathcal L_{pt}^{\lambda} (S^2,g_x); \Z_2),$ comparison maps being induced by inclusions of sublevel sets. Our goal is to describe the barcode $\mathbb B(\mathbb H_{*, pt}(S^2,g_x)).$ By Proposition \ref{bsm-p} all closed geodesics of energy $\leq \frac{\delta_0^2}{2}$ are iterations of $\gamma_0$ and they are all non-degenerate. Thus, we may use Morse-Bott techniques described in Subsection \ref{ss-Mor-B}, namely the identity (\ref{Filtered_Morse-Bott}), see also Remark \ref{Below}.

As explained in Subsection \ref{ss-Mor-B}, constant geodesics will produce two generators $p_0\in CMB_{0,pt}(E_g,h)$ and $p_2\in CMB_{2,pt}(E_g,h)$ corresponding to two critical points of a height function on $S^2$. On the other hand, by Lemma \ref{lm-short-geo}, every $\gamma_0^{\pm m}$ satisfies ${\rm Ind}(\gamma^{\pm m}_0) = 0$ and hence every $\gamma_0^{\pm m}$ produces two generators $p_{\pm m}^0\in CMB_{0,pt}(E_g,h)$ and $p_{\pm m}^1 \in CMB_{1,pt}(E_g,h).$ These two generators correspond to minimum and maximum of a height function on $S^1$-critical submanifold $S^1 \cdot \gamma_0^{\pm m}.$

Furthermore $E_{g_x}(p_0)=E_{g_x}(p_2)=0$ while
\[ E_{g_x}(p_{\pm m}^0) = E_{g_x}(p_{\pm m}^1) = m E_{g_x}(\gamma_0) = m \cdot \frac{\delta^2_0}{2} e^{-2x}. \]
The boundary operator does not increase energy and thus we have that
$$\partial p^1_1=n(p^1_1,p_0)p_0+n(p^1_1,p^0_1)p^0_1+n(p^1_1,p^0_{-1})p^0_{-1},$$
where $n(p^1_1,p_0)$ equals the number of flow lines with cascades connecting $p^1_1$ to $p_0,$ and same for $n(p^1_1,p^0_1)$, $n(p^1_1,p^0_{-1})$, see Subsection \ref{ss-Mor-B}. Since $p^1_1$ and $p^0_1$ belong to the same $S^1$-critical submanifold we have that $n(p^1_1,p^0_1)=2 = 0~ \mod ~ 2.$ On the other hand, $p^1_1$ and $p^0_{-1}$ have the same energy, but belong to different $S^1$-critical submanifolds, which implies that there are no flow lines with cascades connecting them, i.e. $n(p^1_1,p^0_{-1})=0.$ Finally, as the global minimum, $p_0$ represents the homology class of a point which is not zero, i.e. $\partial p^1_1\neq p_0$ and we conclude that $\partial p^1_1=0.$ The same argument shows that $\partial p^1_{-1}=0.$ Thus, we may schematically present boundary relations with the following diagram.
\[ \begin{xymatrix}{
{\mbox{index $1$}} & & p^1_{-1},p_1^1 \ar[ld]|{NOT} \ar[d]|{NOT} & p^1_{-2},p_2^1& \,\,\,\ldots  \\
{\mbox{index $0$}} & p_0 & p^0_{-1},p_1^0 & p^0_{-2},p_2^0 & \ldots \\
{\mbox{energy}} & \lambda_0 = 0 & \lambda_1 = \frac{\delta^2_0}{2} e^{-2x} & \lambda_2 = \delta^2_0 e^{-2x} & \ldots.} \end{xymatrix} \]
Since $\gamma^{\pm m}_0$ do not produce any critical points of index 2, (2) in Proposition \ref{bsm-p} guarantees that $[p^1_{-1}], [p^1_1]\in H_1(\mathcal L_{pt}^{\lambda} (S^2,g_x); \Z_2)$ are non-zero for all $\lambda \leq \frac{\delta_0^2}{2}.$ In other words $\mathbb B(\mathbb H_{1, pt}(S^2,g_x))$ contains a bar $[E_{g_x}(\gamma_0),C_x)$ with $C_x \geq \frac{\delta_0^2}{2}$ (in fact it contains two such bars). Moreover $E_{g_x}(\gamma_0)$ is the smallest left endpoint in $\mathbb B(\mathbb H_{1, pt}(S^2,g_x)).$

Recall that $\B_{1,pt}(U_{g_x}^* S^2)$ denotes the barcode of a symplectic persistence module with logarithmic parametrization in degree one and homotopy class of a point. Theorem \ref{sh-loop-thm} implies that 
$$\left[ \ln \sqrt{2 E_{g_x}(\gamma_0)}, \ln \sqrt{2 C_x} \right) = \left[ \ln \delta_0 - x, \ln \sqrt{2 C_x} \right) \in \B_{1,pt}(U_{g_x}^* S^2).$$
By (2) in Proposition \ref{bsm-p} we also have that $\ln \sqrt{2 C_x}  \geq \ln \delta_0 - y$ for any $y\geq 0.$ Hence, for any $x,y\in [0,\infty)$ Lemma \ref{opt-matching} gives
$$\frac{1}{2} |x-y| \leq d_{bottle}(\B_{1,pt}(U_{g_x}^* S^2), \B_{1,pt}(U_{g_y}^* S^2)),$$
which together with Theorem \ref{TST} implies $\frac{1}{2} |x-y| \leq d_{SBM}(U_{g_x}^* S^2, U_{g_y}^* S^2)$.

Now define the desired embedding $\Phi: [0, \infty) \to \bar{\mathcal G}_{S^2}$ by 
\[ \Phi(x) = R_x \cdot \tilde{\Phi}(x) = R_x \cdot g_x ,\]
where $R_x$ is the rescaling factor given by (3) in Proposition \ref{bsm-p}. From Remark \ref{Scaling-SBM} it follows that 
\begin{align*}
    d_{SBM}(U_{\Phi(x)}^*S^2, U_{\Phi(y)}^* S^2) & = d_{SBM}(\sqrt{R_x} U_{g_x}^* S^2, \sqrt{R_y} U_{g_y}^* S^2)\\
    & = d_{SBM}(\sqrt{R_x/R_y} U_{g_x}^* S^2, U_{g_y}^* S^2) \\
    & \geq d_{SBM}(U_{g_x}^* S^2, U_{g_y}^* S^2) - d_{SBM}(U_{g_x}^*S^2, \sqrt{R_x/R_y} U_{g_x}^* S^2)\\
    & = d_{SBM}(U_{g_x}^* S^2, U_{g_y}^* S^2) - \frac{1}{2} |\ln R_x - \ln R_y|\\
    & \geq \frac{1}{2} |x-y| - \frac{1}{2} |\ln R_x - \ln R_y|.
\end{align*}
For any $\varepsilon>0$, take $\varepsilon_0 = \frac{e^{2\varepsilon} -1}{e^{2\varepsilon}+1}$ in Proposition \ref{bsm-p}. Then the range of $R_x$ given by (3) in Proposition \ref{bsm-p} implies $|\ln R_x - \ln R_y| \in [0, \varepsilon]$. Thus, we get the desired lower bound. 
\end{proof}

\subsection{Proof of Proposition \ref{emb-thm2}} \label{ss-p-13}
Let us give the definition of a multi-bulked surface first. Let $\Sigma$ be a closed, orientable surface of genus at least 1.

We call a subset of $\R^3$ {\it a cylindrical segment} if it can be obtained as an open surface of revolution with a constant profile function $r:(L_-,L_+)\rightarrow \R$ on some interval $(L_-,L_+).$

For $N\geq 1$ {\it an open chain of $N-1$ spheres}, denoted by $O(N),$ is an open surface of revolution with a smooth profile function $r:(L_-,L_+)\rightarrow \R$ which satisfies the following properties:
\begin{itemize}
\item{} $r(\ell)$ has $N$ local minima $a_1,\ldots , a_N$ and $N-1$ local maxima $b_1, \ldots , b_{N-1}.$
\item{} $r''(a_i) > 0$ for all $i=1, \ldots , N.$
\end{itemize}

Profile function of an open chain of $N-1$ spheres is illustrated in the Figure \ref{mprof} below.

\begin{figure}[ht]
\includegraphics[scale=0.5]{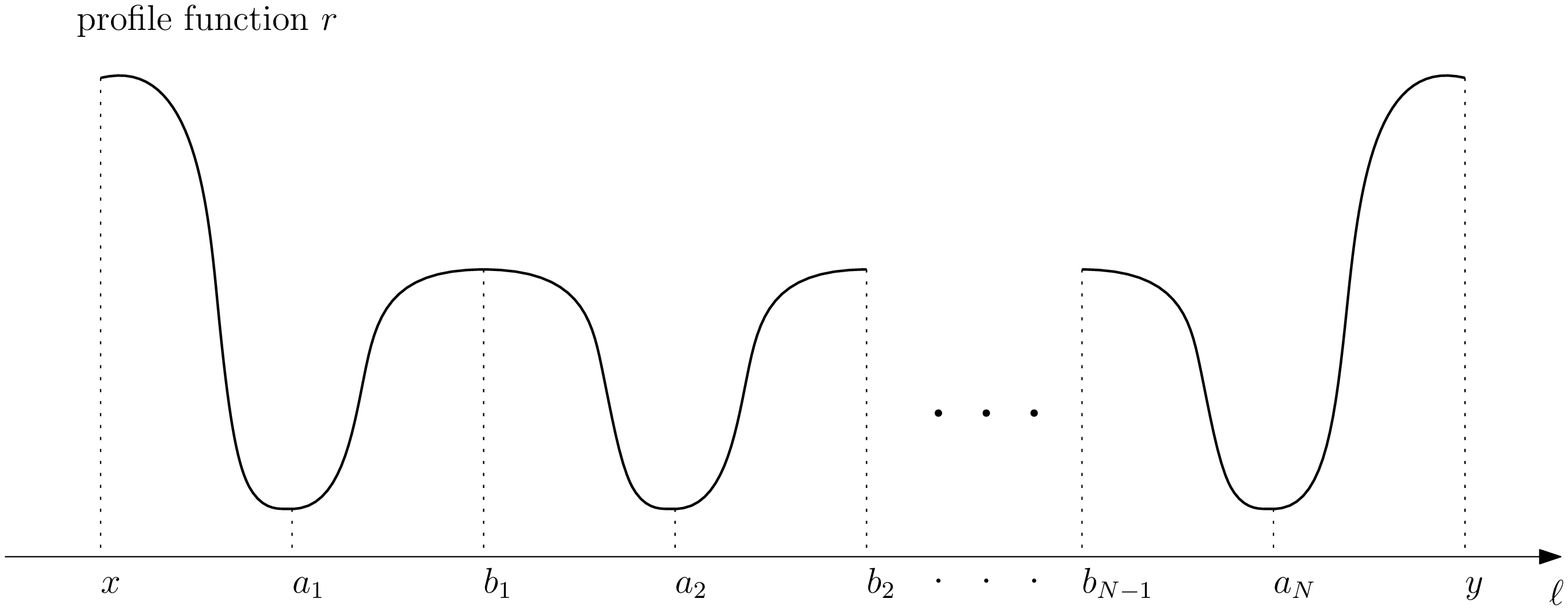}
\caption{Profile function of an open chain of $N-1$ spheres}
\label{mprof}
\end{figure}

\begin{dfn} \label{dfn-mbm}
Fix an embedding $\phi: \Sigma \rightarrow \R^3$ such that $\im \phi$ contains a cylindrical segment. A {\it multi-bulked surface} $S\subset \R^3$ is obtained by cutting out the cylindrical segment from $\im \phi$ and inserting $O(N).$ A general picture of a multi-bulked surface is shown in Figure \ref{ex-mb}. A {\it multi-bulked metric $g$} is a metric on $\Sigma$ induced by the standard metric on $\R^3.$  If we want to emphasize the role of $N$, we will also use terms {\it an $N$-bulked surface} and {\it an $N$-bulked metric}. 
\end{dfn}

\begin{figure}[ht]
\includegraphics[scale=0.5]{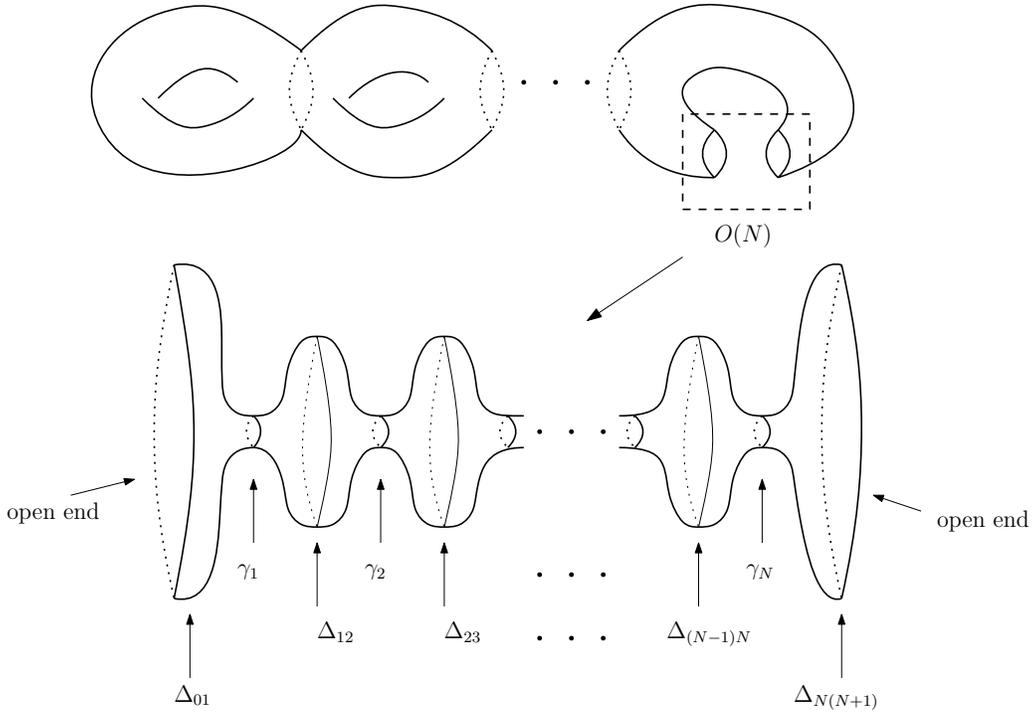}
\caption{A general picture of a multi-bulked surface}
\label{ex-mb}
\end{figure}

Denote the short simple closed geodesics coming from the ``narrow necks'' in $O(N)$ by $\gamma_1, .., \gamma_N$ from left to right as in Figure \ref{ex-mb}. All of $\gamma_i$ belong to the same free homotopy class, which we denote by $\alpha.$ Denote the long simple closed geodesics from the ``spherical parts'' in $O(N)$ by $\Delta_{12}, ..., \Delta_{(N-1) N}$ from left to right and the boundary curves of $O(N)$ by $\Delta_{01}$ and $\Delta_{N (N+1)}$ as in Figure \ref{ex-mb}. Moreover, we put the following requirements. 
\begin{itemize}
    \item{} Lengths of $\Delta_{i (i+1)}$ satisfy $L_{g}(\Delta_{12}) = ... = L_{g}(\Delta_{(N-1)N})$.
    \item{} Energies of $\gamma_i$ satisfy $E_g(\gamma_1) \leq ... \leq E_g(\gamma_N)$. 
\end{itemize}

For any $N \in \N$, let
\[ \mathcal T(N) = \left\{\vec{x} = (x_1, ..., x_{N}) \in [0, \infty)^{N}\,| \, x_1 \leq  x_2 \leq ... \leq x_{N} \right\}.\]

Similarly to Lemma \ref{lm-short-geo}, we have the following result. 
\begin{lemma} \label{lm-short-geo-2}
Each closed geodesic $\gamma_i$ is non-degenerate and ${\rm Ind}(\gamma_i) = 0$ for $i \in \{1, ..., N\}$.
\end{lemma}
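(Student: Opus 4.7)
The plan is to reduce Lemma \ref{lm-short-geo-2} to Lemma \ref{lm-short-geo} by a localization argument. Both the Morse index of a closed geodesic and its non-degeneracy as a critical submanifold of the energy functional are determined by the Jacobi equation along the geodesic (equivalently, by the linearized Poincar\'e return map on a transverse section). In either formulation, these invariants depend only on the germ of the Riemannian metric along $\gamma$, hence on any arbitrarily small tubular neighborhood of $\gamma$.

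Fix $i \in \{1, \ldots, N\}$. By construction of $O(N)$, the geodesic $\gamma_i$ is the parallel circle of the surface of revolution $O(N)$ corresponding to the local minimum $a_i$ of the profile function $r$, and $r''(a_i) > 0$. Choose $\delta > 0$ small enough that $a_i$ is the only critical point of $r$ on $(a_i - \delta,\, a_i + \delta)$, and let $V_i \subset O(N) \subset S$ denote the corresponding open annular neighborhood of $\gamma_i$. The next step is to build an auxiliary bulked sphere whose profile function $\tilde r : [-L, L] \to [0, \infty)$ satisfies $\tilde r(\ell) = r(\ell + a_i)$ on $(-\delta, \delta)$; this is achieved by smoothly extending $r|_{(a_i - \delta,\, a_i + \delta)}$ to all of $[-L, L]$ subject to the conditions of Definition \ref{dfn-bsm}. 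In this auxiliary bulked sphere, the open annular neighborhood $W_i$ of the neck geodesic $\gamma_0$ corresponding to the interval $(-\delta, \delta)$ is by construction isometric to $V_i$ via a diffeomorphism carrying $\gamma_0$ to $\gamma_i$.

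Applying Lemma \ref{lm-short-geo} with $m = 1$ to this auxiliary bulked sphere yields that $\gamma_0$ is non-degenerate and ${\rm Ind}(\gamma_0) = 0$. By the locality stated above, the same conclusions transfer under the isometry $V_i \cong W_i$ to $\gamma_i$, which completes the proof. The main expected obstacle is precisely this locality of the index and nullity of a closed geodesic; while standard, it is worth articulating, since both the space of Jacobi fields along $\gamma$ and the linearization of the Poincar\'e return map at $\gamma$ are determined by the $\infty$-jet of the metric along $\gamma$, and hence are preserved by any isometry between tubular neighborhoods.
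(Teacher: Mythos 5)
Your proposal is correct and takes essentially the same route as the paper: the paper proves Lemma \ref{lm-short-geo} by an explicit computation of the Jacobi fields along the neck geodesic, a computation that uses only the data $r(a_i)$, $r'(a_i)=0$ and $r''(a_i)>0$ at the critical point of the profile function, and then concludes Lemma \ref{lm-short-geo-2} by observing that all these considerations are local and therefore apply verbatim to each $\gamma_i$ on the multi-bulked surface. The only wrinkle in your version is that Definition \ref{dfn-bsm} requires the auxiliary profile $\tilde r$ to be even, which may be incompatible with demanding $\tilde r(\ell)=r(\ell+a_i)$ on all of $(-\delta,\delta)$ when $r$ is not symmetric about $a_i$; this is harmless, since the nullity and index are determined by the Jacobi equation along $\gamma_i$, hence by the curvature $-r''(a_i)/r(a_i)$ there, so matching the $2$-jet of $r$ at $a_i$ (which is symmetric about the critical point) already suffices.
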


Lemma \ref{lm-short-geo-2} is proven in Subsection \ref{Geo_index}. Similarly to Proposition \ref{bsm-p}, we have the following result.

\begin{prop} \label{mb-p} Let $\Sigma$ be a closed, orientable surface of genus at least 1. For any $N \in \N$ and $0<\varepsilon_0<1$, there exists a positive $\delta_0 <<1$ such that for any $\vec{x} = (x_1, ..., x_{N}) \in \mathcal T(N)$, there exists an $N$-bulked metric $g_{\vec{x}} \in {\mathcal G}_{\Sigma}$ satisfying the following properties. 
\begin{itemize}
    \item[(1)] Each closed geodesic $\gamma_i$ has energy $E_{g_{\vec{x}}}(\gamma_i) = \frac{\delta_0^2}{2} e^{-2x_i}$ for $i \in \{1, ..., N\}$.
    \item[(2)] Any closed geodesic $\gamma$ on $(\Sigma, g_{\vec{x}})$ different from $\gamma_1, ..., \gamma_{N}$ and their iterates has energy $E_{g_{\vec{x}}}(\gamma) > \frac{\delta_0^2}{2}$.
    \item[(3)] Every cylinder connecting $\gamma_i$ and $\gamma_j$ for $i \neq j$ must pass through a loop with energy greater than $\frac{\delta_0^2}{2}$. 
    \item[(4)] There exists some constant $R_{\vec{x}} \in \left[ \sqrt{\frac{1}{1+ \varepsilon_0}}, \sqrt{\frac{1}{1 - \varepsilon_0}} \right]$ such that $R_{\vec{x}} \cdot g_{\vec{x}} \in \bar{\mathcal G}_{\Sigma}$.
\end{itemize}
\end{prop}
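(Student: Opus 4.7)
The plan is to mirror the construction of Proposition \ref{bsm-p} with $N$ necks in place of one. First, I fix once and for all an embedding $\phi:\Sigma\to\R^3$ whose image contains a long cylindrical segment, together with the induced background metric $g_0$ on $\Sigma\setminus O(N)$; this exterior part of the surface and its metric are kept independent of $\vec{x}$ throughout. The construction of $g_{\vec{x}}$ then reduces to prescribing a smooth profile function $r_{\vec{x}}:(L_-,L_+)\to(0,\infty)$ with local minima $a_1<\cdots<a_N$ and local maxima $b_1<\cdots<b_{N-1}$ interspersed, satisfying $r_{\vec{x}}''(a_i)>0$ and matching the radius of the cylindrical segment smoothly at $L_{\pm}$. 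To realize (1) I set $2\pi r_{\vec{x}}(a_i)=\delta_0 e^{-x_i}$, so that the parallel circle $\gamma_i$ at $\ell=a_i$, which is automatically a geodesic since $r'(a_i)=0$, has length $\delta_0 e^{-x_i}$ and constant-speed energy $\frac{\delta_0^2}{2}e^{-2x_i}$. The ordering $x_1\le\cdots\le x_N$ from $\mathcal T(N)$ guarantees $E_{g_{\vec{x}}}(\gamma_1)\le\cdots\le E_{g_{\vec{x}}}(\gamma_N)$. Non-degeneracy and the index computation in Lemma \ref{lm-short-geo-2} reduce to a Jacobi field calculation along $\gamma_i$ inside the surface of revolution, done in Subsection \ref{Geo_index}.

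For (2) I split closed geodesics of $(\Sigma,g_{\vec{x}})$ into three types. Type (a) are those entirely contained in $\Sigma\setminus O(N)$, where the metric equals the fixed $g_0$; their energies are bounded below by a positive constant $E_0(g_0)$ depending only on the background, so choosing $\delta_0<\sqrt{2E_0(g_0)}$ rules these out. Type (b) are those entirely inside $O(N)$ and not equal to any $\gamma_i^{\pm m}$; since $O(N)$ is a surface of revolution, the Clairaut relation $r(\ell)\cos\theta=\text{const}$ applies, and any such geodesic must either oscillate across some local maximum $b_k$ or connect to the boundary of $O(N)$. In either case its length is bounded below by a quantity controlled by the heights $r_{\vec{x}}(b_k)$ of the bulges, which are free parameters I choose large. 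Type (c) are geodesics crossing $\partial O(N)$; their length is controlled by the same kind of estimate combined with a lower bound on the distance across the background. This parallels the argument for bulked spheres in Subsection \ref{Geo_length}, where the analogous statement is proved in detail.

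For (3) I argue topologically. Given any smooth cylinder $C:S^1\times[0,1]\to\Sigma$ with $C(\cdot,0)=\gamma_i$, $C(\cdot,1)=\gamma_j$ and $i<j$, continuity of the image of the axial coordinate $\ell\circ C$ forces, for some intermediate $k$ with $i\le k<j$ and some parameter value $s_0\in(0,1)$, the loop $C(\cdot,s_0)$ to meet the parallel circle at $\ell=b_k$. A topological argument using that $\gamma_i$ is non-trivially linked with this parallel (it winds once around the axis inside $O(N)$) shows the loop $C(\cdot,s_0)$ must wrap at least once around the axis at height $b_k$, so $L_{g_{\vec{x}}}(C(\cdot,s_0))\ge 2\pi r_{\vec{x}}(b_k)$, which is bounded below by a constant $>\delta_0$ when the heights of the bulges are chosen sufficiently large.

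For (4), because the entire deformation takes place inside a fixed cylindrical segment and the profile $r_{\vec{x}}$ converges uniformly to a fixed shape as $\delta_0\to0$, the volume $\Vol_{g_{\vec{x}}}(\Sigma)$ and the diameter $\mathrm{diam}(\Sigma,g_{\vec{x}})$ differ from the corresponding quantities for $g_0$ by amounts that go to $0$ uniformly in $\vec{x}$. Arranging $g_0$ in advance so that $\Vol_{g_0}(\Sigma)$ is close to $1$ and $\mathrm{diam}(\Sigma,g_0)\le 90$, and taking $\delta_0$ small enough, the unique positive rescaling $R_{\vec{x}}$ with $\Vol_{R_{\vec{x}}\cdot g_{\vec{x}}}(\Sigma)=1$ lies in $[\sqrt{1/(1+\varepsilon_0)},\sqrt{1/(1-\varepsilon_0)}]$ and satisfies the diameter bound.

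The main obstacle is verifying items (2) and (3) rigorously, since type (b) geodesics can in principle remain trapped near a neck by the rotational symmetry. The key input is the Clairaut integral: a geodesic with Clairaut constant $C$ is confined to the region $\{r(\ell)\ge C\}$, so either $C=r_{\vec{x}}(a_i)$ and the geodesic is (a multiple of) $\gamma_i$, or else $C$ is strictly smaller and the geodesic must traverse a bulge whose minimum width provides the lower length bound. A secondary subtlety is ensuring the smooth interpolation at $L_{\pm}$ produces no spurious short geodesics; this is handled by making the transition region $C^{\infty}$-small relative to $g_0$, so that the a priori lower bound $E_0(g_0)$ degrades by an arbitrarily small amount.
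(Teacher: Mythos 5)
Your overall strategy is the paper's: prescribe the profile so that $2\pi r_{\vec x}(a_i)=\delta_0 e^{-x_i}$, use Clairaut to confine geodesics, force cylinders and long geodesics to cross the separating parallels, and localize the deformation so that volume and diameter are controlled. However, the proof of (3) (and the crossing case of (2)) has two concrete gaps. First, your intermediate-value argument on $\ell\circ C$ is not valid: the axial coordinate $\ell$ is only defined on the $O(N)$ part, and a single parallel circle $\Delta_{k(k+1)}$ lies in the class $\alpha$, which is typically non-separating in a surface of genus $\geq 1$; a cylinder from $\gamma_i$ to $\gamma_j$ can exit $O(N)$, travel through a handle, and re-enter without ever meeting the circle at $\ell=b_k$. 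The paper avoids this by observing that $\gamma_i$ and $\gamma_j$ lie in different components of $\Sigma\setminus(\Delta_{01}\cup\Delta_{i(i+1)})$ — it takes \emph{two} of the long parallels to separate — so the cylinder must meet one of that pair.

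Second, once a slice $C(\cdot,s_0)$ meets a point $p$ of some $\Delta$, the bound $L_{g_{\vec x}}(C(\cdot,s_0))\geq 2\pi r_{\vec x}(b_k)$ does not follow from the loop being in class $\alpha$: the loop only has to pass through $p$, and it can descend into an adjacent narrow neck and accomplish its winding around the axis there at cost $O(\delta_0)$, so it certainly need not "wrap around at height $b_k$". The estimate you need (a lower bound independent of $\delta_0$ and $\vec x$) is obtained differently in the paper: a loop in a nontrivial free homotopy class cannot be contained in an embedded metric disc $B(p;\rho)$ with $\rho$ less than the injectivity radius at $p$, hence it contains two arcs from $p$ to $\partial B(p;\rho)$ and has length at least $2\rho$; by the rotational symmetry near the $\Delta$'s this $\rho$ is uniform, and $\delta_0$ is then chosen smaller than it. Relatedly, you cannot take the bulge heights $r_{\vec x}(b_k)$ "large" as free parameters — the normalization in (4) forces the whole $O(N)$ region to be small — so all lower bounds must come from fixed (small but $\delta_0$-independent) geometric constants of the background surface, exactly as in the injectivity-radius argument. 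Items (1) and (4) are fine and match the paper's explicit parameterization.
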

Properties (1), (2) and (3) in Proposition \ref{mb-p} can be confirmed by the same argument as (1), (2) and (4) in Proposition \ref{bsm-p}. Property (3) in Proposition \ref{mb-p} essentially comes from the fact that curves $\Delta_{i (i+1)}$ are very long compared to $\gamma_j.$ 

\medskip

The quasi-isometric embedding of $(\R^N, |\cdot|_{\infty})$ into $\bar{\mathcal G}_{\Sigma}$ which we construct to prove Proposition \ref{emb-thm2} will be realized as a composition of two quasi-isometric embeddings according to the following scheme 
$$(\R^N, |\cdot|_{\infty}) \xrightarrow{Q} (\mathcal T(2N), |\cdot|_{\infty}) \xrightarrow{\Psi} \bar{\mathcal G}_{\Sigma}.$$ To this end, in Subsection \ref{red-emb-space} we prove the following lemma. 

\begin{lemma} \label{red-par}
Fix $N \in \N$. There exists a map $Q: (\R^N, |\cdot|_{\infty}) \to (\mathcal T(2N), |\cdot|_{\infty})$ such that for any $\vec{x}, \vec{y} \in (\R^N, |\cdot|_{\infty})$, 
\[ \frac{1}{4} |\vec{x} - \vec{y}|_{\infty} \leq |Q(\vec{x}) - Q(\vec{y})|_{\infty} \leq  (2N) \cdot |\vec{x} - \vec{y}|_{\infty}. \]
\end{lemma}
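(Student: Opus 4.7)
The plan is to construct $Q$ as a composition $Q = S \circ R$, where $R : \R^N \to [0,\infty)^{2N}$ splits each coordinate into its positive and negative parts, and $S : [0,\infty)^{2N} \to \mathcal T(2N)$ turns a vector of non-negative ``gaps'' into the corresponding monotone tuple of partial sums. I will then bound the bi-Lipschitz constants of $R$ and of $S$ separately (in the $|\cdot|_\infty$-norm) and multiply.

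Concretely, for $\vec{x} = (x_1,\ldots,x_N) \in \R^N$ set $x_i^\pm := \max(\pm x_i, 0) \geq 0$ and define
\[
R(\vec{x}) = (x_1^+, x_1^-, x_2^+, x_2^-, \ldots, x_N^+, x_N^-), \qquad
S(z_1, \ldots, z_{2N}) = \bigl(z_1,\ z_1+z_2,\ \ldots,\ z_1+z_2+\cdots+z_{2N}\bigr).
\]
The map $S$ takes values in $\mathcal T(2N)$, since partial sums of non-negative entries form a non-decreasing non-negative sequence. Since $t \mapsto t^\pm$ is $1$-Lipschitz on $\R$, the upper bound $|R(\vec{x})-R(\vec{y})|_\infty \le |\vec{x}-\vec{y}|_\infty$ is immediate, while the identity $x_i-y_i=(x_i^+-y_i^+)-(x_i^--y_i^-)$ combined with the triangle inequality yields
\[
|\vec{x}-\vec{y}|_\infty \;\leq\; 2\,|R(\vec{x})-R(\vec{y})|_\infty.
\]
For $S$, the upper bound $|S(\vec{z})-S(\vec{z}')|_\infty \leq 2N\,|\vec{z}-\vec{z}'|_\infty$ follows from $|S_j(\vec{z})-S_j(\vec{z}')| \leq \sum_{k\leq j}|z_k-z_k'|$, and the lower bound $|\vec{z}-\vec{z}'|_\infty \leq 2\,|S(\vec{z})-S(\vec{z}')|_\infty$ follows from writing $z_j-z_j' = (S_j(\vec{z})-S_{j-1}(\vec{z})) - (S_j(\vec{z}')-S_{j-1}(\vec{z}'))$ and applying the triangle inequality.

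Composing $Q = S \circ R$ and chaining the four inequalities gives
\[
\tfrac{1}{4}|\vec{x}-\vec{y}|_\infty \;\leq\; \tfrac{1}{2}|R(\vec{x})-R(\vec{y})|_\infty \;\leq\; |Q(\vec{x})-Q(\vec{y})|_\infty \;\leq\; 2N\,|R(\vec{x})-R(\vec{y})|_\infty \;\leq\; 2N\,|\vec{x}-\vec{y}|_\infty,
\]
which is exactly the assertion of Lemma \ref{red-par}. I do not anticipate any genuine obstacle: the entire argument reduces to checking Lipschitz constants for two elementary maps, and the constants $\tfrac14$ and $2N$ come out precisely as stated, with the factor $2$ losses arising once from the positive/negative decomposition and once from the partial-sums bijection.
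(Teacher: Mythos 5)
Your proof is correct and follows essentially the same architecture as the paper's: a coordinate-doubling embedding $\R\to[0,\infty)^2$ with bi-Lipschitz constants $(\tfrac12,1)$, composed with the partial-sums map $[0,\infty)^{2N}\to\mathcal T(2N)$ with constants $(\tfrac12,2N)$. The only differences are cosmetic: the paper uses an ``L-shaped'' embedding $L(x)=(1,1-x)$ for $x<0$, $L(x)=(1+x,1)$ for $x\ge 0$ in place of your positive/negative-part decomposition $x\mapsto(x^+,x^-)$ (both give the same constants), and your telescoping identity $z_j-z_j'=(S_j-S_{j-1})-(S_j'-S_{j-1}')$ is a cleaner route to the lower bound $|\vec z-\vec z'|_\infty\le 2|S(\vec z)-S(\vec z')|_\infty$ than the paper's case analysis.
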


\begin{remark} Similarly to metrics $g_x$ in Proposition \ref{bsm-p}, metrics $g_{\vec{x}}$ in Proposition \ref{mb-p} may not be bumpy. As exaplained in Remark \ref{rmk-bumpy}, they can be perturbed by a $d_{SBM}$-small perturbation to a bumpy metric which still satisfies all the properties from Lemma \ref{lm-short-geo-2} and Proposition \ref{mb-p} (up to a small difference in logarithms of energies). Again, we ignore this point in the proof that follows, for the sake of clarity.
\end{remark}

We are now in a position to give a proof of the lower bound in Proposition \ref{emb-thm2}.

\begin{proof} (Proof of Proposition \ref{emb-thm2} (lower bound)) Define a map $\tilde{\Psi}: \mathcal T(2N) \to {\mathcal G}_\Sigma$ as $\tilde{\Psi}(\vec{x}) = g_{\vec{x}},$
where $\vec{x} = (x_1, ..., x_{2N}) \in \mathcal T(2N)$ and $g_{\vec{x}}$ is a $2N$-bulked metric given by Proposition \ref{mb-p}. The short geodesics in the $O(2N)$ part are labelled from the longest to the shortest by $\gamma_1,\ldots , \gamma_{2N}$. Denote the homotopy class of these geodesics by $\alpha = [\gamma_1] = ...= [ \gamma_{2N}]$.

For this $\alpha$, reversed loops $\gamma_i^{-1}$ as well as iterations $\gamma_i^{\pm m}$ for $m\geq 2$ are all not in $\alpha.$ Constant loops are also not in $\alpha$ and thus (3) in Proposition \ref{mb-p} implies that the only closed geodesics in class $\alpha$ with energy less or equal to $\frac{\delta_0^2}{2}$ are $\gamma_i$, $i=1,\ldots , 2N.$ Lemma \ref{lm-short-geo-2} guarantees that all $\gamma_i$ are non-degenerate and thus we may use Morse-Bott techniques introduced in Subsection \ref{ss-Mor-B}, namely the identity (\ref{Filtered_Morse-Bott}), see also Remark \ref{Below}.

As in the proof of Proposition \ref{emb-thm} each $\gamma_i,$ $i=1,\ldots , 2N$ produces two generators of the Morse-Bott chain complex, $p^0_i\in CMB_{0,\alpha}(E_{g_{\vec{x}}},h)$ and $p^1_i\in CMB_{1,\alpha}(E_{g_{\vec{x}}},h).$ Moreover these these are the only generators of $CMB^\lambda_{*,\alpha}(E_{g_{\vec{x}}},h)$ for $\lambda \leq \frac{\delta_0}{2}.$

In terms of the boundary operator we have that for all $i=1,\ldots , 2N$ it holds $\partial p^0_i=0$ as well as $n(p^1_i, p^0_i)=0$ because $p^1_i$ and $p^0_i$ belong to the same $S^1$-critical submanifold. We claim that also $n(p^1_i, p^0_j)=0$ when $i\neq j.$ Indeed, assume that there exists a flow line with cascades $(u_1, \ldots, u_k, t_1, \ldots , t_{k-1})$ connecting $p^1_i$ and $p^0_j.$ Since $\gamma_i \neq \gamma_j,$ we must have $k\geq 1$ and one of the flow lines $u_l$ would have to start at a critical submanifold $S^1 \cdot \gamma_{i_1}$ and end at a critical submanifold $S^1 \cdot \gamma_{i_2}$ with $i_1 \neq i_2.$ However, this would mean that $\im u_l \subset \Sigma$ defines a cylinder which connects $\gamma_{i_1}$
and $\gamma_{i_2}$ and which passes only through loops of energy no greater than $\lambda \leq \frac{\delta_0^2}{2}.$ Existence of such a cylinder is ruled out by (3) in Proposition \ref{mb-p} and hence $n(p^1_i, p^0_j)=0$ for all $i,j.$ This means that for $\lambda \leq \frac{\delta_0^2}{2},$ $\partial = 0$ on $CBM^\lambda_{*,\alpha}(E_{g_{\vec{x}}},h).$

Using (\ref{Filtered_Morse-Bott}) we conclude that $\mathbb B(\mathbb H_{1, \alpha}(\Sigma, g_{\vec{x}}))$ contains bars $[E_{g_{\vec{x}}}(\gamma_i), C_i(\vec{x}))$ for $i=1,\ldots , 2N$, with $C_i(\vec{x}) \geq \frac{\delta_0^2}{2}$ (possibly $C_i(\vec{x}) = \infty$). Moreover, $E_{g_{\vec{x}}}(\gamma_i)$ are the $2N$ smallest left endpoints of bars in $\mathbb B(\mathbb H_{1, \alpha}(\Sigma, g_{\vec{x}})).$

Recall that $\B_{1,\alpha}(U_{g_{\vec x}}^* \Sigma)$ denotes the barcode of a symplectic persistence module with logarithmic parametrization in degree one and homotopy class $\alpha$. Theorem \ref{sh-loop-thm} implies that, for any $i \in \{1, ..., 2N\}$,  
\[ \left[ \ln\sqrt{2E_{g_{\vec{x}}}(\gamma_i)}, \ln \sqrt{2C_i(\vec{x})} \right) = \left[ \ln \delta_0 - x_i, \ln \sqrt{2C_i(\vec{x})} \right) \in \B_{1, \alpha}(U_{g_{\vec{x}}^*} \Sigma). \]
Similar conclusion holds for any $\vec{y} \in \mathcal T(2N)$. Moreover, $\ln \sqrt{2C_i(\vec{x})} \geq \ln \delta_0 - y_j$ for any $y_j\in [0,\infty)$. Hence, Lemma \ref{opt-matching} implies 
\[ \frac{1}{2} |\vec{x} - \vec{y}|_{\infty} \leq d_{bottle}(\B_{1, \alpha}(U_{g_{\vec{x}}}^* \Sigma), \B_{1, \alpha}(U_{g_{\vec{y}}}^* \Sigma)).\]
Theorem \ref{TST} then yields $\frac{1}{2}|\vec{x} - \vec{y}|_{\infty} \leq d_{SBM}(U_{g_{\vec{x}}}^* \Sigma, U_{g_{\vec{y}}}^* \Sigma)$.

Now Lemma \ref{red-par} provides an embedding $\tilde \Phi: = \tilde{\Psi} \circ Q: \R^N \to \mathcal G_{\Sigma}$, which satisfies
$$\frac{1}{8} |\vec{x}-\vec{y}|_{\infty} \leq \frac{1}{2}|{Q}(\vec{x}) - {Q}(\vec{y})|_{\infty} \leq d_{SBM}(U_{\tilde{\Phi}(\vec{x})}^* \Sigma, U_{\tilde{\Phi}(\vec{y})}^* \Sigma),$$
for any $\vec{x}, \vec{y} \in \R^N.$

Finally, $\Phi: \R^N \to \bar{\mathcal G}_{\Sigma}$ is defined by setting $\Phi(\vec{x}) = R_{Q(\vec{x})} \cdot \tilde{\Phi}(\vec{x}) =  R_{Q(\vec{x})} \cdot g_{Q(\vec{x})},$ where $R_{Q(\vec{x})}$ is the rescaling factor given by (4) in Proposition \ref{mb-p}, associated to vector $Q(\vec{x}) \in \mathcal T(2N)$. The same argument as in the proof of Proposition \ref{emb-thm} (lower bound) implies 
\[ \frac{1}{8} |\vec{x} - \vec{y}|_{\infty} - \frac{1}{2}| \ln R_{Q(\vec{x})} - \ln R_{Q(\vec{y})}| \leq d_{SBM}(U_{{\Phi}(\vec{x})}^* \Sigma, U_{{\Phi}(\vec{y})}^* \Sigma). \]
For any $ \varepsilon>0$, take $\varepsilon_0 = \frac{e^{2\varepsilon} -1}{e^{2\varepsilon}+1}$ in Proposition \ref{mb-p}. Then (4) in Proposition \ref{mb-p} implies the desired lower bound. 
\end{proof}

\section{Bulked sphere and multi-bulked surface} \label{sec-bulk}

\subsection{Analyzing short geodesics}\label{Geo_index}
The goal of this subsection is to prove Lemmas \ref{lm-short-geo} and \ref{lm-short-geo-2}. All considerations in this subsection are local and hence apply equally to both propositions. Let us focus on $\gamma_0$ on a bulked sphere $S.$ 

\begin{lemma} \label{hyper} The geodesic $\gamma_0$ on a bulked sphere is hyperbolic. 
\end{lemma}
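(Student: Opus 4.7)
The plan is to exploit the rotational symmetry of $S$ and reduce hyperbolicity to a constant-coefficient Jacobi equation. First, near $\gamma_0 = \{l = 0\}$, I would introduce arc-length along the meridian, $s(l) = \int_0^l \sqrt{1 + r'(t)^2}\,dt$, and let $\rho(s) = r(l(s))$, so the metric on $S$ takes the standard surface-of-revolution form $ds^2 + \rho(s)^2\,d\theta^2$. Since $r'(0) = 0$, we have $\rho(0) = r(0)$ and $\rho''(0) = r''(0)$. Using the standard formula for the Gaussian curvature of a surface of revolution in arc-length parametrization, the curvature at points of $\gamma_0$ is the constant
\[
K_0 = -\frac{\rho''(0)}{\rho(0)} = -\frac{r''(0)}{r(0)} < 0,
\]
where the strict inequality uses $r(0) > 0$ and the defining condition $r''(0) > 0$ of a bulked sphere.

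Next, I would analyze the linearized Poincaré map using normal Jacobi fields. For $t$ the arc-length parameter along $\gamma_0$ and $N(t)$ the unit normal, a normal Jacobi field $J(t) N(t)$ satisfies $J''(t) + K(\gamma_0(t)) J(t) = 0$, which by rotational symmetry reduces to the constant-coefficient ODE $J''(t) + K_0 J(t) = 0$. Setting $\omega = \sqrt{-K_0}$ and $T = L_g(\gamma_0) = 2\pi r(0)$, the linearized Poincaré map $(J(0), J'(0)) \mapsto (J(T), J'(T))$ is
\[
P = \begin{pmatrix} \cosh(\omega T) & \omega^{-1} \sinh(\omega T) \\ \omega \sinh(\omega T) & \cosh(\omega T) \end{pmatrix},
\]
with real eigenvalues $e^{\pm \omega T}$ of modulus different from $1$. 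This is precisely the definition of $\gamma_0$ being a hyperbolic periodic orbit of the geodesic flow.

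The same argument applies verbatim to each short geodesic $\gamma_i$ on a multi-bulked surface: $\gamma_i$ lies in the surface-of-revolution region $O(N)$ at a local minimum $a_i$ of the profile where $r'(a_i) = 0$ and $r''(a_i) > 0$, so the curvature along $\gamma_i$ is a negative constant and the identical Jacobi-field computation yields hyperbolicity. The only step requiring care is the change of variables from the axial parameter $l$ to the arc-length parameter $s$, which is needed so that the standard surface-of-revolution curvature formula applies directly; this is routine since $r'$ vanishes at the local minimum. Once hyperbolicity is in hand, Lemma \ref{lm-short-geo} follows immediately: iterates of a hyperbolic orbit remain hyperbolic (eigenvalues get raised to powers, staying off the unit circle), so all $\gamma_0^{\pm m}$ are non-degenerate, and the exponential growth of Jacobi fields prevents conjugate points, forcing $\mathrm{Ind}(\gamma_0^{\pm m}) = 0$.
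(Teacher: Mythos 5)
Your argument is correct and is essentially the paper's own proof: both compute the constant negative Gaussian curvature $-r''(0)/r(0)$ along $\gamma_0$, reduce the normal Jacobi equation to a constant-coefficient ODE, and read off eigenvalues $e^{\pm\omega T}$ of the linearized Poincar\'e map, which lie off the unit circle. The only difference is cosmetic — you use arc-length parametrization while the paper uses a period-$1$ parametrization of speed $2\pi r$, so your exponent $\omega T = 2\pi\sqrt{r(0)r''(0)}$ coincides with the paper's $\sqrt{-K}$ with $K = -4\pi^2 r\,r''(0)$.
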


We start with some necessary background. Let $(M,g)$ be an $n$-dimensional Riemannian manifold. Recall that a vector field $J$ along the geodesic path $\gamma:[0,1]\rightarrow M$ is called {\it Jacobi field} if it satisfies the Jacobi equation
\begin{equation}\label{jac}
\nabla_{\dot{\gamma}} \nabla_{\dot{\gamma}} J + R(J,\dot{\gamma})\dot{\gamma}=0,    
\end{equation}
where $R(\cdot,\cdot)$ stands for the curvature tensor associated to $g.$ Jacobi fields are tangent to the space of geodesic paths with free endpoints. When $\gamma$ is a closed geodesic, they can be used to calculate index and nullity of $\gamma.$ To this end, first notice that Jacobi field is uniquely determined by two initial conditions $J(0)$ and $\nabla_{\dot{\gamma}}J(0).$ Moreover, we may choose these two vectors freely, which means that the space of Jacobi fields is $2n$-dimensional. The two initial conditions $J_0(0)=\dot{\gamma}(0)$, $ \nabla_{\dot{\gamma}}J_0(0)= 0$ and $\bar{J}_0(0)=0$, $ \nabla_{\dot{\gamma}}\bar{J}_0(0)=\dot{\gamma}(0)$ yield Jacobi fields $J_0(t)=\dot{\gamma}(t)$ and $\bar{J}_0(t)=t\dot{\gamma}(t)$ which are tangent to $\gamma.$ Let 
$$E(t)=(T\gamma(t))^{\perp}\oplus(T\gamma(t))^{\perp}\subset T_{\gamma(t)}M\oplus T_{\gamma(t)}M$$
be the $(2n-2)$-dimensional vector bundle along $\gamma,$ where $(T\gamma(t))^{\perp}$ denotes the orthogonal space to $\dot{\gamma}(t)$ inside $T_{\gamma(t)}M.$ It is easy to check that if $J(0)\perp \dot{\gamma}(0)$ and $\nabla_{\dot{\gamma}(0)}J(0)\perp \dot{\gamma}(0)$ then $J(t)\perp \dot{\gamma}(t)$ and $\nabla_{\dot{\gamma}(t)}J(t)\perp \dot{\gamma}(t)$ for all $t\in [0,1].$ This means that we may define a family of maps
\[ P(t):E(0)\rightarrow E(t) \]
by $P(t)(v,w)= \left(J(t), \nabla_{{\dot\gamma}} J(t) \right)$ where $J$ is the Jacobi field with the initial condition $\left(J(0),\nabla_{{\dot\gamma}} J(0)\right)= (v, w)$. In particula if $\gamma$ is closed, i.e. $\gamma(t+1)=\gamma(t),$ we have that
\[ P(1):E(0)\rightarrow E(0)\]
and this map is called the {\it linearized Poincare map}.

\begin{dfn} Closed geodesic $\gamma$ is called {\it hyperbolic} if no eigenvalue of the linearized Poincare map has norm equal to 1. \end{dfn}

Taking advantage of the geometry of a bulked sphere, the proof of Lemma \ref{hyper} comes from a direct computation of eigenvalues of the linearized Poincare map.

\begin{proof} (Proof of Lemma \ref{hyper})
Suppose that our bulked sphere $S$ comes from rotating a profile function $r$ around the $x$-axis and denote the radius of the circle $\gamma_0(t)$ by $r(0): = r.$ Then
\[ \gamma_0(t) = (0, r\cos(2\pi t), r\sin(2 \pi t)), ~t\in [0,1] \]
and its velocity is given by
\[ {\dot{\gamma}}_0(t) = (0, - 2 \pi r \sin(2 \pi t), 2 \pi r\cos(2 \pi t)). \]
Gaussian curvature $K_G$ along $\gamma_0(t)$ is constant and can be expressed using the formula for the Gaussian curvature of the surface of revolution. More precisely, we have
$$K_G=- \frac{r''(0)}{r},$$
which is negative by the third property in the definition of a bulked sphere, namely $r''(0)>0,$ see Definiton \ref{dfn-bsm}.

In order to calculate the linearized Poincare map, we are only interested in the Jacobi fields orthogonal to $\dot{\gamma}_0(t)$. Let $J = J(t)$ be such a Jacobi field, $J(t) \perp {\dot\gamma_0}(t)$ for all $t\in [0,1].$ Since $\dim S =2,$ $J(t)$ and ${\dot \gamma}_0(t)$ span the tangent planes $T_{\gamma(t)}S$. On the other hand, the curvature tensor satisfies $\left< R(J, \dot{\gamma}_0)\dot{\gamma}_0, {\dot\gamma}_0\right> =0$ and hence $R(J(t), \dot{\gamma}_0(t))\dot{\gamma}_0(t)$ is proportional to $J(t).$ We calculate
\begin{align*}
\left< R(J(t), \dot{\gamma}_0(t))\dot{\gamma}_0(t), J(t) \right> & = |\dot{\gamma}_0(t)|^2 |J(t)|^2 \left<R(e_2, e_1)e_1, e_2 \right> \,\,\,\,\mbox{with $\{e_1,e_2\}$ orthonormal}\\
& = |\dot{\gamma}_0(t)|^2 |J(t)|^2 K_G \\
& = (2 \pi r)^2 |J(t)|^2 \cdot \frac{- r''(0)}{r} \\
& = - 4 \pi^2 r r''(0) |J(t)|^2 \\
& = - 4 \pi^2 r r''(0) \left< J(t), J(t) \right>.
\end{align*}
Denoting $K = - 4 \pi^2 r r''(0)$, (\ref{jac}) is simplified as
\begin{equation} \label{jac2}
\nabla_{\dot{\gamma}_0} \nabla_{\dot{\gamma}_0} J + K \cdot J = 0.
\end{equation}
Note that $K$ is always negative because $r''(0)>0.$

Now, since $S$ is a surface of revolution with axis of rotation being the $x$-axis, and since $r'(0)=0,$ the tangent space to $S$ at $\gamma_0(t)$ is generated by $\dot{\gamma}_0(t)$ and $(1,0,0).$ This means that a Jacobi field orthogonal to $\dot{\gamma}_0$ has the form $J(t) = (J_1(t), 0, 0).$ It follows that
\begin{equation}\label{first-cd}
 \nabla_{\dot{\gamma}_0} J(t) = ({\dot J}_1(t), 0, 0),  
\end{equation}
as well as
\begin{equation}\label{second-cd}
\nabla_{\dot{\gamma}_0} \nabla_{\dot{\gamma}_0} J(t) = ({\ddot J}_1(t), 0, 0),  
\end{equation}
and (\ref{jac2}) becomes a second order equations
\begin{equation} \label{jac3}
{\ddot J}_1(t) + K \cdot J_1(t) = 0.
\end{equation}
The two solutions of this equation are vector fields $J_{+}(t)=(e^{\sqrt{-K}t},0,0)$ and $J_{-}(t)=(e^{-\sqrt{-K}t},0,0).$ Moreover, initial vectors 
$$(J_{+}(0), (\nabla_{\dot{\gamma}_0} J_{+})(0)) = ((1,0,0), (\sqrt{-K},0,0))$$
and
$$(J_{-}(0), (\nabla_{\dot{\gamma}_0} J_{-})(0)) = ((1,0,0), (-\sqrt{-K},0,0))$$
are linearly independent and hence generate  $E(0)=(T\gamma_0(0))^{\perp}\oplus(T\gamma_0(0))^{\perp}.$ In order to compute the eigenvalues of the linearized Poincare map $P:E(0)\rightarrow E(1)$ it is enough to notice that from (\ref{first-cd}) we have
$$(J_{+}(1), (\nabla_{\dot{\gamma}_0} J_{+})(1)) = ((e^{\sqrt{-K}},0,0), (\sqrt{-K}e^{\sqrt{-K}},0,0))$$
as well as
$$(J_{-}(1), (\nabla_{\dot{\gamma}_0} J_{-})(1)) = ((e^{-\sqrt{-K}},0,0), (-\sqrt{-K}e^{-\sqrt{-K}},0,0)).$$
Thus $((1,0,0), (\sqrt{-K},0,0))$ and $((1,0,0), (-\sqrt{-K},0,0))$ are eigenvectors of $P$ with eigenvalues  $\lambda_1 = e^{\sqrt{-K}}$ and $\lambda_2 = e^{-\sqrt{-K}}.$ Since $K \neq 0$, neither one of these has norm one, which means that $\gamma_0$ is hyperbolic by definition.\end{proof} 

Recall that a closed geodesics is non-degenerate if its nullity is zero. The following lemma is a direct consequence of the second variation formula, see, for example, Corollary 2.5.6 in \cite{Kli95}.

\begin{lemma}\label{Lemma_Nulity} Nullity of a closed geodesic $\gamma$ is equal to the dimension of the space of periodic Jacobi fields along $\gamma$ minus one. In particular, $\gamma$ is non-degenerate, that is nullity of $\gamma$ is $0$, if and only if there are no periodic Jacobi fields along $\gamma$ which are orthogonal to $\dot{\gamma}$.
\end{lemma}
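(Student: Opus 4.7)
The plan is to deduce both claims from the second variation formula for the energy functional $E_g$ at a closed geodesic $\gamma$. First I would write, for vector fields $V, W$ along $\gamma$,
\[
\mathrm{Hess}(E_g)_\gamma(V,W) \;=\; \int_0^1 \Bigl( \langle \nabla_{\dot\gamma} V,\, \nabla_{\dot\gamma} W\rangle - \langle R(V,\dot\gamma)\dot\gamma,\, W\rangle \Bigr)\, dt,
\]
which is the second variation formula for a smooth variation with periodic boundary conditions (see, e.g., Corollary~2.5.6 in \cite{Kli95}). Integration by parts in $t$, using periodicity of $V$ and $W$ to kill the boundary terms, gives
\[
\mathrm{Hess}(E_g)_\gamma(V,W) \;=\; -\int_0^1 \bigl\langle \nabla_{\dot\gamma}\nabla_{\dot\gamma} V + R(V,\dot\gamma)\dot\gamma,\; W\bigr\rangle\, dt.
\]
Since $W$ ranges over all periodic vector fields along $\gamma$, $V$ lies in the kernel of $\mathrm{Hess}(E_g)_\gamma$ if and only if $V$ satisfies the Jacobi equation \eqref{jac} together with the periodicity condition $V(0)=V(1)$, $\nabla_{\dot\gamma}V(0)=\nabla_{\dot\gamma}V(1)$. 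In other words, $\ker\mathrm{Hess}(E_g)_\gamma$ is precisely the space of periodic Jacobi fields along $\gamma$.

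Next I would isolate the obvious element $\dot\gamma$ in this kernel. Indeed, $\nabla_{\dot\gamma}\dot\gamma = 0$ and $R(\dot\gamma,\dot\gamma)\dot\gamma = 0$, so $\dot\gamma$ is a periodic Jacobi field, corresponding infinitesimally to the $S^1$-action on $\mathcal L_\alpha(M)$ by reparametrization. Since this direction is tangent to the critical submanifold containing $\gamma$, subtracting it off is exactly what the ``$-1$'' in the definition of nullity does: by definition of the paper, $\mathrm{nullity}(\gamma) = \dim\ker\mathrm{Hess}(E_g)_\gamma - 1$, which therefore equals the dimension of the space of periodic Jacobi fields minus one. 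This proves the first sentence of the lemma.

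For the second sentence, I would use the standard decomposition of a Jacobi field into tangential and normal components. Writing $J = J^{\top} + J^{\perp}$ with $J^{\top} = \langle J,\dot\gamma\rangle\,\dot\gamma/|\dot\gamma|^2$, a direct computation shows that both $J^{\top}$ and $J^{\perp}$ separately satisfy the Jacobi equation, and that any tangential Jacobi field has the form $(a+bt)\dot\gamma$ with $a,b\in\mathbb{R}$. Periodicity forces $b=0$, so the space of periodic tangential Jacobi fields is exactly $\mathbb{R}\cdot\dot\gamma$. Hence the space of periodic Jacobi fields splits as
\[
\mathbb{R}\cdot\dot\gamma \;\oplus\; \{\text{periodic Jacobi fields orthogonal to }\dot\gamma\},
\]
and its dimension exceeds $1$ precisely when the second summand is nonzero. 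Combining with the first part, $\mathrm{nullity}(\gamma)=0$ if and only if there are no periodic Jacobi fields along $\gamma$ orthogonal to $\dot\gamma$, which is the second assertion. No real obstacles arise here; the only point requiring care is the derivation of the boundary-free form of the Hessian, which is standard and already encoded in the reference given in the statement.
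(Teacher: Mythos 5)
Your proof is correct and takes essentially the same approach as the paper, which gives no argument beyond citing the second variation formula (Corollary~2.5.6 in Klingenberg) and remarking that the ``$-1$'' accounts for the tangent Jacobi field $J_0(t)=\dot\gamma(t)$. You simply carry out in detail the standard derivation the paper points to: identifying $\ker\mathrm{Hess}(E_g)_\gamma$ with the periodic Jacobi fields and splitting off the tangential summand $\R\cdot\dot\gamma$.
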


Note that ``minus one'' in Lemma \ref{Lemma_Nulity} comes from the need to exclude the tangent Jacobi field $J_0(t)=\dot{\gamma}(t).$

When a closed geodesic is hyperbolic, its index as well as the indices of all its iterations are particularly easy to compute. Let us recall some related formulas. When $\gamma$ is hyperbolic, we have a splitting
\[ E = E_s \oplus E_u \]
such that $P(t)|_{E_s}$ is contracting and $P(t)|_{E_u}$ is expanding as $t$ goes from $0$ to $1$. Now, for each $t_* \in [0,1]$, define a number $\iota(t_*)$ to be the dimension of the subspace of Jacobi fields $J(t)$ along $\gamma$ such that $(J(t), (\nabla_{{\dot\gamma}} J(t))) \in E_s(t)$, for all $t\in [0,1]$ and $J(t_*) = 0.$ The number of points $t_*$ for which $\iota(t_*)>0$ is finite and the following holds.

\begin{lemma}[Proposition 5, page 4 of \cite{Kli74}]\label{Lemma_Index}
If $\gamma$ is hyperbolic, then 
\[{\rm Ind}(\gamma) = \sum_{t_* \in [0,1)} \iota(t_*) .\]
\end{lemma}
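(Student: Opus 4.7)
The plan is to identify $\mathrm{Ind}(\gamma)$ with a Maslov-type crossing number for the Lagrangian path $t \mapsto E_s(t)$ inside the symplectic bundle $E(t) = (T\gamma(t))^\perp \oplus (T\gamma(t))^\perp$, measured against the vertical Lagrangian $V(t) := \{0\} \oplus (T\gamma(t))^\perp$. First I would recast $\mathrm{Ind}(\gamma)$ as the Morse index of the classical index form
\begin{equation*}
I(X,Y) \;=\; \int_0^1 \Bigl(\langle \nabla_{\dot\gamma} X, \nabla_{\dot\gamma} Y\rangle - \langle R(X,\dot\gamma)\dot\gamma, Y\rangle\Bigr)\,dt
\end{equation*}
on periodic $H^1$-sections of $(T\gamma)^\perp$; its null space is the space of periodic orthogonal Jacobi fields, consistent with Lemma \ref{Lemma_Nulity}. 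Because the Jacobi equation (\ref{jac}) is a linear Hamiltonian system on $E$, its time-$t$ flow $P(t)$ is symplectic, and hyperbolicity ensures that $E_s$ and $E_u$ are $P$-invariant transverse Lagrangian subbundles (stable/unstable subspaces of a symplectic map are Lagrangian whenever the spectrum avoids the unit circle).

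The central technical step is the identity
\begin{equation*}
\mathrm{Ind}(\gamma) \;=\; \mu\bigl(E_s(\cdot),\, V(\cdot);\, [0,1]\bigr),
\end{equation*}
a Morse-index theorem for Lagrangian paths adapted to the periodic problem. Two features of the hyperbolic setting make the right-hand side natural. First, invariance of $E_s$ under $P$ means a Lagrangian path starting at $E_s(0)$ automatically closes up after one period, so periodicity translates into a well-posed endpoint condition. Second, $E_s(0) \cap V(0) = 0$ (otherwise $P(1)$ would have eigenvalue $1$, contradicting hyperbolicity), which eliminates the boundary correction that ordinarily arises for periodic orbits and allows a direct comparison between $I$ and its restriction to broken Jacobi fields.

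Finally I would expand $\mu$ via its definition as a signed count of crossings. A crossing at $t_*$ corresponds to a Jacobi field $J$ with $(J(0),\nabla_{\dot\gamma} J(0)) \in E_s(0)$ and $J(t_*)=0$; by $P$-invariance of $E_s$, the condition $(J(t),\nabla_{\dot\gamma}J(t)) \in E_s(t)$ for all $t$ is automatic, and the multiplicity of the crossing is $\dim(E_s(t_*) \cap V(t_*)) = \iota(t_*)$. Every crossing contributes positively, because the crossing form of a Lagrangian path generated by a second-order ODE with positive-definite leading symbol (here the identity on $(T\gamma)^\perp$) is positive definite. Summing over $t_* \in [0,1)$ yields the stated formula. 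The main obstacle will be the rigorous justification of the central identity; the cleanest route is finite-dimensional approximation by broken Jacobi fields on a partition fine enough that no two adjacent nodes are conjugate, combined with the splitting $E = E_s \oplus E_u$ to show that the unstable directions make no net contribution once endpoint transversality is in hand.
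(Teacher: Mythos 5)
The paper does not prove this lemma at all: it is quoted verbatim as Proposition 5 of Klingenberg's paper \cite{Kli74}, so there is nothing internal to compare your argument against. Judged on its own terms, your sketch correctly identifies the modern framework (the Maslov intersection number of the stable Lagrangian path $t\mapsto E_s(t)$ against the vertical $V(t)$, with $\dim(E_s(t_*)\cap V(t_*))=\iota(t_*)$ and definite crossing forms for a Legendre-convex second-order system), but it contains one outright error and one unclosed gap that together leave the proof incomplete.

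The error: you claim $E_s(0)\cap V(0)=0$ ``otherwise $P(1)$ would have eigenvalue $1$.'' That implication is false --- a stable Jacobi field may perfectly well vanish at $t=0$ without producing a periodic Jacobi field; membership in $E_s$ only says the data decays under iteration of $P(1)$, which is compatible with $J(0)=0$. Indeed the statement you are proving sums over $t_*\in[0,1)$ precisely so as to include a possible crossing at $t_*=0$ with full multiplicity $\iota(0)=\dim(E_s(0)\cap V(0))$; if transversality at the endpoint were automatic, the sum could be taken over $(0,1)$. Since you use this transversality both to ``eliminate the boundary correction'' and to set up endpoint conditions for the broken-Jacobi-field approximation, the error propagates: you must instead handle endpoint crossings explicitly (e.g.\ via the Robbin--Salamon half-weight convention, noting $\iota(0)=\iota(1)$ so the two half-contributions recombine into the single term $\iota(0)$ of the half-open sum), or shift the base point. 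The gap: the identity ${\rm Ind}(\gamma)=\mu\bigl(E_s(\cdot),V(\cdot);[0,1]\bigr)$ is exactly the content of Klingenberg's proposition, and you only assert it, deferring to ``a Morse-index theorem for Lagrangian paths adapted to the periodic problem'' and a finite-dimensional approximation whose key step --- showing that the unstable summand in $E=E_s\oplus E_u$ contributes nothing to the periodic index form --- is named but not carried out. As written, the proposal reformulates the statement rather than proving it.
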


One may regard this lemma as an analogue of the well-known Morse index theorem for a geodesic segment. A general result about the index of a closed geodesic (not necessarily hyperbolic) is worked out in \cite{Kli73}. Finally, 

\begin{lemma}[Corollary 3.2.15 in \cite{Kli95}]\label{Lemma_Index_Iterations}
For a hyperbolic $\gamma$ it holds:
\[ {\rm Ind}(\gamma^m) = m \cdot {\rm Ind}(\gamma). \]
\end{lemma}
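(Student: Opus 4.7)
The plan is to reduce the computation of ${\rm Ind}(\gamma^m)$ to that of ${\rm Ind}(\gamma)$ via Lemma \ref{Lemma_Index}, exploiting the fact that $\gamma$ is closed so that everything is $1$-periodic in the underlying time parameter. To apply Lemma \ref{Lemma_Index} to $\gamma^m$ in the first place, we first verify that $\gamma^m$ is again hyperbolic. Its linearized Poincar\'e map is simply $P(1)^m$, whose eigenvalues are $\lambda_1^m, \lambda_2^m$ with $\lambda_1, \lambda_2$ the eigenvalues of $P(1)$. Since $|\lambda_i| \neq 1$, we also have $|\lambda_i^m| \neq 1$, so no eigenvalue of $P(1)^m$ lies on the unit circle. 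Moreover, the stable and unstable subspaces coincide: $E_s^{(m)} = E_s$ and $E_u^{(m)} = E_u$ inside $E(0)$, since the eigenspaces of $P(1)$ and $P(1)^m$ agree.

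Next, I would set up the correspondence between Jacobi fields on $\gamma^m$ and Jacobi fields on $\gamma$. It is convenient to reparametrize $\gamma^m: [0,1] \to M$ as a single traverse of $\gamma$ over the interval $[0, m]$, so that the Jacobi equation along $\gamma^m$ becomes the original Jacobi equation \eqref{jac} along $\gamma$ on the longer interval $[0,m]$, and the stable distribution $E_s(t) := \Phi_t(E_s)$ (with $\Phi_t$ the linearized geodesic flow) extends $1$-periodically by the equivariance $\Phi_{t+1} = P(1) \circ \Phi_t$, which preserves $E_s$. Thus ``stable'' Jacobi fields along $\gamma^m$ in the sense of Lemma \ref{Lemma_Index} correspond precisely to those Jacobi fields $J(t)$ on $\gamma$ defined for $t \in [0,m]$ with $(J(t), \nabla_{\dot\gamma}J(t)) \in E_s(t \bmod 1)$ for all $t$.

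The key combinatorial step is then to count zeros. For each $t_* \in [0,1)$ contributing $\iota_\gamma(t_*)>0$ in the formula for ${\rm Ind}(\gamma)$, the $1$-periodicity of $E_s(t)$ combined with the invariance of the Jacobi equation under $t \mapsto t+1$ shows that any stable Jacobi field on $\gamma$ vanishing at $t_*$ (and lying in $E_s$ for all times) lifts to a stable Jacobi field along $\gamma^m$ vanishing at all $m$ times $t_*, t_*+1, \ldots, t_*+(m-1)$ in $[0,m)$. Under the reparametrization back to $[0,1)$, these become the points $t_*/m, (t_*+1)/m, \ldots, (t_*+m-1)/m$, all distinct. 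Conversely, every dangerous time of $\gamma^m$ arises this way, and the dimension $\iota_{\gamma^m}$ at each such point equals $\iota_\gamma(t_*)$ by the natural identification of stable Jacobi fields. Summing via Lemma \ref{Lemma_Index} gives
\[
{\rm Ind}(\gamma^m) \;=\; \sum_{s_* \in [0,1)} \iota_{\gamma^m}(s_*) \;=\; \sum_{t_* \in [0,1)} m\cdot \iota_\gamma(t_*) \;=\; m \cdot {\rm Ind}(\gamma).
\]

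The main technical obstacle I expect is justifying the identification of stable Jacobi fields under iteration: one needs to be careful that the ``stable at all times'' condition of Lemma \ref{Lemma_Index} is equivalent for $\gamma$ viewed on $[0,1]$ with periodic extension and for $\gamma^m$ viewed on $[0,1]$, which ultimately relies on the $P(1)$-invariance of $E_s$ and the absence of eigenvalues on the unit circle (so that the stable and unstable splittings are genuinely canonical and do not get mixed by iteration). Once this is in place, the counting argument above yields the claim.
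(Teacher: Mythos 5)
The paper itself offers no proof of this lemma: it is quoted verbatim as Corollary 3.2.15 of Klingenberg's book \cite{Kli95}, where the standard derivation goes through Bott's iteration formula (the index of $\gamma^m$ is a sum of $\omega$-indices over the $m$-th roots of unity, and for a hyperbolic orbit the $\omega$-index is independent of $\omega$ because it can only jump at eigenvalues of the Poincar\'e map on the unit circle). Your route is genuinely different and arguably more in the spirit of this paper: you deduce the iteration formula directly from the hyperbolic index theorem, Lemma \ref{Lemma_Index}, by matching the ``dangerous times'' of $\gamma^m$ in $[0,1)$ with the $m$ translates $(t_*+k)/m$, $k=0,\dots,m-1$, of the dangerous times of $\gamma$. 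The preliminary steps are correct: $P(1)^m$ has eigenvalues $\lambda_i^m$ off the unit circle, so $\gamma^m$ is hyperbolic, and its stable/unstable splitting coincides with that of $\gamma$ since the generalized eigenspaces of $P(1)$ for $|\lambda|<1$ (resp.\ $>1$) sit inside those of $P(1)^m$ for $|\lambda^m|<1$ (resp.\ $>1$) and already span $E$.

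One sentence of the counting step is false as written and needs repair: a stable Jacobi field $J$ with $J(t_*)=0$ does \emph{not} in general vanish at $t_*+1,\dots,t_*+m-1$; Jacobi fields along a closed geodesic are not periodic (indeed for a hyperbolic orbit the only periodic orthogonal Jacobi field is $0$). What your count actually requires, and what is true, is a dimension statement: translation $J\mapsto J(\cdot+k)$ carries stable Jacobi fields to stable Jacobi fields (by $P(1)$-invariance of $E_s$ and periodicity of the geodesic) and sends the space of those vanishing at $t_*+k$ isomorphically onto the space of those vanishing at $t_*$ --- on initial data in $E_s$ this is precomposition with the invertible map $P(1)^k|_{E_s}$. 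Hence $\iota_{\gamma^m}\bigl((t_*+k)/m\bigr)=\iota_{\gamma}(t_*)$ for each $k$, every dangerous time of $\gamma^m$ arises this way, and summing via Lemma \ref{Lemma_Index} gives ${\rm Ind}(\gamma^m)=m\cdot{\rm Ind}(\gamma)$. With that one correction the argument is complete.
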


We are now ready to give the desired proofs. 
\begin{proof}(Proof of Lemmas \ref{lm-short-geo} and \ref{lm-short-geo-2}) From the computations in the proof of Lemma \ref{hyper}, we know that the space of Jacobi fields orthogonal to $\dot{\gamma}_0$ is generated by the fields $J_{+}(t)=(e^{\sqrt{-K}t},0,0)$ and $J_{-}(t)=(e^{-\sqrt{-K}t},0,0).$ Since $e^{\sqrt{-K}t}\rightarrow +\infty$ and $e^{-\sqrt{-K}t}\rightarrow 0$ when $t\rightarrow + \infty,$ no linear combination of $J_{+}$ and $J_{-}$ can be periodic. Thus, by Lemma \ref{Lemma_Nulity}, we know that $\gamma_0^m$ are non-degenerate for all $m\in \N$.

On the other hand, by (\ref{first-cd}) we have that for $t\in [0,1]$
$$(J_{+}(t), (\nabla_{\dot{\gamma}_0} J_{+})(t)) = ((e^{\sqrt{-K}t},0,0), (\sqrt{-K}e^{\sqrt{-K}t},0,0))$$
as well as
$$(J_{-}(t), (\nabla_{\dot{\gamma}_0} J_{-})(t)) = ((e^{-\sqrt{-K}t},0,0), (-\sqrt{-K}e^{-\sqrt{-K}t},0,0)).$$
In other words contracting and expanding spaces in the splitting $E(t)= E_s(t) \oplus E_u(t)$ are generated by $(J_{-}(t), (\nabla_{\dot{\gamma}_0} J_{-})(t))$ and $(J_{+}(t), (\nabla_{\dot{\gamma}_0} J_{+})(t))$ respectively. Since for all $t\in [0,1]$ it holds $J_{-}(t)\neq 0,$ Lemma \ref{Lemma_Index} implies that ${\rm Ind}(\gamma_0) = 0$ and thus by Lemma \ref{Lemma_Index_Iterations} ${\rm Ind}(\gamma_0^m) = m \cdot {\rm Ind}(\gamma_0) = 0.$ Finally, note that the direction of $\gamma_0$ played no role in this subsection, i.e. all statements apply equally to $\gamma_0^{-m}.$ This completes the proof of Lemma \ref{lm-short-geo}. Since all the considerations are local, the proof of Lemma \ref{lm-short-geo-2} follows in the same fashion. \end{proof}
 
\begin{remark}
One may also prove that ${\rm Ind}(\gamma_0^m)=0$ by a direct computation using Lemma \ref{Lemma_Index}, without realying on Lemma \ref{Lemma_Index_Iterations}.
\end{remark} 
 
\subsection{Analyzing long geodesics}\label{Geo_length} In this subsection, we will prove (2) in Proposition \ref{bsm-p} as well as (2) and (3) in Proposition \ref{mb-p}. To this end, let us describe closed geodesics on a bulked sphere and a multi-bulked surface. We start with a bulked sphere.

Assume that our bulked sphere $S\subset \R^3$ is obtained by rotating a profile function $r$ around an axis $l$ as described in Subsection \ref{ss-p-12}. For a point $p\in S$ we denote by $l(p)$ he coordinate of $p$ on the $l$-axis and by $r(p)$ the value of the profile funtction at $l(p),$ i.e. $r(p)$ is the distance from $p$ to the axis $l.$

Firstly, we notice that parallel circles given by $l=const$ are geodesics if and only if $r'(l)=0.$ This means that $\gamma_-,$ $\gamma_0$ and $\gamma_+$ are the only geodesic parallel circles. In order to describe geodesics which are not parrallel circles we evoke the well-known Clairaut's relation, see, for example, Proposition 4.4 in \cite{Shi}.

\begin{theorem} \label{thm-cla} Suppose that $S(r)$ is a surface of revolution obtained by rotating a profile function $r$ around a fixed axis. Then any geodesic on $S(r)$ satisfies the equation 
\begin{equation} \label{clairaut}
r \cos(\phi) = \mbox{constant}
\end{equation}
where $\phi$ is the angle between the geodesic and the parallel circles. Conversely, any constant speed curve satisfying (\ref{clairaut}) which is not a parallel circle is a geodesic. \end{theorem}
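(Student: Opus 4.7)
The plan is to set up local coordinates exploiting the rotational symmetry. Parametrize $S(r)$ locally by $\sigma(u,v)=(u,r(u)\cos v,r(u)\sin v)$, where $u$ is the axis coordinate and $v$ the rotation angle. A direct computation of the induced first fundamental form yields the diagonal metric
\[
ds^2 = E\,du^2 + G\,dv^2,\qquad E(u)=1+r'(u)^2,\quad G(u)=r(u)^2.
\]
The key point is that both coefficients depend only on $u$, which reflects the fact that $\partial_v$ is a Killing field generating the rotational symmetry.

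Next, I would write down the Euler--Lagrange equations for the geodesic Lagrangian $L=\tfrac12(E\dot u^2+G\dot v^2)$. Since $L$ is independent of $v$, Noether's theorem (equivalently, the $v$-component of the geodesic equation) immediately gives that
\[
r(u)^2\,\dot v \;=\; \text{const}
\]
along every geodesic. On the other hand, geodesics have constant speed $c$, and the angle $\phi$ between $\dot\gamma$ and the parallel circle $\{u=\text{const}\}$ satisfies $|\dot\gamma|\cos\phi = r|\dot v|$, since the tangent vector to the parallel has length $r$. Dividing by the constant $c$ converts conservation of $r^2\dot v$ into Clairaut's relation $r\cos\phi=\text{const}$.

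For the converse, suppose $\gamma(t)=\sigma(u(t),v(t))$ has constant speed and satisfies $r\cos\phi\equiv k$. Running the previous equivalence backwards gives $r^2\dot v\equiv kc$, which is exactly the $v$-component of the geodesic equation. To obtain the $u$-component $\tfrac{d}{dt}(E\dot u)-\tfrac12 E_u\dot u^2-\tfrac12 G_u\dot v^2=0$, I would differentiate the constant-speed identity $E\dot u^2+G\dot v^2=c^2$, eliminate $\ddot v$ using the conservation law, and check algebraically that what remains is $\dot u$ times the $u$-equation. Hence the $u$-equation holds at every point where $\dot u\neq 0$.

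The main obstacle is handling points where $\dot u$ vanishes. The hypothesis that $\gamma$ is not a parallel circle rules out $\dot u\equiv 0$ on any open interval, so the zeros of $\dot u$ are either isolated or form a closed nowhere-dense set; on the complement, the $u$-equation holds by the argument above, and it then extends by continuity. At any point where $\dot u=0$, one can alternatively invoke uniqueness for the geodesic ODE: the geodesic through $\gamma(t_0)$ with the same initial velocity has conserved quantity $r^2\dot v$ equal to $kc$ and the same speed $c$, so it must coincide with $\gamma$ in a neighborhood of $t_0$. This completes the proof sketch.
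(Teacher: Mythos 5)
The paper does not actually prove this statement; it is quoted with a citation (Proposition 4.4 in \cite{Shi}), so there is no in-paper argument to compare against. Your route is the standard one: the rotational symmetry makes the metric coefficients $E=1+r'^2$, $G=r^2$ depend only on $u$, the $v$-Euler--Lagrange equation gives the first integral $r^2\dot v=\mathrm{const}$, and dividing by the speed converts this into $r\cos\phi=\mathrm{const}$. The forward direction is complete and correct, and the algebra for the converse away from zeros of $\dot u$ (differentiate $E\dot u^2+G\dot v^2=c^2$, eliminate $\ddot v$ via the conservation law, and factor out $\dot u$) checks out.

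There is, however, a genuine gap in how you dispose of the set $\{\dot u=0\}$ in the converse. First, the hypothesis that $\gamma$ is not a parallel circle does \emph{not} by itself rule out $\dot u\equiv 0$ on a proper open subinterval: a priori the curve could run along an arc of a non-geodesic parallel $u=u_0$ (where the $u$-equation genuinely fails, since $\ddot u=0$ but $G_u\dot v^2\neq 0$) and then leave it. One must argue this cannot happen for a $C^2$ curve: just after leaving the arc the curve satisfies the geodesic equations, so by continuity $\ddot u$ at the departure time equals $G_u\dot v^2/(2E)\neq 0$ from the right but $0$ from the left, contradicting $C^2$ regularity; hence either $r'(u_0)=0$ (and the $u$-equation holds trivially on the arc) or the curve is the entire parallel, which is excluded. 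Second, your fallback argument at an isolated zero of $\dot u$ --- that the geodesic with the same initial velocity shares the conserved quantities $c$ and $r^2\dot v$ and therefore coincides with $\gamma$ --- is a non sequitur: sharing first integrals with a geodesic does not force a curve to satisfy the geodesic ODE (the non-geodesic parallel and the geodesic tangent to it at a point share both quantities yet differ). The correct repair at such points is the continuity argument you already mention: once the zero set of $\dot u$ is known to have empty interior, the $u$-equation holds on a dense set and extends to all of the domain by continuity of $\ddot u$.
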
 

Using notations from Subsection \ref{ss-p-12}, we call the part of the bulked sphere $S$ where $l\in (-a,a)$ {\it the neck} of $S$ and the part where $|l|\geq a$ the {\it spherical regions} of $S.$ The next lemma claims that a closed geodesic different from $\gamma_0^{\pm m}, m\in \N,$ can not be entirely contained in the neck.

\begin{lemma}\label{geodesics1}
Assume that $\gamma:\R / \Z \rightarrow S$ is a closed geodesics different form $\gamma_0^{\pm m}, m\in \N$ such that $l(\gamma_{t_0})\in (-a,a)$ for some $t_0\in \R / \Z.$ Then $\gamma$ intersects either $\gamma_-$ or $\gamma_+.$ 

\end{lemma}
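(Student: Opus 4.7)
The plan is to argue by contradiction via Clairaut's relation (Theorem \ref{thm-cla}). Suppose $\gamma$ is a closed geodesic different from $\gamma_0^{\pm m}$ with $l(\gamma(t_0)) \in (-a, a)$ and suppose $\gamma$ meets neither $\gamma_-$ nor $\gamma_+$. Since the image of $\gamma$ avoids $\{l = a\} \cup \{l = -a\}$ and $\gamma(S^1)$ is connected, $\gamma$ lies in a single component of $S \setminus (\gamma_- \cup \gamma_+)$. Because $\gamma$ meets the open neck at $t_0$, it must be entirely contained in the open neck $\{|l| < a\}$, and my goal is to derive a contradiction from this.

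First I would dispose of the case where the coordinate $l(t)$ is constant along $\gamma$. Then $\gamma$ is a geodesic parallel circle in the open neck; but by the definition of a bulked sphere, $r'$ vanishes on $(-a, a)$ only at $l = 0$, so the only geodesic parallel circle in the open neck is $\gamma_0$. Thus $\gamma = \gamma_0^{\pm m}$ for some $m \in \N$, contradicting the hypothesis on $\gamma$.

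Next I would treat the case where $l(t)$ is non-constant. By Theorem \ref{thm-cla}, $r(l(t)) \cos\phi(t) = C$ for some constant $C$ along $\gamma$. Since $l(t)$ is smooth and periodic, it attains a maximum $l^*$ and minimum $l_*$ with $l_* < l^*$. At these extrema $\dot{l} = 0$, so $\dot\gamma$ is tangent to the corresponding parallel circle and $|\cos\phi| = 1$, giving $r(l_*) = |C| = r(l^*)$. Because $r$ is even, strictly monotone on each side of $0$, and attains its unique minimum on $(-a,a)$ at $l = 0$, the equation $r(l_*) = r(l^*)$ with $l_* < l^*$ forces $l_* = -l^*$ with $l^* \in (0, a)$. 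But then for any $l \in (-l^*, l^*)$ we have $r(l) < r(l^*) = |C|$, so along the portion of $\gamma$ passing through such $l$ the Clairaut relation yields $|\cos\phi(t)| = |C|/r(l(t)) > 1$, which is impossible. This contradiction completes the argument.

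The only real obstacle is the non-constant case; once Clairaut's relation is invoked, the extremum analysis is dictated by the shape of the profile function. The one point requiring care is the degenerate possibility that an extremum of $l$ coincides with $l = 0$ (so that $l_* = 0$ or $l^* = 0$): this is immediately excluded since $r(0) = r_0$ is strictly less than $r(l^*)$ for any $l^* \in (0, a)$, so the identity $r(l_*) = r(l^*)$ cannot hold.
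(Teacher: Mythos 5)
Your proof is correct and rests on the same key ingredients as the paper's own argument: Clairaut's relation (Theorem \ref{thm-cla}) combined with the strict monotonicity of the profile function $r$ on each half of the neck. The paper argues directly, following the geodesic from a transverse crossing of a parallel circle to its first turning point and showing via $r(\gamma(\tau_0))=C_0<r(\gamma(0))$ that this turning point must lie beyond $l=a$, whereas you assume the geodesic is trapped in the neck and derive a contradiction from the two turning points satisfying $r(l_*)=|C|=r(l^*)$, hence $l_*=-l^*$ and $|\cos\phi|>1$ in between; this is only a cosmetic reorganization of the same argument.
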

\begin{proof}
Since $\gamma \neq \gamma_0^{\pm m},$ we have that $\gamma$ is not a parallel circle and thus for some $T\in \R / \Z,$ $\dot{\gamma}(T)$ is transverse to the parallel circle $P=\{ p\in S ~|~ l(p)=a_0 \}$ for some $0\leq a_0 <a.$ We may assume that $T=0$ as well as that $\dot{\gamma}(0)$ points away from $\gamma_0$ and towards $\gamma_+.$ We can make this assumption because if $\dot{\gamma}(0)$ points towards $\gamma_0$ we may look at $\gamma^{-1}(t)=\gamma(-t)$ which defines the same curve as $\gamma$ only with reversed direction. We may also assume that the angle $\phi(\gamma(0))$ between the parallel circle $P$ and $\dot{\gamma}(0)$ satisfies $\phi(\gamma(0))\in (0,\frac{\pi}{2}),$ see Figure \ref{geo-bulk}, the case $\phi(\gamma(0))\in (\frac{\pi}{2},\pi)$ is treated in the same manner.

\begin{figure}[ht]
\begin{center}
\includegraphics[scale=0.4]{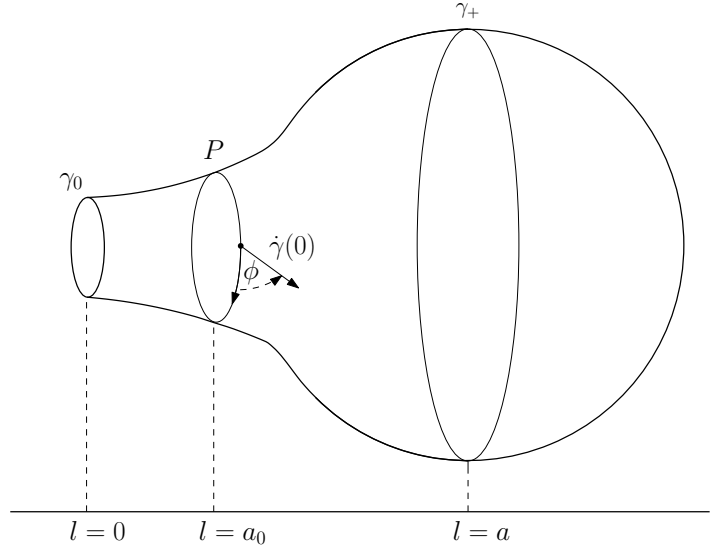}
\caption{Geodesic $\gamma$ intersecting parallel circle $P$}\label{geo-bulk}
\end{center}
\end{figure}

Clairaut's relation implies that $r(\gamma(t)) \cos(\phi(\gamma(t)))=C_0>0.$ For a small $\varepsilon>0$ it holds $l(\gamma(-\varepsilon))<a_0$, $l(\gamma(\varepsilon))>a_0$ and since $\gamma$ is closed, we have that for some $\tau>0,$ $l(\gamma(\tau))<a_0.$ This means that $\gamma$ eventually exits the region $\{ l>a \}$ and hence it must intersect $P$ with a negative angle $-\phi (\gamma(0))$. Formally, there exists $\tau_1>0$ such that $\gamma(\tau_1)\in P$ and $\phi (\gamma(\tau_1))=-\phi (\gamma(0))<0.$ It follows that there exists $0<\tau_0 <\tau_1$ such that $\phi(\tau_0)=0,$ and Clairaut's relation implies
$$C_0=r(\gamma(0)) \cos (\phi(\gamma(0)))=r(\gamma(\tau_0)) \cos (\phi(\gamma(\tau_0)))=r(\gamma(\tau_0)).$$
Thus $r(\gamma(\tau_0))<r(\gamma(0))$ and since $r$ increases on the interval $[0,a]$ we have that $l(\gamma(\tau_0))>a,$ which proves the claim.
\end{proof}

Now, if $S$ is a multi-bulked surface defined in Subsection \ref{ss-p-13}, we call the part of $S$ between $\Delta_{(i-1)i}$ and $\Delta_{i(i+1)}$ {\it the neck of} $\gamma_i.$ Using the same argument as in the proof of Lemma \ref{geodesics1} we may prove the following.

\begin{lemma}\label{geodesics2}
Let $\gamma$ be a closed geodesic on a multi-bulked surface $S$ which enters the neck of $\gamma_i.$ Then $\gamma$ intersects either $\Delta_{(i-1)i}$ or $\Delta_{i(i+1)}.$
\end{lemma}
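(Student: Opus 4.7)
The plan is to mirror the proof of Lemma \ref{geodesics1} verbatim, exploiting the fact that the neck of $\gamma_i$ lies entirely inside the surface of revolution $O(N)$, so Clairaut's relation (Theorem \ref{thm-cla}) applies throughout it. Let $l$ parametrize the axis of revolution of $O(N)$, and let the local minima of the profile function $r$ be at $l=a_1<\cdots<a_N$ (where $\gamma_j$ sits) and the local maxima at $l=b_1<\cdots<b_{N-1}$ (where $\Delta_{j(j+1)}$ sits); in the boundary cases $j=0$ or $j=N$, replace $b_0$ or $b_N$ with the corresponding endpoint coordinate of $O(N)$. In every case $r$ is strictly monotonic on each side of $a_i$ between $a_i$ and the adjacent $\Delta$, and consequently $\gamma_i$ is the unique geodesic parallel circle in the (open) neck.

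Suppose, for contradiction, that $\gamma$ is a closed geodesic which enters the neck of $\gamma_i$ and fails to intersect either $\Delta_{(i-1)i}$ or $\Delta_{i(i+1)}$; then $\gamma$ stays inside the open neck. Since $\gamma$ is closed and cannot coincide with any iterate $\gamma_i^{\pm m}$ (the unique geodesic parallel circle available), there exists some time at which $\dot\gamma$ is transverse to a parallel circle. Reparametrizing and, if necessary, replacing $\gamma$ by $\gamma^{-1}$, one may assume this happens at $t=0$, with $l(\gamma(0))=a_0\in[a_i,b_i)$, the angle $\phi(\gamma(0))\in(0,\pi/2)$, and $\dot\gamma(0)$ pointing toward $\Delta_{i(i+1)}$.

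The crux is then identical to Lemma \ref{geodesics1}: by Clairaut, $r(\gamma(t))\cos\phi(\gamma(t))\equiv C_0$ where $C_0=r(a_0)\cos\phi(\gamma(0))<r(a_0)$. Since $\gamma$ is closed and confined to the bounded neck, $l(\gamma(t))$ cannot keep increasing, so there is a smallest $\tau_0>0$ at which $\phi(\gamma(\tau_0))=0$, where $r(\gamma(\tau_0))=C_0<r(a_0)$. On the other hand, by minimality of $\tau_0$ the function $l(\gamma(t))$ has been nondecreasing on $[0,\tau_0]$, so $l(\gamma(\tau_0))\geq a_0\geq a_i$; and by our standing assumption $l(\gamma(\tau_0))<b_i$. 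But strict monotonicity of $r$ on $[a_i,b_i]$ then forces $r(\gamma(\tau_0))\geq r(a_0)$, contradicting $r(\gamma(\tau_0))=C_0<r(a_0)$. The symmetric situation in which $\dot\gamma(0)$ points toward $\Delta_{(i-1)i}$ or in which $a_0\in(b_{i-1},a_i]$ is handled by reflection and yields instead a crossing of $\Delta_{(i-1)i}$.

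I do not expect any genuine obstacle, since the argument is a purely local replica of Lemma \ref{geodesics1}; the only points deserving a line of explanation are (i) that $\gamma_i$ really is the only geodesic parallel circle strictly inside the neck, which follows from the critical-point structure of $r$ recorded in Definition \ref{dfn-mbm}, and (ii) that the argument adapts unchanged when $\Delta_{(i-1)i}$ or $\Delta_{i(i+1)}$ is a boundary curve of $O(N)$ rather than a parallel circle at a local maximum of $r$, because $r$ is still strictly monotonic from the local minimum $r(a_i)$ to the boundary value.
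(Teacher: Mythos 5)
Your proof is correct and follows exactly the route the paper intends: the paper gives no separate argument for this lemma, stating only that it follows ``using the same argument as in the proof of Lemma \ref{geodesics1}'', and your write-up is a faithful local transplant of that Clairaut-plus-turning-point argument (with the confinement phrased as a contradiction rather than directly locating the turning point outside the neck, which is a cosmetic difference). The two points you flag as needing comment --- uniqueness of the geodesic parallel circle $\gamma_i$ in the open neck, and the boundary necks $i=1,N$ where one wall of the neck is a boundary curve of $O(N)$ --- are indeed the only adaptations required, and you handle both correctly via the critical-point structure of the profile function.
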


\begin{remark}
One may also deduce Lemmas \ref{geodesics1} and \ref{geodesics2} form the analysis of the geodesic flow similar to the one presented in Subsection \ref{geodesic-flow}.
\end{remark} 

We are now in a position to give a proof of (2) in Proposition \ref{bsm-p} as well as (2) and (3) in Proposition \ref{mb-p}. However, before we proceed with the arguments, we wish to explain the general logic which these proofs follow.

In the case of a multi-bulked surface, firstly we fix the genus of the surface and the number of necks $N$ (in the case of the bulked sphere these are automatically fixed).

Secondly, we fix an embedding $\phi:\Sigma\rightarrow \R^3,$ of the surface and a cylindrical segment inside $\im \phi$ which we wish to replace by an open chain of $N-1$ spheres $O(N)$ as described in Subsection \ref{ss-p-13}. After inserting $O(N)$ we obtain the multi-bulked surface $S\subset \R^3.$ 

Finally the major part of $S$ remains fixed as we vary $g_{\vec{x}},$ for $\vec{x} \in \mathcal T(N)$ (or $g_x$, for $x\in [0,\infty)$ in the bulked sphere case). In fact, for different $\vec{x} \in \mathcal T(N),$ metrics $g_{\vec{x}}$ only differ in very small neighbourhoods of $\gamma_1,\ldots , \gamma_N$ (or in a small neighbourhood of $\gamma_0$ in the bulked sphere case). Moreover, we have the freedom to define $g_{\vec{x}}$ in these neighbourhoods in such a way that the energies of $\gamma_1,\ldots , \gamma_N$ (or the energy of $\gamma_0$) are equal to any sufficiently small numbers, see Subsection \ref{precise-par}.

Now, proving the existence of $\delta_0$ as in (2) in Proposition \ref{bsm-p} and (2), (3) in Proposition \ref{mb-p} actually means providing $\delta_0$ which only depends on the fixed part of $S.$ In other words, $\delta_0$ should not depend on the small change that we make in the neighbourhoods of short geodesics $\gamma_1,\ldots, \gamma_N$ (or $\gamma_0$). Given such $\delta_0,$ we may define $g_{\vec{x}}$ (or $g_x$) in the neighbourhoods of $\gamma_i$ in such a way that (1) in Propositions \ref{bsm-p} and \ref{mb-p} are satisfied, see Subsection \ref{precise-par}.  

\begin{proof} (Proof of (2) in Proposition \ref{bsm-p} and (2), (3) in Proposition \ref{mb-p})

We will prove properties (2) and (3) in Proposition \ref{mb-p}. Property (2) in Proposition \ref{bsm-p} is proven in the same way as (2) in Proposition \ref{mb-p}. Let us start by giving a lower bound as in (3).

Assume that a cylinder $u:\R \times S^1 \rightarrow M$ connects $\gamma_i$ and $\gamma_j$ for $i<j,$ that is $u(-\infty,t)=\gamma_i(t),u(+\infty,t)=\gamma_j(t).$ Since $\gamma_i$ and $\gamma_j$ belong to different connected components of $\Sigma \setminus (\Delta_{0 1} \cup \Delta_{i (i+1)})$, we have that $\im(u)$ must intersect either $\Delta_{0 1}$ or $\Delta_{i (i+1)}.$ Assume first that it intersects $\Delta_{i (i+1)}$ and let $s_0\in \R$ be such that curves $u_{s_0}=u(s_0,\cdot):S^1\rightarrow M$ and $\Delta_{i (i+1)}$ intersect. Take $p=u(s_0,t_0)\in u_{s_0} \cap \Delta_{i (i+1)}$ and let $B(p;\rho)$ be a disc of radius $\rho$ around $p,$ with respect to the distance induced by $g_{\vec{x}}.$ If we take $\rho$ to be smaller than the injectivity radius at $p,$ $B(p;\rho)$ is embedded. Since curve $u_{s_0}$ belongs to a non-trivial homotopy class $\alpha,$ it is not completely contained in $B(p;\rho),$ i.e. there exists $t_1$ such that $u(s_0,t_1) \notin B(p;\rho).$ Hence, two arcs form $t_0$ to $t_1$ on $S^1$ are mapped into two paths $u\big( \wideparen{t_0t_1} \big)$ and $u\big( \wideparen{t_1t_0} \big)$ which connect the center $p$ of the disc $B(p;\rho)$ with the outside of the disc, see Figure \ref{cylinder_picture} below. This implies that $L_{g_{\vec{x}}}(u_{s_0})\geq 2\rho.$

\begin{figure}[ht]
\begin{center}
\includegraphics[scale=0.45]{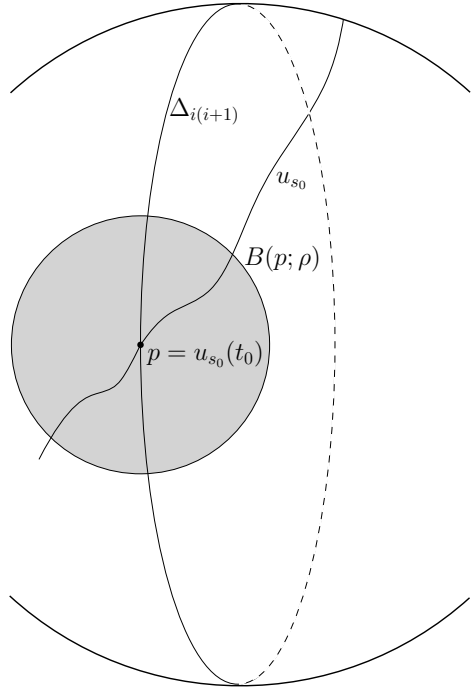}
\caption{Disc centered at a point $p\in u_{s_0} \cap \Delta_{i (i+1)}.$}\label{cylinder_picture}
\end{center}
\end{figure}

The exact same reasoning applies if we assume that $\im (u)$ intersects $\Delta_{0 1}$ at some point $p',$ in which case we get that $L_{g_{\vec{x}}}(u_{s_0})\geq 2\rho',$ where $\rho'$ is smaller than the injectivity radius at $p'.$ Note also that our metric $g_{\vec{x}}$ has $S^1$-symmetry near each $\Delta_{k (k+1)},$ $0\leq k \leq N$ because it was defined as an induced metric on a surface of revolution. This means that $\rho$ and $\rho'$ may be chosen to be the same for all $p\in \Delta_{i (i+1)}$ and all $p'\in \Delta_{0 1}.$ Moreover, the neighbourhoods of $\Delta_{k (k+1)}$ for $1\leq k \leq N-1$ are all isometric and hence $\rho$ and $\rho'$ can be chosen independently of $i$ and $j.$ By taking $\bar{\delta}_0=\min \{ \rho, \rho'\}$, we have that $L_{g_{\vec{x}}}(u_{s_0})\geq \bar{\delta}_0.$ Finally Cauchy-Schwarz inequality yields $$E_{g_{\vec{x}}}(u_{s_0}) \geq \frac{ L_{g_{\vec{x}}}(u_{s_0})^2}{2} \geq \frac{\bar{\delta}_0^2}{2},$$
which gives a lower bound as in (3).

\medskip

In order to give a lower bound as in (2), first notice that by Lemma \ref{geodesics2} every closed geodesics $\gamma$ in class $\alpha$, different than $\gamma_1,\ldots, \gamma_N$, either intersects $\Delta_{i(i+1)}$ for some $i=0, \ldots, N$ or it is entirely contained in $S \setminus O(N).$ In the first case we get a lower bound
$$E_{g_{\vec{x}}}(\gamma) \geq \frac{\bar{\delta}_0^2}{2},$$
with the same $\bar{\delta}_0$ as above by applying the exact same argument to $\gamma$ that we applied to $u_{s_0}.$

In the second case we have that $\gamma$ is also a closed geodesic on $\im \phi \subset \R^3,$ where $\phi:\Sigma \rightarrow \R^3$ is the embedding we fixed in order to define a multi-bulked surface. This means that 
$$E_{g_{\vec{x}}}(\gamma) \geq E_{\min},$$
with $E_{\min}$ being the minimal energy of a closed geodesic in class $\alpha$ on $\im \phi.$ Finally, we finish the proof by taking $\delta_0<\min \{ \bar{\delta}_0, \sqrt{2 E_{\min}} \}$.
\end{proof}

\subsection{Upper bounds in Proposition \ref{emb-thm} and Proposition \ref{emb-thm2}}  \label{sec-upper}
In this subsection, we will explain how to prove the upper bounds in Propositions \ref{emb-thm} and \ref{emb-thm2}.

Recall that metrics $g_x$, $x\in [0,\infty)$ which we used to prove lower bound in Proposition \ref{emb-thm} come from bulked spheres which are surfaces of revolution, see Proposition \ref{bsm-p}. On the other hand, metrics $g_{\vec{x}},$ $\vec{x}\in \mathcal{T}(N)$ which we used to prove lower bound in Proposition \ref{emb-thm2}, come from multi-bulked surfaces, which contain $O(N)$-part which is a surface of revolution, see Proposition \ref{mb-p}. Moreover, for different $\vec{x}\in \mathcal{T}(N),$ metric $g_{\vec{x}}$ only differ in the $O(N)$-part. Thus in order to compare different (multi)-bulked metrics, we will first explain how to compare metrics which come from surfaces of revolution in general.

Let $I\subset \R$ be an interval and $r:I \rightarrow [0,\infty)$ a smooth function. Using $r$ as a profile function, we define a surface of revolution $S(r)\subset \R^3$ and, by pulling back the standard metric from $\R^3,$ we define a metric $g^r$ on $I\times S^1.$ We claim the following.

\begin{lemma}\label{Lemma_matrix}
Let $r_1,r_2:I \rightarrow [0,\infty)$ be two profile functions and fix $C>0.$ Then $g^{r_1} \preceq C g^{r_2}$ if and only if
$$ (r_1(l))^2\leq C (r_2(l))^2, ~~ 1+(r_1'(l))^2\leq C ( 1+(r_2'(l))^2) $$
for all $l\in I.$
\end{lemma}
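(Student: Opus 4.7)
The plan is to work in the natural coordinates $(l,\theta)\in I\times S^1$ coming from the revolution parametrization. The surface of revolution $S(r)$ admits the explicit embedding
\[ \Psi_r\colon I\times S^1 \to \R^3,\quad (l,\theta)\mapsto \bigl(l,\,r(l)\cos\theta,\,r(l)\sin\theta\bigr),\]
so the pullback of the Euclidean metric computes directly as
\[ g^r = \bigl(1+(r'(l))^2\bigr)\,dl^2 + r(l)^2\,d\theta^2.\]
In particular $g^{r_1}$ and $g^{r_2}$ are simultaneously diagonal in the frame $(\partial_l,\partial_\theta)$.

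Once that is in hand, the lemma reduces to a pointwise linear algebra statement about two diagonal quadratic forms on $\R^2$. First I would spell out what $g^{r_1}\preceq C g^{r_2}$ means: for every $(l,\theta)$ and every tangent vector $v=a\partial_l+b\partial_\theta$,
\[ \bigl(1+(r_1'(l))^2\bigr)a^2 + r_1(l)^2 b^2 \;\leq\; C\bigl(1+(r_2'(l))^2\bigr)a^2 + C r_2(l)^2 b^2.\]
The ``only if'' direction is immediate by testing on $v=\partial_l$ (i.e.\ $b=0$) and $v=\partial_\theta$ (i.e.\ $a=0$), which respectively yield the two claimed inequalities.

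For the ``if'' direction, assuming both pointwise inequalities $r_1^2\leq Cr_2^2$ and $1+(r_1')^2\leq C(1+(r_2')^2)$, one simply adds $a^2$ times the second to $b^2$ times the first; because there are no cross terms (both forms are diagonal in the same basis), this yields the desired inequality for every $v$. There is no real obstacle here: the whole content of the lemma is the observation that the two metrics share a common orthogonal frame, so the comparison of quadratic forms decouples into the two scalar comparisons. The only thing worth being careful about is ensuring the parametrization $\Psi_r$ is used consistently on both sides so that the coordinate-wise comparison is meaningful; this is automatic since both surfaces of revolution are parametrized by the same domain $I\times S^1$.
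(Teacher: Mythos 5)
Your proposal is correct and follows essentially the same route as the paper: compute that $g^r$ is diagonal in the $(\partial_l,\partial_\theta)$ frame with entries $1+(r'(l))^2$ and $r(l)^2$, so the norm comparison decouples into the two scalar inequalities. You merely spell out the "if" and "only if" steps that the paper leaves implicit.
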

\begin{proof}
Introduce local coordinates $(l,\theta)\in I\times S^1.$ If $r:I\rightarrow [0,\infty)$ is a smooth profile function, a simple computation shows that the matrix of $g^r$ expressed in coordinates $(\partial_l ,\partial_\theta )$ satisfies
\begin{equation*}
    [g^r]_{(l,\theta)}= \begin{pmatrix} 1+(r'(l))^2 & 0 \\ 0 & (r(l))^2 \end{pmatrix}.
\end{equation*}
By definition $g^{r_1}\preceq C g^{r_2}$ is equivalent to $\|\cdot \|_{g^{r_1}}\leq \|\cdot \|_{Cg^{r_2}}$ at all points $(l,\theta)\in I \times S^1$, and hence the claim follows.
\end{proof}

From Lemma \ref{Lemma_matrix} we obtain the following corollary.

\begin{cor}\label{ratios}
Let $r_1,r_2:I \rightarrow [0,\infty)$ be two profile functions and denote by\footnote{Here we use the convention that $\frac{0}{0}=1.$} 
$$ C=\max_{l\in I} \max \left\{ \frac{r_1(l)}{r_2(l)},\frac{r_2(l)}{r_1(l)}, \frac{r_1'(l)}{r_2'(l)}, \frac{r_2'(l)}{r_1'(l)} \right\}.$$
If $g^{r_1},g^{r_2}$ are the induced Riemannian metrics on $I\times S^1$ it holds
$$\frac{1}{C^2} g^{r_1} \preceq g^{r_2} \preceq C^2 g^{r_1}.$$
\end{cor}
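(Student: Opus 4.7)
The plan is to deduce this corollary directly from Lemma~\ref{Lemma_matrix}, since that lemma provides a clean criterion for the inequality $g^{r_1} \preceq D\cdot g^{r_2}$ purely in terms of pointwise estimates on $r_i$ and $r_i'$. By symmetry in $r_1$ and $r_2$, it suffices to establish $g^{r_2} \preceq C^2 g^{r_1}$; the other inequality $C^{-2} g^{r_1} \preceq g^{r_2}$ then follows by running the same argument with the roles of $r_1$ and $r_2$ swapped.

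First I would observe that $C \geq 1$. This is immediate from the definition: for every $l \in I$ at least one of the two ratios $r_1(l)/r_2(l)$ and $r_2(l)/r_1(l)$ is $\geq 1$ (with the convention $0/0=1$ covering the degenerate case), and similarly for the derivative ratios; taking the maximum over all such ratios yields $C \geq 1$. This harmless remark is what will let us absorb the additive constant $1$ appearing in the matrix entry $1+(r')^2$ into a factor of $C^2$.

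Next I would apply Lemma~\ref{Lemma_matrix} with $D = C^2$. The lemma reduces the statement $g^{r_2} \preceq C^2 g^{r_1}$ to verifying two pointwise inequalities for every $l\in I$, namely
\[
(r_2(l))^2 \leq C^2 (r_1(l))^2 \qquad \text{and} \qquad 1+(r_2'(l))^2 \leq C^2\bigl(1+(r_1'(l))^2\bigr).
\]
The first is immediate by squaring $r_2(l)/r_1(l)\leq C$. For the second, the definition of $C$ gives $(r_2'(l))^2 \leq C^2 (r_1'(l))^2$, and then the bound $1\leq C^2$ from the previous paragraph allows me to write $1+(r_2'(l))^2 \leq C^2 + C^2(r_1'(l))^2 = C^2(1+(r_1'(l))^2)$, as required.

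There is no real obstacle here: the statement is a direct packaging of Lemma~\ref{Lemma_matrix}. The only mildly delicate point worth flagging is the handling of the derivative ratios when $r_1'$ or $r_2'$ vanishes (or the two have opposite signs), which is why the convention $0/0=1$ is built into the statement; one should read $r_1'/r_2'$ and $r_2'/r_1'$ as ratios whose square governs the comparison, so that the estimate $(r_2'(l))^2 \leq C^2 (r_1'(l))^2$ continues to hold at points where a denominator vanishes (in which case the corresponding numerator must also vanish for $C$ to be finite).
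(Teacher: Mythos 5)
Your proof is correct and follows essentially the same route as the paper: apply Lemma~\ref{Lemma_matrix} with the constant $C^2$, get the first inequality by squaring the ratio bound, and use $C\geq 1$ to absorb the additive $1$ in $1+(r')^2$. Your closing remark about the sign/vanishing of the derivatives is a reasonable extra precaution that the paper's own proof does not spell out, but it does not change the argument.
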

\begin{proof}
By Lemma \ref{Lemma_matrix} $g^{r_2} \preceq C^2 g^{r_1}$ is equivalent to 
$$ (r_1(l))^2\leq C^2 (r_2(l))^2, ~~ 1+(r_1'(l))^2\leq C^2 ( 1+(r_2'(l))^2).$$
The first inequality follows directly from the definition of $C.$ Since $C\geq 1,$ the second inequality follows from
$$1+(r_1'(l))^2 \leq 1+C^2(r_2'(l))^2 \leq C^2+C^2(r_2'(l))^2.$$
Inequality $\frac{1}{C^2} g^{r_1} \preceq g^{r_2}$ is proven by the same argument.
\end{proof}

Finally, we have the following proposition.
\begin{prop}\label{Prop-upper}
Bulked sphere metrics $g_x$, $x\in [0,\infty)$, whose existence is guaranteed by Proposition \ref{bsm-p}, can be defined in such a way that their profile functions satisfy 
$$\max_{l\in I} \max \left\{ \frac{r_x(l)}{r_y(l)},\frac{r_y(l)}{r_x(l)}, \frac{r_x'(l)}{r_y'(l)}, \frac{r_y'(l)}{r_x'(l)} \right\} = e^{|x-y|}$$
for all $x,y\in [0,\infty).$ Similarly, multi-bulked metrics $g_{\vec{x}},$ $\vec{x}\in \mathcal T(N)$ in Proposition \ref{mb-p} can be defined in such a way that the profile functions of the corresponding $O(N)$-parts satisfy 
$$\max_{l\in I} \max \left\{ \frac{r_{\vec{x}}(l)}{r_{\vec{y}}(l)},\frac{r_{\vec{y}}(l)}{r_{\vec{x}}(l)}, \frac{r_{\vec{x}}'(l)}{r_{\vec{y}}'(l)}, \frac{r_{\vec{y}}'(l)}{r_{\vec{x}}'(l)} \right\} = e^{|\vec{x}-\vec{y}|_\infty}$$
for all $\vec{x},\vec{y}\in \mathcal T(N).$
\end{prop}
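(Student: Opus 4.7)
The plan is to define the profile functions by multiplicative rescaling in the neck region via an exponential factor. For the bulked sphere case, I fix once and for all a smooth even base profile $R : [-L, L] \to [0, \infty)$ satisfying Definition \ref{dfn-bsm} with $R(0) = \delta_0/(2\pi)$, together with a smooth even cutoff function $\chi : [-L, L] \to [0, 1]$ that is identically $1$ on $[-\eta_1, \eta_1]$ and identically $0$ outside $[-\eta_2, \eta_2]$, where $\eta_2 \ll a$. I then set
\[ r_x(l) := R(l)\,e^{-x\chi(l)}. \]
The first task is to verify that each $r_x$ is a valid bulked sphere profile in the sense of Definition \ref{dfn-bsm}: smoothness, evenness and the vanishing locus are immediate, while on $[-\eta_1, \eta_1]$ one has $r_x = e^{-x}R$, forcing a local minimum at $0$ with $r_x''(0) = e^{-x}R''(0) > 0$; near $\pm a$ the function $r_x$ coincides with $R$ and so inherits the local maxima; and in the transition region $r_x'(l) = e^{-x\chi(l)}(R'(l) - R(l)x\chi'(l))$ is strictly positive for $l>0$ since $R' > 0$ and $-\chi' > 0$, so no spurious critical points appear. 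Property (1) of Proposition \ref{bsm-p} then follows from $r_x(0) = e^{-x}R(0)$, while (2) and (3) follow as in Subsection \ref{Geo_length} and Subsection \ref{precise-par}, because the modification of $R$ is localized to a small neck region of negligible volume.

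The $r$-ratio bound is immediate: $r_x(l)/r_y(l) = e^{(y-x)\chi(l)}$ lies in $[e^{-|x-y|}, e^{|x-y|}]$ since $0 \leq \chi \leq 1$, with both extrema attained on $[-\eta_1, \eta_1]$ where $\chi \equiv 1$. For the derivative ratio one computes
\[ \frac{r_x'(l)}{r_y'(l)} = e^{(y-x)\chi(l)} \cdot \frac{R'(l) - R(l)x\chi'(l)}{R'(l) - R(l)y\chi'(l)}. \]
On the region $\{\chi' = 0\}$ the second factor is $1$ and the ratio equals $e^{(y-x)\chi(l)}$, again in $[e^{-|x-y|}, e^{|x-y|}]$. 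In the transition region, for $l > 0$ one has $R' > 0$ and $-\chi' > 0$, so writing the second factor as $(R' + xR|\chi'|)/(R' + yR|\chi'|)$ gives a ratio of positive quantities, and a direct application of $\log(1 + t) \leq t$ (used in both directions) reduces the desired bound to the pointwise condition
\[ R(l)\,|\chi'(l)| \leq (1 + \chi(l))\,R'(l) \quad \text{for all } l \in (\eta_1, \eta_2). \]

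The hard part is arranging this last condition. Since $\chi \equiv 1$ on $[-\eta_1, \eta_1]$ and $\chi \equiv 0$ outside $[-\eta_2, \eta_2]$, one can make $\sup|\chi'|$ uniformly small by choosing the transition width $\eta_2 - \eta_1$ large; on the other hand, $R'/R$ is bounded below by a positive constant on $[\eta_1, \eta_2]$ since $R$ strictly increases from its minimum at $0$ to its maximum at $a$. Choosing $\eta_2 - \eta_1$ large enough relative to $\sup_{[\eta_1, \eta_2]}(R/R')$ secures the inequality. Since the maximum is always at least $e^{|x-y|}$ (attained at $l = 0$), equality in the statement follows.

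Finally, the multi-bulked case is handled analogously using a collection of cutoff functions $\chi_1, \dots, \chi_N$ with pairwise disjoint supports, each centered at one of the necks of $O(N)$, and defining $r_{\vec{x}}(l) = R(l)\,e^{-\sum_{i=1}^{N} x_i \chi_i(l)}$. Disjointness of supports ensures that at any given $l$ at most one $\chi_i$ is nonzero, so the single-neck analysis applies locally and the resulting supremum of the four ratios over $l$ equals $\max_i e^{|x_i - y_i|} = e^{|\vec{x} - \vec{y}|_\infty}$.
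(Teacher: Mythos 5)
Your construction is genuinely different from the paper's. The paper (Subsection \ref{precise-par}, Propositions \ref{geo-prop} and \ref{geo-prop-2}) writes the profiles down explicitly -- a family of parabolas through a common point $(B',h)$, followed by a piecewise-linear interpolation of the derivative and a circular arc -- and checks the four ratio bounds by direct computation. You instead deform one fixed profile multiplicatively, $r_x=Re^{-x\chi}$, which packages the entire verification into the single pointwise inequality $R|\chi'|\le(1+\chi)R'$ on the transition zone; the reduction via $\frac{d}{ds}\log(1+st)\le t$ is correct, the ratio $r_x/r_y=e^{(y-x)\chi}$ is handled trivially, the absence of spurious critical points is checked, and equality (not just $\le$) is correctly attained at $l=0$. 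The disjoint-support trick for the multi-bulked case then works for the same local reason. This is an elegant, soft alternative to the paper's explicit bookkeeping.

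The one place the argument as written does not go through is the justification of $R|\chi'|\le(1+\chi)R'$. You propose to secure it by "choosing the transition width $\eta_2-\eta_1$ large", but (i) the threshold you need, $\inf_{[\eta_1,\eta_2]}R'/R$, itself can only decrease as you enlarge the interval, so the choice is circular as stated, and (ii) a wide transition zone conflicts with your own requirement $\eta_2\ll a$ and with the claim that the modification is confined to a neck of negligible size -- which is exactly what you invoke to inherit properties (1)--(3) of Proposition \ref{bsm-p} (the long-geodesic analysis of Subsection \ref{Geo_length} needs the metrics $g_x$ to agree outside a tiny neighbourhood of $\gamma_0$). The true obstruction is integral rather than pointwise: since $\int_{\eta_1}^{\eta_2}|\chi'|\ge 1$ and $1+\chi\le 2$, your inequality forces $\log\bigl(R(\eta_2)/R(\eta_1)\bigr)\ge 1/2$, i.e. $R$ must grow by a definite multiplicative factor across the transition zone. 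That is the condition you should impose, and it is compatible with $\eta_2$ tiny provided the base profile $R$ climbs steeply out of the neck (as the paper's explicit profile does, tripling over $[0,10^{-2n}A]$); the extremal solution $1+\chi=2R(\eta_1)/R$ of the corresponding ODE shows the inequality is then realizable by a smooth cutoff with room to spare. So the gap is repairable, but the repair must replace "make $\eta_2-\eta_1$ large" by "make $R(\eta_2)/R(\eta_1)$ large while keeping $\eta_2$ small".
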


In order to prove Proposition \ref{Prop-upper} one must specify precisely the profile functions which are used to define bulked spheres and multi-bulked surfaces. This is done in Subsection \ref{precise-par}.

\medskip

We are now ready to give a proof of the upper bounds.

\begin{proof} (Upper bounds in Propositions \ref{emb-thm} and \ref{emb-thm2}) We will only prove the upper bounds in terms of $d_{RBM}.$ The upper bounds in terms of $d_{SBM}$ then follow from Proposition \ref{SvsR}. 

For any $x \in [0, \infty)$, let $g_x$ be the bulked sphere metric given by Proposition \ref{bsm-p}. In order to prove the upper bound in Proposition \ref{emb-thm}, notice that Corollary \ref{ratios} and Proposition \ref{Prop-upper} imply that, for $x,y\in [0,\infty),$ it holds
$$e^{-2|x-y|} g_x \preceq g_y \preceq e^{2|x-y|} g_x .$$
Taking $\phi=\mathds{1}_M$ in the definition of $d_{RBM}$ we get
$$d_{RBM}(g_x,g_y)\leq 2|x-y|.$$
Recall that the embedding $\Phi: [0, \infty) \to \bar{\mathcal G}_{S^2}$ is defined by $\Phi(x) = R_x \cdot g_x$ where $R_x$ is the rescaling factor from (3) in Proposition \ref{bsm-p}. Now, Remark \ref{Scaling-RBM} implies 
\begin{align*}
    d_{RBM}(\Phi(x), \Phi(y)) & = d_{RBM}(R_x \cdot g_x, R_y \cdot g_y) \\
    & = d_{RBM}((R_x/R_y) \cdot g_x, g_y) \\
    & \leq d_{RBM}(g_x, (R_x/R_y) \cdot g_x) + d_{RBM}(g_x, g_y) \\
    & \leq |\ln R_x - \ln R_y| + 2|x-y|. 
\end{align*}
For any given $\varepsilon>0$, using (3) in Proposition \ref{bsm-p} with $\varepsilon_0 = \frac{e^{2\varepsilon} -1}{e^{2\varepsilon}+1}$ yields the desired upper bound in Proposition \ref{emb-thm}. 

To prove the upper bound in Proposition \ref{emb-thm2}, recall that $g_{\vec{x}}$ is the multi-bulked metric given by Proposition \ref{mb-p}. Moreover, metrics $g_{\vec{x}}$ for different $\vec{x}\in \mathcal{T}(N)$ only differ in the $O(N)$-part. Hence, Corollary \ref{ratios} and Proposition \ref{Prop-upper} imply
$$ e^{-2|\vec{x}-\vec{y}|_\infty} g_{\vec{x}} \preceq g_{\vec{y}} \preceq e^{2|\vec{x}-\vec{y}|_\infty} g_{\vec{x}},$$
for any $\vec{x},\vec{y}\in \mathcal{T}(N).$ Taking $\phi=\mathds{1}_M$ in the definition of $d_{RBM}$ gives
\begin{equation}\label{T-RBM}
d_{RBM}(g_{\vec{x}}, g_{\vec{y}}) \leq 2 |\vec{x}-\vec{y}|_\infty.    
\end{equation}

Map $\Phi:\R^N \rightarrow \bar{\mathcal G}_{M}$ in the proof of Proposition \ref{emb-thm2} is defined by $\Phi(\vec{x})=R_{Q(\vec{x})} \cdot g_{Q(\vec{x})}$ where $Q:\R^N \rightarrow \mathcal{T}(2N)$ is the quasi-isometric embedding given by Lemma \ref{red-par} and $R_{Q(\vec{x})}$ is the rescaling factor from (4) in Proposition \ref{mb-p}.  The same argument as above together with Lemma \ref{red-par} implies, for any $\vec{x}, \vec{y} \in \R^N$, 
$$ d_{RBM}(\Phi(\vec{x}), \Phi(\vec{y})) \leq |\ln R_{Q(\vec{x})} - \ln R_{Q(\vec{y})}| + 4N\cdot |\vec{x}- \vec{y}|_{\infty}.$$
For any $\varepsilon >0$, using (4) in Proposition \ref{mb-p} with $\varepsilon_0 = \frac{e^{2\varepsilon} -1}{e^{2\varepsilon}+1}$ yields the desired upper bound in Proposition \ref{emb-thm2}. 
 \end{proof}

\section{Quantitative existence of closed geodesics} \label{quan-geo}

The goal of this section is to prove Theorem \ref{exist-geodesic} and Corollary \ref{exist}. Since Corollary \ref{exist} immediately follows from Theorem \ref{TST}, we give its proof first.

\begin{proof} (Proof of Corollary \ref{exist})
We will prove the claim in the case of a finite bar $[a^2/2, b^2/2),$ the case of an infinite ray is treated in the same fashion. Using the isomorphism of persistence modules provided by Theorem \ref{sh-loop-thm}, we conclude that the barcode $\B_{*,\alpha}(U^*_{g_1}M)$ of $\mathbb{S}_{*,\alpha}(U^*_{g_1}M)$, contains the bar $[\ln a, \ln b).$ Theorem \ref{TST} and the assumptions give
\begin{equation}\label{Inequality_RBM}
d_{bottle}(\mathbb B_{*,\alpha}(U^*_{g_1}M), \mathbb B_{*,\alpha}(U^*_{g_2} M)) \leq \frac{1}{2} d_{RBM}(g_1,g_2)<\frac{1}{2}(\ln b - \ln a).
\end{equation}
Denoting $D=d_{bottle}(\mathbb B_{*,\alpha}(U^*_{g_1}M), \mathbb B_{*,\alpha}(U^*_{g_2}M))$ we have that for every $0<\varepsilon < \frac{1}{2}(\ln b - \ln a) - D$ there exists a $(D+\varepsilon)$-matching between $\mathbb B_{*,\alpha}(U^*_{g_1} M)$ and $\mathbb B_{*,\alpha}(U^*_{g_2} M).$ Since $D+\varepsilon < \frac{1}{2}(\ln b - \ln a)$, the bar $[\ln a, \ln b) \in \mathbb B_{*,\alpha}(U^*_{g_1} M)$ is not erased in this matching but rather has a genuine match $[c_\varepsilon,d_\varepsilon)\in \mathbb B_{*,\alpha}(U^*_{g_2} M)$ such that
\[ \max \left\{ \left| \ln a - c_\varepsilon \right|, \left| \ln b -d_\varepsilon   \right| \right\} \leq D + \varepsilon.\]
Since $\mathbb{S}_{*,\alpha}(U^*_{g_2}M)$ is pointwise finite dimensional, the fact that a bar $[c_\varepsilon,d_\varepsilon)$ as above exists for all $0<\varepsilon < \frac{1}{2}(\ln b - \ln a) - D$ implies that there exists $[c_0,d_0)\in \mathbb B_{*,\alpha}(U^*_{g_2} M)$ such that
\begin{equation}\label{sharp}
\max \left\{ \left| \ln a - c_0 \right|, \left| \ln b -d_0  \right| \right\} \leq D.
\end{equation} 
Indeed, if this was not the case, by shrinking $\varepsilon$ we would get infinitely many bars $[c_\varepsilon,d_\varepsilon)\in \mathbb B_{*,\alpha}(U^*_{g_2} M)$ which all contain the middle $\frac{\ln a+ \ln b}{2}$ of the interval $[\ln a, \ln b).$ This would imply that $S^{\frac{\ln a+ \ln b}{2}}_{*}(U^*_{g_2}M; \alpha)$ is not finite dimensional.

Finally, by Remark \ref{endpoints} we know that there exist closed geodesics $\gamma_1,\gamma_2$ of $g_2$ such that $c_0=\ln \sqrt{2E_{g_2}(\gamma_1)}$ and $d_0=\ln \sqrt{2E_{g_2}(\gamma_2)}$. Then the proof follows.
\end{proof}

In the rest of the section, we focus on proving Theorem \ref{exist-geodesic}. 

\subsection{Lemmas about persistence modules}

In order to prove Theorem \ref{exist-geodesic}, we will use a lemma about general persistence modules, see Lemma \ref{bar} below. We start with an auxiliary statement first.

\begin{lemma} \label{intervals}
Let $\k_{[a,b)}$ and $\k_{[c,d)}$ be two interval modules over field $\k.$ Then a non-zero persistence morphism
$$\f = \{f^t\}_{t \in \R}:\k_{[a,b)} \rightarrow \k_{[c,d)}$$
exists if and only if $c\leq a \leq d \leq b.$ Similarly for $b=d=+\infty$ a non zero persistence morphism
$$\f=\{f^t\}_{t \in \R}:\k_{[a,+\infty)} \rightarrow \k_{[c,+\infty)}$$
exists if and only if $c\leq a.$
\end{lemma}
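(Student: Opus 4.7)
The plan is to characterize non-zero morphisms between interval modules by exploiting the rigidity of the identity comparison maps inside each interval. First I would unpack the definitions: a persistence morphism $\f$ is a family of $\k$-linear maps $f^t\co(\k_{[a,b)})^t\to(\k_{[c,d)})^t$ commuting with all comparison maps. Where both source and target equal $\k$ (i.e.\ $t\in[a,b)\cap[c,d)$), $f^t$ is multiplication by a scalar; elsewhere it is forced to vanish.

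For the forward implication, assume $\f\neq 0$ and pick $t_0$ with $f^{t_0}\neq 0$; necessarily $t_0\in[a,b)\cap[c,d)$ and $f^{t_0}$ is an isomorphism. For any $t\in[a,t_0]$ the comparison $\iota^{\k_{[a,b)}}_{t,t_0}$ is the identity on $\k$, so naturality gives $\iota^{\k_{[c,d)}}_{t,t_0}\circ f^t=f^{t_0}\neq 0$, which forces both $f^t\neq 0$ and $\iota^{\k_{[c,d)}}_{t,t_0}\neq 0$, hence $t\in[c,d)$. Thus $[a,t_0]\subset[c,d)$, giving $c\leq a$. Symmetrically, for $t\in[t_0,d)$ the map $\iota^{\k_{[c,d)}}_{t_0,t}$ is the identity on $\k$, and the relation $f^t\circ\iota^{\k_{[a,b)}}_{t_0,t}=f^{t_0}$ forces $t\in[a,b)$, yielding $d\leq b$. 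Since $a\leq t_0<d$, we also obtain $a\leq d$ and necessity follows.

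For the converse, given $c\leq a\leq d\leq b$ with $a<d$, I would define $f^t=\operatorname{id}_{\k}$ for $t\in[a,d)=[a,b)\cap[c,d)$ and $f^t=0$ otherwise, and then verify naturality by a short case analysis on the position of $s\leq t$ relative to the endpoints $a,b,c,d$; the only substantive case is $s,t\in[a,d)$, where both sides of the naturality square equal $\operatorname{id}_{\k}$, while all other cases reduce to a composition involving a map out of, or into, a zero space. The infinite case $b=d=+\infty$ is handled verbatim, except that the right-extension step in the necessity becomes vacuous, so only the condition $c\leq a$ survives. No real obstacle arises; the main point is purely bookkeeping, namely carefully distinguishing maps that vanish because the underlying vector space is $0$ from maps that vanish as endomorphisms of $\k$, and patiently running through each position combination of $s$ and $t$ in the naturality check.
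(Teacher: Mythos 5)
Your proposal is correct and follows essentially the same route as the paper: the explicit morphism $f^t=\operatorname{id}_\k$ on $[a,d)$ for sufficiency, and naturality of the squares involving the identity comparison maps inside each interval for necessity (the paper organizes the latter as a case-by-case contradiction on the ordering of $a,b,c,d$, whereas you derive the three inequalities directly, but the underlying mechanism is identical). Your added proviso $a<d$ in the converse also quietly fixes the degenerate case $a=d$, where the intersection is empty and only the zero morphism exists.
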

\begin{proof}
Firstly, note that if $c\leq a \leq d \leq b$ there exists a non-zero persistence morphism $\f$ given by $f^t(1_{\k})=1_{\k}$ for $t\in [a,d)$ and $f^t=0$ otherwise. This proves one direction of the equivalence. For the other direction one readily sees that $b>c$ and $d>a$ since otherwise $[a,b)$ and $[c,d)$ do not intersect. The rest of the proof follows from a case analysis in terms of the order of endpoints $a,b,c,d.$ We will analyze one case, the other cases are treated in the same way. Assume, for example, that $c\leq a \leq b < d$ and let $\varepsilon>0$ be such that $b<b+\varepsilon<d.$ Now, if $f^t(1_{\k})\neq 0$ for some $t\in [a,b),$ on the one hand we have
$$f^{b+\varepsilon}(\iota_{t,b+\varepsilon}(1_{\k}))=f^{b+\varepsilon}(0)=0,$$
while on the other hand
$$f^{b+\varepsilon}(\iota_{t,b+\varepsilon}(1_{\k}))= \iota_{t,b+\varepsilon}(f^t(1_{\k}))=f_t(1_{\k})\neq 0$$
which gives a contradiction. 
\end{proof}

Recall that if $\mathbb V$ is a persistence module, for $A>0,$ shifted module $\mathbb V[A]$ is defined by $\mathbb V[A]^t=\mathbb V^{t+A}$ with comparison maps $\iota^{\mathbb V[A]}_{s,t}=\iota^{\mathbb V}_{s+A,t+A}.$ Barcode $\mathbb{B}(\mathbb V[A])$ is a shift of $\mathbb{B}(\mathbb V)$ by $A$ to the left, i.e. $[x,y) \in \mathbb{B}(\mathbb V[A])$ if and only if $[x+A,y+A) \in \mathbb{B}(\mathbb V).$ The following lemma is the main combinatorial ingredient of the proof of Theorem \ref{exist-geodesic}. 

\begin{lemma} \label{bar}
Let $\mathbb{V}, \mathbb{W}$ be two persistence modules, $A,B\geq0$ non-negative constants and 
$$\f:\mathbb{V} \rightarrow \mathbb{W}[A],~~ \g: \mathbb{W}[A] \rightarrow \mathbb{V}[A+B]$$
persistence morphisms such that the following diagram commutes
\[ \xymatrix{
\mathbb{V} \ar[rr]^{\iota^{\mathbb V}_{t,t+A+B}} \ar[rd]_{\f} && \mathbb{V}[A+B]. \\
& \mathbb W[A] \ar[ru]_{\g} &} \]
If there exists a bar $[a,b)\in \mathbb{B}(\mathbb{V})$ such that $b-a>A+B$ then there exists a bar $[c,d)\in \mathbb{B}(\mathbb{W})$ such that
$$a-B\leq c \leq a+A,~~b-B\leq d \leq b+A.$$\end{lemma}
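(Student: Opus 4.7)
The plan is to invoke the structure theorem for (pointwise finite-dimensional) persistence modules, decompose $\mathbb V$ and $\mathbb W$ into interval summands, identify the diagonal component of $\g \circ \f$ corresponding to the distinguished bar $[a,b)$, and read off the required bounds by applying Lemma \ref{intervals} twice.

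First, I would write $\mathbb V \cong \bigoplus_{I \in \mathbb B(\mathbb V)} \k_I$ and $\mathbb W \cong \bigoplus_{J \in \mathbb B(\mathbb W)} \k_J$, with corresponding summand inclusions $\iota_I$ and projections $\pi_I$. Set $I_0 := [a,b)$. With respect to these decompositions, $\f$ and $\g$ have ``matrix'' entries $\f_{J, I_0} := \pi_J \circ \f \circ \iota_{I_0} : \k_{I_0} \to \k_J[A] \cong \k_{[c_J - A,\, d_J - A)}$ and $\g_{I_0, J} := \pi_{I_0} \circ \g \circ \iota_J : \k_J[A] \to \k_{I_0}[A+B] \cong \k_{[a-A-B,\, b-A-B)}$, writing $J = [c_J, d_J)$.

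Next, I would project the hypothesis $\g \circ \f = \iota^{\mathbb V}_{\cdot,\, \cdot + A + B}$ to the $(I_0, I_0)$ slot. Since comparison maps respect the interval decomposition, this yields
\[ \sum_{J \in \mathbb B(\mathbb W)} \g_{I_0, J} \circ \f_{J, I_0} \;=\; \iota^{\k_{I_0}}_{\cdot,\, \cdot + A + B} : \k_{[a,b)} \to \k_{[a-A-B,\, b-A-B)}. \]
The assumption $b - a > A + B$ is exactly the condition $a \leq b - A - B$, so the right-hand side is a \emph{nonzero} morphism of interval modules by Lemma \ref{intervals}. Pointwise finite-dimensionality ensures that at each filtration level only finitely many summands contribute nontrivially, so the sum is well-defined; hence at least one term $\g_{I_0, J_0} \circ \f_{J_0, I_0}$ must itself be nonzero for some $J_0 =: [c,d) \in \mathbb B(\mathbb W)$, and in particular both $\f_{J_0, I_0}$ and $\g_{I_0, J_0}$ are nonzero.

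Finally, I would apply Lemma \ref{intervals} to each of these nonzero interval morphisms: from $\f_{J_0, I_0} : \k_{[a,b)} \to \k_{[c-A,\, d-A)}$ one reads off $c - A \leq a \leq d - A \leq b$, while from $\g_{I_0, J_0} : \k_{[c-A,\, d-A)} \to \k_{[a-A-B,\, b-A-B)}$ one reads off $a - A - B \leq c - A \leq b - A - B \leq d - A$. Rearranging and combining these two chains of inequalities yields precisely $a - B \leq c \leq a + A$ and $b - B \leq d \leq b + A$, which is the desired conclusion. The main subtlety is the step where one passes from ``the sum of the diagonal components is nonzero'' to ``some individual component is nonzero''; this is what forces the use of pointwise finite-dimensionality (and, for the infinite-ray case $b = +\infty$, the infinite version of Lemma \ref{intervals} applied to the target interval module).
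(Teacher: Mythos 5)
Your proposal is correct and follows essentially the same route as the paper's proof: decompose both modules into interval summands, isolate the $(I_0,I_0)$-component of $\g\circ\f$, use pointwise finite-dimensionality to extract a single interval $J_0$ of $\mathbb W[A]$ through which a nonzero composition factors, and apply Lemma \ref{intervals} to each of the two resulting nonzero interval morphisms. The paper phrases the extraction of $J_0$ by evaluating on a generator at a fixed $t_0\in(a,b-A-B)$ rather than in matrix-entry language, but the argument is the same.
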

\begin{proof}
Fix a decomposition of $\mathbb{V}$ given by Theorem \ref{thm-dc},
\begin{equation}\label{decomposition1}
\mathbb{V}= \bigoplus_{I\in \mathbb{B}(\mathbb{V})} \k_I
\end{equation}
and let 
\begin{equation}\label{decomposition2}
\mathbb{V}[A+B]= \bigoplus_{I\in \mathbb{B}(\mathbb{V})} \k_I[A+B]
\end{equation}
be the induced decomposition of $\mathbb{V}[A+B].$ 
Since $[a,b) \in \mathbb{B}(\mathbb{V}),$ $\k_{[a,b)}$ is a summand in (\ref{decomposition1}) and $\k_{[a,b)}[A+B]=\k_{[a-A-B,b-A-B)}$ is a summand in (\ref{decomposition2}). Denote by
$$\pi_{[a-A-B,b-A-B)}:\mathbb{V}[A+B] \rightarrow \k_{[a-A-B,b-A-B)}$$
the projection with respect to (\ref{decomposition2}). By restricting $f$ to  $\k_{[a,b)}$ we obtain the following commutative diagram for all $t\in \R:$
\[ \xymatrix{
\k^t_{[a,b)} \ar[rr]^{\iota_{t,t+A+B}} \ar[rd]_{f^t} && \k^{t}_{[a-A-B,b-A-B)} \\
& \mathbb W[A]^t \ar[ru]_{ \quad (\pi_{[a-A-B,b-A-B)}\circ g)^t} &} \]
Condition $A+B<b-a$ implies that $[a,b) \cap [a-A-B,b-A-B)\neq \emptyset$ and hence 
$$\iota_{t,t+A+B}:\k^t_{[a,b)} \rightarrow \k^t_{[a-A-B,b-A-B)}$$
is non-zero and is given by the obvious map equal to $\mathds{1}_{\k}$ when $t\in [a,b-A-B)$ and zero otherwise. Let us fix $t_0\in (a,b-A-B)$ and $1^{t_0}\in \k^{t_0}_{[a,b)}.$ By previous, It holds $\iota_{t_0,t_0+A+B}(1^{t_0})\neq 0.$

Fix a decomposition of $\mathbb{W}[A],$
\begin{equation}\label{decomposition3}
\mathbb{W}[A]= \bigoplus_{I\in \mathbb{B}(\mathbb{W}[A])} \k_I    
\end{equation}
and assume that 
$$f^{t_0}(1^{t_0})=\sum_{i=1}^N \lambda_i 1^{t_0}_{I_i},$$
where $\lambda_i \in \k,\, \lambda_i \neq 0$ and $1^{t_0}_{I_i}\in \k^{t_0}_{I_i}$ for $I_i\in \mathbb{B}(\mathbb{W}[A]).$ Since $(\pi_{[a-A-B,b-A-B)} \circ g)^{t_0} \circ f^{t_0}=\iota_{t_0,t_0+A+B}$ and $\iota_{t_0,t_0+A+B}(1^{t_0})\neq 0,$ we have that $(\pi_{[a-A-B,b-A-B)} \circ g)^{t_0}(\sum_{i=1}^N \lambda_i 1^{t_0}_{I_i}) \neq 0$ and hence there exists some $i_0 \in \{1, ..., N\}$ such that $(\pi_{[a-A-B,b-A-B)} \circ g)^{t_0}(\lambda_{i_0} 1^{t_0}_{I_{i_0}})\neq 0,$ see Figure \ref{interval_picture} below.
\begin{figure}[ht]
\begin{center}
\includegraphics[scale=0.4]{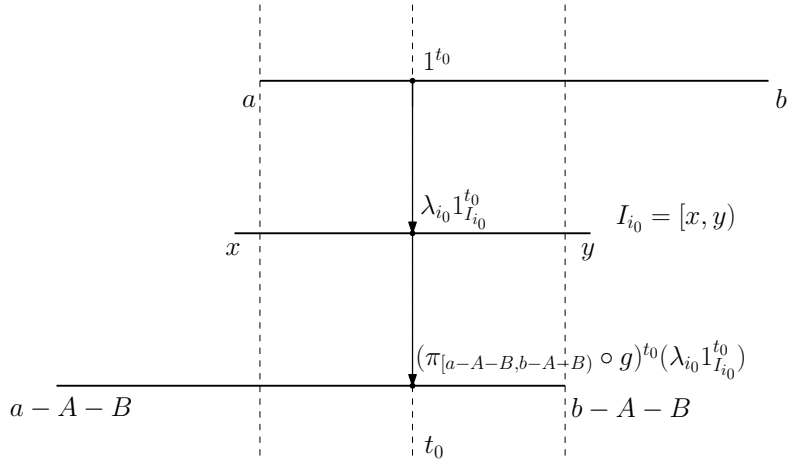}
\caption{Morphisms at a point $t_0$.}\label{interval_picture}
\end{center}
\end{figure}

Let $I_{i_0}=[x,y).$ We claim that $a-A-B\leq x\leq a$ and $b-A-B\leq y\leq b.$ Indeed, denote by
$$\pi_{I_{i_0}}:\mathbb{W}[A] \rightarrow \k_{I_{i_0}}$$
the projection with respect to (\ref{decomposition3}). This projection is a morphism of persistence modules and we have that 
$$\pi_{I_{i_0}}\circ \f:\k_{[a,b)} \rightarrow \k_{I_{i_0}}$$
is a non-zero persistence morphism because $(\pi_{I_{i_0}}\circ f)^{t_0}(1^{t_0})=\lambda_{i_0} 1^{t_0}_{I_{i_0}} \neq 0.$ Thus, Lemma \ref{intervals} imples that $x\leq a$ and $y\leq b.$

Similarly, restricting $g$ to $\k_{I_{i_0}}$ gives
$$\pi_{[a-A-B,b-A-B)}\circ g :\k_{I_{i_0}} \rightarrow \k_{[a-A-B,b-A-B)}.$$
This morphism is non-zero because $(\pi_{[a-A-B,b-A-B)}\circ g)^{t_0}(\lambda_{i_0}1^t_{I_{i_0}})\neq 0$ and hence Lemma \ref{intervals} implies that $a-A-B\leq x$ and $b-A-B\leq y.$

To finish the proof notice that $[x,y)\in \mathbb{B}(\mathbb W[A])$ and hence $[x+A,y+A)\in \mathbb{B}(\mathbb{W}).$ For this bar it holds $a-B\leq x+A \leq a+A$ and $b-B\leq y+A \leq b+A$ and we may take $[c,d)=[x+A,y+A).$
\end{proof}

\subsection{Proof of Theorem \ref{exist-geodesic}}\label{Proof_non_symmetric}

\begin{proof} (Proof of Theorem \ref{exist-geodesic})
We will prove only the case of a finite bar $[x,y)\in \mathbb{B}(\mathbb H_{*, \alpha}(M, g_1)),$ the other case is proved in the same manner. 

It follows from the definition that $U^*_{Cg} M=\sqrt{C}U^*_g M.$ From the assumption $\frac{1}{C_1} g_1 \preceq g_2 \preceq C_2 g_1$, one concludes that
$$U^*_{\frac{1}{C_1} g_1} M \subset U^*_{g_2} M \subset U^*_{C_2 g_1} M,$$
which is equivalent to
\begin{equation}\label{inclusions}
\frac{1}{\sqrt{C_1}}U^*_{ g_1} M \subset U^*_{g_2} M \subset \sqrt{C_2} U^*_{g_1} M.
\end{equation}
Applying contravariant functor $\SH^a_{*}(\cdot ; \alpha)$ to (\ref{inclusions}) gives the following commutative diagram
\[ \xymatrix{
\SH^a_{*}(\sqrt{C_2}U^*_{ g_1} M; \alpha) \ar[rr]^{\h_{inc}} \ar[rd]_{\h_{inc}} &&  \SH^a_{*}(\frac{1}{\sqrt{C_1}}U^*_{ g_1} M; \alpha)\\
&  \SH^a_{*}(U^*_{g_2} M; \alpha) \ar[ru]_{ \h_{inc}} &} \]
where $\h_{inc}$ denote maps induced by the respective inclusions. Applying (2) in Proposition \ref{fp-sh} with $C=\frac{1}{\sqrt{C_1 C_2}}\leq 1$ to the horizontal arrow gives us
\[ \xymatrix{
\SH^a_{*}(\sqrt{C_2}U^*_{ g_1} M; \alpha) \ar[rr]^{\iota_{a,\sqrt{C_1 C_2}a}} \ar[rd]_{\h_{inc}} && \SH^{\sqrt{C_1 C_2}a}_{*}(\sqrt{C_2}U^*_{ g_1} M; \alpha) \\
&  \SH^a_{*}(U^*_{g_2} M; \alpha) \ar[ru]_{\quad (r_{\frac{1}{\sqrt{C_1C_2}}})^{-1} \circ \h_{inc}} &}\]
where $\iota_{a,\sqrt{C_1 C_2}a}$ denotes the persistence comparison map of the symplectic persistence module $\mathbb {SH}_{*, \alpha}(\sqrt{C_2}U_{g_1}^*M)$ and $r_{\frac{1}{\sqrt{C_1C_2}}}$ is the isomorphism given by (2) in Proposition \ref{fp-sh}. In terms of the logarithmic  version of symplectic persistence modules, setting $t = \ln a$ gives us 
\[ \xymatrix{
S^t_{*}(\sqrt{C_2}U^*_{ g_1} M;\alpha) \ar[rr]^-{\iota_{t,t+ \ln \sqrt{C_1} + \ln \sqrt{C_2}}} \ar[rd]_{\h_{inc}} && S^{t+ \ln \sqrt{C_1} + \ln \sqrt{C_2}}_{*}(\sqrt{C_2}U^*_{ g_1} M; \alpha). \\
& S^t_{*}(U^*_{g_2} M; \alpha) \ar[ru]_{\quad (r_{\frac{1}{\sqrt{C_1C_2}}})^{-1} \circ \h_{inc}} &}\]
Since $[x,y) \in  \mathbb{B}(\mathbb H_{*,\alpha}(M,g_1))$, $[\ln \sqrt{2x},\ln \sqrt{2y})\in \mathbb{B}_{*, \alpha}(U_{g_1}^*M)$ by Theorem \ref{sh-loop-thm}, where $\mathbb{B}_{*, \alpha}(U_{g_1}^*M)$ denotes the barcode of persistence module $\mathbb S_{*, \alpha}(U^*_{g_1} M)$. Now, Proposition \ref{fp-sh} implies that $[\ln \sqrt{2x} + \ln \sqrt{C_2}, \ln \sqrt{2y} + \ln \sqrt{C_2} ) \in \mathbb{B}_{*, \alpha}(\sqrt{C_2}U^*_{g_1} M).$ The length of this bar is $\ln \sqrt{\frac{y}{x}} $ and since $\ln \sqrt{\frac{y}{x}} > \ln \sqrt{C_1} + \ln \sqrt{C_2}$ by the assumption, we can apply Lemma \ref{bar} with $A=0$ and $B=\ln \sqrt{C_1}+ \ln \sqrt{C_2}.$ 

It follows that there exists a bar $(c,d] \in \mathbb{B}_{*, \alpha}(U^*_{g_2} M)$ such that 
$$ \ln \sqrt{2x} - \ln \sqrt{C_1} \leq c \leq \ln \sqrt{2x} + \ln \sqrt{C_2},$$
and
\[ \ln \sqrt{2y} - \ln \sqrt{C_1} \leq d \leq \ln \sqrt{2y} + \ln \sqrt{C_2}.\]
By Theorem \ref{sh-loop-thm}, bar $[\frac{1}{2}e^{2c},\frac{1}{2}e^{2d})\in \B(\mathbb H_{*, \alpha}(M,g_2))$ and its endpoints satisfy
$$ \frac{x}{C_1} \leq \frac{1}{2}e^{2c} \leq C_2 x ,\,\,\,\, \frac{y}{C_1} \leq \frac{1}{2}e^{2d} \leq C_2 y.$$
Endpoints of bars in $\mathbb B(\mathbb H_{*, \alpha}(M,g_2))$ correspond to the energies of closed geodesics and thus there exist closed geodesics $\gamma_1,\gamma_2$ of $g_2$ such that 
$$E_{g_2}(\gamma_1)=\frac{1}{2}e^{2c} ,\,\,\,\, E_{g_2}(\gamma_2)=\frac{1}{2}e^{2d}.$$
This finishes the proof. \end{proof}

\section{Appendix}
\subsection{Background on persistence modules and barcodes} \label{app-pm}
We review some rudiments of the theory of persistence modules and barcodes. For a detailed exposition see \cite{EH08,Ghr08,Car09,Wein11,BL14,Edel14,Oud15,CdSGO16,PRSZ17}.

\begin{dfn}
A persistence module $\mathbb V$ (parametrized by $\R$) over a field $\k$ consists of the following data,
\begin{equation} \label{pm}
 \mathbb V = \left\{\{V^t\}_{t \in \R}, \{\iota_{s,t}: V^s \to V^t\}_{s\leq t \in \R}\right\}
 \end{equation}
where each $V^t$ is a finite dimensional $\k$-vector space, $\iota_{s,t}$ are $\k$-linear maps such that $\iota_{t,t}=\mathds{1}_{V^t}$ for all $t\in \R$ and for all $s\leq t \leq r$ we have $$\iota_{s,r} = \iota_{t,r} \circ \iota_{s,t}.$$
Maps $\iota_{s,t}$ are called comparison maps of $\mathbb{V}.$
\end{dfn}

Typical examples of persistence modules are homologies of filtered chain complexes, $V^t=H(C^t_*),$ $t \in \R$, $\iota_{s,t}$ being induced by inclusions $C^s_*\hookrightarrow C^t_*$ for $s\leq t.$ These include (Morse) homologies of sublevel sets $\{f\leq t \}$ of a Morse function $f:M\rightarrow \R$ on a closed manifold $M$, as well as, Hamiltonian Floer homologies of the Hamiltonian Floer chain complex, filtered by symplectic action functional, on a symplectically aspherical manifold, see \cite{PS16}. In this paper we consider a persistence module coming from filtered symplectic homology, see Section \ref{section-SympHom}.

\begin{dfn} \label{p-mor} Given two persistence modules $\mathbb V$ and $\mathbb W$, we call a family of maps $\mathfrak f = \{f^t\}: \mathbb V \to \mathbb W$ a {\it persistence morphism} if each $f^t: V^t \to W^t$ is a $\k$-linear map and $\mathfrak f$ commutes with comparison maps, i.e., for each $s\leq t$,
\begin{equation} \label {pmm}
f^t \circ \iota^{\mathbb V}_{s,t} = \iota^{\mathbb W}_{s,t} \circ f^s.
\end{equation}
Moreover, we call $\mathbb V$ and $\mathbb W$ {\it isomorphic} if there exist persistence morphisms $\mathfrak f: \mathbb V \to \mathbb W$ and $\mathfrak g: \mathbb W \to \mathbb V$ such that for each $t$, $f^t \circ g^t = {\mathds 1}_{W_t}$ and $g^t \circ f^t = {\mathds 1}_{V_t}$.
\end{dfn}

Persistence modules and their morphisms form an Abelian category. Kernels and images are taken pointwise, meaning for each $t\in \R$, while the direct sum of $\mathbb{V}$ and $\mathbb{W}$ is given by
$$((\mathbb{V}\oplus \mathbb{W})^t, \iota_{s,t}^{\mathbb{V}\oplus \mathbb{W}})= (V^t \oplus W^t, \iota_{s,t}^{\mathbb{V}} \oplus \iota_{s,t}^{\mathbb{W}}) ~ \textrm{ for all } s\leq t. $$

An important abstract example is the {\it interval persistence module $\k_I$} associated to a given interval $I \subset \R$ which is defined by setting $\k^t_I = \k$ if $t \in I$ and $0$ otherwise, with the comparison map $\iota_{s,t} = {\mathds 1}_{\k}$ when both $s,t \in I$ and $\iota_{s,t} =0$ otherwise. Interval persistence modules are the building blocks of all persistence modules, i.e. the following theorem holds.

\begin{theorem} \label{thm-dc} (Stucture theorem \cite{CZ05})
For a persistence module $\mathbb V$, there exists a collection of intervals $\{{I_j} \}_{j\in \mathcal{J}}$ with repetitions, unique up to reordering, such that
\[ \mathbb V = \bigoplus_{j\in \mathcal{J} } \k_{I_j}.\]
This collection of intervals is called the barcode of $\mathbb V$ and is denoted by $\mathbb B(\mathbb V),$ while the intervals themselves are called bars. Since $V^t$ are finite dimensional for all $t\in \R,$ each bar appears finitely many times in $\mathbb B(\mathbb V).$ The number of times a bar $I$ appears in $\mathbb B(\mathbb V)$ is called the multiplicity of $I.$
\end{theorem}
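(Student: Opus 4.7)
The plan is to reduce the structure theorem to two essentially independent statements: (i) every pointwise finite-dimensional persistence module admits a decomposition as a direct sum of indecomposable submodules, and (ii) every indecomposable pointwise finite-dimensional persistence module is isomorphic to an interval module $\k_I$ for some interval $I \subset \R$. Once both are in place, uniqueness of the collection $\{I_j\}_{j \in \mathcal{J}}$ up to reordering would follow from the Krull-Schmidt-Remak-Azumaya theorem, since interval modules $\k_I$ have local endomorphism rings (every nonzero endomorphism of $\k_I$ is a nonzero scalar, hence invertible).

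For step (ii), let $\mathbb W = \{W^t, \iota_{s,t}\}$ be indecomposable and pointwise finite-dimensional, and set $I = \{t \in \R : W^t \neq 0\}$. I would first show that $I$ is an interval: if $I$ had a gap, i.e.\ there were $a < c < b$ with $W^a, W^b \neq 0$ but $W^c = 0$, then for $s \leq a < b \leq t$ the comparison map $\iota_{s,t}$ would factor through $W^c = 0$ and hence vanish, which would yield a nontrivial splitting of $\mathbb W$ as the sum of its restrictions to $(-\infty, c]$ and $[c, +\infty)$, contradicting indecomposability. To then show $\dim W^t = 1$ on $I$ and that the comparison maps between nonzero fibres are isomorphisms, I would pick $t_0 \in I$ and apply Fitting's lemma to various endomorphisms of $W^{t_0}$ built from the comparison maps together with carefully chosen projectors; any nontrivial Fitting decomposition that propagated compatibly through all $\iota_{s,t}$ would again split $\mathbb W$, forcing one-dimensionality and giving $\mathbb W \cong \k_I$.

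For step (i), together with the uniqueness portion of the theorem, Azumaya's theorem reduces everything to proving that every indecomposable pointwise finite-dimensional persistence module has a local endomorphism ring. This is the main obstacle: in the continuous-parameter setting one must control Fitting decompositions on every $W^t$ in a manner compatible with all comparison maps $\iota_{s,t}$ simultaneously, and this compatibility across uncountably many fibres is the technical heart of Crawley-Boevey's proof. Fortunately, for the persistence modules that actually appear in this paper (filtered symplectic homologies, filtered loop-space homologies, etc.) the modules are \emph{tame} in the sense that below any bounded threshold the filtration changes only at finitely many values, each contributing finite-dimensional data. For such tame modules I would truncate to a finite sub-poset of change values, so that the persistence module becomes a representation of the linear quiver $A_n$ with all arrows pointing in one direction, apply Gabriel's theorem to decompose this finite representation into interval modules in a unique way, and then pass to the direct limit over finer and finer truncations to obtain the decomposition over all of $\R$.
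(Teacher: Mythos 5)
The paper does not actually prove this theorem; it is imported as a black box from the literature (the citation \cite{CZ05} covers the finitely-indexed/discrete case, and the general $\R$-parametrized pointwise finite-dimensional case is due to Crawley-Boevey). So your proposal is being measured against the standard literature proof, and in outline it matches it exactly: decompose into indecomposables, show indecomposables are interval modules, and get uniqueness from Krull--Remak--Schmidt--Azumaya using $\mathrm{End}(\k_I)\cong\k$. The pieces you do carry out are fine: the gap-in-the-support argument correctly splits $\mathbb W$ into its restrictions to the two sides of a zero fibre, and the Azumaya step is legitimate once the decomposition into interval modules exists.

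The genuine gap is the one you name yourself and then do not close: proving that indecomposables are intervals and that indecomposables have local endomorphism rings (equivalently, propagating Fitting decompositions compatibly through all comparison maps $\iota_{s,t}$ over an uncountable parameter set) \emph{is} the content of the theorem for general pointwise finite-dimensional modules, so deferring it to ``the technical heart of Crawley-Boevey's proof'' leaves the statement unproved as stated -- note that the theorem in the paper is asserted for every persistence module in the sense of the appendix, whose spectrum need not be discrete, so the tameness hypothesis you fall back on is an extra assumption. Moreover, even in the tame case your reduction to Gabriel's theorem is incomplete in two places. First, sampling only the change values $t_1<\dots<t_n$ loses information about what happens \emph{at} a change value versus just before or after it; you must include sample points both at each $t_i$ and inside each gap $(t_i,t_{i+1})$, and this is exactly what determines whether each limiting bar is open or closed at each endpoint -- something your ``direct limit over finer truncations'' cannot see otherwise. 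Second, the uniqueness of the $A_n$-decomposition at each truncation level gives consistent \emph{multisets} of intervals, but an actual direct-sum decomposition of the $\R$-indexed module requires choosing splittings at each level that are compatible under refinement, which is an additional inverse-limit (or Zorn-type) argument, not a formal passage to the limit. For the specific modules appearing in this paper these issues are manageable because their spectra are discrete and the left-closed/right-open convention is imposed by construction, but as a proof of the theorem as stated the argument is not complete.
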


\begin{remark}
Theorem \ref{thm-dc} for homology of filtered chain complexes was proven in \cite{Bar94}, using different language, since persistence modules and barcodes were not introduced at the time.
\end{remark}

The set of all endpoints of all bars in $\mathbb B(\mathbb V)$ is called the \textit{spectrum} $\mathbb{V}.$ It follows from the definitions that if $[s,t]$ contains no points of the spectrum, then $\iota_{s,t}$ is an isomorphism. In other words, $V^t$ changes when $t$ passes through a point in the spectrum. In the case of the homology of sublevel sets $V^t=H_*(\{ f\leq t\}),$ of a Morse function $f,$ the spectrum equals the set of all critical values of $f.$ Similarly, in the case of filtered Hamiltonian Floer homology on a symplectically aspherical manifold, points in the spectrum are actions of closed Hamiltonian orbits. The spectrum of a persistence module which we use in this paper, the one coming from filtered symplectic homology, consists of periods of closed Reeb orbits on the boundary of the domain, see Remark \ref{endpoints} for the case of unit codisc bundles.

\begin{remark}
All the persistence modules considered in this paper are defined using conventions which guarantee that all the intervals in the corresponding barcodes have left endpoints closed and right endpoints open. In other words, all the bars are either equal to $(-\infty, +\infty)$ or of the form $[a,b)$ for $a<b\leq +\infty $ with finite $a.$ Moreover, we sometimes use the set of parameters $t\in \R^+=(0,+\infty)$ instead of $t\in \R$ in the definition of our persistence modules. This difference is non-essential because the two sets of parameters can be related by an order-preserving bijection, for example $\ln : \R^+\rightarrow \R.$
\end{remark}

Theorem \ref{thm-dc} translates an algebraic structure into combinatorial information via correspondence $\mathbb V \leftrightarrow \mathbb B(\mathbb V).$ Our goal is to use this correspondence to quantitatively compare different persistence modules by comparing their corresponding barcodes. In order to achieve this, we need to introduce the following two distances: {\it interleaving distance $d_{inter}$} between persistence modules and {\it bottleneck distance $d_{bottle}$} between multisets of intervals.

\begin{dfn} \label{int-bottle} We say that two persistence modules $\mathbb V$ and $\mathbb W$ are {\it $\delta$-interleaved} if there exist persistence morphisms $\Phi$ and $\Psi$,
\[ \xymatrix{
\mathbb V \ar[r]^{\Phi} & \mathbb W[\delta] \ar[r]^{\Psi[\delta]} & \mathbb V[2\delta]} \,\,\,\,\,\,\mbox{s.t.}\,\,\,\, \,\,\, \Psi[\delta] \circ \Phi = \{\iota^{\mathbb V}_{t ,t+ 2\delta}\}  \]
and
\[ \xymatrix{
\mathbb W \ar[r]^{\Psi} & \mathbb V[\delta] \ar[r]^{\Phi[\delta]} & \mathbb W[2\delta]} \,\,\,\,\,\,\mbox{s.t.}\,\,\,\, \,\,\, \Phi[\delta] \circ \Psi = \{\iota^{\mathbb W}_{t ,t+ 2\delta}\}. \]
Here $\mathbb V[\delta]$ and $\Phi[\delta]$ are $\delta$-shifts of $\mathbb V$ and $\Phi,$ given by $\mathbb V[\delta]^t = V^{t +\delta},$ $\iota^{\mathbb V[\delta]}_{s,t}=\iota^{\mathbb V}_{s+\delta,t +\delta}$ and $(\Phi[\delta])^t=\Phi^{t+\delta},$ and the same holds for $\mathbb W[\delta]$, $\Psi[\delta].$ Now one defines,  
\[ d_{inter}(\mathbb V, \mathbb W) := \inf\{\delta \geq 0 \,| \, \mbox{$\mathbb V$ and $\mathbb W$ are $\delta$-interleaved}\}.\]
\end{dfn}

On the other hand we define $d_{bottle}$ in the following way.

\begin{dfn}
Let $\mathbb B_1$ and $\mathbb B_2$ be two multisets of intervals with finite multiplicities. We call $\sigma: \B_1 \to \B_2$ an {\it $\varepsilon$-matching} if there exist subsets $(\B_1)_{short} \subset \B_1$, $(\B_2)_{short} \subset \B_2$, consisting of intervals with lengths not greater than $2 \varepsilon$, such that
\[ \sigma: \B_1 \backslash (\B_1)_{short}  \to \B_2 \backslash (\B_2)_{short} \,\,\,\,\mbox{is a bijection}\]
and for any two intervals $[a, b),[c,d),$ for which $\sigma([a,b))=[c, d),$ it holds $|a-c| \leq \varepsilon$ and $|b-d| \leq \varepsilon$. We say that bars in $(\B_1)_{short}$ and $(\B_2)_{short}$ are {\it erased}, while bars in $\B_1 \backslash (\B_1)_{short}$ and $\B_2 \backslash (\B_2)_{short}$ are {\it matched}. The bottleneck distance is now given by
\[ d_{bottle}(\B_1, \B_2) := \inf\{\varepsilon \geq 0 \,| \, \exists\, \mbox{$\varepsilon$-matching between $\B_1$ and $\B_2$}\}. \]

\end{dfn}

\begin{remark}
In the above definition of $d_{bottle}$ we assumed that matched intervals $[a,b)$ and $[c,d)$ have left endpoints closed and right endpoints open in accordance to the conventions we use in the paper. The same definition applies if we make no assumption on the endpoints of bars and Theorem \ref{iso-thm} below remains true. 
\end{remark}

The following theorem is one of the most important results in the theory of persistence modules.

\begin{theorem} \label{iso-thm} (Isometry Theorem \cite{Chazal07, Chazal09, Les15})
Let $\mathbb V$ and $\mathbb W$ be two persistence modules and denote their corresponding barcodes by $\B(\mathbb V)$ and $\B(\mathbb W).$ Then
\[ d_{inter}(\mathbb V, \mathbb W) = d_{bottle}(\B(\mathbb V), \B(\mathbb W)). \]
\end{theorem}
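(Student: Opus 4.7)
The plan is to establish the identity by proving two inequalities separately, which is the standard approach for isometry theorems of this type. The easier inequality $d_{inter}(\mathbb V, \mathbb W) \leq d_{bottle}(\B(\mathbb V), \B(\mathbb W))$ is handled by constructing interleaving morphisms from a matching bar-by-bar using the Structure Theorem (Theorem \ref{thm-dc}); the harder inequality $d_{bottle}(\B(\mathbb V), \B(\mathbb W)) \leq d_{inter}(\mathbb V, \mathbb W)$ requires extracting a combinatorial matching from algebraic data, which is the main obstacle.

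For the first inequality, suppose $\sigma$ is a $\delta$-matching between $\B(\mathbb V)$ and $\B(\mathbb W)$. Using Theorem \ref{thm-dc}, write $\mathbb V = \bigoplus_{I} \k_I$ and $\mathbb W = \bigoplus_{J} \k_J$. I would define $\Phi: \mathbb V \to \mathbb W[\delta]$ as the direct sum of elementary maps $\varphi_I: \k_I \to \k_{\sigma(I)}[\delta]$, where for a matched pair $I = [a,b)$ and $\sigma(I) = [c,d)$ one sets $\varphi_I$ to be the identity on the overlap $I \cap (\sigma(I) - \delta) = [\max(a, c-\delta), \min(b, d-\delta))$ and zero elsewhere; for erased intervals $I \in (\B_1)_{short}$, set $\varphi_I = 0$. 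Define $\Psi: \mathbb W \to \mathbb V[\delta]$ symmetrically. The hypothesis $|a-c|, |b-d| \leq \delta$ makes each $\varphi_I$ a well-defined persistence morphism, and one checks directly that $\Psi[\delta] \circ \Phi$ equals the $2\delta$-shift map on the long bars and is automatically zero on erased bars (since a bar of length $\leq 2\delta$ is killed by the $2\delta$-shift comparison map). A parallel computation handles $\Phi[\delta] \circ \Psi$.

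For the second inequality, the strategy is the Bauer–Lesnick induced matching. The key technical device is the \emph{persistent rank function} $r_{\mathbb V}(s,t) := \dim \mathrm{Im}(\iota^{\mathbb V}_{s,t})$ for $s \leq t$, from which the barcode can be recovered via the inclusion-exclusion formula for multiplicities of bars in rectangles: for $s < s' \leq t < t'$, the number (with multiplicity) of bars $[a,b) \in \B(\mathbb V)$ with $a \in (s,s']$ and $b \in (t, t']$ equals $r_{\mathbb V}(s',t) - r_{\mathbb V}(s,t) - r_{\mathbb V}(s',t') + r_{\mathbb V}(s,t')$. A $\delta$-interleaving $(\Phi, \Psi)$ yields the inequality $r_{\mathbb V}(s+\delta, t-\delta) \leq r_{\mathbb W}(s, t) \leq r_{\mathbb V}(s-\delta, t+\delta)$ whenever $s + \delta \leq t - \delta$, because $\iota^{\mathbb V}_{s+\delta, t-\delta}$ factors through $\Psi^{t-\delta} \circ \Phi^{s+\delta}$ via $\mathbb W$.

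The main obstacle, and the core of the Bauer–Lesnick argument, is upgrading these rank inequalities into an actual bijection between long bars. I would proceed by defining, for each bar $I = [a,b) \in \B(\mathbb V)$ with $b - a > 2\delta$, the set of ``candidate'' partners in $\B(\mathbb W)$ as those bars $[c,d)$ with $c \in [a-\delta, a+\delta]$ and $d \in [b-\delta, b+\delta]$, then showing via the rank inequalities and the inclusion-exclusion formula that for each threshold $\lambda > 2\delta$ one obtains an injection from the multiset of bars of length $> \lambda$ in $\B(\mathbb V)$ into the multiset of bars of length $> \lambda - 2\delta$ in $\B(\mathbb W)$. By symmetry one obtains injections in both directions; combining them (via a Cantor–Bernstein style argument in the multiset setting, or by taking a direct limit as $\lambda \to 2\delta^+$) produces a $\delta$-matching in the sense of the paper. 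The subtle point requiring care is that pointwise finite-dimensionality, rather than finiteness of the barcode, is enough for this construction, since for each $\lambda > 2\delta$ only finitely many bars in either barcode cross any given parameter $t$ with length $> \lambda$.
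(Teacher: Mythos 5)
First, a point of comparison: the paper does not prove this theorem at all --- it is quoted as a known result with citations to Cohen-Steiner--Edelsbrunner--Harer, Chazal et al.\ and Lesnick --- so you are reproving a deep background theorem rather than reproducing an argument from the paper. Your architecture (easy direction bar-by-bar, hard direction via induced matchings) matches the standard literature proofs, and your treatment of the easy inequality $d_{inter}\leq d_{bottle}$ is correct: for matched long bars the conditions $|a-c|\leq\delta$, $|b-d|\leq\delta$, $b-a>2\delta$ give exactly $c-\delta\leq a\leq d-\delta\leq b$, so by Lemma \ref{intervals} your $\varphi_I$ is a genuine morphism, the diagonal composites reproduce the $2\delta$-shifts, and erased or short bars contribute zero on both sides.

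The hard direction, however, has a genuine gap (plus a sign error). The sign error: your rank inequalities are reversed. From $\iota^{\mathbb V}_{s,t}=\Psi^{t-\delta}\circ\iota^{\mathbb W}_{s+\delta,t-\delta}\circ\Phi^{s}$ one gets $r_{\mathbb V}(s,t)\leq r_{\mathbb W}(s+\delta,t-\delta)$, so the correct chain is $r_{\mathbb V}(s-\delta,t+\delta)\leq r_{\mathbb W}(s,t)\leq r_{\mathbb V}(s+\delta,t-\delta)$; the inequality as you wrote it already fails for $\mathbb V=\k_{[0,30)}$, $\mathbb W=\k_{[5,25)}$, $\delta=5$, $(s,t)=(0,29)$, where your left-hand side is $1$ and the middle term is $0$. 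The genuine gap is the assertion that ``the rank inequalities and the inclusion-exclusion formula'' yield an injection from the bars of $\B(\mathbb V)$ of length $>\lambda$ into $\delta$-close bars of $\B(\mathbb W)$. This is the crux of the entire theorem and is not justified: rectangle-by-rectangle counting inequalities do not produce an injection. To extract one you would need Hall's theorem for the bipartite graph of $\delta$-close bars, and verifying Hall's condition requires controlling the number of bars in arbitrary finite unions of overlapping rectangles, which does not follow from the single-rectangle identities. This is exactly why the published proofs either interpolate between $\mathbb V$ and $\mathbb W$ and reduce to a Box Lemma for small $\delta$, or (Bauer--Lesnick) construct the injection algebraically from the epi-mono factorization $\mathbb V\twoheadrightarrow\im\Phi\hookrightarrow\mathbb W[\delta]$ together with the canonical matchings induced by monomorphisms and epimorphisms of interval-decomposable modules; you invoke the Bauer--Lesnick name but not their mechanism. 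I will add that your final combination step is actually sound: an alternating-path Cantor--Bernstein argument does turn two endpoint-controlled injections into a $\delta$-matching, because every matched pair in the resulting partial bijection is an edge of one of the two given injections and hence moves endpoints by at most $\delta$. But without a construction of those injections, the inequality $d_{bottle}\leq d_{inter}$ remains unproved.
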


\subsection{Precise parameterizations}\label{precise-par}
In this subsection, we will give precise parameterizations of bulked spheres and multi-bulked surfaces announced in Sections \ref{Proofs} and \ref{sec-bulk}. The metrics which we are going to define will satisfy all the properties that we used in these sections, namely we will prove (1) and (3) in Proposition \ref{bsm-p}, (1) and (4) in Proposition \ref{mb-p} as well as Proposition \ref{Prop-upper}.

\subsubsection{Parameterizations of bulked spheres} Let $S$ be a union of two spheres with radius $A = \sqrt{\frac{1}{8\pi}}$ touching at point. The area of $S$ is equal to 1 and $S$ can be obtained as a (singular) surface of revolution. The graph of the profile function $r$ which defines $S$ is the union of two semicircles of radius $A$ centered at $-A$ and $A,$ see Figure \ref{kisssphere}. 
\begin{figure}[ht]
\includegraphics[scale=0.6]{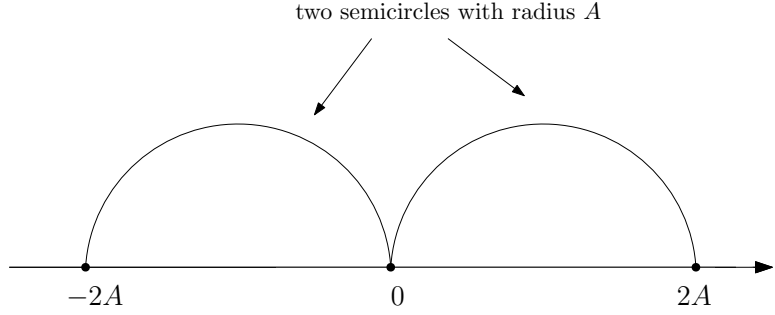}
\caption{Profile function $r$}
\label{kisssphere}
\end{figure}

Let $n \in \N$ and $B = 10^{-n} A.$ We consider $n$ to be a free parameter which will eventually be chosen large. The profile functions $r_x$, $x\in [0,\infty),$ which define bulked sphere metrics $g_x,$ will all be even on $[-2A, 2A]$ and they will coincide with $r$ on $[-2A,-B]\cup [B,2A].$ On $[-B,B],$ each $r_x$ will interpolate between two semicircles and will have a local minimum at $0.$ Let
\begin{equation} \label{delta-2}
\delta_0 = \sqrt{\frac{\pi}{8}} \cdot \frac{1}{2 \cdot 10^n + 1} \cdot \frac{3 - 10^{-2n}}{\sqrt{2 \cdot 10^n - 1}}. 
\end{equation}
Since $\delta_0 = O(10^{\frac{-3n}{2}}),$ by picking large enough $n$, we can make $\delta_0$ arbitrarily small. The following proposition holds.

\begin{prop} \label{geo-prop} Given any sufficiently small $\varepsilon>0$, for all sufficiently large $n$ and $A,B,\delta_0$ as above, there exists a family of profile functions $r_x,$ $x\in [0,\infty),$ each of which defines a bulked sphere metric $g_x$ such that
\begin{itemize}
    \item[(1)] $|\Vol_{g_x}(S^2)-1|\leq \varepsilon$ and ${\rm diam}(S^2,g_x)\leq 100 \sqrt{1- \varepsilon}.$
    \item[(2)] $r_x$ coinicide outside $[-B,B]$ for all $x\in [0,\infty).$ 
    \item[(3)] $r_x(0)=\frac{\delta_0}{2\pi}e^{-x}.$
    \item[(4)] For any $x, y \in [0, \infty)$, \[ \max_{l \in [-2A, 2A]} \max\left\{ \frac{r_x(l)}{r_y(l)}, \frac{r_y(l)}{r_x(l)}, \frac{r'_x(l)}{r'_y(l)}, \frac{r'_y(l)}{r'_x(l)}\right\} = e^{|x-y|}. \]
\end{itemize}
In (4), as before, we use convention that $\frac{0}{0}=1.$ 
\end{prop}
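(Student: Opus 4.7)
The plan is to interpolate between a uniformly rescaled fixed neck shape, active on a central sub-interval $[-B/2,B/2]$, and the kissing-sphere profile $r$, active outside $[-B,B]$, with a smooth gluing on the two transition annuli $[\pm B/2,\pm B]$. On the central sub-interval the family will be literally $r_x(l)=e^{-x}\rho_0(l)$ for a fixed $\rho_0$, which automatically makes all four ratios in (4) equal to $e^{|x-y|}$ there; the technical task is to verify that on the transition annuli these ratios remain strictly below $e^{|x-y|}$, so that the global maximum is realised and equals $e^{|x-y|}$.

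Fix once and for all a smooth even profile $\rho_0:[-B,B]\to(0,\infty)$ with $\rho_0(0)=\delta_0/(2\pi)$, non-degenerate local minimum $\rho_0''(0)>0$, and $\rho_0\equiv\rho_0(B/2)$ on $[B/2,B]$ (and symmetrically), together with a smooth even cutoff $\xi:[-B,B]\to[0,1]$ equal to $1$ on $[-B/2,B/2]$ and to $0$ near $\pm B$, flat to infinite order at all four points $\pm B/2,\pm B$. Set
\[
r_x(l)\;=\;r(l)\bigl(1-\xi(l)\bigr)\,+\,e^{-x}\rho_0(l)\,\xi(l)\qquad\text{for } l\in[-B,B],
\]
and $r_x(l)=r(l)$ for $l\in[-2A,-B]\cup[B,2A]$. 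The flat gluing of $\xi$ and $\rho_0$ makes each $r_x$ globally smooth on $[-2A,2A]$ with $r_x''(0)>0$; properties (2) and (3) are immediate.

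The heart of the argument is (4). On $[-B/2,B/2]$ the identity $r_x=e^{-x}\rho_0$ gives $r_x/r_y=r_x'/r_y'=e^{y-x}$, so all four quantities in the $\max$ equal $e^{|x-y|}$; this realises the claimed equality. On the transition annulus $[B/2,B]$ write $r_x=A(l)+B(l)e^{-x}$ with $A(l)=r(l)(1-\xi(l))\ge 0$ and $B(l)=\rho_0(B/2)\,\xi(l)\ge 0$; then the elementary bound $(A+Be^{-y})/(A+Be^{-x})\le e^{x-y}$, valid for $A,B\ge 0$ and $x\ge y$, handles the value-ratio estimates. For the derivative ratios the choice $\rho_0\equiv\rho_0(B/2)$ on $[B/2,B]$ yields $B'(l)=\rho_0(B/2)\,\xi'(l)$ and $A'(l)\ge r(l)|\xi'(l)|$, so that $\tau(l):=|B'(l)|/A'(l)\le\rho_0(B/2)/r(B/2)$, which is $O(10^{-n})$ by direct estimation of $\sqrt{AB}$. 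A short computation then gives, for $x\ge y$,
\[
\frac{r_x'}{r_y'}=\frac{1-\tau(l)e^{-x}}{1-\tau(l)e^{-y}}\le 1+\frac{\tau(l)\,|x-y|}{1-\tau(l)}\le e^{2\tau(l)\,|x-y|}\le e^{|x-y|}
\]
(using $|e^{-y}-e^{-x}|\le|x-y|$ and $2\tau\le 1$), and the symmetric bound for $r_y'/r_x'$ follows analogously. Hence in the transition annulus all four ratios are strictly below $e^{|x-y|}$, and the max is attained on $[-B/2,B/2]$. This derivative-ratio estimate is the main obstacle of the proof; it is passed only because $B=10^{-n}A$ is exponentially smaller than $\delta_0$, and the specific formula (\ref{delta-2}) for $\delta_0$ is arranged precisely so that this asymptotic domination is uniform in $x\in[0,\infty)$.

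Finally, (1) is a volume and diameter calculation. The two tangent spheres of radius $A=1/\sqrt{8\pi}$ have total area $8\pi A^2=1$; changing $r$ to $r_x$ modifies the area element $2\pi r_x\sqrt{1+(r_x')^2}\,dl$ only on $[-B,B]$, and since the integrand is bounded by a constant uniformly in $x$, this changes the area by $O(B)=O(10^{-n})<\varepsilon$ for $n$ large. The surface is contained in a Euclidean ball of radius $\le 3A$ in $\R^3$, so its intrinsic diameter is $O(1)$, far below $100\sqrt{1-\varepsilon}$.
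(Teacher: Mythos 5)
Your construction is a genuinely different route from the paper's: you interpolate with a cutoff between the fixed kissing-sphere profile and a uniformly rescaled neck $e^{-x}\rho_0$, so that equality in (4) is realised for free on $[-B/2,B/2]$, whereas the paper prescribes $r_x$ on $[0,B']$ as an explicit parabola through a common point $(B',h)$ and then prescribes $r_x'$ on $[B',B]$ directly as a \emph{positive} piecewise-linear interpolation of slopes. Your ratio estimates on the transition annulus (the mediant inequality for the values, and the $\tau=O(10^{-n})$ bound for the derivatives) are essentially sound. However, there is a genuine gap: your $r_x$ is not a bulked-sphere profile in the sense of Definition \ref{dfn-bsm}, because it acquires extra critical points at $l=\pm B/2$. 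Since $\rho_0$ is smooth and constant on $[B/2,B]$, necessarily $\rho_0'(B/2)=0$, and since $\xi(B/2)=1$ and $\xi'(B/2)=0$, one computes
\[
r_x'(B/2)=r'(B/2)\bigl(1-\xi(B/2)\bigr)-r(B/2)\,\xi'(B/2)+e^{-x}\bigl(\rho_0'(B/2)\,\xi(B/2)+\rho_0(B/2)\,\xi'(B/2)\bigr)=0 .
\]

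This is not a cosmetic violation. A parallel circle of a surface of revolution is a closed geodesic exactly where $r'=0$, so your surfaces carry additional closed geodesics at $l=\pm B/2$ of length $2\pi e^{-x}\rho_0(B/2)=O(\delta_0 e^{-x})$ --- short geodesics contained in the neck that are not iterates of $\gamma_0$. They contradict Lemma \ref{geodesics1} and, for $x$ not too small, item (2) of Proposition \ref{bsm-p}; they are moreover degenerate (one checks $r_x''(\pm B/2)=0$, so the curvature vanishes along these circles and constant orthogonal Jacobi fields are periodic), which wrecks the Morse--Bott barcode computation that Proposition \ref{geo-prop} is meant to feed. The repair is to keep $r_x'>0$ on $(0,A)$: either take $\rho_0$ strictly increasing up to $B$ and redo your annulus estimate with the extra term $e^{-x}\rho_0'\xi$ (which has the opposite sign to $\rho_0\xi'$ and must be handled separately), or prescribe $r_x'$ directly as the paper does. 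A minor further point: containment in a small Euclidean ball does not bound the intrinsic diameter; the correct (and easy) argument bounds the meridian length $\int\sqrt{1+(r_x')^2}\,dl=O(A)$.
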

\begin{proof}
We ask for $r_x$ to be even on $[-2A, 2A]$, and hence only give their definitions on $[0,2A]$. Let $B' = 10^{-2n}A$, $h = \frac{3\delta_0}{2\pi}$, $h_x = \frac{\delta_0}{2 \pi} e^{-x}$, and define
\[ r_x(l) = \frac{h - h_x}{(B')^2} l^2 + h_x, \,\,\,\,\mbox{for}\,\,\,\,l \in [0, B']. \]
It immediatelly follows that $r_x(0)=h_x=\frac{\delta_0}{2 \pi} e^{-x}$ and hence (3) is satisfied. On the other hand, a simple computation shows that for all $x,y\in [0,\infty)$
\begin{equation}\label{first_part}
\max_{l \in [0, B']} \max\left\{ \frac{r_x(l)}{r_y(l)}, \frac{r_y(l)}{r_x(l)}, \frac{r'_x(l)}{r'_y(l)}, \frac{r'_y(l)}{r'_x(l)}\right\} = e^{|x-y|}.
\end{equation}
Notice also that $r_x(B')=h$ for all $x\in [0,\infty)$, i.e., the graphs of all $r_x$ meet at the point $(B',h),$ see Figure \ref{abpic} (a).
\begin{figure}[ht]
\includegraphics[scale=0.5]{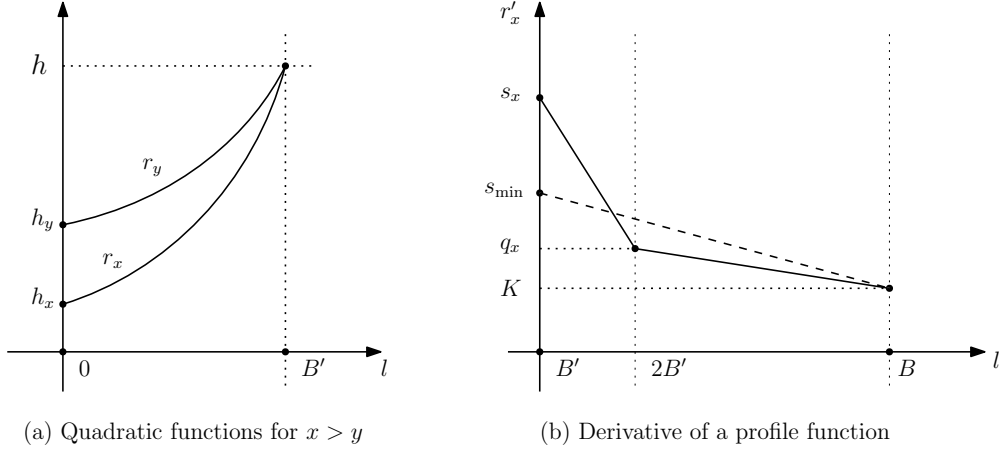}
\caption{Parameterization of $r_x$ in the region $[0, B]$}
\label{abpic}
\end{figure}

Graphs of profile functions $r_x$ will connect $(B',h)$ to $(B,\sqrt{A^2-(A-B)^2})$ on a semicircle. To this end, let 
\[ s_x := r'_x(B') = \frac{2(h-h_x)}{B'} \,\,\,\,\mbox{and}\,\,\,\, K = \frac{A - B}{\sqrt{2AB - B^2}}.\]
For $x\in [0,\infty)$ we have $s_x\in [\frac{2\delta_0}{\pi B'}, \frac{3\delta_0}{\pi B'})$ and we denote $s_{\min} = s_{0} = \frac{2\delta_0}{\pi B'}.$ On the other hand $K$ is the derivative at $B$ of the function $y=\sqrt{A^2-(A-l)^2},$ which defines a semicircle.

We now define $r_x$ by giving its derivative on $[B',B].$ Let
\[q_x=\frac{s_{\min}(B - B') + KB'-s_xB' }{B-B'}.\]
On $[B',2B']$, the derivative $r'_x$ is by definition equal to a linear function whose graph connects $(B',s_x)$ and $(2B',q_x).$ On $[2B',B]$, $r'_x$ is equal to another linear function, whose graph connects $(2B',q_x)$ and $(B,K).$ It is easy to check that $r'_0$ is linear on $[B',B],$ i.e. $(B',s_{\min}),$ $(2B',q_0)$ and $(B,K)$ are on the same line, as well as that $K<q_x\leq q_0$ for all $x\in [0,\infty)$, see Figure \ref{abpic} (b). Explicitly $r'_x$ is given by
\begin{equation} \label{exp-slope}
\displaystyle r'_x(l) = \left\{ \begin{array}{cc} \frac{q_x - s_x}{B'} (l - B') + s_x & l \in [B', 2B'] \\ \frac{K-q_x}{B-2B'}(l-B) + K & l \in [2B', B] \end{array} \right..
\end{equation}
Straightforward calculation shows that
$$r_x(B)=h+\int_{B'}^B r'_x(l) ~d l= \sqrt{A^2-(A-B)^2},$$
and thus by setting $r_x(l)=\sqrt{A^2-(A-l)^2}$ for $l\in [B,2A],$ we obtain a $C^1$-smooth function $r_x:[0,2A]\rightarrow [0,\infty).$ Another straightforward calculation shows that for $x,y\in [0,\infty)$
\begin{equation}\label{second_part}
\max_{l \in [B', 2A]} \max\left\{ \frac{r_x(l)}{r_y(l)}, \frac{r_y(l)}{r_x(l)}, \frac{r'_x(l)}{r'_y(l)}, \frac{r'_y(l)}{r'_x(l)}\right\} \leq e^{|x-y|}. 
\end{equation}
Moreover, by making a $C^1$-small perturbation near the points $B',2B'$ and $B$, we can make sure that $r_x$ are all smooth while (\ref{second_part}) remains valid. Finally, we extend $r_x$ to $[-2A,2A]$ by setting $r_x(l)=r_x(-l).$ It is clear from the construction that property (2) holds. 

Combining (\ref{first_part}) and (\ref{second_part}) proves property (4). By taking large enough $n$ we can guarantee that property (1) holds, which finishes the proof.
\end{proof}

We can now give a proof of (1) and (3) in Proposition \ref{bsm-p}. 
\begin{proof} (Proof of (1) and (3) in Proposition \ref{bsm-p}) 
Denote by $g_x$ the metric induced from profile function $r_x$ given by Proposition \ref{geo-prop}. By (2) in Proposition \ref{geo-prop} we get that $L_{g_x}(\gamma_0)=2\pi r_x(0)=\delta_0 e^{-x}.$ Since $\gamma_0$ has constant speed
$$E_{g_x}(\gamma_0) = \frac{L_{g_x}^2(\gamma_0)}{2} =\frac{\delta_0^2}{2} e^{-2x},$$
which proves (1) in Proposition \ref{bsm-p}. 

To prove (3) in Proposition \ref{bsm-p}, let $R_x= \frac{1}{\sqrt{{\rm Vol}_{g_x} S^2}}.$ Now ${\rm Vol}_{R_x \cdot g_x} S^2 =1$ and from (1) in Proposition \ref{geo-prop} it follows that
\[ {\rm diam}(S^2, R_x \cdot g_x) = R_x \cdot {\rm diam}(S^2, g_x) \leq \sqrt{\frac{1}{1-\varepsilon}} \cdot (100 \sqrt{ 1- \varepsilon}) \leq 100, \]
as well as that
\[ R_x^2 (1- \varepsilon) \leq 1 \leq R_x^2 (1+ \varepsilon). \]

Thus $R_x \cdot g_x\in \bar{\mathcal{G}}_{S^2}$, and taking small enough $\varepsilon$ finishes the proof.
\end{proof}

\subsubsection{Parameterizations of multi-bulked surfaces} Recall that a cylindrical segment is a surface of revolution with constant profile function $r: I \to [0,\infty)$ on an open interval $I.$ Let $\Sigma$ be a closed, orientable surface of genus at least one and fix $N\in \N.$ Denote by $g_{std}$ the standard Riemannian metric on $\R^3$ and let $0<\tau <<1$ be a small number. We fix an embedding $\phi:\Sigma \rightarrow \R^3$ such that $\Vol_{\phi^*g_{std}}(\Sigma)=1$, ${\rm diam}(\Sigma,\phi^*g_{std}) \leq 99$ and $\im \phi$ contains a cylindrical segment $C$ given by a profile function $r_{seg}:(L_{-},L_{+})\rightarrow \R,$ $r_{seq}\equiv \tau$ with $L_{+}-L_{-}=2\tau.$ 

We construct our $N$-bulked surface by replacing $C$ with an open chain of $N-1$ spheres denoted by $O(N).$ Let $A_N=\frac{\tau}{N}$ and take a profile function $r:(L_{-},L_{+})\rightarrow [0,\infty)$ whose graph consists of $N-1$ semicircles of radius $A_N$ and two connecting ends. More precisely, on $[L_{-},L_{-}+A_N]$, $r$ is strictly decreasing, and moreover on $[L_{-}+\frac{A_N}{2},L_{-}+A_N]$ its graph coincides with a part of a semicircle with radius $A_N$ centered at $L_{-}.$ Similarly, $r$ is strictly increasing on $[L_{+}-A_N,L_{+}]$ and on $[L_{+}-A_N,L_{+}-\frac{A_N}{2}]$ its graph coincides with a semicircle with radius $A_N$ centered at $L_+$, see Figure \ref{multipara}. 
\begin{figure}[ht]
\includegraphics[scale=0.5]{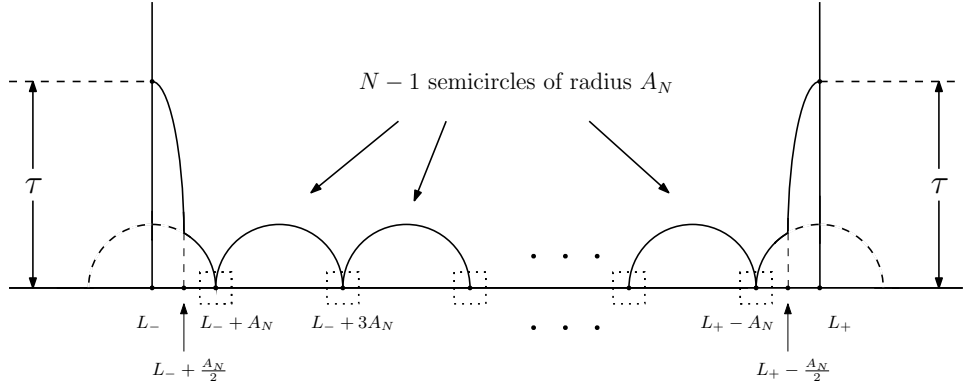}
\caption{Profile function $r$ of the chain of $N-1$ spheres}
\label{multipara}
\end{figure}

Let $g^r$ be the (singular) metric on $\Sigma$ obtained from the standard metric on $\R^3$ after replacing $C$ with the $O(N)$ given by the above profile function $r.$ The area of $O(N)$ is of order $\tau^2$, while its diameter is of order $\tau.$ Thus, for any given $0<\varepsilon<1$, by taking $\tau$ small enough, we have that $|\Vol_{g^r}(\Sigma)-1|\leq \varepsilon$ and ${\rm diam}(\Sigma,g^r)\leq 100 \sqrt{1-\varepsilon}.$ Now, let $B_N=10^{-n} A_N$,  $\delta_0(N) = \delta_0 \cdot \frac{\varepsilon  \sqrt{8 \pi}}{N}$, $\delta_0$ being given by (\ref{delta-2}). Notice also that $B_N$ and  $\delta_0(N)$ depend on a parameter $n.$ By carrying out the same construction as in the bulked sphere case near each of the touching points $L_{-}+A_N,L_{-}+3A_N,\ldots , L_{+}-3A_N,L_{+}-A_N$, we obtain the following proposition.  

\begin{prop} \label{geo-prop-2} Given any sufficiently small $\varepsilon>0$, for all sufficiently large $n$ and $A_N,B_N,\delta_0(N)$ as above, there exists a family of profile functions $r_{\vec{x}},$ $\vec{x} \in \mathcal T(N),$ each of which defines an $N$-bulked metric $g_{\vec{x}}$ such that
\begin{itemize}
    \item[(1)] $|\Vol_{g_{\vec{x}}}(\Sigma)-1|\leq \varepsilon$ and ${\rm diam}(\Sigma,g_{\vec{x}})\leq 100\sqrt{1-\varepsilon}.$
    \item[(2)] For different $\vec{x}\in\mathcal T(N)$, $r_{\vec{x}}$ coinicide outside of $B_N$-neighbourhoods of  $L_{-}+A_N,L_{-}+3A_N,\ldots , L_{+}-3A_N,L_{+}-A_N.$
    \item[(3)] For $k=1,\ldots , N$, $r_{\vec{x}}(L_{-}+(2k-1)A_N)=\frac{\delta_0}{2\pi}e^{-x_k},$ where $\vec{x}=(x_1,\ldots,x_N).$
    \item[(4)] For any $\vec{x}, \vec{y} \in \mathcal T(N)$, \[ \max_{l \in [L_{-}, L_{+}]} \max\left\{ \frac{r_{\vec{x}}(l)}{r_{\vec{y}}(l)}, \frac{r_{\vec{y}}(l)}{r_{\vec{x}}(l)}, \frac{r'_{\vec{x}}(l)}{r'_{\vec{y}}(l)}, \frac{r'_{\vec{y}}(l)}{r'_{\vec{x}}(l)}\right\} = e^{|\vec{x}-\vec{y}|_\infty}. \]
\end{itemize}
In (4), as before, we use convention that $\frac{0}{0}=1.$ 
\end{prop}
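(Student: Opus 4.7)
The plan is to adapt the construction from the proof of Proposition \ref{geo-prop} by applying it independently in a small window around each of the $N$ junction points $p_k := L_- + (2k-1)A_N$, $k=1,\ldots,N,$ where adjacent semicircles in the graph of $r$ touch. Since consecutive $p_k$ are separated by $2A_N$ and $B_N = 10^{-n} A_N \ll A_N$ for large $n$, the perturbation windows $I_k := [p_k - B_N, p_k + B_N]$ are pairwise disjoint and also disjoint from the endpoints $L_\pm$. Outside $\bigcup_k I_k$, I set $r_{\vec{x}} \equiv r$ for every $\vec{x}$, which will immediately give property (2).

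First I would fix $\varepsilon>0$ and choose $\tau$ small enough that the unperturbed metric $g^r$ already satisfies $|\Vol_{g^r}(\Sigma) - 1| \leq \varepsilon/2$ and ${\rm diam}(\Sigma, g^r) \leq 99\sqrt{1-\varepsilon}$; this is possible because $O(N)$ has area of order $\tau^2$ and diameter of order $\tau$. Then on each $I_k$ I replicate verbatim the three-piece recipe from Proposition \ref{geo-prop}. Writing $l$ for the coordinate measured from $p_k$, and setting $h = \frac{3\delta_0}{2\pi}$, $h_k = \frac{\delta_0}{2\pi} e^{-x_k}$, $B' = 10^{-2n} A_N$, the function $r_{\vec{x}}$ on $I_k$ is defined to be the quadratic $\tfrac{h-h_k}{(B')^2} l^2 + h_k$ on $[-B', B']$, followed on $[B', 2B']$ and $[2B', B_N]$ (and symmetrically on the negative side) by two linear-slope profiles interpolating between the value at $B'$ and the semicircle of radius $A_N$ tangent at $p_k \pm A_N$. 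Property (3) holds by construction since $r_{\vec{x}}(p_k) = h_k$, and the matching of values and derivatives with the unperturbed semicircle at $l = \pm B_N$ (after a $C^1$-small corner smoothing as in the bulked-sphere case) makes $r_{\vec{x}}$ globally smooth.

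Property (4) is a window-by-window calculation: on each $I_k$, the same identities (\ref{first_part}) and (\ref{second_part}) from the proof of Proposition \ref{geo-prop} give
\[ \max_{l \in I_k} \max\left\{ \tfrac{r_{\vec{x}}(l)}{r_{\vec{y}}(l)}, \tfrac{r_{\vec{y}}(l)}{r_{\vec{x}}(l)}, \tfrac{r'_{\vec{x}}(l)}{r'_{\vec{y}}(l)}, \tfrac{r'_{\vec{y}}(l)}{r'_{\vec{x}}(l)}\right\} = e^{|x_k - y_k|}, \]
while outside $\bigcup_k I_k$ the ratio is identically $1$. Taking the maximum over $k$ and using $\max_k |x_k - y_k| = |\vec{x} - \vec{y}|_\infty$ yields (4). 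Finally, each localized perturbation changes the area and diameter by an amount controlled uniformly by $B_N$ (and hence by $n$), so taking $n$ large keeps the total perturbation below $\varepsilon/2$, yielding property (1).

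The construction is essentially a parallelization of the bulked-sphere argument, and I do not expect any new substantive obstacle. The only minor technical point requiring care is the $C^\infty$-smoothing at the two interior corners $l = \pm B'$ and $l = \pm 2B'$ of each piece on every window $I_k$: one must do it so that the derivative ratio in property (4) is preserved, but this is the same $C^1$-small smoothing used in Proposition \ref{geo-prop}, and because the windows $I_k$ are disjoint the smoothings near different junction points do not interact.
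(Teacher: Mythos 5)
Your proposal is correct and coincides with the paper's own (very terse) argument, which simply says that one carries out the bulked-sphere construction of Proposition \ref{geo-prop} near each of the $N$ touching points; your window-by-window localization, the verbatim reuse of the three-piece profile on each $[p_k-B_N,p_k+B_N]$, and the observation that the ratio in (4) equals $e^{|x_k-y_k|}$ on the $k$-th window and $1$ elsewhere are exactly what the paper intends. No gaps.
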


We can now give a proof of (1) and (4) in Proposition \ref{mb-p}. 

\begin{proof} (Proof of (1) and (4) in Proposition \ref{mb-p}) Let $g_{\vec{x}}$ be the $N$-bulked metric from Proposition \ref{geo-prop-2}. By (2) in Proposition \ref{geo-prop} it follows
$$L_{g_{\vec{x}}}(\gamma_1)=\delta_0(N) e^{-x_1}, \ldots , L_{g_{\vec{x}}}(\gamma_N)=\delta_0(N) e^{-x_N}.$$
Since all $\gamma_i$ have constant speed, we have
$$E_{g_{\vec{x}}}(\gamma_i) = \frac{L_{g_{\vec{x}}}^2(\gamma_i)}{2} =\frac{(\delta_0(N))^2}{2} e^{-2x_i}, ~~{\rm for }~~ i=1,\ldots, N,$$
which proves (1) in Proposition \ref{mb-p}. The proof of (4) in Proposition \ref{mb-p} is exactly the same as the proof of (3) in Proposition \ref{bsm-p} above. 
\end{proof}

Finally, the proof of Proposition \ref{Prop-upper} follows directly  from (4) in Proposition \ref{geo-prop} and (4) in Proposition \ref{geo-prop-2}.

\subsection{Reduction of parameterization space} \label{red-emb-space}
Recall that $\mathcal T(2N)$ is defined as
\[ \mathcal T(2N) = \left\{\vec{x} = (x_1, ..., x_{2N}) \in [0, \infty)^{2N}\,| \, x_1 \leq  x_2 \leq ... \leq x_{2N} \right\}.\]
In this subsection, we will prove Lemma \ref{red-par}. It claims that for every $N \in \N$ there exists a quasi-isometric embedding $Q:(\R^N, |\cdot|_{\infty}) \rightarrow (\mathcal T(2N), |\cdot|_{\infty}).$ We construct $Q$ as a composition of two quasi-isometric embeddings $A$ and ${\bf L}$ as follows
\[ (\R^N, |\cdot|_{\infty}) \xrightarrow{{\bf L}} ([0, \infty)^{2N}, |\cdot|_{\infty}) \xrightarrow{A} (\mathcal T(2N), |\cdot|_{\infty}). \]

\subsubsection{Construction of ${\bf L}$} Consider a map $L: \R \to [0, \infty)^2$, given by
\[ L(x) = \left\{ \begin{array}{cc} (1, -x+1) \,\,\,\,\mbox{when $x <0$} \\ (1+x, 1) \,\,\,\,\mbox{when $x \geq 0$} \end{array} \right.. \]
If we realize $[0, \infty)^2$ as the first quadrant of $\R^2$, then map $L$ gives an ``L-shaped'' embedding of $\R$ with corner at $(1,1)$. Now define a multi-dimensional version of $L$, that is ${\bf L}: \R^N \to [0, \infty)^{2N}$ by
\begin{equation*}
{\bf L}(\vec{x}) = (L(x_1), ..., L(x_N)).
\end{equation*}

We claim the following.

\begin{lemma} \label{lip-emb}
For any $N \in \N$ and $\vec{x}, \vec{y} \in \R^{N}$, it holds
\[ \frac{1}{2} |\vec{x} - \vec{y} |_{\infty} \leq |{\bf L}(\vec{x}) - {\bf L}(\vec{y})|_{\infty} \leq |\vec{x}- \vec{y}|_{\infty}. \]
\end{lemma}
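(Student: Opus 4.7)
Since $\mathbf{L}$ acts coordinate-wise by $L$ and the $|\cdot|_\infty$-norm is the maximum over coordinates, I have
\[ |\mathbf{L}(\vec{x}) - \mathbf{L}(\vec{y})|_\infty = \max_{1 \leq i \leq N} |L(x_i) - L(y_i)|_\infty \quad \text{and} \quad |\vec{x} - \vec{y}|_\infty = \max_{1 \leq i \leq N} |x_i - y_i|. \]
Consequently, the multi-dimensional inequalities follow at once from the one-dimensional claim
\[ \tfrac{1}{2} |x-y| \leq |L(x) - L(y)|_\infty \leq |x-y| \qquad \text{for all } x, y \in \R, \]
so the plan is to reduce to this scalar statement and then verify it by a short case analysis on the signs of $x$ and $y$.

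For the scalar statement I will distinguish three cases. If $x, y \geq 0$, then $L(x) - L(y) = (x-y, 0)$, so $|L(x) - L(y)|_\infty = |x-y|$, and the inequality holds with equality on the right. The same is true if $x, y < 0$, where $L(x) - L(y) = (0, y-x)$.

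The interesting case is the ``mixed'' case, say $x \geq 0 > y$ (the other mixed case is symmetric). A direct computation gives
\[ L(x) - L(y) = (1+x, 1) - (1, -y+1) = (x, y), \]
whence $|L(x) - L(y)|_\infty = \max(x, -y)$. Since $|x-y| = x + (-y)$ with both summands nonnegative, the elementary inequalities
\[ \tfrac{1}{2}(x + (-y)) \leq \max(x, -y) \leq x + (-y) \]
yield $\tfrac{1}{2} |x-y| \leq |L(x) - L(y)|_\infty \leq |x-y|$, completing the one-dimensional claim and hence the lemma.

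There is no serious obstacle here; the only mild subtlety is recognising that the factor $\tfrac{1}{2}$ on the left is sharp precisely at the ``corner'' of the L-shape (realised when $x = -y > 0$, where $\max(x,-y) = \tfrac{1}{2}(x + (-y))$), which explains why one cannot hope for a better constant in this construction.
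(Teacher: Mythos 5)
Your proof is correct and follows essentially the same route as the paper: reduce to the scalar case via the coordinate-wise maximum, then verify the bi-Lipschitz bounds $\tfrac{1}{2}|x-y| \leq |L(x)-L(y)|_\infty \leq |x-y|$ by a sign case analysis, with the mixed case resting on $\tfrac{1}{2}(a+b)\leq \max(a,b)\leq a+b$ for $a,b\geq 0$. The explicit computation $L(x)-L(y)=(x,y)$ in the mixed case is a slightly cleaner packaging of the paper's $|(-y,-x)|_\infty=\max\{|x|,|y|\}$, but the argument is the same.
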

\begin{proof} First consider the case when $N=1$. When both $x, y$ are negative or both $x,y$ are non-negative, it is easy to see $|x-y| = |L(x) - L(y)|_{\infty}$. When $x<0$ and $y \geq 0$,
\begin{align*}
|L(x) - L(y)|_{\infty} & = |(1, - x+1) - (1+y, 1)|_{\infty} \\
& = |(-y, -x)|_{\infty} = \max\{|x|, |y|\} \\
& \leq |x| + y = |x-y|.
\end{align*}
On the other hand,
\begin{align*}
2|L(x) - L(y)|_{\infty} &= 2 \max\{|x|, |y|\} \\
& \geq |x| + |y| \\
& = |x| + y = |x-y|.
\end{align*}
The same argument works for $x \geq 0$ and $y<0$. Therefore, we get a bi-Lipschitz relation
\begin{equation} \label{bi-lip-re}
|L(x)- L(y)|_{\infty} \leq |x-y| \leq 2|L(x) - L(y)|_{\infty}
\end{equation}
Then
\begin{align*}
|{\bf L}(\vec{x}) - {\bf L}(\vec{y})|_{\infty} & = \max\{|L(x_1) - L(y_1)|_{\infty}, ..., |L(x_N) - L(y_N)|_{\infty}\}\\
&\leq \max\{|x_1 - y_1|, ..., |x_N - y_N|\} = |\vec{x} - \vec{y}|_{\infty}.
\end{align*}
and
\begin{align*}
2|{\bf L}(\vec{x}) - {\bf L}(\vec{y})|_{\infty} & = \max\{2|L(x_1) - L(y_1)|_{\infty}, ..., 2|L(x_N) - L(y_N)|_{\infty}\} \\
& \geq \max\{|x_1 - y_1|, ..., |x_N - y_N|\} = |\vec{x} - \vec{y}|_{\infty}.
\end{align*}
Thus we get the conclusion.
\end{proof}
\subsubsection{Construction of $A$} Consider the following map $A: [0, \infty)^{2N} \to \mathcal T(2N)$,
\[ A(\vec{x}) = A(x_1, ..., x_{2N}) = \left(x_1, x_1 + x_2, ..., x_1 + ... + x_{2N} \right).\]
We have
\begin{lemma} \label{lip-emb-2} For every $N \in \N$ and $\vec{x}, \vec{y} \in [0, \infty)^{2N}$, it holds
\[ \frac{1}{2} |\vec{x} - \vec{y}|_{\infty} \leq |A(\vec{x}) - A(\vec{y})|_{\infty} \leq (2N)\cdot |\vec{x} - \vec{y}|_{\infty}. \] \end{lemma}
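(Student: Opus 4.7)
The plan is to pass to partial sums and use telescoping. Write $S_k = \sum_{i=1}^k x_i$ and $T_k = \sum_{i=1}^k y_i$ for $k = 1, \ldots, 2N$, with the convention $S_0 = T_0 = 0$, so that $A(\vec{x}) = (S_1, \ldots, S_{2N})$ and $A(\vec{y}) = (T_1, \ldots, T_{2N})$. Note that since $\vec{x}, \vec{y} \in [0,\infty)^{2N}$ the sequences $(S_k)$ and $(T_k)$ are non-decreasing, so $A$ indeed takes values in $\mathcal T(2N)$.

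For the upper bound I would simply apply the triangle inequality. For each $k \in \{1, \ldots, 2N\}$,
\[
|S_k - T_k| = \left|\sum_{i=1}^k (x_i - y_i)\right| \leq \sum_{i=1}^k |x_i - y_i| \leq k \cdot |\vec{x} - \vec{y}|_\infty \leq (2N)\cdot |\vec{x} - \vec{y}|_\infty,
\]
and taking the max over $k$ yields $|A(\vec{x}) - A(\vec{y})|_\infty \leq (2N)\cdot |\vec{x} - \vec{y}|_\infty$.

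For the lower bound I would use the telescoping identity $x_i - y_i = (S_i - T_i) - (S_{i-1} - T_{i-1})$, valid for all $i \in \{1, \ldots, 2N\}$ thanks to $S_0 = T_0 = 0$. Then
\[
|x_i - y_i| \leq |S_i - T_i| + |S_{i-1} - T_{i-1}| \leq 2 \cdot |A(\vec{x}) - A(\vec{y})|_\infty,
\]
so taking the maximum over $i$ gives $|\vec{x} - \vec{y}|_\infty \leq 2 \cdot |A(\vec{x}) - A(\vec{y})|_\infty$, i.e. the claimed lower bound.

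This argument is essentially formal and there is no serious obstacle; the only mildly delicate point is remembering to include $S_0 = T_0 = 0$ so that the telescoping identity covers the $i=1$ case. I would also briefly remark that both constants are sharp: the upper bound is attained (up to constants) when $\vec x - \vec y$ is constant in sign, and the factor $2$ in the lower bound is attained when $\vec x - \vec y$ alternates in sign, so one cannot do better with a Lipschitz-type comparison of $|\cdot|_\infty$-norms of the vector and its cumulative sums.
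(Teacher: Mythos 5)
Your proof is correct and takes essentially the same approach as the paper: both bounds reduce to comparing the sup norm of the difference vector with the sup norm of its partial sums via the triangle inequality. Your telescoping identity $x_i - y_i = (S_i - T_i) - (S_{i-1} - T_{i-1})$ is just a cleaner packaging of the paper's two-term inequality $\max\{|a_1|, |a_1+a_2|\} \geq \tfrac{1}{2}\max\{|a_1|,|a_2|\}$ applied with $a_1$ the preceding partial sum, so the arguments coincide in substance.
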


\begin{proof} Conclusion of Lemma \ref{lip-emb-2} immediately follows from the following inequalities. For $a_1, ..., a_n \in \R$,
\[ \frac{1}{2} \max\{|a_1|, ..., |a_n|\} \leq \max\{|a_1|, |a_1 + a_2|, ..., |a_1 + ... + a_n| \} \leq n \cdot \max\{|a_1|, ..., |a_n|\}. \]
The second inequality comes from the fact that for any $k \in \{1, ..., n\}$,
\[ |a_1 + ... + a_k| \leq |a_1|+ ... + |a_k| \leq k \cdot \max\{|a_1|, ..., |a_k|\}\leq n  \cdot \max\{|a_1|, ..., |a_n|\}.\]
For the first inequality, consider the two-term case first, that is 
\begin{equation} \label{2-t}
\max\{|a_1|, |a_1+ a_2|\} \geq \frac{1}{2} \max\{|a_1|, |a_2|\}.
\end{equation}
If $|a_1| \geq |a_2|$, the inequality is obvious. If on the other hand, $|a_1| \leq |a_2|$, then
\begin{align*}
2\max\{|a_1|, |a_1 + a_2|\} & = 2 \max\{|a_1|, |a_1 - (-a_2)|\}\\
&\geq  2 \max\{|a_1|, ||a_1| - |a_2||\}\\
& = 2 \max\{|a_1|, |a_2| - |a_1|\} \\
& \geq |a_1| + |a_2| - |a_1| \\
& = |a_2| = \max\{|a_1|, |a_2|\}.
\end{align*}
This proves (\ref{2-t}).

For the general case, assume that $\max\{|a_1|, |a_2|, ..., |a_n|\}=|a_k|.$ If $k=1,$ the inequality if obvious. If $k\geq 2$ then (\ref{2-t}) implies that
$$\max \{ |a_1+...+a_{k-1}|,|a_1+...+a_k| \} \geq \frac{1}{2} \max \{ |a_1+...+a_{k-1}|,|a_k| \}\geq \frac{1}{2}|a_k|,$$
and the claim follows. \end{proof}

\begin{proof} (Proof of Lemma \ref{red-par}) Set $Q = A \circ \bf L$ and we get the conclusion. \end{proof}

\subsection{Geodesic flow on a torus of revolution} \label{geodesic-flow}

We give a detailed analysis of the geodesic flow of the metric of revolution on $\mathbb{T}^2$ and in particular prove Lemma \ref{No-geodesics}. 

Recall from Example \ref{Torus_of_revolution} that $f:[-A,A]\rightarrow (0,+\infty)$ was a smooth, even function, which extends $2A$-periodically to a smooth function on $\R.$ Moreover, $f$ was strictly increasing on $[-A,0]$ and hence strictly decreasing on $[0,A]$ with a unique maximum at 0 and two minima at $\pm A.$ Using $f$ as a profile function, we defined a metric of revolution $g$ on $\mathbb{T}^2= \R/ 2A \Z \times \R/ 2\pi\Z.$ In other words, $g$ is a pull back of the Euclidean metric on $\R^3$ via the embedding
$$(x,\theta)\rightarrow (x,f(x)\cos \theta, f(x) \sin \theta).$$
Recall also that we used a change of variable $X(x)=\int_{0}^{x}\sqrt{1+(f'(t))^2} dt, x\in [-A,A].$ The new variable satisfies $X\in[-T,T]$ for $T=\int_{0}^{A}\sqrt{1+(f'(t))^2}$ and we denoted $F(X)=f(x(X)).$ A direct computation shows that in $(X,\theta)$ coordinates metric has the following form:
\begin{equation} \label{metric-tensor}
g_{(X, \theta)} = \begin{pmatrix} 1 & 0 \\ 0 & F^2(X) \end{pmatrix}.
\end{equation}
The Lagrangian of the geodesic flow of $g$ is given by
\[ L(X, \theta, v_{X}, v_{\theta}) = \frac{1}{2} \left(v_X^2  + F^2(X) v_{\theta}^2 \right). \]
while momenta are
\[ p_X = \frac{\partial L}{\partial v_X} =  v_X \,\,\,\,\mbox{and}\,\,\,\, p_{\theta} = \frac{\partial L}{\partial v_\theta} = F^2(X)v_{\theta}. \]
We compute the Hamiltonian as a Legendre transform
\begin{equation} \label{Lege}
H(X, \theta, p_X, p_{\theta}) = p_X v_X + p_\theta v_\theta - \frac{1}{2} \left( v^2_X + F^2(X) v^2_{\theta}\right)= \frac{1}{2} \left( p^2_X + \frac{p^2_{\theta}}{F^2(X)}\right).
\end{equation}
Hamiltonian equations are
\begin{equation} \label{ham-equ}
\left\{ \begin{array}{l}
{\displaystyle \dot X  = \frac{\partial H}{\partial p_X} = p_X} \\
\\
{\displaystyle\dot {\theta} = \frac{\partial H}{\partial p_{\theta}} = \frac{p_{\theta}}{F^2(X)} }\\
\\
{\displaystyle\dot {p}_X  = - \frac{\partial H}{\partial X} = \frac{F'(X)}{F^3(X)} p_{\theta}^2}\\
\\
{\displaystyle\dot {p}_{\theta} = - \frac{\partial H}{\partial \theta} = 0}.
\end{array} \right.
\end{equation}
The above system is integrable with two integrals given by $H$ and $p_\theta.$ Let us analyze the system on the energy level $H=\frac{1}{2}$ (this corresponds to unit speed geodesics) and let us assume that $p_\theta = \sqrt{C}\geq 0.$ The case $p_\theta<0$ is treated similarly (note that $(p_X,p_\theta)\rightarrow (-p_X,-p_\theta)$ corresponds to changing the direction of a geodesic). Now, (\ref{Lege}) translates to
\begin{equation}\label{H_integral}
p_X^2+\frac{C}{F^2(X)}=1,
\end{equation}
while Hamiltonian equations become

\begin{equation} \label{H_equation}
\left\{ \begin{array}{l}
{\displaystyle \dot X = p_X} \\
\\
{\displaystyle\dot {\theta} = \frac{\sqrt{C}}{F^2(X)} }\\
\\
{\displaystyle\dot {p}_X  = C \frac{F'(X)}{F^3(X)}} \\
\end{array} \right.
\end{equation}

If $C=0,$ (\ref{H_integral}) and (\ref{H_equation}) imply that $\dot \theta=0,~ \dot X =p_X=\pm 1.$ Hence, in this case geodesics are given by $\theta(t)=const,~ X(t)=X(0)\pm t.$

If $C>0$, (\ref{H_integral}) implies that $\sqrt{C}\leq \max F$ and we distinguish four cases.

\medskip

\noindent \underline{$1^\circ ~~ \sqrt{C}=\max F$:}
\\
In this case (\ref{H_integral}) implies that $X=0$ and thus $p_X=0,~ F'(X)=0.$ Now, (\ref{H_equation}) gives $\dot X=0,~\dot p_X=0,~ \dot \theta = \frac{1}{\max F}$ and thus $X(t)=0,~ p_X(t)=0,~\theta(t)=\theta(0)+\frac{t}{\max F}.$ This solution describes a closed geodesic $\hat{\gamma}$, i.e. the parallel circle of radius $\max F$ at $X=0$, and it's iterations.
\\
\\
\underline{$2^\circ ~~ \min F <\sqrt{C}<\max F$:}
\\
In this case the dynamics is constrained to the interval where $\sqrt{C}\leq F(X)$, i.e. on $[-\lambda_C(F),\lambda_C(F)]$ for $F^{-1}(\sqrt{C})=\{-\lambda_C(F),\lambda_C(F) \}.$ Moreover, on this interval it holds $p_X=\pm \sqrt{1-\frac{C}{F^2(X)}}$ and the portrait of the system in $(X,p_X)$-plane looks as follows:

\begin{figure}[ht]
\includegraphics[scale=0.55]{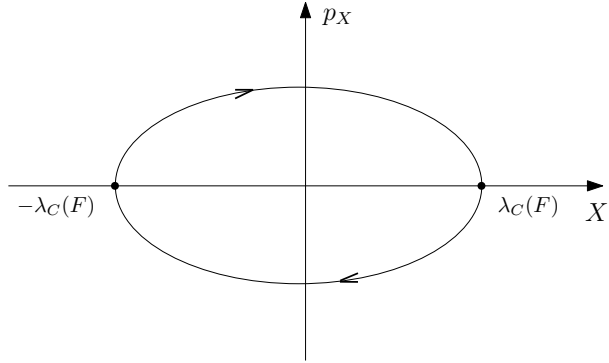}
\caption{$(X,p_X)$-portrait when $\min F<\sqrt{C}<\max F$}
\end{figure}

\noindent\underline{$3^\circ ~~ \sqrt{C}=\min F$:}
\\
In this case $\lambda_C(F)=T$ and $p_X=\pm \sqrt{1-\frac{C}{F^2(X)}}.$ The behaviour of the flow at $X=\pm T$ differs from the behaviour when $X\in (-T,T).$ Indeed, if $X=\pm T$, we have $p_X=F'(X)=0$ and (\ref{H_equation}) becomes $\dot X=0,~\dot p_X=0,~ \dot \theta = \frac{1}{\min F}.$ Thus, we obtain a solution $X(t)=\pm T,~ p_X(t)=0,~\theta(t)=\theta(0)+\frac{t}{\min F},$ which describes a closed geodesic $\check{\gamma}$, i.e. the parallel circle of radius $\min F$ at $X=\pm T$, and it's iterations. In $(X,p_X)$-plane solutions with $X\in (-T,T)$ trace two curves which connect points $-T$ and $T$ and the portrait looks as follows:

\begin{figure}[ht]
\includegraphics[scale=0.45]{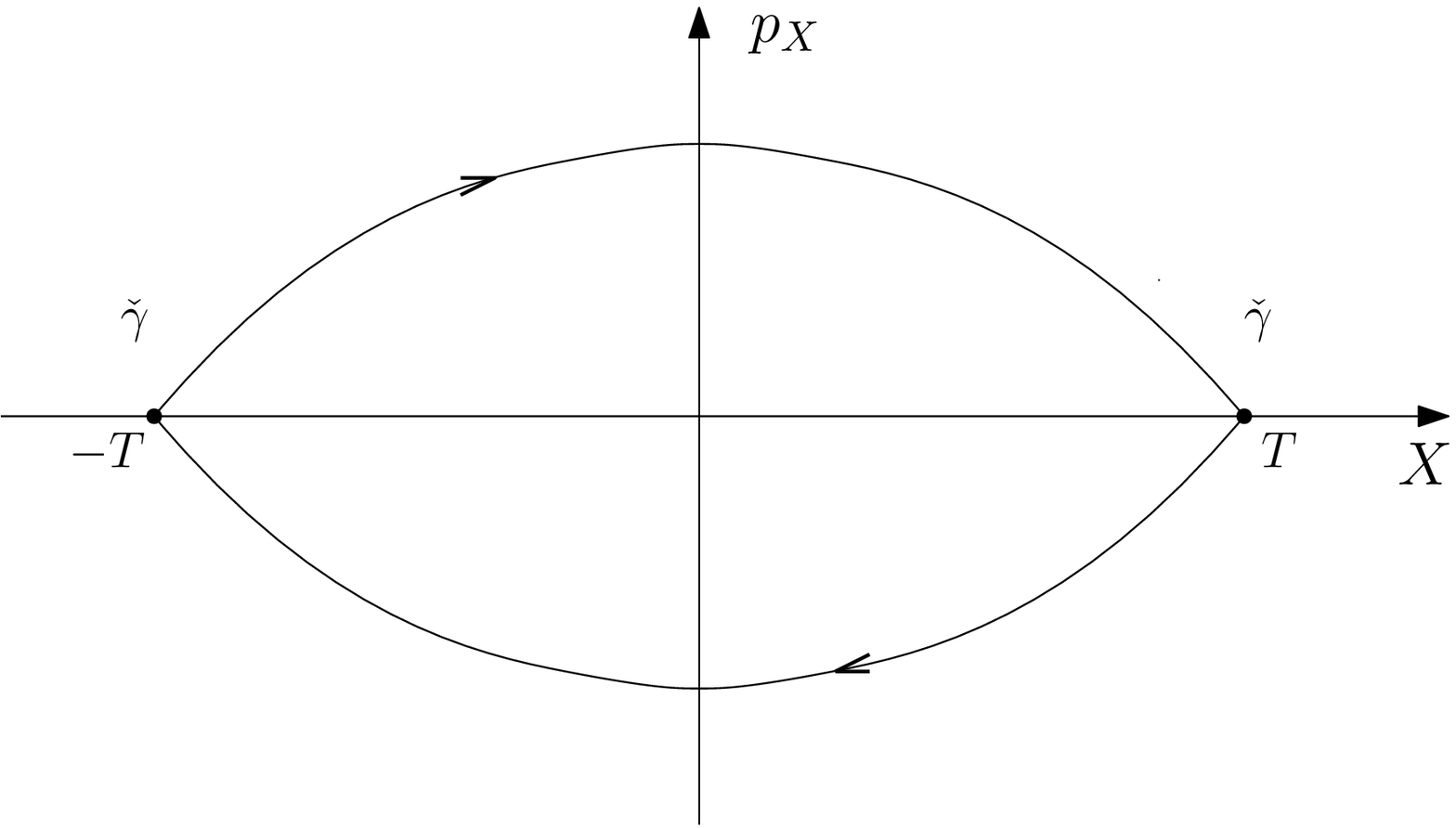}
\caption{$(X,p_X)$-portrait when $\sqrt{C}=\min F$}
\end{figure}

\noindent\underline{$4^\circ ~~ \sqrt{C}<\min F$:}
\\
In this case $1-\frac{C}{F^2(X)}>0$ for all $X\in [-T,T]$ and $p_X=\pm \sqrt{1-\frac{C}{F^2(X)}}.$ The portrait looks as follows:

\begin{figure}[ht]
\includegraphics[scale=0.45]{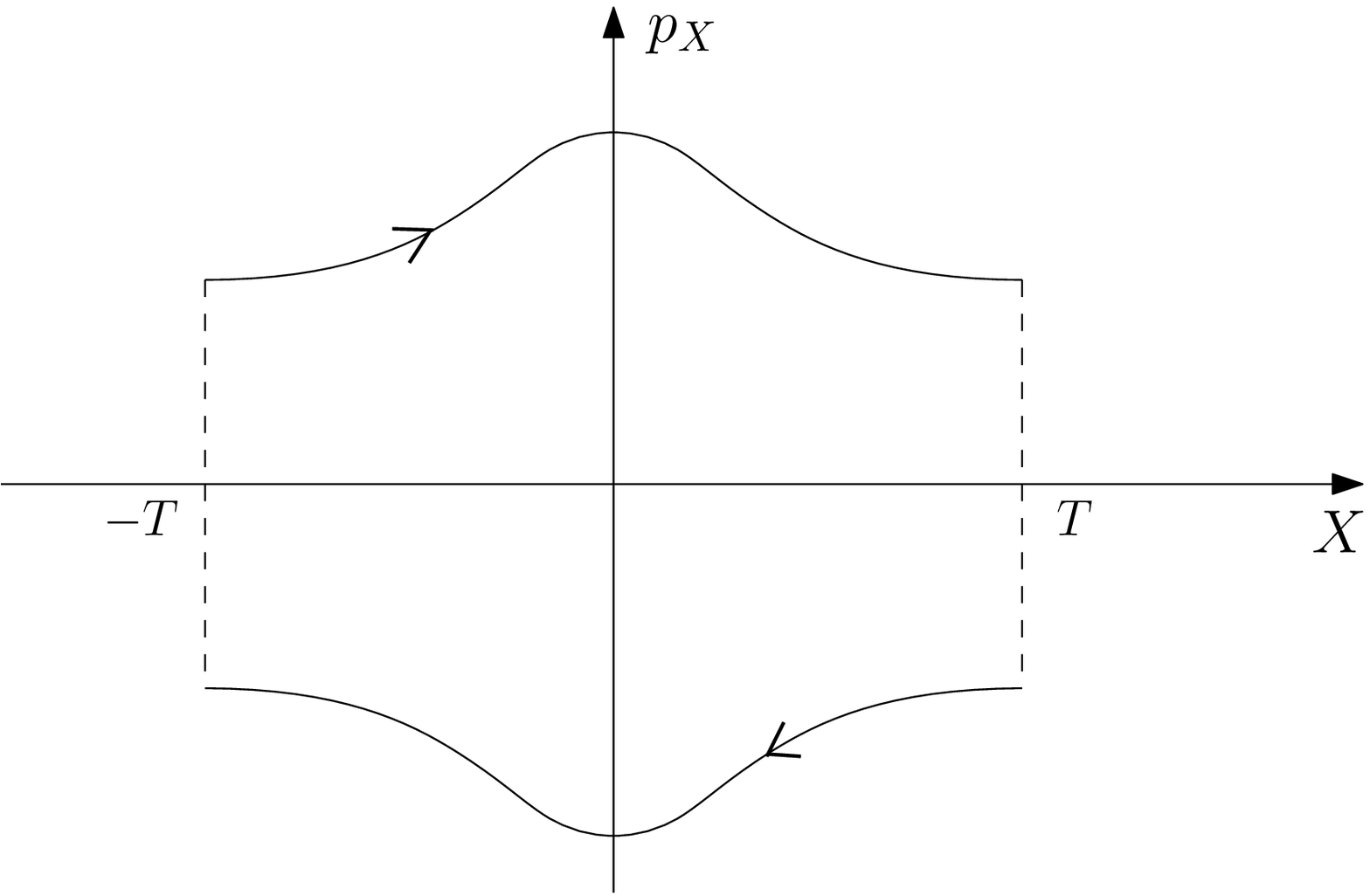}
\caption{$(X,p_X)$-portrait when $\sqrt{C}<\min F$}
\end{figure}

Recall that $\alpha$ denotes the homotopy class of loops represented by a loop $\theta(t)=t, ~t\in[0,2\pi], ~X=const.$ Let us now focus on closed geodesics in homotopy class $\alpha.$ As we saw above, there are always two closed geodesics in this class, namely $\check{\gamma}$ and $\hat{\gamma}.$ Their properties are summarized by the following lemma.

\begin{lemma} \label{Critical_points}
Assume that $F''(T)>0$ as well as that $0<-F(0)F''(0)<1.$ Then $\check{\gamma}$ and $\hat{\gamma}$ are non-degenerate closed geodesics and $\ind \check{\gamma}=0, \ind \hat{\gamma}=1.$
\end{lemma}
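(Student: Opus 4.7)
My plan is to reduce both assertions to analyzing the Jacobi equation for perpendicular fields along the two geodesics, mimicking the hyperbolic analysis in Subsection~\ref{Geo_index} but extending it to the elliptic case for $\hat{\gamma}$. First I will record that in $(X,\theta)$ coordinates the metric reads $g = dX^2 + F^2(X)\,d\theta^2$, so the Gaussian curvature along the parallel circle $X = X_0$ is $K_G(X_0) = -F''(X_0)/F(X_0)$. Since $F'(\pm T) = F'(0) = 0$, the circles $\check{\gamma}(\theta) = (T,\theta)$ and $\hat{\gamma}(\theta) = (0,\theta)$ are closed geodesics parametrized over $\theta \in [0,2\pi]$ with constant speeds $F(T)$ and $F(0)$ respectively. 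For a perpendicular Jacobi field $J = f(\theta)\,E$ with $E$ a parallel unit normal, the standard two-dimensional calculation (as in the proof of Lemma~\ref{hyper}) reduces the Jacobi equation to
\[ \ddot f(\theta) + K_G\,|\dot\gamma|^2\,f(\theta) = 0, \]
which for $\check{\gamma}$ becomes $\ddot f - \mu^2 f = 0$ with $\mu^2 = F(T)F''(T) > 0$, and for $\hat{\gamma}$ becomes $\ddot f + \omega^2 f = 0$ with $\omega^2 = -F(0)F''(0) \in (0,1)$.

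For $\check{\gamma}$ the general solution $c_1 e^{\mu\theta} + c_2 e^{-\mu\theta}$ is $2\pi$-periodic only when identically zero, so by Lemma~\ref{Lemma_Nulity} no nontrivial perpendicular periodic Jacobi field exists and $\check{\gamma}$ is non-degenerate. The linearized Poincar\'e map is hyperbolic (exactly the situation of Lemma~\ref{hyper}), and since the contracting Jacobi field $e^{-\mu\theta}$ never vanishes, Lemma~\ref{Lemma_Index} yields $\ind\check{\gamma} = 0$, just as in the proof of Lemmas~\ref{lm-short-geo} and~\ref{lm-short-geo-2}.

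For $\hat{\gamma}$ the general solution $a\cos(\omega\theta) + b\sin(\omega\theta)$ is $2\pi$-periodic if and only if $\omega \in \Z$, which is ruled out by $\omega \in (0,1)$; hence $\hat{\gamma}$ is non-degenerate by Lemma~\ref{Lemma_Nulity}. To compute the index I will apply the second variation formula directly. Decomposing a variation $V = g\,\dot{\hat\gamma} + f\,E$ and using $\nabla_\theta \dot{\hat\gamma} = 0$ and $\nabla_\theta E = 0$, a short calculation gives
\[ \mathrm{Hess}(E_g)(V,V) = \int_0^{2\pi} \bigl( |\dot{\hat\gamma}|^2 \dot g^2 + \dot f^2 - \omega^2 f^2 \bigr)\,d\theta. \]
The tangential part is non-negative with one-dimensional kernel spanned by constant $g$ (the $S^1$-reparameterization mode). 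Expanding the perpendicular contribution as $f = \sum_{k\in\Z} c_k e^{ik\theta}$ produces $2\pi \sum_k (k^2 - \omega^2)|c_k|^2$. Because $\omega \in (0,1)$, the coefficient $k^2 - \omega^2$ is negative only for $k=0$ and strictly positive for every $k \neq 0$. Thus the negative eigenspace is exactly one-dimensional (real constants) and the full kernel is exhausted by the tangential reparameterization mode, giving $\ind\hat{\gamma} = 1$ and reconfirming non-degeneracy.

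The conceptually delicate point, and the place where both hypotheses are genuinely used, is the elliptic analysis of $\hat{\gamma}$: the condition $-F(0)F''(0) > 0$ gives $K_G(0) > 0$ so that the Jacobi equation is oscillatory in the first place, while the condition $-F(0)F''(0) < 1$ rules out the resonance $\omega \in \Z_{\geq 1}$ which would simultaneously produce a periodic Jacobi field and add extra negative Fourier modes, thereby both destroying non-degeneracy and inflating the index beyond $1$.
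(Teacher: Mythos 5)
Your argument is correct, and for $\check{\gamma}$ it coincides with the paper's (both reduce to the hyperbolic computation of Subsection~\ref{Geo_index} via Lemmas~\ref{Lemma_Nulity} and~\ref{Lemma_Index}). For $\hat{\gamma}$ the two proofs agree on non-degeneracy — the Jacobi equation $\ddot f + \omega^2 f=0$ with $\omega=\sqrt{-F(0)F''(0)}\in(0,1)$ has no nontrivial periodic solutions — but diverge on the index computation. The paper observes that the linearized Poincar\'e map is a rotation by $2\pi\omega$, so $\hat{\gamma}$ is non-degenerate and non-hyperbolic, and then invokes Lemma~3.4.2 of Klingenberg's book (on an orientable surface such a geodesic has odd index equal to $m$ or $m+1$, $m$ the number of interior conjugate points); since $\omega<1$ forces $m\in\{0,1\}$, the index is $1$. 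You instead diagonalize the index form directly: the Hessian splits into a tangential block $\int|\dot{\hat\gamma}|^2\dot g^2$, which is positive semidefinite with kernel the reparameterization mode, and a normal block whose Fourier eigenvalues are $k^2-\omega^2$, negative exactly for $k=0$. Your route is more elementary and self-contained — it avoids the external parity theorem and yields non-degeneracy and the index in one computation — at the cost of having to justify the second-variation formula and its block decomposition on the loop space; the paper's route is shorter given the cited machinery and makes the dynamical picture (elliptic Poincar\'e map, conjugate point at $\hat{\gamma}(\pi/\sqrt{K})$) explicit. Both uses of the hypothesis $0<-F(0)F''(0)<1$ are the same in substance: positivity makes the normal equation oscillatory, and the upper bound excludes the resonances that would create periodic Jacobi fields or additional negative modes.
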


\begin{proof}

First notice that $F(0)=f(0), F(\pm T)=f(\pm A), F'(0)=f'(0)=0,F'(\pm T)=f'(\pm A)=0$ and $F''(0)=f''(0),F''(\pm T)=f''(\pm A).$ This immediately follows after differentiating (twice) the expression $f(x)=F(X(x)),$ using that $X'(x)=\sqrt{1+(f'(x))^2}$ as well as that $f'(0)=f'(\pm A)=0.$ Hence, $F''(\pm T)>0$ implies $f''(\pm A)>0$ and thus $\check{\gamma}$ is non-degenerate and $\ind \check{\gamma}=0,$ as show in Subsection \ref{Geo_index} (proof of Lemmas \ref{lm-short-geo} and \ref{lm-short-geo-2}).

By Lemma \ref{Lemma_Nulity}, $\hat{\gamma}$ is non-degenerate if and only if there are no periodic Jacobi fields along $\hat{\gamma},$ orthogonal to $\dot{\hat{\gamma}}.$ As in the case of $\check{\gamma},$ Jacobi fields are computed using (\ref{jac2}), however in this case $K=-4\pi^2f(0)f''(0)=-4\pi^2F(0)F''(0)>0.$ Orthogonal Jacobi fields are of the form $J(t)=(J_1(t),0,0)$ and (\ref{jac2}) translates to
\begin{equation} \label{jac3}
{\ddot J}_1(t) + K \cdot J_1(t) = 0.
\end{equation}
In other words, the space of orthogonal Jacobi fields is spanned by
$$J_{\sin}(t)=(\sin(\sqrt{K}t),0,0)\,\,\,\, \mbox{and}\,\,\,\, J_{\cos}(t)=(\cos(\sqrt{K}t),0,0).$$ 
By the assumption, $0<-F(0)F''(0)<1$ and hence $0<\sqrt{K}<2\pi,$ which impies that no orthogonal Jacobi field is periodic, i.e. $\hat{\gamma}$ is non-degenerate.

Using that 
$$(J_{\sin}(t), \dot{J}_{\sin}(t)) = ((\sin(\sqrt{K}t),0,0), (\sqrt{K}\cos(\sqrt{K}t),0,0))$$
and
$$(J_{\cos}(t), \dot{J}_{\cos}(t)) = ((\cos(\sqrt{K}t),0,0), (-\sqrt{K}\sin(\sqrt{K}t),0,0)),$$
we may express the linearized Poincare map $P:(T\hat{\gamma}(0))^{\perp}\oplus(T\hat{\gamma}(0))^{\perp}\rightarrow (T\hat{\gamma}(0))^{\perp}\oplus(T\hat{\gamma}(0))^{\perp} $ with respect to basis $((1,0,0),(0,0,0))^T, ((0,0,0),(\sqrt{K},0,0))^T$ by the following matrix
$$
P = \begin{pmatrix} \cos \sqrt{K} & \sin \sqrt{K} \\ -\sin \sqrt{K} & \cos \sqrt{K} \end{pmatrix}.
$$
The eigenvalues of $P$ are $e^{\pm i \sqrt{K}}$ and hence $\hat{\gamma}$ is not hyperbolic. Finally, in order to compute $\ind \hat{\gamma}$ we use Lemma 3.4.2. from \cite{Kli95}. This lemma states that if a closed geodesic $\gamma$ on an orientable surface is non-degenerate and not hyperbolic, then it's index is an odd number equal to either $m$ or $m+1,$ where $m$ denotes the number of points $\gamma(t_*),0<t_*<1,$ conjugate\footnote{Recall that a point $\gamma(t_*)$ is called conjugate to $\gamma(0)$ along $\gamma$ if there exists a Jacobi field $J$ along $\gamma$ such that $J(0)=J(t_*)=0.$ Since the space of Jacobi fields along $\gamma$ is spanned by orthogonal Jacobi fields and $\dot{\gamma}$ and $t\dot{\gamma},$ one readily sees that $\gamma(t_*)$ is conjugate to $\gamma(0)$ if and only if there exists an orthogonal Jacobi field $J$ along $\gamma$ such that $J(0)=J(t_*)=0.$} to $\gamma(0)$ along $\gamma$. Since every orthogonal Jacobi field has the form 
$$J(t)=(A\cos (\sqrt{K} t) + B\sin (\sqrt{K} t), 0, 0),$$
$J(0)=(0,0,0)$ is equivalent to $A=0$ and since $0< \sqrt{K}<2\pi$ there can be at most one point conjugate to $\hat{\gamma}(0),$ namely $\hat{\gamma}(\frac{\pi}{\sqrt{K}}).$ This means that $m=0$ or $m=1$ and thus $\ind \hat{\gamma}=1.$
\end{proof}

The above analysis of the portrait in $(X,p_X)$-plane shows that closed geodesics in class $\alpha$ other than $\check{\gamma}$ and $\hat{\gamma}$ can only appear when $\min F<\sqrt{C}<\max F.$ In this case, for a fixed $C$, the flow is periodic in $(X,p_X)$-plane. Denote by $\Theta_F(C)$ the shift in $\theta$-coordinate made by a flow line $\tilde{\gamma}$ by the time it makes a single turn from $(-\lambda_C,0)$ back to $(-\lambda_C,0)$ (we abbreviate $\lambda_C=\lambda_C(F)$). 

Formally, let $\tilde{\gamma}(t)=(X(t),\theta(t),p_X(t),\sqrt{C})$ be a flow line of the Hamiltonian system (\ref{H_equation}), assume without lost of generality that $X(0)=-\lambda_C,p_X(0)=0$ and let $t_0>0$ be the smallest time when $X(t_0)=-\lambda_C,p_X(t_0)=0$ again. Define
$$\Theta_F(C)=\theta(t_0)-\theta(0).$$
As notation suggests, $\Theta_F(C)$ only depends on $F$ and $C.$ Indeed, using (\ref{H_integral}), (\ref{H_equation}) and the symmetry of $F$ we calculate\footnote{Compare to Proposition 2 in \cite{Alex06}.}
\begin{equation}\label{Theta}
\begin{split}
\Theta_F(C) & =\int_0^{t_0} \dot{\theta}(t)dt = \int_{0}^{t_0} \frac{\sqrt{C}}{F^2(X(t))}dt=2 \int_{-\lambda_C}^{\lambda_C} \frac{\sqrt{C}}{F^2(X)}\frac{dX}{\dot{X}}\\
& =2 \int_{-\lambda_C}^{\lambda_C} \frac{\sqrt{C}}{F^2(X)}\frac{dX}{p_X}=2\sqrt{C}\int_{-\lambda_C}^{\lambda_C} \frac{dX}{F(X)\sqrt{F^2(X)-C}}.
\end{split}
\end{equation}
We will define $F_\varepsilon$, described in Lemma \ref{No-geodesics}, for which $\Theta_{F_\varepsilon}(C)>2\pi$ for all $\min F_\varepsilon<\sqrt{C}<\max F_\varepsilon.$ Since from (\ref{H_equation}) we have that $\dot{\theta}>0$, $\theta(t)$ is increasing and hence $\Theta_{F_\varepsilon}(C)>2\pi$ implies that any closed geodesic $\gamma$ must make at least two full turns in $\theta$-direction, i.e. it can not lie in the homotopy class $\alpha.$

Everything we have done so far applies to any $F$ satisfying the necessary conditions. Let us now focus on concrete examples and prove Lemma \ref{No-geodesics}.
\begin{proof}(Proof of Lemma \ref{No-geodesics}) First, we note that $F$ is implicitly defined by $f$ and hence, it is not a priori clear that we may choose $F$ freely. However, one can show that if $F:[-T,T]\rightarrow (0,+\infty)$ satisfies $|F'(X)|<1$ for all $X\in [-T,T]$ then there exists $f:[-A,A]\rightarrow (0,+\infty)$, for some $A$, such that $F(X)=f(x(X)).$ Indeed, by setting $x(X)=\int_0^X \sqrt{1-(F'(\tau))^2}d\tau,~A=\int_0^T \sqrt{1-(F'(\tau))^2}d\tau$ and $f(x(X))=F(X),$ one checks by a direct computation that $f$ defines $F.$ Moreover, since $|F'(X)|<1$ for all $X\in [-T,T],$ $x(X)$ is a smooth function and $\frac{dx}{dX}>0$ on $[-T,T].$ Thus, $f$ is smooth if and only if $F$ is smooth.

Fix $0<\sqrt{k}<m$ and let us take $T=1,F_0:[-1,1]\rightarrow(0,+\infty)$ given by $F_0(X)=\frac{1}{\sqrt{kX^2+m}}.$ One readily checks that $|F'(X)|< 1$ for all $X\in [-1,1].$ For small enough $\varepsilon>0,$ $F_\varepsilon$ will be a smoothing of $F_0$ near the points $\pm 1.$ We start by analysing $F_0.$

Denote by $\lambda^0_C=\lambda_C(F_0)=\sqrt{\frac{\frac{1}{C}-m}{k}}$ and let $\tilde{\gamma}(t)=(X(t),\theta(t),p_X(t),\sqrt{C}),~ X(0)=-\lambda^0_C,~ p_X(0)=0$ be a solution of the Hamiltonian system (\ref{H_integral}), (\ref{H_equation}) associated to $F_0.$ From (\ref{H_equation}) we have
$$\ddot{X}=\dot{p_X}=\frac{F'(X)}{F^3(X)}C=-CkX,$$
and hence $X(t)=a\cos(\sqrt{CK}t)+b\sin(\sqrt{CK}t).$ Initial conditions $X(0)=-\lambda^0_C$ and $\dot{X}(0)=p_X(0)=0$ give us that $a=-\lambda^0_C$ and $b=0,$ i.e.
\begin{equation}\label{Solution_X}
X(t)=-\sqrt{\frac{\frac{1}{C}-m}{k}} \cos(\sqrt{CK}t).
\end{equation}
Using (\ref{H_equation}) and (\ref{Solution_X}), a direct computation shows that
\begin{align*}
\Theta_{F_0}(C) & =\int_0^{\frac{2\pi}{\sqrt{Ck}}} \dot{\theta}(t)dt=\int_0^{\frac{2\pi}{\sqrt{Ck}}} \frac{\sqrt{C}}{F^2(X(t))} dt = \int_0^{\frac{2\pi}{\sqrt{Ck}}} \frac{\sqrt{C}}{\frac{1}{kX^2(t)+m}} dt \\
& =\sqrt{C}\int_0^{\frac{2\pi}{\sqrt{Ck}}} \left( \left( \frac{1}{C}-m \right) \cos^2(\sqrt{Ck}t)+m \right )dt=\frac{\pi}{\sqrt{k}} \left( \frac{1}{C}+m \right).
\end{align*}
From $\sqrt{C}<\max F_0 = \frac{1}{\sqrt{m}}$ we have that $\frac{1}{C}>m$ and thus $\Theta_{F_0}(C)>2\pi \frac{m}{\sqrt{k}}.$ Since $m>\sqrt{k}$ it follows that $\Theta_{F_0}(C)>2\pi$ for all $\min F_0<\sqrt{C}<\max F_0.$

Finally, let us show that for $\varepsilon>0$ we may smoothen $F_0$ on intervals $[-1,-1+\varepsilon]$ and $[1-\varepsilon,1]$ in such a way that newly obtained $F_\varepsilon$ also satisfies $\Theta_{F_\varepsilon
}(C)>2\pi$ for all $\min F_\varepsilon<\sqrt{C}<\max F_\varepsilon.$ To this end, let $F_\varepsilon$ be such that $F_\varepsilon|_{[-1+\varepsilon,1-\varepsilon]} = F_0|_{[-1+\varepsilon,1-\varepsilon]},$ $F_\varepsilon\geq F_0$ elsewhere, $|F_\varepsilon'(X)|\leq |F_0'(X)|< 1$ for all $X\in[-1,1],$ $F_\varepsilon$ extends 2-periodically to a smooth function on $\R$ and $F_\varepsilon\xrightarrow{C^0}F_0$ as $\varepsilon \rightarrow 0.$ Denote $F_\varepsilon^{-1}(\sqrt{C})=\{-\lambda_C,\lambda_C \},~\lambda_C>0.$ Since $F_\varepsilon\geq F_0$ we have that $\lambda_C^0\leq \lambda_C.$ Now, note that if $\lambda^0_C \leq 1-\varepsilon,$ it holds $\lambda_C=\lambda^0_C$ as well as $\Theta_{F_\varepsilon}(C)=\Theta_{F_0}(C)>2\pi,$ because two function coincide on $[-1+\varepsilon, 1-\varepsilon].$ If however $1-\varepsilon<\lambda^0_C\leq \lambda_C<1,$ from (\ref{Theta}) we obtain
$$\Theta_{F_\varepsilon}(C)  =2\sqrt{C}\int_{-\lambda_C}^{\lambda_C}\frac{dX}{F_\varepsilon(X)\sqrt{F^2_\varepsilon(X)-C}}>2\sqrt{C}\int_{-\lambda_C+\varepsilon}^{\lambda_C-\varepsilon} \frac{dX}{F_\varepsilon(X)\sqrt{F^2_\varepsilon(X)-C}}.$$
Since $\lambda^0_C\leq \lambda_C$ we have
\begin{align*}
2\sqrt{C}\int_{-\lambda_C+\varepsilon}^{\lambda_C-\varepsilon} \frac{dX}{F_\varepsilon(X)\sqrt{F^2_\varepsilon(X)-C}} & \geq 2\sqrt{C}\int_{-\lambda^0_C+\varepsilon}^{\lambda^0_C-\varepsilon} \frac{dX}{F_\varepsilon
(X)\sqrt{F^2_\varepsilon(X)-C}}=\\
& =2\sqrt{C}\int_{-\lambda^0_C+\varepsilon}^{\lambda^0_C-\varepsilon} \frac{dX}{F_0(X)\sqrt{F^2_0(X)-C}}.
\end{align*}
Same change of variables used in (\ref{Theta}) gives us
\begin{align*}
\Theta_{F_\varepsilon}(C) & > 2\sqrt{C}\int_{-\lambda^0_C+\varepsilon}^{\lambda^0_C-\varepsilon} \frac{dX}{F_0(X)\sqrt{F^2_0(X)-C}} = 2\sqrt{C}\int_{X^{-1}(-\lambda^0_C+\varepsilon)}^{X^{-1}(\lambda^0_C-\varepsilon)} \frac{dt}{F^2_0(X(t))} \\
&=2 \sqrt{C}\int\limits_{\frac{\arccos\left(1-\frac{\varepsilon}{\lambda^0_C}\right)}{\sqrt{Ck}}}^{\frac{\arccos\left(-1+\frac{\varepsilon}{\lambda^0_C}\right)}{\sqrt{Ck}}} (kX^2(t)+m)dt > \frac{2m}{\sqrt{k}} \left( \arccos\left(-1+\frac{\varepsilon}{\lambda^0_C}\right) - \arccos\left(1-\frac{\varepsilon}{\lambda^0_C}\right)\right).    
\end{align*}
Since $\lambda^0_C\geq 1-\varepsilon$, it follows that $\frac{\varepsilon}{\lambda^0_C}\leq \frac{\varepsilon}{1-\varepsilon}\rightarrow 0$ when $\varepsilon\rightarrow 0,$ independently of $C.$ Moreover, by the assumption $\sqrt{k}<m,$ so for small enough $\varepsilon$ we have that $\Theta_{F_\varepsilon}(C)>2\pi$ for all $\min F_\varepsilon < \sqrt{C}<\max F_\varepsilon.$ As explained above, this implies that the only closed geodesics in class $\alpha$ of a metric induced by $F_\varepsilon$ are $\check{\gamma}$ and $\hat{\gamma}$ which together with Lemma \ref{Critical_points} concludes the proof. \end{proof}

\end{document}